\newtheorem{theorem}{Theorem}[section]
\newtheorem{lemma}[theorem]{Lemma}
\newtheorem{cor}[theorem]{Corollary}
\newtheorem{conj}[theorem]{Conjecture}
\newtheorem{prob}[theorem]{Problem}
\theoremstyle{definition}
\newtheorem{definition}[theorem]{Definition}
\newtheorem{remark}[theorem]{Remark}
\renewcommand{\subset}{\subseteq}
\renewcommand{\supset}{\supseteq}
\renewcommand{\epsilon}{\varepsilon}
\newcommand{\abs}[1]{\left|#1\right|}                   
\newcommand{\absf}[1]{|#1|}                             
\newcommand{\vnorm}[1]{\left\|#1\right\|}    
\newcommand{\vnormf}[1]{\|#1\|}                         
\newcommand{\vnormt}[1]{\left\|#1\right\|}    
\newcommand{\N}{\mathbb{N}}
\newcommand{\E}{\mathbb{E}}
\renewcommand{\d}{\mathrm{d}}
\newcommand{\R}{\mathbb{R}}
\newcommand{\embolden}[1]{\textbf {#1}}
\newcommand{\redA}{\Sigma}
\newcommand{\redb}{\partial^{*}}
\newcommand{\sdimn}{n}
\newcommand{\adimn}{n+1}
\newcommand{\Phiz}{\Phi\Big(\frac{\beta \langle \nu(z),x\rangle-\alpha}{\sqrt{1-\beta^{2}}}\Big)}
\newcommand{\Phizs}{\Phi\Big(\frac{\beta \langle \nu(z^{(s)}),x\rangle-\alpha}{\sqrt{1-\beta^{2}}}\Big)}
\newcommand{\Phizderiv}{\frac{\beta e^{-\frac{ [\beta\langle \nu(z),x\rangle-\alpha]^{2}}{2(1-\beta^{2})}}}{\sqrt{1-\beta^{2}}}\nu(z)}
\newcommand{\Phizdd}{\frac{\beta e^{-\frac{ [\beta\langle \nu(z),x\rangle-\alpha]^{2}}{2(1-\beta^{2})}}}{\sqrt{1-\beta^{2}}}}
\newcommand{\Phizdds}{\frac{\beta e^{-\frac{ [\beta\langle \nu(z^{(s)}),x\rangle-\alpha]^{2}}{2(1-\beta^{2})}}}{\sqrt{1-\beta^{2}}}}
\newcommand{\Phizddy}{\frac{\beta e^{-\frac{ [\beta\langle \nu(z),y\rangle-\alpha]^{2}}{2(1-\beta^{2})}}}{\sqrt{1-\beta^{2}}}}
\newcommand{\pen}{\Big[\int_{\Omega}\Phiz\gamma_{\adimn}(x)dx\Big]^{2}}   
\newcommand{\pens}{\Big[\int_{\Omega^{(s)}}\Phizs\gamma_{\adimn}(x)\,\d x\Big]^{2}}   
\newcommand{\pensnosq}{\int_{\Omega^{(s)}}\Phizs\gamma_{\adimn}(x)\,\d x}   
\newcommand{\pennosq}{\int_{\Omega}\Phiz\gamma_{\adimn}(x)\,\d x}   
\newcommand{\penf}{\Big[\int_{\R^{\adimn}}f(x)\Phiz\gamma_{\adimn}(x)dx\Big]^{2}}
\newcommand{\penfw}{\Big[\int_{\R^{\adimn}}f(x)\Phiz\gamma_{\adimn}(x)dx\Big]^{2}}
\newcommand{\SimplePhiz}{\Phi\Big(\frac{\rho \langle \frac{z}{\vnorm{z}},x\rangle}{\sqrt{1-\rho^{2}}}\Big)}
\newcommand{\SimplePhizs}{\Phi\Big(\frac{\rho \langle \frac{z^{(s)}}{\vnormf{z^{(s)}}},x\rangle}{\sqrt{1-\rho^{2}}}\Big)}
\newcommand{\SimplePhizy}{\Phi\Big(\frac{\rho \langle \frac{z}{\vnorm{z}},y\rangle}{\sqrt{1-\rho^{2}}}\Big)}
\newcommand{\SimplePhizderiv}{\frac{\rho e^{-\frac{ \left[\rho\left\langle \frac{z}{\vnorm{z}},x\right\rangle\right]^{2}}{2(1-\rho^{2})}}}{\sqrt{1-\rho^{2}}}\frac{z}{\vnorm{z}}}
\newcommand{\Simplepen}{\Big[\int_{\Omega}\SimplePhiz\gamma_{\adimn}(x)dx\Big]^{2}}   
\newcommand{\Simplepens}{\Big[\int_{\Omega+sv}\SimplePhizs\gamma_{\adimn}(x)\,\d x\Big]^{2}}   
\newcommand{\Simplepennosq}{\int_{\Omega}\SimplePhiz\gamma_{\adimn}(x)\,\d x}   
\newcommand{\Phizstilde}{\Phi\Big(\frac{\beta \langle \nu(\widetilde{z}^{(s)}),x\rangle-\alpha}{\sqrt{1-\beta^{2}}}\Big)}
\newcommand{\penstilde}{\Big[\int_{\widetilde{\Omega}^{(s)}}\Phizstilde\gamma_{2}(x)\,\d x\Big]^{2}}   
\begin{document}

\title{A Variational Proof of Robust Gaussian Noise Stability}

\author{Steven Heilman}
\address{Department of Mathematics, University of Southern California, Los Angeles, CA 90089-2532}
\email{stevenmheilman@gmail.com}

\date{\today}
\thanks{S. H. is Supported by NSF Grant CCF 1911216}

\begin{abstract}
Using the calculus of variations, we prove that a Euclidean set of fixed Gaussian measure that nearly maximizes Gaussian noise stability is close to a half space.  The main result proves a modification of a conjecture of Eldan from 2013: a robust Borell inequality that removes a logarithmic dependence on the distance of the set to a half space.  For sets of Gaussian measure $1/2$, we prove Eldan's 2013 conjecture.

The noise stability of a Euclidean set $A$ with correlation $\rho$ is the probability that $(X,Y)\in A\times A$, where $X,Y$ are standard Gaussian random vectors with correlation $\rho\in(-1,1)$.

Barchiesi, Brancolini and Julin proved that a Euclidean set of fixed Gaussian measure that nearly minimizes Gaussian surface area is close to a half space, using a variational ``penalty function'' method.  Our proof adapts their method to the more general setting of noise stability.

We also show that half spaces are the only sets that are stable (in the sense of second variation) for noise stability, generalizing a result of McGonagle and Ross for Gaussian surface area.
\end{abstract}

\maketitle
\setcounter{tocdepth}{1}
\tableofcontents
%
%
%
%
%

\section{Introduction}\label{secintro}

The Gaussian isoperimetric inequality says that a half space has the smallest Gaussian surface area among all Euclidean sets of fixed Gaussian volume \cite{sudakov74,borell75,ledoux94,bobkov97,burchard01,mcgonagle15}.  A robust version of this inequality says: if a Euclidean set nearly minimizes its Gaussian surface area (subject to a Gaussian volume constraint), then this set is close to a half space.  Such an inequality was proven in \cite{barchiesi16}, following \cite{mossel15,mossel12,eldan13}.

The noise stability of a measurable Euclidean set $A$ with correlation $\rho$ is the probability that $(X,Y)\in A\times A$, where $X,Y$ are standard Gaussian random vectors with correlation $\rho\in(-1,1)$.  Borell's inequality \cite{borell85} generalizes the Gaussian isoperimetric inequality in the following way: a half space has the largest noise stability among all Euclidean sets of fixed Gaussian volume, when $0<\rho<1$.  Letting $\rho\to1^{-}$ in Borell's inequality recovers the Gaussian isoperimetric inequality \cite{ledoux94}.

A robust version of Borell's inequality says: if a Euclidean set nearly maximizes noise stability (subject to a Gaussian volume constraint), then this set is close to a half space.  Robust versions of Borell's inequality were proven in \cite{mossel12,eldan13}.

The proof of the robust Gaussian isoperimetric inequality in \cite{barchiesi16} uses the calculus of variations to minimize the Gaussian surface area plus a ``penalty'' function.  The minimum of this quantity occurs at a half space, so that the ``penalty'' function quantifies how far an arbitrary set is from being a half space.  The main step of the proof computes the second derivative of infinitesimal translations of an optimal set that are Gaussian volume-preserving.

The proof methods of the more general robust Borell inequality \cite{mossel12,eldan13} are arguably ad hoc, so one might hope for a more elementary proof, along the lines of \cite{barchiesi16}.  Moreover, the proof methods of \cite{mossel12,eldan13} do not seem to generalize to inequalities for the noise stability of partitions of Euclidean space, as opposed to the calculus of variations arguments of e.g. \cite{heilman18b,heilman20d}.

In this paper, we demonstrate that the penalty function method of \cite{barchiesi16} can prove a robust Borell inequality.  As in \cite{barchiesi16}, the main step of the proof computes the second derivative of infinitesimal translations of the optimal set that are Gaussian volume-preserving.  This step works for all correlation parameters $0<\rho<1$, and it also seems to hold for an arbitrary number of sets that partition Euclidean space, though we avoid pursuing such a statement at this time.  This ``dimension reduction'' step of Section \ref{secdimred} shows that a maximizing set $\Omega\subset\R^{\adimn}$ is ``one-dimensional,'' i.e. after rotating $\Omega$, there exists $\Omega'\subset\R$ such that $\Omega=\Omega'\times\R^{\sdimn}$.  The final step of the proof, contained in Section \ref{secpro}, shows that in fact $\Omega'$ is an unbounded interval.

Besides their intrinsic interest, inequalities for noise stability have applications to social choice theory \cite{mossel10}, the Unique Games Conjecture \cite{khot07,mossel10,khot15}, to semidefinite programming algorithms such as MAX-CUT \cite{khot07,isaksson11}, to learning theory \cite{feldman12}, etc.  For some surveys on this and related topics, see  \cite{odonnell14b,khot10b,heilman20b}.

The combination of our robust Borell inequality, Theorem \ref{mainthm1} below, with previous works such as \cite{barchiesi16,heilman18b,heilman20d} essentially shows that one single argument can prove nearly every known inequality for sets or partitions that maximize noise stability, with respect to Gaussian volume constraints.  So, instead of having disparate arguments to prove these inequalities, one single calculus of variations argument has emerged, providing an aesthetically pleasing way to prove these optimal inequalities.

\subsection{More Formal Introduction}

For any $k\geq1$, we define the Gaussian density as
\begin{equation}\label{zero0.0}
\begin{aligned}
\gamma_{k}(x)&\colonequals (2\pi)^{-k/2}e^{-\vnormt{x}^{2}/2},\qquad
\langle x,y\rangle\colonequals\sum_{i=1}^{\adimn}x_{i}y_{i},\qquad
\vnormt{x}^{2}\colonequals\langle x,x\rangle,\\
&\qquad\forall\,x=(x_{1},\ldots,x_{\adimn}),y=(y_{1},\ldots,y_{\adimn})\in\R^{\adimn}.
\end{aligned}
\end{equation}

Let $f\colon\R^{\adimn}\to[0,1]$ be measurable and let $\rho\in(-1,1)$.  Define the \textbf{Ornstein-Uhlenbeck operator with correlation $\rho$} applied to $f$ by
\begin{equation}\label{oudef}
\begin{aligned}
T_{\rho}f(x)
&\colonequals\int_{\R^{\adimn}}f(x\rho+y\sqrt{1-\rho^{2}})\gamma_{\adimn}(y)\,\d y\\
&=(1-\rho^{2})^{-(\adimn)/2}(2\pi)^{-(\adimn)/2}\int_{\R^{\adimn}}f(y)e^{-\frac{\vnorm{y-\rho x}^{2}}{2(1-\rho^{2})}}\,\d y,
\qquad\forall x\in\R^{\adimn}.
\end{aligned}
\end{equation}
$T_{\rho}$ is a parametrization of the Ornstein-Uhlenbeck operator, which gives a fundamental solution of the (Gaussian) heat equation
\begin{equation}\label{oup}
\frac{d}{d\rho}T_{\rho}f(x)=\frac{1}{\rho}\Big(-\overline{\Delta} T_{\rho}f(x)+\langle x,\overline{\nabla}T_{\rho}f(x)\rangle\Big),\qquad\forall\,x\in\R^{\adimn}.
\end{equation}
Here $\overline{\Delta}\colonequals\sum_{i=1}^{\adimn}\partial^{2}/\partial x_{i}^{2}$ and $\overline{\nabla}$ is the usual gradient on $\R^{\adimn}$.  Our main object of study is the noise stability (or Gaussian heat content) of Euclidean sets.

\begin{definition}[\embolden{Noise Stability}]\label{noisedef}
Let $\Omega\subset\R^{\adimn}$ be measurable.  Let $\rho\in(-1,1)$.  We define the \textit{noise stability} of the set $\Omega$ with correlation $\rho$ to be
\begin{equation}\label{nseq}
\int_{\R^{\adimn}}1_{\Omega}(x)T_{\rho}1_{\Omega}(x)\gamma_{\adimn}(x)\,\d x
\stackrel{\eqref{oudef}}{=}(2\pi)^{-(\adimn)}(1-\rho^{2})^{-(\adimn)/2}\int_{\Omega}\int_{\Omega}e^{\frac{-\|x\|^{2}-\|y\|^{2}+2\rho\langle x,y\rangle}{2(1-\rho^{2})}}\,\d x\d y.
\end{equation}
Equivalently, if $X=(X_{1},\ldots,X_{\adimn}),Y=(Y_{1},\ldots,Y_{\adimn})\in\R^{\adimn}$ are $(\adimn)$-dimensional jointly Gaussian distributed random vectors with $\E X_{i}Y_{j}=\rho\cdot1_{(i=j)}$ $\forall$ $i,j\in\{1,\ldots,\adimn\}$,
$$\int_{\R^{\adimn}}1_{\Omega}(x)T_{\rho}1_{\Omega}(x)\gamma_{\adimn}(x)\,\d x=\mathbb{P}((X,Y)\in \Omega\times \Omega).$$
\end{definition}

Borell's inequality says that a half space maximizes noise stability among all measurable Euclidean sets of fixed Gaussian volume.

\begin{theorem}[\embolden{Borell's Inequality}, {\cite{borell85}}]\label{borellthm}
Let $\Omega\subset\R^{\adimn}$ be measurable.  Let $H\subset\R^{\adimn}$ be a half space with $\gamma_{\adimn}(\Omega)=\gamma_{\adimn}(H)$.  Let $0<\rho<1$.  Then
$$\int_{\R^{\adimn}}1_{\Omega}(x)T_{\rho}1_{\Omega}(x)\gamma_{\adimn}(x)\,\d x\leq \int_{\R^{\adimn}}1_{H}(x)T_{\rho}1_{H}(x)\gamma_{\adimn}(x)\,\d x.$$
If $-1<\rho<0$, then the inequality is reversed.
\end{theorem}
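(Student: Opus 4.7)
The plan is to prove Borell's inequality by Gaussian hyperplane symmetrization, reducing to a one-dimensional problem that is then solved by a direct variational argument.

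First, by a standard approximation argument (truncation to a large ball followed by mollification), reduce to $\Omega$ open, bounded, with smooth boundary. Fix a unit vector $v\in\R^{\adimn}$, let $V\colonequals v^{\perp}$, and for each $x'\in V$ consider the slice $\Omega(x')\colonequals\{t\in\R:x'+tv\in\Omega\}$. Define the \emph{Gaussian hyperplane symmetrization} $\Omega^{v}$ by replacing each slice with the half-line $(-\infty,s(x')]$ having the same one-dimensional standard Gaussian measure. Fubini for the product decomposition $\gamma_{\adimn}=\gamma_{\sdimn}\otimes\gamma_{1}$ gives $\gamma_{\adimn}(\Omega^{v})=\gamma_{\adimn}(\Omega)$.

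The heart of the argument is to show that this symmetrization does not decrease noise stability for $0<\rho<1$. Rewriting \eqref{nseq} and conditioning on transverse coordinates $(x',y')\in V\times V$, the noise stability becomes an outer integral against the $2\sdimn$-dimensional joint Gaussian density of $(x',y')$ and an inner bilinear integral on the $v$-lines against the one-dimensional Mehler kernel
\begin{equation*}
    M_{\rho}(s,t)\colonequals(2\pi)^{-1}(1-\rho^{2})^{-1/2}\exp\Big(-\frac{s^{2}-2\rho s t+t^{2}}{2(1-\rho^{2})}\Big).
\end{equation*}
The crucial one-dimensional fact is the two-set Borell inequality in one variable: for $A,B\subset\R$ with prescribed Gaussian measures, the bilinear form $\iint_{A\times B}M_{\rho}(s,t)\,\d s\,\d t$ is maximized when $A$ and $B$ are parallel half-lines. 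Applied to $A=\Omega(x')$, $B=\Omega(y')$ and aggregated over $(x',y')$, this shows $\Omega\mapsto\Omega^{v}$ is noise-stability increasing. Iterating over a countable dense set of directions $v$ and using $L^{1}(\gamma_{\adimn})$ compactness produces a limit set invariant under all such symmetrizations, which must be a half space, with noise stability at least that of $\Omega$. The case $-1<\rho<0$ follows from the same scheme since the one-dimensional bilinear form is now \emph{minimized} by parallel half-lines.

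The one-dimensional two-set Borell inequality itself admits a proof in the variational spirit of this paper: on bounded sets, the bilinear form is continuous and attains a maximum; the first variation along Gaussian-measure-preserving perturbations of $\partial A$ and $\partial B$ forces the maximizer to satisfy a constant ``noise level'' on its boundaries, which in dimension one immediately forces $A,B$ to be intervals, and a final direct comparison rules out all intervals other than parallel half-lines. This one-dimensional base case is the main obstacle; without it, the symmetrization step is void, and proving it is essentially a miniature version of the variational analysis the paper carries out in higher dimensions.
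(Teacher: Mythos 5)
This theorem is not proved in the paper at all: it is quoted from Borell \cite{borell85} as a known classical result (the paper's contribution is the robust version, Theorem \ref{mainthm1}). So your proposal must be judged on its own merits, and it has a genuine gap. The symmetrization scheme itself is set up correctly: since $\E X_iY_j=\rho 1_{\{i=j\}}$, the pair $(\langle X,v\rangle,\langle Y,v\rangle)$ is a $\rho$-correlated one-dimensional Gaussian pair independent of the components in $v^{\perp}$, so the Fubini/conditioning step and the slicewise comparison are valid \emph{provided} the one-dimensional two-set Borell inequality holds. But that base case is not a minor lemma; it is logically at least as strong as the theorem you are proving (take $A=B$ and note that every known proof of Borell's inequality — Borell's Ehrhard symmetrization, Mossel--Neeman's semigroup argument, Eldan's stochastic calculus — in fact establishes the two-set version directly). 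Your reduction therefore relocates, rather than removes, the entire difficulty.

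The sketch you give for the base case then breaks down at the decisive step. The first-variation condition for a maximizer of $\iint_{A\times B}M_{\rho}(s,t)\,\d s\,\d t$ under the measure constraints only says that, up to null sets, $A$ is a superlevel set $\{s\colon T_{\rho}1_{B}(s)>c_{A}\}$ and symmetrically for $B$; it does \emph{not} ``immediately force $A,B$ to be intervals.'' A superlevel set of the smooth function $T_{\rho}1_{B}$ on $\R$ can be a disjoint union of many intervals (take $B$ to be two widely separated intervals, so that $T_{\rho}1_{B}$ is bimodal); indeed the configuration $\Omega=(-\infty,0]\cup[d,\infty)$ discussed in Section \ref{secintro} is exactly such a near-critical disconnected competitor. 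Ruling out these configurations is the whole content of the hard part of the problem — in this paper it requires the second-variation/translation analysis (Lemmas \ref{treig}--\ref{svartran}) together with the lengthy one-dimensional endgame of Sections \ref{secpro}--\ref{lastlems}, and even there only for the penalized functional with small $\epsilon$. There is also a secondary, more routine gap: the claim that iterating symmetrizations over a dense set of directions converges in $L^{1}(\gamma_{\adimn})$ to a set invariant under all of them, and that such a set must be a half space, needs an actual argument (a monotone auxiliary functional, or a two-point-symmetrization scheme as in Isaksson--Mossel). As written, the proposal is a valid reduction framework with its essential engine missing.
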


An improved, robust version of this inequality says that if the inequality in Theorem \ref{borellthm} is nearly an equality, then $\Omega$ is close to a half space.  Such an improved statement was proven in \cite{mossel12} and also \cite{eldan13}.

\begin{theorem}[\embolden{Robust Borell's Inequality}, {\cite[Theorem 1.4]{mossel12}}]\label{robustbor}
Let $0<\rho<1$ and fix $0<a<1$.  Then there exists $c(\rho,a)>0$ such that the following holds.  Let $\Omega\subset\R^{\adimn}$ be measurable with $\gamma_{\adimn}(\Omega)=a$.  Let $H\subset\R^{\adimn}$ be a half space with $\gamma_{\adimn}(H)=a$.  Define
$$\delta\colonequals \int_{\R^{\adimn}}1_{H}(x)T_{\rho}1_{H}(x)\gamma_{\adimn}(x)\,\d x-\int_{\R^{\adimn}}1_{\Omega}(x)T_{\rho}1_{\Omega}(x)\gamma_{\adimn}(x)\,\d x.$$
Then, after rotating $H$ if necessary,
$$\gamma_{\adimn}(\Omega\Delta H)\leq c(\rho,a)\cdot\delta^{\frac{1}{4}\frac{(1-\rho)(1-\rho^{2})}{1+3\rho}}.$$
Here $\Delta$ denotes the symmetric difference of sets.
\end{theorem}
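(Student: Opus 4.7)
The plan is to follow the penalty function strategy of Barchiesi, Brancolini and Julin \cite{barchiesi16}, adapted to the nonlocal functional appearing in noise stability. The rough idea is to augment the noise stability functional by a small ``penalty'' term that measures distance to a half space, then extract a sharp quantitative comparison by exploiting the rigidity of the maximizers of the augmented functional.

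First, I would introduce a penalty $\mathrm{pen}(\Omega)$ which vanishes precisely when $\Omega$ is (up to rotation) a half space of the correct Gaussian measure. A natural choice mirroring \cite{barchiesi16} involves the barycenter of $\Omega$ relative to that of the target half space $H$, or more refinedly an integral of a function adapted to the Ornstein-Uhlenbeck semigroup. I would then consider the penalized variational problem of maximizing
$$\int_{\R^{\adimn}}1_{\Omega}T_{\rho}1_{\Omega}\gamma_{\adimn}(x)\,\d x+\epsilon\cdot\mathrm{pen}(\Omega)$$
subject to $\gamma_{\adimn}(\Omega)=a$, for a small parameter $\epsilon>0$ to be tuned against $\delta$. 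Using a compactness and selection principle argument (exploiting that level sets of $T_{\rho}1_{\Omega}$ enjoy uniform regularity), one obtains a maximizer $\Omega_{\epsilon}$ which, if $\delta$ is small, must be close in measure to $H$.

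Second, I would apply the calculus of variations on $\Omega_{\epsilon}$. The first variation under volume-preserving perturbations yields an Euler-Lagrange equation stating that $T_{\rho}1_{\Omega_{\epsilon}}$ is constant on $\partial^{*}\Omega_{\epsilon}$ modulo a controllable penalty contribution. The decisive step is the second variation along infinitesimal volume-preserving translations: given $v\in\R^{\adimn}$, flow $\Omega_{\epsilon}$ by a vector field of the form $v+c(v)\nu$, where $c(v)$ is chosen so that Gaussian volume is preserved to second order. Summing the resulting inequality over an orthonormal basis of directions should produce a dimension reduction showing that, up to rotation, $\Omega_{\epsilon}=\Omega'\times\R^{\sdimn}$ for some $\Omega'\subset\R$. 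Once $\Omega_{\epsilon}$ is one-dimensional, Borell's inequality applied in $\R$ together with the first-order Euler-Lagrange equation forces $\Omega'$ to be a half-line, so $\mathrm{pen}(\Omega_{\epsilon})=0$ and $\Omega_{\epsilon}$ is itself a half space. Tracking how $\mathrm{pen}$ depends on $\delta$ through the penalized comparison yields an inequality of the form $\mathrm{pen}(\Omega)\leq C(\rho,a)\delta^{\beta}$, which after a trace-type estimate converts to the desired $L^{1}$ control on $1_{\Omega}-1_{H}$.

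The main obstacle is the second-variation computation for the nonlocal noise stability functional. Unlike Gaussian surface area, whose second variation under translations reduces cleanly to a boundary integral, noise stability is bilinear with kernel $e^{-\|x-\rho y\|^{2}/(2(1-\rho^{2}))}$, so its second variation splits into contributions from $\partial^{*}\Omega\times\partial^{*}\Omega$ as well as $\partial^{*}\Omega\times\Omega$, with cross terms that must be carefully unwound using the semigroup identity \eqref{oup}. Extracting a strictly negative contribution in every direction outside the half-space symmetry axis, with constants sharp enough to yield the explicit exponent $\tfrac{1}{4}(1-\rho)(1-\rho^{2})/(1+3\rho)$, will be the delicate quantitative core of the argument.
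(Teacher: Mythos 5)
This statement is Theorem \ref{robustbor}, which the paper does not prove: it is quoted from Mossel--Neeman \cite[Theorem 1.4]{mossel12}, whose proof is a semigroup interpolation argument tracking $\frac{\d}{\d\rho}\E J(T_{\rho}1_{\Omega}(X),T_{\rho}1_{\Omega}(Y))$ for a carefully chosen bivariate function $J$; the explicit exponent $\frac{1}{4}\frac{(1-\rho)(1-\rho^{2})}{1+3\rho}$ is an artifact of that interpolation. Your proposal instead transplants the penalty-function variational scheme, which is what the present paper uses for its \emph{own} main result (Theorem \ref{mainthm1}) --- a genuinely different statement, with the deficit measured by the $\Phi$-weighted quantity $\eta_{\beta}$ rather than by $\gamma_{\adimn}(\Omega\Delta H)$, and with the extra hypothesis $\vnorm{z}\geq z_{0}>0$ on the barycenter. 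So even granting every step of your outline, you would not have proved the theorem as stated.

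The concrete gap is your final sentence-long step, ``after a trace-type estimate converts to the desired $L^{1}$ control on $1_{\Omega}-1_{H}$.'' The penalized comparison gives at best $\mathrm{pen}(\Omega)\leq\delta/\epsilon$, and no penalty of the type this method tolerates controls $\gamma_{\adimn}(\Omega\Delta H)$ by a power of itself without further losses: the paper itself recalls (following \cite[Section 1.1]{eldan13}) that the barycenter penalty $\vnorm{\int_{\Omega}x\gamma_{\adimn}(x)\,\d x}^{2}$ \emph{cannot} be used here because sets like $\Omega=(-\infty,0]\cup[d,\infty)$ beat the half space for the penalized functional, and the bounded penalty $\Phi(\cdot)$ that must be used instead yields $\eta_{\beta}$, whose relation to the symmetric difference is exactly the lossy step that produces logarithmic or worse corrections. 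Moreover, your second-variation step requires a nondegeneracy of the barycenter ($z\neq0$, indeed $\vnorm{z}\geq z_{0}$) that Theorem \ref{robustbor} does not assume, and nothing in the variational machinery produces the specific $\rho$-dependence $\frac{(1-\rho)(1-\rho^{2})}{1+3\rho}$. If your goal is Theorem \ref{robustbor} itself, you need the Mossel--Neeman interpolation argument (or an equally quantitative substitute for the conversion step), not the penalty method.
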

A similar inequality is given in \cite{mossel12} when $-1<\rho<0$.  Since $\max_{0\leq\rho\leq1}\frac{(1-\rho)(1-\rho^{2})}{1+3\rho}=1$, the largest exponent of $\delta$ in Theorem \ref{robustbor} is $1/4$.

In order to remove the sub-optimal power dependence on $\delta$ in Theorem \ref{robustbor}, Eldan proved the following nearly matching upper and lower bounds on the ``noise stability deficit,'' $\delta$.

\begin{theorem}[\embolden{Robust Borell's Inequality}, {\cite[Theorem 2]{eldan13}}]\label{robustborv2}
Let $0<\rho<1$ and fix $0<a<1$.  Then there exists $c(a),c'(a)>0$ such that the following holds.  Let $\Omega\subset\R^{\adimn}$ be measurable with $\gamma_{\adimn}(\Omega)=a$.  Let $H\subset\R^{\adimn}$ be a half space with $\gamma_{\adimn}(H)=a$.  Define
$$\delta\colonequals \int_{\R^{\adimn}}1_{H}(x)T_{\rho}1_{H}(x)\gamma_{\adimn}(x)\,\d x-\int_{\R^{\adimn}}1_{\Omega}(x)T_{\rho}1_{\Omega}(x)\gamma_{\adimn}(x)\,\d x.$$
$$\eta\colonequals \Big\|\int_{H} x\gamma_{\adimn}(x)\d x\Big\|^{2}-\Big\|\int_{\Omega} x\gamma_{\adimn}(x)\,\d x\Big\|^{2}.$$
Then $\eta\geq0$ (with $\eta=0$ only when $\Omega$ is a half space) and, if $\eta<e^{-1/\rho}$, then
$$c(a)\frac{\eta}{\abs{\log\eta}}\sqrt{1-\rho}\leq \delta\leq c'(a)\frac{\eta}{\sqrt{1-\rho}}.$$
\end{theorem}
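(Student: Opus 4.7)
My starting point would be the Hermite--Fourier (Wiener chaos) expansion of $f\colonequals\mathbf{1}_{\Omega}$ and $g\colonequals\mathbf{1}_{H}$ in $L^{2}(\gamma_{\adimn})$. Decomposing $f=\sum_{k\geq 0}f_{k}$ and $g=\sum_{k\geq 0}g_{k}$ into orthogonal projections onto the degree-$k$ Hermite chaos, $T_{\rho}$ acts as multiplication by $\rho^{k}$ on the $k$-th component, so
\begin{equation*}
\delta=\sum_{k\geq 1}\rho^{k}\bigl(\|g_{k}\|_{2}^{2}-\|f_{k}\|_{2}^{2}\bigr).
\end{equation*}
Since $\gamma_{\adimn}(\Omega)=\gamma_{\adimn}(H)=a$ and $\|f\|_{2}^{2}=\|g\|_{2}^{2}=a$, one has $\|f_{0}\|_{2}=\|g_{0}\|_{2}$ and the Parseval identity $\sum_{k\geq 1}(\|g_{k}\|_{2}^{2}-\|f_{k}\|_{2}^{2})=0$. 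The degree-$1$ chaos is spanned by linear functionals $x\mapsto\langle v,x\rangle$, and a direct computation identifies $\|g_{1}\|_{2}^{2}-\|f_{1}\|_{2}^{2}=\eta$; consequently the higher chaos satisfy $\sum_{k\geq 2}(\|g_{k}\|_{2}^{2}-\|f_{k}\|_{2}^{2})=-\eta$. The non-negativity $\eta\geq 0$ then follows by extracting the leading $\rho$-order in $\delta=\rho\eta-\sum_{k\geq 2}\rho^{k}(\|f_{k}\|_{2}^{2}-\|g_{k}\|_{2}^{2})$ as $\rho\to 0^{+}$ and applying Borell's inequality (Theorem \ref{borellthm}).

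For the \textbf{upper bound} $\delta\leq c'(a)\eta/\sqrt{1-\rho}$, I would rotate $H$ so that its normal aligns with $\int_{\Omega}x\gamma_{\adimn}(x)\,\d x$, making $f_{1}$ and $g_{1}$ parallel; the identity $\|g_{1}-f_{1}\|_{2}^{2}=(\|g_{1}\|_{2}-\|f_{1}\|_{2})^{2}\leq C(a)\eta^{2}$ then quantitatively constrains the degree-$1$ discrepancy. Since $g$ is the indicator of a half space, its one-dimensional Hermite profile has coefficients decaying in a controlled manner, and a Cauchy--Schwarz comparison of $f_{k}$ against $g_{k}$ with a cutoff $K\sim(1-\rho)^{-1}$ should produce the factor $1/\sqrt{1-\rho}$, reflecting the same scaling that governs the surface-area limit $(1-\int gT_{\rho}g\gamma_{\adimn})/\sqrt{1-\rho}\to C(a)$ as $\rho\to 1^{-}$. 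For the \textbf{lower bound} $c(a)\eta\sqrt{1-\rho}/|\log\eta|\leq\delta$, I would argue by contrapositive: if $\delta$ were much smaller than $\eta\sqrt{1-\rho}/|\log\eta|$, then nearly all the cancellation in $\sum_{k\geq 2}\rho^{k}(\|g_{k}\|_{2}^{2}-\|f_{k}\|_{2}^{2})=-\rho\eta+O(\delta)$ would have to be concentrated at degrees $k$ of order at least $(1-\rho)^{-1}|\log\eta|$, where $\rho^{k}$ becomes comparable to $\eta$; this would force the low-degree Hermite masses of $f$ and $g$ to agree to within $O(\eta)$, and a hypercontractive stability argument for the Ornstein--Uhlenbeck semigroup should promote this to $\|f-g\|_{2}^{2}\leq C(a)\eta|\log\eta|$, contradicting the prescribed size of $\eta$.

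The \textbf{main obstacle} is that the individual differences $\|g_{k}\|_{2}^{2}-\|f_{k}\|_{2}^{2}$ are sign-indefinite for $k\geq 2$; only weighted sums carry clean information, so term-by-term estimates must be replaced by interpolation across a range of degrees, and it is precisely this interpolation that appears to introduce the logarithmic factor $|\log\eta|$. Eldan's original proof sidesteps Hermite analysis entirely by constructing a continuous-time martingale interpolating between $T_{\rho}\mathbf{1}_{\Omega}$ and $\mathbf{1}_{\Omega}$ and extracting structural information from its quadratic variation via It\^o calculus. A variational alternative more in line with the present paper would be to apply the penalty-function method of \cite{barchiesi16} directly to the functional $\delta-c(a,\rho)\eta$ and perform a second-variation analysis at half-space maximizers; this is essentially the route that the main theorem of the present paper pursues, with the crucial refinement that it removes the $|\log\eta|$ factor from the lower bound.
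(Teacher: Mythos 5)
First, for context: the paper does not prove Theorem \ref{robustborv2}; it is quoted from \cite{eldan13}, and the paper only records that Eldan's argument is a stochastic-calculus analysis of the map $\rho\mapsto\E\big\|\int_{(\Omega-\rho X)/\sqrt{1-\rho}}x\gamma_{\adimn}(x)\,\d x\big\|^{2}$, which is a genuinely different mechanism from your Hermite-chaos decomposition. Your opening reductions are correct and standard: Mehler's formula gives $\delta=\sum_{k\geq1}\rho^{k}(\|g_{k}\|_{2}^{2}-\|f_{k}\|_{2}^{2})$, Parseval for indicators of equal measure gives $\sum_{k\geq1}(\|g_{k}\|_{2}^{2}-\|f_{k}\|_{2}^{2})=0$, the first chaos satisfies $\|g_{1}\|_{2}^{2}-\|f_{1}\|_{2}^{2}=\eta$, and sending $\rho\to0^{+}$ in Borell's inequality does yield $\eta\geq0$ (though not the stated equality case). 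Everything after that, however, is heuristic, and neither bound can be completed from the information you have extracted.

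For the upper bound, writing $\delta=-(1-\rho)\eta+\sum_{k\geq2}(1-\rho^{k})(\|f_{k}\|_{2}^{2}-\|g_{k}\|_{2}^{2})$, the only constraints available at the level of chaos weights are nonnegativity, the total mass $\sum_{k\geq1}\|f_{k}\|_{2}^{2}=a-a^{2}$, and $\sum_{k\geq2}(\|f_{k}\|_{2}^{2}-\|g_{k}\|_{2}^{2})=\eta$. These do not imply the bound: a hypothetical weight sequence with $\|f_{2}\|_{2}^{2}=0$ and the compensating mass $\|g_{2}\|_{2}^{2}+\eta$ pushed to very high degree satisfies all of them yet gives $\delta\approx\rho^{2}\|g_{2}\|_{2}^{2}$, which is of order one while $\eta$ may be arbitrarily small. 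Ruling this out requires genuinely geometric input (when $\eta$ is small the barycenter of $\Omega$ is nearly maximal, which forces $\Omega$ close to $H$ and hence forces the low-degree weights to nearly agree); ``Cauchy--Schwarz with a cutoff $K\sim(1-\rho)^{-1}$'' supplies no such input. The lower bound is worse off: your contrapositive ends with the conclusion $\|f-g\|_{2}^{2}\leq C(a)\eta\abs{\log\eta}$, which is a stability statement, not a contradiction with any hypothesis ($\eta<e^{-1/\rho}$ contradicts nothing here), so the argument does not close. Moreover the sign-indefiniteness of $\|g_{k}\|_{2}^{2}-\|f_{k}\|_{2}^{2}$ for $k\geq2$, which you correctly identify as the obstacle, is precisely what a proof must overcome, and the example $\Omega=(-\infty,0]\cup[d,\infty)$ (cited in the paper from \cite[Section 1.1]{eldan13}) shows the logarithmic loss is unavoidable; a correct proof must see where that example saturates the inequality, and the sketch does not. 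As it stands the proposal establishes only $\eta\geq0$ and the exact identity $\delta=\rho\eta+\sum_{k\geq2}\rho^{k}(\|g_{k}\|_{2}^{2}-\|f_{k}\|_{2}^{2})$; the two inequalities that constitute the theorem remain unproved.
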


Theorem \ref{robustborv2} removes the power dependence on $\delta$ in Theorem \ref{robustbor}, yet Theorem \ref{robustborv2} still has an extra logarithmic factor in the lower bound on $\delta$, so the upper and lower bounds on $\delta$ do not match.  Unfortunately, as shown in \cite[Section 1.1]{eldan13}, this logarithmic factor cannot be fully removed, as one can see by considering sets of the form $\Omega=(-\infty,0]\cup[d,\infty)$ where $d\to\infty$.   Desiring a matching upper and lower bound on $\delta$, Eldan considered a different definition of $\eta$ that quantifies how far a set $\Omega$ is from a half space, resulting in the following conjecture.

Before stating the conjecture, denote $\Phi(t)\colonequals\gamma_{1}(-\infty,t]$, $\forall$ $t\in\R$, and denote $\nu(z)\colonequals z/\vnorm{z}$ if $z\neq0$ and $\nu(0)\colonequals(1,0,\ldots,0)\in\R^{\adimn}$.  Also define $\alpha\colonequals-\Phi^{-1}(a)$.

\begin{conj}[\embolden{Eldan's Robust Borell Conjecture}, {\cite[Conjecture 7]{eldan13}}]\label{eldconj}
Let $0<\rho<1$ and fix $0<a<1$.  Then there exists $c(a),c'(a)>0$ such that the following holds.  Let $\Omega\subset\R^{\adimn}$ be measurable with $\gamma_{\adimn}(\Omega)=a$.  Denote $z\colonequals\int_{\Omega}x\gamma_{\adimn}(x)\,\d x\in\R^{\adimn}$.  Let $H$ be a half space with $\gamma_{\adimn}(H)=a$ such that $\int_{H}x\gamma_{\adimn}(x)\,\d x$ is a positive multiple of $z$.  Define
$$\delta\colonequals \int_{\R^{\adimn}}1_{H}(x)T_{\rho}1_{H}(x)\gamma_{\adimn}(x)\,\d x-\int_{\R^{\adimn}}1_{\Omega}(x)T_{\rho}1_{\Omega}(x)\gamma_{\adimn}(x)\,\d x.$$
$$\eta\colonequals \int_{\R^{\adimn}}\Phi\Big(\frac{\rho \langle \nu(z),x\rangle-\alpha}{\sqrt{1-\rho^{2}}}\Big)(1_{H}(x)-1_{\Omega}(x))\gamma_{\adimn}(x)\d x.$$
Then $\eta\geq0$ (with $\eta=0$ only when $\Omega$ is a half space) and
$$c(a)\cdot \eta(1-\rho)\leq \delta\leq c'(a)\cdot\eta.$$
\end{conj}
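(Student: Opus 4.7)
The argument has three distinct parts: the upper bound, the nonnegativity of $\eta$, and the lower bound.

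\textbf{Upper bound and nonnegativity.} Since $H = \{x : \langle \nu(z), x\rangle \geq \alpha\}$ has Gaussian measure $a = \Phi(-\alpha)$, a direct computation from \eqref{oudef} gives $T_\rho 1_H(x) = \Phi\bigl((\rho\langle\nu(z),x\rangle - \alpha)/\sqrt{1-\rho^{2}}\bigr)$. Writing $f \colonequals 1_H - 1_\Omega$ and using self-adjointness of $T_\rho$ on $L^{2}(\gamma_{\adimn})$, one obtains
\[
\delta \;=\; 2\int_{\R^{\adimn}} 1_H T_\rho f\, \gamma_{\adimn}\,\d x \;-\; \int_{\R^{\adimn}} f T_\rho f\, \gamma_{\adimn}\,\d x \;=\; 2\eta - \int_{\R^{\adimn}} f T_\rho f\, \gamma_{\adimn}\,\d x.
\]
By Mehler's formula $T_\rho$ has nonnegative spectrum (eigenvalues $\rho^{k}$ on the $k$-th Hermite chaos) for $0<\rho<1$, so $\int f T_\rho f\,\gamma_{\adimn}\,\d x \geq 0$ and $\delta \leq 2\eta$, giving the upper bound with $c'(a)=2$. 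The nonnegativity $\eta \geq 0$, with equality iff $\Omega = H$, follows from the Hardy-Littlewood rearrangement inequality along the $\nu(z)$-axis: since $T_\rho 1_H$ is strictly monotone in $\langle\nu(z),x\rangle$ and $H$ is its super-level set, $\int 1_\Omega T_\rho 1_H\,\gamma_{\adimn}\,\d x$ over sets of Gaussian measure $a$ is maximized precisely when $1_\Omega = 1_H$.

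\textbf{Lower bound via penalization.} The plan is to adapt the penalty-function method of \cite{barchiesi16} to the noise-stability functional. Proceed by contradiction: assume that for $\Lambda \colonequals c(a)(1-\rho)$ with $c(a)$ sufficiently small, there is some $\Omega$ with $\delta(\Omega) < \Lambda\,\eta(\Omega)$. Consider
\[
\mathcal{J}(\Omega) \;\colonequals\; -\!\int_{\R^{\adimn}} 1_\Omega T_\rho 1_\Omega\,\gamma_{\adimn}\,\d x \;-\; \Lambda\,\eta(\Omega),
\]
on measurable sets $\Omega$ with $\gamma_{\adimn}(\Omega) = a$. The assumption then reads $\mathcal{J}(\Omega) < \mathcal{J}(H)$, so $\mathcal{J}$ is not minimized at a half-space. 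Adding an auxiliary perimeter regularization to recover compactness and smoothness (as in \cite{barchiesi16}), extract a minimizer $\Omega^{*}$ with smooth reduced boundary $\partial^{*}\Omega^{*}$. The first variation in the normal direction on $\partial^{*}\Omega^{*}$ yields an Euler-Lagrange equation coupling $T_\rho 1_{\Omega^{*}}$, $T_\rho 1_{H_{\Omega^{*}}}$, and a Lagrange multiplier for the volume constraint. The second variation along the infinitesimal tangential translations $v(x) = e_i$ for $e_i \perp \nu(z_{\Omega^{*}})$, corrected to first order to preserve $\gamma_{\adimn}$, must be nonnegative at the minimizer. The dimension-reduction computation of Section \ref{secdimred} will show that for $\Lambda = c(a)(1-\rho)$ this second variation is strictly negative unless $\Omega^{*}$ is invariant under translations in every such $e_i$ direction; hence, after a rotation, $\Omega^{*} = \Omega' \times \R^{\sdimn}$ for some $\Omega'\subset\R$. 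The one-dimensional analysis of Section \ref{secpro} then classifies $\Omega'$ as a half-line, so $\Omega^{*}$ is itself a half-space, contradicting $\mathcal{J}(\Omega^{*}) < \mathcal{J}(H)$.

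\textbf{Main obstacle.} The technical heart, and the place where difficulty concentrates, is the second-variation computation. Unlike \cite{barchiesi16}, whose penalty was the transparent quadratic $\bigl\|\int_\Omega x\,\gamma_{\adimn}\,\d x\bigr\|^{2}$, here $\eta(\Omega)$ depends on $\Omega$ both through $1_\Omega$ and through the barycenter direction $\nu(z_\Omega)$, producing additional self-coupling terms with no counterpart in the Gaussian surface-area setting. Obtaining the correct $(1-\rho)$ scaling for the admissible $\Lambda$ will require invoking the Ornstein-Uhlenbeck heat equation \eqref{oup} to rewrite cross terms and to balance the positive-semidefinite noise-stability second variation against the sign-indefinite penalty contribution. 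The precise constant $c(a)$ should emerge from a coercivity estimate for the resulting quadratic form on $L^{2}(\gamma_{\sdimn})$ functions on $\partial^{*}\Omega^{*}$ modulo the Jacobi field of translation in the $\nu(z)$ direction.
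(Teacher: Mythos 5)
The statement you are proving is stated in the paper as a \emph{conjecture}, and the paper does not prove it: it proves only a modified version (Theorem \ref{mainthm1}), recovering the conjecture itself only in the special case $a=1/2$. Your upper bound and the nonnegativity of $\eta$ are fine (the identity $\delta=2\eta-\int f\,T_{\rho}f\,\gamma_{\adimn}\,\d x$ with $f=1_{H}-1_{\Omega}$, plus positivity of $T_{\rho}$, is exactly how $\delta\le2\eta$ is obtained in \cite{eldan13}). The problem is the lower bound: your sketch defers the entire difficulty to the assertion that ``the dimension-reduction computation of Section \ref{secdimred} will show that for $\Lambda=c(a)(1-\rho)$ the second variation is strictly negative unless $\Omega^{*}$ is translation-invariant.'' That is precisely the step where the method, as actually carried out, delivers strictly less than the conjecture. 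The admissible penalty strength in Theorem \ref{mainthm2} is of order $(1-\rho)^{2}\vnormt{z}^{2}/\rho$, not $(1-\rho)$: the $(1-\rho)^{2}$ (rather than $(1-\rho)$) loss comes from a Cauchy--Schwarz step in Lemma \ref{techlem} when the penalty's second variation is expanded in Hermite chaoses and compared against the coercive part $\sum_{k\ge1}\rho^{k}[\cdots]^{2}$, and the paper explicitly remarks that it does not know how to avoid an exponent smaller than $2$ there. Moreover the argument requires an a priori lower bound $\vnormt{z}\ge z_{0}>0$ on the barycenter, which has no counterpart in the conjecture and which the paper acknowledges as a genuine restriction of its result.

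A second, independent obstruction concerns the case $a\neq1/2$. Eldan's $\eta$ uses $\Phi\bigl((\rho\langle\nu(z),x\rangle-\alpha)/\sqrt{1-\rho^{2}}\bigr)$ with correlation parameter exactly $\rho$. To make the penalized functional positive semidefinite (needed both for existence via the extreme-point argument of Lemma \ref{existlem} and for the sign determination in Lemma \ref{lemma7p}), one expands this $\Phi$ in Hermite polynomials; by Lemma \ref{erfourier} the $k$-th coefficient can only be bounded by $(\beta+\lambda)^{k}e^{\alpha^{2}\max(0,\beta/2\lambda-1/2)}$, so to dominate these by $\rho^{k}$ one must take $\beta<\rho$ strictly when $\alpha\neq0$, at the cost of a factor $e^{c\alpha^{2}\beta/(\rho-\beta)}$ that blows up as $\beta\to\rho$. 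Consequently the method lower-bounds $\delta$ by a multiple of $\eta_{\beta}$ with $\beta<\rho$, not by Eldan's $\eta=\eta_{\rho}$, except when $\alpha=0$, i.e.\ $a=1/2$. So your plan, completed along the lines you indicate, would yield the paper's Theorem \ref{mainthm1} but not the conjecture; closing the gap to $c(a)(1-\rho)\eta_{\rho}$ for general $a$ requires a new idea, not just the bookkeeping you describe.
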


Moreover, in \cite[Equation (129) and (130)]{eldan13}, the bound $\delta\leq 2\eta$ is shown to hold by the Cauchy-Schwarz inequality, so the only remaining part of Conjecture \ref{eldconj} is the lower bound on $\delta$.

In this paper we prove a modified version of the lower bound of Conjecture \ref{eldconj}.

\subsection{Our Contribution}

\begin{theorem}[\embolden{Robust Borell's Inequality, Small Correlation}]\label{mainthm1}
Let $0<a<1$.  Let $\Omega\subset\R^{\adimn}$ be measurable with $\gamma_{\adimn}(\Omega)=a$.  Let $H$ be a half space with $\gamma_{\adimn}(H)=a$.  Denote $z\colonequals\int_{\Omega}x\gamma_{\adimn}(x)\,\d x\in\R^{\adimn}$.  Let $H$ be a half space with $\gamma_{\adimn}(H)=a$ such that $\int_{H}x\gamma_{\adimn}(x)\,\d x$ is a positive multiple of $z$.  Assume that $z\neq0$ and $\vnorm{z}\geq z_{0}>0$.  Define $\nu(z)\colonequals z/\vnorm{z}$.  Let $0\leq\beta\leq\rho$.  Define
$$\delta\colonequals \int_{\R^{\adimn}}1_{H}(x)T_{\rho}1_{H}(x)\gamma_{\adimn}(x)\,\d x-\int_{\R^{\adimn}}1_{\Omega}(x)T_{\rho}1_{\Omega}(x)\gamma_{\adimn}(x)\,\d x.$$
$$\eta_{\beta}\colonequals \int_{\R^{\adimn}}\Phiz(1_{H}(x)-1_{\Omega}(x))\gamma_{\adimn}(x)\d x.$$
Then $\eta_{\beta}\geq0$ (with $\eta_{\beta}=0$ only when $\Omega$ is a half space) and
$$\Big[\frac{10^{-7}az_{0}^{2}e^{-\alpha^{2}\cdot\max\left(1,\frac{\beta}{\rho-\beta}\right)}}{(6+\abs{\alpha})^{2}}\rho(1-\rho)^{2}\beta(1-\beta^{2})a(1-a)
 \Big]
\eta_{\beta}\leq\delta\leq2\eta_{\rho}.$$

In the case $a=1/2$ (so that $\alpha=0$ since $\alpha=-\Phi^{-1}(a)$), we can let $\beta\colonequals\rho$ and deduce
$$10^{-9}\rho^{2}z_{0}^{2}(1-\rho^{2})^{2}
\eta_{\rho}\leq\delta\leq2\eta_{\rho}.$$
\end{theorem}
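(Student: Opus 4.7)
The upper bound $\delta\leq 2\eta_{\rho}$ is Eldan's Cauchy--Schwarz observation \cite[(129)--(130)]{eldan13}, so the substantive content is the lower bound. The plan is to adapt the penalty-function strategy of \cite{barchiesi16} to the noise-stability setting by introducing a penalty functional under which half spaces are (constrained) maximizers, and then using first- and second-variation calculus to show that any maximizer must itself be a half space. Quantitatively comparing the resulting Lagrangian at $\Omega$ and at $H$ yields the asserted inequality.

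The first step (Section \ref{secdimred}) is a dimension reduction. I would analyze a putative maximizer $\Omega$ of noise stability under the Gaussian volume constraint together with a constraint pinning the penalty integral $\int_{\Omega}\Phi((\beta\langle\nu(z),x\rangle-\alpha)/\sqrt{1-\beta^{2}})\gamma_{\adimn}(x)\,\d x$; the corresponding Lagrange condition on $\partial^{*}\Omega$ reads
$$T_{\rho}1_{\Omega}(x)+\lambda\,\Phi\!\left(\tfrac{\beta\langle\nu(z),x\rangle-\alpha}{\sqrt{1-\beta^{2}}}\right)=\mathrm{const}.$$
Testing the second variation against rigid translations $x\mapsto x+sv$ with $v\perp\nu(z)$, which preserve the Gaussian volume to first order and move the penalty integrand only through the implicit dependence of $\nu(z)$ on $\Omega$, and using the Gaussian heat equation \eqref{oup} to rewrite $\tfrac{d}{d\rho}T_{\rho}$ followed by integration by parts, turns the noise-stability second variation into a non-positive quadratic form in the normal trace $\langle v,\nu_{\Omega}\rangle$ along $\partial^{*}\Omega$. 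Nonnegativity of the total second variation, combined with the $\beta(1-\beta^{2})$-weighted penalty contribution, then forces $\langle v,\nu_{\Omega}\rangle\equiv 0$ on $\partial^{*}\Omega$ for every $v\perp\nu(z)$, so $\Omega=\Omega'\times\R^{\sdimn}$ after rotating $\nu(z)$ onto $e_{1}$. The hypothesis $\vnormt{z}\geq z_{0}>0$ is what prevents $\nu(z)$ from degenerating in this argument.

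The second step (Section \ref{secpro}) is the one-dimensional analysis. Once $\Omega=\Omega'\times\R^{\sdimn}$, both functionals reduce to one-variable integrals against $\gamma_{1}$, and the strict monotonicity of $t\mapsto\Phi((\beta t-\alpha)/\sqrt{1-\beta^{2}})$ together with standard one-dimensional properties of the Ornstein--Uhlenbeck kernel permit a boundary-rearrangement argument showing that $\Omega'$ must be a single unbounded interval. Combining both steps, the only (constrained) maximizers are half spaces, and comparing the Lagrangian value at a general $\Omega$ with that at $H$ yields $\delta\geq c\,\eta_{\beta}$ with the displayed prefactor.

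The main obstacle is the second-variation calculation in step one. Unlike in the surface-area setting of \cite{barchiesi16}, the penalty integrand depends implicitly on $\Omega$ through the barycenter $z$, so differentiating twice along a translation produces cross terms between the barycenter variation and the boundary motion; these must be absorbed into the concave noise-stability quadratic form without too much loss in the constants. Tracking the resulting Gaussian tail integrals explicitly is what produces the factors $e^{-\alpha^{2}\max(1,\beta/(\rho-\beta))}$ and $(6+\abs{\alpha})^{-2}$, and the constraint $\beta\leq\rho$ is what permits the comparison between the penalty integrand and $T_{\rho}1_{\Omega}$ via Ornstein--Uhlenbeck semigroup identities. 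In the symmetric case $a=1/2$ one has $\alpha=0$, which simultaneously removes the exponential loss and permits $\beta=\rho$; this is why the cleaner constant $10^{-9}\rho^{2}z_{0}^{2}(1-\rho^{2})^{2}$ emerges in that case, corresponding to Eldan's original conjecture.
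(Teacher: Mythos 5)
Your overall architecture matches the paper's: a penalty functional built from $\Phi$, a translation-based second-variation argument forcing $\Omega=\Omega'\times\R^{\sdimn}$, and a final one-dimensional analysis, with the upper bound $\delta\leq2\eta_{\rho}$ imported from Eldan. Two points, however, are genuine gaps rather than omitted details.

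First, your one-dimensional step does not work as described. You claim that once $\Omega=\Omega'\times\R^{\sdimn}$, monotonicity of $t\mapsto\Phi\big(\frac{\beta t-\alpha}{\sqrt{1-\beta^{2}}}\big)$ plus a boundary-rearrangement argument shows $\Omega'$ is an unbounded interval. But rearrangement toward a half-line increases \emph{both} the noise stability \emph{and} the penalty integral $\int_{\Omega'}\Phi(\cdot)\gamma_{1}$ (the latter by the bathtub principle, precisely because $\Phi$ is monotone); since the penalty enters the maximized functional with a negative sign, the two effects compete and rearrangement gives no conclusion. This competition is exactly the content of the theorem: one must show the noise-stability gain dominates the penalty loss for $\epsilon$ below an explicit threshold. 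The paper does this by perturbing $\Omega'\times\R$ in the direction $f(x_{1},x_{2})=x_{2}$ (a volume-preserving variation unavailable in one dimension), and then lower-bounding the resulting second variation by the Gaussian isoperimetric deficit via Lemmas \ref{finallem1}, \ref{finallem7} and \ref{finallem9}; this is where the factors $a(1-a)$, $(6+\abs{\alpha})^{-2}$ and most of the numerical constant actually arise, not from ``tracking Gaussian tail integrals'' in the dimension-reduction step.

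Second, your formulation pins the penalty integral as a hard constraint with multiplier $\lambda$. With a hard constraint, $\Omega$ and $H$ generally satisfy different constraints, so no comparison between them follows, and $\lambda$ is uncontrolled. The paper instead subtracts $\epsilon\big[\int_{\Omega}\Phi(\cdot)\gamma_{\adimn}\big]^{2}$ (the square is needed both for the positive-semidefiniteness underlying existence/regularity via the Hermite expansion of $\Phi$, and so that the final rearrangement produces $\delta\geq\epsilon\,\eta_{\beta}\cdot\int\Phi(\cdot)(1_{H}+1_{\Omega})\gamma_{\adimn}\geq\epsilon\,\eta_{\beta}\,a^{2}/2$). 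Relatedly, the dependence of $\nu(z)$ on $\Omega$ produces the extra terms $\langle x,\zeta\rangle$, $\zeta'$, $\nu''(z)$ in the first and second variations (Lemmas \ref{firstvarmaxns}, \ref{lemma6}, \ref{techlem}); these are the source of the condition $\vnorm{z}\geq z_{0}$ and of the threshold on $\epsilon$, and your plan acknowledges them only in passing. Finally, the key mechanism in the dimension-reduction step is not the heat equation \eqref{oup} but the divergence-theorem identity of Lemma \ref{treig} showing $\langle v,N\rangle$ is an almost-eigenfunction of the kernel operator $S$.
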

\begin{remark}
It seems possible to lower bound $\delta$ by the quantity
$$
\int_{\R^{\adimn}}\Phi\Big(\frac{\beta \langle \nu(z),x\rangle}{\sqrt{1-\beta^{2}}}\Big)(1_{H}(x)-1_{\Omega}(x))\gamma_{\adimn}(x)\d x
$$
by changing some minor details in the proof of Theorem \ref{mainthm1}, though we will not pursue such an inequality, since it seems further from Conjecture \ref{eldconj} than Theorem \ref{mainthm1}.
\end{remark}
\begin{remark}
It is unclear whether or not the proof of Theorem \ref{mainthm1} can achieve an exponent $1$ in place of $4$ in the term $(1-\rho^{2})^{4}$.  It seems difficult to avoid an exponent smaller than $2$ here, due to a use of the Cauchy-Schwarz inequality in Lemma \ref{techlem}.
\end{remark}
\begin{remark}
The restriction that $\vnorm{z}\geq z_{0}>0$ might appear a bit unnatural.  Note however that a similar restriction appears in Eldan's Theorem \ref{robustborv2}, i.e. $\eta<e^{-1/\rho}$.  Note also that, in the case $z=0$, it is easily seen that $\Omega$ with $z=\int_{\Omega}\gamma_{\adimn}(x)\,\d x=0$ cannot maximize noise stability, since if it did, e.g. the main result of \cite{heilman20d} implies that every direction $v\in\R^{\adimn}$ of translation of $\Omega$ has nonpositive second variation for the noise stability, implying that $\langle v,N(x)\rangle=0$ $\forall$ $v\in\R^{\adimn}$, and $\forall$ $x\in\partial\Omega$, where $N(x)$ is the unit exterior pointing normal vector to $\partial\Omega$.  No such unit vector $N(x)$ can satisfy $\langle v,N(x)\rangle=0$ $\forall$ $v\in\R^{\adimn}$, so we have found a contradiction.  That is, $\Omega$ with $z=0$ cannot maximize noise stability.
\end{remark}

\subsection{Discussion of Proof Methods}

The proof of Theorem \ref{robustbor} in \cite{mossel12} proceeds by a careful analysis of the rate of change of the auxiliary function
$$\E J(T_{\rho}1_{\Omega}(X),T_{\rho}1_{\Omega}(Y))$$
with respect to $\rho$, where $X=(X_{1},\ldots,X_{\adimn})$ and $Y=(Y_{1},\ldots,Y_{\adimn})$ are each standard Gaussian random variables that are correlated so that $\E X_{i}Y_{j}=\rho 1_{\{i=j\}}$ for all $1\leq i,j\leq \adimn$.  Here $J(a,b)\colonequals\int_{-\infty}^{\Phi^{-1}(a)}T_{\rho}1_{[-\infty,\Phi^{-1}(a))}(x)\gamma_{1}(x)\,\d x$ for all $a,b\in[0,1]$ where $\Phi(t)\colonequals\gamma_{1}(-\infty,t]$ for all $t\in\R$.

The proof of Theorem \ref{robustborv2} in \cite{eldan13} uses stochastic calculus to estimate the rate of change of the quantity
$$ \E \Big\|\int_{\frac{\Omega-\rho X}{\sqrt{1-\rho}}} x\gamma_{\adimn}(x)\,\d x\Big\|^{2}$$
with respect to $\rho$.

In the present paper, instead of analyzing the rate of change of an auxiliary quantity, we simply consider how the noise stability itself changes when the set $\Omega$ is translated.  This strategy first appeared in \cite{colding11} in the context of mean curvature flows, then appearing for inequalities for the Gaussian surface area in \cite{mcgonagle15,barchiesi16,barchiesi18,milman18a,heilman18,milman18b,heilman18b}.  The strategy of analyzing the translations of noise stability first appeared in \cite{heilman20d}.  In the present work, we show that the strategy of \cite{heilman20d} still works when we add a ``penalty term'' to the noise stability.  That is, we adapt the argument of \cite{barchiesi16,heilman18b} from the setting of Gaussian surface area to the more general setting of Gaussian noise stability.

As mentioned earlier, the main strategy of \cite{barchiesi16} involves minimizing the Gaussian surface area, plus a ``penalty'' term.  So, the main problem of interested in \cite{barchiesi16} is to minimize
$$\int_{\partial\Omega}\gamma_{\adimn}(x)\,\d x+\epsilon\vnorm{\int_{\Omega}x\gamma_{\adimn}(x)\,\d x}^{2},$$
over all measurable $\Omega\subset\R^{\adimn}$ with $\gamma_{\adimn}(\Omega)$ fixed, for sufficiently small $\epsilon>0$.  It is then shown that the minimal such set is a half space $H$, so that, for any $\Omega\subset\R^{\adimn}$, if $H\subset\R^{\adimn}$ is a half space with $\gamma_{\adimn}(\Omega)=\gamma_{\adimn}(H)$, then for appropriate $\epsilon>0$, we have
$$\int_{H}\gamma_{\adimn}(x)\,\d x+\epsilon\vnorm{\int_{H}x\gamma_{\adimn}(x)\,\d x}^{2}
\leq \int_{\partial\Omega}\gamma_{\adimn}(x)\,\d x+\epsilon\vnorm{\int_{\Omega}x\gamma_{\adimn}(x)\,\d x}^{2}.$$
Rearranging this inequality then gives a robust Gaussian isoperimetric inequality:
\begin{equation}\label{robiso}
\int_{\partial\Omega}\gamma_{\adimn}(x)\,\d x-\int_{H}\gamma_{\adimn}(x)\,\d x
\geq\epsilon\Big(\vnorm{\int_{H}x\gamma_{\adimn}(x)\,\d x}^{2}-\vnorm{\int_{\Omega}x\gamma_{\adimn}(x)\,\d x}^{2}\Big).
\end{equation}
Since half spaces maximize the quantity $\vnorm{\int_{\Omega}x\gamma_{\adimn}(x)\,\d x}^{2}$, the right side of \eqref{robiso} is nonnegative, and it quantifies how close the arbitrary set $\Omega$ is from being a half space.

It seems natural to use this same method directly for our purposes, i.e. by trying to show that half spaces maximize the quantity
\begin{equation}\label{badone}
\int_{\Omega}T_{\rho}1_{\Omega}(x)\gamma_{\adimn}(x)\,\d x-\epsilon\vnorm{\int_{\Omega}x\gamma_{\adimn}(x)\,\d x}^{2},
\end{equation}
over all measurable $\Omega\subset\R^{\adimn}$ with $\gamma_{\adimn}(\Omega)$ fixed, for sufficiently small $\epsilon>0$.  Unfortunately, this approach fails for the same reason as mentioned in \cite[Section 1.1]{eldan13}.  A half space cannot maximize \eqref{badone}, since sets of the form $\Omega=(-\infty,0]\cup[d,\infty)$ with $d$ large will always have a larger value of \eqref{badone} (e.g. for sets of Gaussian measure close to $1/2$).  The issue here is that the penalty function of \eqref{badone} grows linearly near infinity, with respect to the Gaussian density.  So, in some sense the \textit{only} way for the approach of \cite{barchiesi16} to work for noise stability is to use a penalty function such as $\Phi$ that is bounded near infinity.

The following is therefore our main problem of interest.

\begin{prob}[\embolden{Noise Stability, with Penalty}]\label{prob2}
Let $0<\epsilon<\rho<1$.  Let $0<a<1$.  Find a measurable set $\Omega\subset\R^{\adimn}$ with $\gamma_{\adimn}(\Omega)=a$ that maximizes
$$\int_{\R^{\adimn}}1_{\Omega}(x)T_{\rho}1_{\Omega}(x)\gamma_{\adimn}(x)\,\d x
-\epsilon\pen,$$
where $z\colonequals\int_{\Omega}x\gamma_{\adimn}(x)\,\d x\in\R^{\adimn}$, $\nu(z)\colonequals\frac{z}{\vnorm{z}}$ if $z\neq0$, and $\nu(0)\colonequals(1,0,\ldots,0)$, and $\alpha\colonequals-\Phi^{-1}(a)$, $\Phi(t)\colonequals\gamma_{1}(-\infty,t]$, $\forall$ $t\in\R$.
\end{prob}

The case $\epsilon=0$ corresponds to the problem of maximizing noise stability.  Intuitively, when $\epsilon>0$ is sufficiently small, the ``penalty'' term on the right should not affect the noise stability term very much.

As in \cite{barchiesi16}, for technical reasons, it is more convenient to replace the volume constraint by a volume-penalization term.  That is, we change Problem \ref{prob2} to the following

\begin{prob}[\embolden{Maximizing Noise Stability, with Penalties}]\label{prob2z}
Let $0<\epsilon<\rho<1$.  Let $0<a<1$.  Find a measurable set $\Omega\subset\R^{\adimn}$ that maximizes
$$\int_{\Omega}T_{\rho}1_{\Omega}(x)\gamma_{\adimn}(x)\,\d x
-\epsilon\pen
-2(1+\abs{\alpha})\abs{\gamma_{\adimn}(\Omega)-a},$$
where $z\colonequals\int_{\Omega}x\gamma_{\adimn}(x)\,\d x\in\R^{\adimn}$, $\nu(z)\colonequals\frac{z}{\vnorm{z}}$ if $z\neq0$, and $\nu(0)\colonequals(1,0,\ldots,0)$, and $\alpha\colonequals-\Phi^{-1}(a)$, $\Phi(t)\colonequals\gamma_{1}(-\infty,t]$, $\forall$ $t\in\R$.
\end{prob}

The Main Theorem \ref{mainthm1} is a corollary of the following ``Dimension Reduction'' Theorem.  Theorem \ref{mainthm2} says that the optimizer of Problem \ref{prob2} must be one-dimensional.

\begin{theorem}[\embolden{Dimension Reduction}]\label{mainthm2}
Let $0<\rho<1$, let $z_{0}>0$ and let
$$0<\epsilon<\frac{(1-\rho)^{2}z_{0}^{2}}{\rho10e^{\alpha^{2}\cdot\max\left(0,\frac{\beta}{\rho-\beta}-1\right)}}.$$
Let $\Omega\subset\R^{\adimn}$ maximize Problem \ref{prob2}.  Assume that $\vnorm{z}\geq z_{0}$.  Then, after rotating $\Omega$ and applying Lebesgue measure zero changes to $\Omega$, there exist measurable $\Omega'\subset\R$ such that,
$$\Omega=\Omega'\times\R^{\sdimn}.$$  
\end{theorem}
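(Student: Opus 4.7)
The plan is to exploit the maximality of $\Omega$ for Problem \ref{prob2z} by computing the second variation of the penalized functional
$$\mathcal{F}_{\epsilon}(\Omega) \colonequals \int_{\Omega}T_{\rho}1_{\Omega}(x)\gamma_{\adimn}(x)\,\d x - \epsilon\pen - 2(1+\abs{\alpha})\abs{\gamma_{\adimn}(\Omega)-a}$$
along infinitesimal Gaussian volume-preserving translations of $\Omega$, and deducing that every outward unit normal to $\partial\Omega$ must be parallel to $\nu(z)$.

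First I would establish enough regularity of a maximizer $\Omega$ to apply variational formulas. The coefficient $2(1+\abs{\alpha})$ in the volume-penalization is precisely chosen to strictly exceed the natural Lagrange multiplier arising from a volume constraint, so any maximizer must satisfy $\gamma_{\adimn}(\Omega)=a$ exactly, and its boundary is regular in the sense needed for variational formulas by importing the analysis from \cite{barchiesi16}. Then, for each $v \in \R^{\adimn}$, I would consider a one-parameter family $\{\Omega_{s}\}_{s}$ obtained by translating $\Omega$ by $sv$ and correcting at second order to keep the Gaussian volume fixed at $a$ (bypassing the non-smoothness at $s=0$ of the absolute-value term). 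Since $\Omega$ is a maximizer, $\frac{d^{2}}{ds^{2}}\big|_{s=0}\mathcal{F}_{\epsilon}(\Omega_{s}) \leq 0$ for every such $v$.

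Next I would evaluate each piece of this second variation. The noise-stability second variation is obtained by twice differentiating the explicit Gaussian kernel in \eqref{nseq} after the substitution $x \mapsto x-sv$, $y \mapsto y-sv$, producing a Gaussian-weighted quadratic form in $v$ which, via a first-variation identity, can be rewritten as a boundary integral on $\partial\Omega$. The penalty second variation must be computed with care, tracking the implicit dependence of $\nu(z(\Omega_{s}))$ on $s$ via the chain rule, which contributes both a ``direct'' term (treating $\nu(z)$ as fixed) and correction terms from the motion of $\nu(z)$; for $v \perp \nu(z)$ the direct contribution to $\Phiz$ vanishes, so only the implicit chain-rule term through $z$ survives, yielding the needed smallness. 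Summing the inequality $\frac{d^{2}}{ds^{2}}\big|_{s=0}\mathcal{F}_{\epsilon}(\Omega_{s}) \leq 0$ over $v$ in an orthonormal basis of $\nu(z)^{\perp}$, the noise-stability contribution should collapse to a nonnegative boundary integral of the form
$$C_{\rho,\alpha}\int_{\partial\Omega}\bigl(1-\langle N(x),\nu(z)\rangle^{2}\bigr)\,w_{\rho}(x)\,\d\mathcal{H}^{\sdimn}(x)$$
for a strictly positive weight $w_{\rho}$, while the summed penalty contribution (multiplied by $\epsilon$) is bounded above by a manageable quantity depending only on $\alpha$, $\beta$, $\rho$, and $z_{0}$.

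The main obstacle will be the quantitative matching in the last step: the implicit second variation of the penalty must be bounded in terms of boundary moments of $N(x)$ via a Cauchy-Schwarz-type estimate (the role of Lemma \ref{techlem}), and the explicit hypothesis $\epsilon < (1-\rho)^{2}z_{0}^{2}/\bigl(10\rho\,e^{\alpha^{2}\max(0,\beta/(\rho-\beta)-1)}\bigr)$ on $\epsilon$ is calibrated exactly so that the noise-stability term strictly dominates. Once this is in place, the inequality forces $\langle N(x),\nu(z)\rangle^{2}=1$ for $\mathcal{H}^{\sdimn}$-a.e.\ $x \in \partial\Omega$, meaning the outward normal to $\Omega$ is everywhere parallel to $\nu(z)$. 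A rotation aligning $\nu(z)$ with $e_{1}$ then makes $\partial\Omega$ invariant under translations in the last $\sdimn$ coordinates, which yields, after modifying by a Lebesgue-null set, the product decomposition $\Omega = \Omega' \times \R^{\sdimn}$ for some measurable $\Omega' \subset \R$.
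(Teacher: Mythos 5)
Your proposal is correct and follows essentially the same route as the paper: volume-preserving infinitesimal translations $v\perp\nu(z)$, the first-variation (level-set) condition combined with the divergence theorem to turn the noise-stability second variation into a nonnegatively weighted boundary integral of $\langle v,N\rangle^{2}$ (the almost-eigenfunction property of Lemma \ref{treig}), the Cauchy--Schwarz control of the penalty term (Lemma \ref{techlem}) calibrated against the hypothesis on $\epsilon$, and the conclusion that $N$ is parallel to $\nu(z)$ a.e. The only cosmetic difference is that you sum over an orthonormal basis of $\nu(z)^{\perp}$ to get $1-\langle N,\nu(z)\rangle^{2}$, whereas the paper argues vector-by-vector over the subspace $V=\{v:\int_{\Sigma}\langle v,N\rangle\gamma_{\adimn}\,\d x=0\}=z^{\perp}$; these are equivalent.
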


\subsection{Technical Issues in Theorem \ref{mainthm1}}

There are a few technical issues in the proof of Theorem \ref{mainthm1}.

First, in order to get existence and regularity of $\Omega$ maximizing Problem \ref{prob2} in Lemmas \ref{existlem} and \ref{reglem}, we need to show that the quadratic quantity in Problem \ref{prob2} is a positive semidefinite function of $\Omega\subset\R^{\adimn}$.  To show this, we write the $\Phi$ function as an exponentially decaying sum of Hermite polynomials in Section \ref{secexreg}.  This argument is fairly elementary though technical, and it requires $\epsilon>0$ to be sufficiently small.  (If $\epsilon>0$ is large in Problem \ref{prob2}, then the quadratic quantity is not positive semidefinite.)

Second, we need to show that the second variation of the quadratic quantity in Problem \ref{prob2} is a positive semidefinite function of $f\colon\partial\Omega\to\R$.  To show this, we write the $\Phi$ function as an exponentially decaying sum of Hermite polynomials in Section \ref{secexreg}.  This argument is more involved than that of Section \ref{secexreg}, but the idea is the same.  We write we write $\Phi$ and its first and second derivatives as exponentially decaying sum of Hermite polynomials in Section \ref{secexreg}.  Once again, $\epsilon>0$ must be sufficiently small in order to get a positive semidefinite function.  Most of the effort in proving this positive semidefinite property is contained in Lemma \ref{techlem}.  With results such as Lemma \ref{techlem}, we can then prove Theorem \ref{mainthm2} by adapting a now-standard argument appearing e.g. in \cite{colding12a}, \cite{mcgonagle15} or \cite{heilman20d}.

Finally, using Theorem \ref{mainthm2}, it suffices to solve the one-dimensional case of Problem \ref{prob2}.  In this case, we can explicitly write out various terms that appear in the second variation of Problem \ref{prob2} (when the set is perturbed in the constant normal direction), and we control these terms with the isoperimetric deficit $\int_{\partial\Omega}\gamma_{1}(x)\,\d x-\int_{\partial H}\gamma_{1}(x)\,\d x$, where $H$ is a half space satisfying $\gamma_{\adimn}(H)=\gamma_{\adimn}(\Omega)$.  Such inequalities are proven in Section \ref{lastlems} using technical but somewhat elementary arguments.

All of these technical aspects have lengthened the paper.  Besides its length, we believe the argument of Theorem \ref{mainthm1} is conceptually simple, since it requires simply considering infinitesimal translations of the set $\Omega\subset\R^{\adimn}$ to reduce to the one-dimensional case of Problem \ref{prob2} (Theorem \ref{mainthm2}), and we then only need to perturb the optimal set $\Omega\subset\R$ in the constant normal direction to deduce Theorem \ref{mainthm1}.  Put another way, the details require finding how small $\epsilon>0$ needs to be in order to show that half spaces maximize Problem \ref{prob2} (or equivalently, how large the constant in the lower bound of $\delta$ needs to be in Theorem \ref{mainthm1}).

\subsection{Outline of the Proof of the Main Theorem}

In this section we outline the proof of Theorem \ref{mainthm1} in the case that $\gamma_{\adimn}(\Omega)=a=1/2$.  The proof loosely follows that of a corresponding statement \cite{mcgonagle15,barchiesi16} for the Gaussian surface area (which was then adapted to multiple sets in \cite{milman18a,milman18b,heilman18}), with a few key differences.  For didactic purposes, we will postpone a discussion of technical difficulties (such as existence and regularity of a maximizer) to Section \ref{secpre}.

Fix $a=1/2$, so that $\alpha=0$.  Let $\epsilon>0$.  Suppose there exists measurable $\Omega\subset\R^{\adimn}$ maximizing
$$\int_{\R^{\adimn}}1_{\Omega}(x)T_{\rho}1_{\Omega}(x)\gamma_{\adimn}(x)dx-\epsilon\Simplepen,$$
subject to the constraint $\gamma_{\adimn}(\Omega)=a$.  Define $z\colonequals\int_{\Omega}x\gamma_{\adimn}(x)\,\d x\in\R^{\adimn}$.  A first variation argument (Lemma \ref{latelemma3} below) implies that $\Sigma\colonequals\partial\Omega$ is a level set of the Ornstein-Uhlenbeck operator applied to $1_{\Omega}$, plus another term.  That is, there exists $c\in\R$ such that
\begin{equation}\label{zero1}
\Sigma=\Big\{x\in\R^{\adimn}\colon T_{\rho}1_{\Omega}(x)-\epsilon\SimplePhiz=c\Big\}.
\end{equation}
Since $\Sigma$ is a level set, a vector perpendicular to the level set is also perpendicular to $\Sigma$.  Denoting $N(x)\in\R^{\adimn}$ as the unit length exterior pointing normal vector to $x\in\Sigma$, \eqref{zero1} implies that
\begin{equation}\label{zero1.7}
\overline{\nabla}T_{\rho}1_{\Omega}(x)-\epsilon\SimplePhizderiv = -N(x)\Big\|\overline{\nabla}T_{\rho}1_{\Omega}(x)-\epsilon \SimplePhizderiv\Big\|.
\end{equation}
(It is not obvious that there must be a negative sign here, but it follows from examining the second variation.)  We now observe how the noise stability of $\Omega$ changes as the set is translated infinitesimally.  Fix $v\in\R^{\adimn}$, and consider the variation of $\Omega$ induced by the constant vector field $v$.  Denote $f(x)\colonequals \langle v,N(x)\rangle$ for all $x\in\Sigma$.  Then define
\begin{equation}\label{introsdef}
S(f)(x)\colonequals (1-\rho^{2})^{-(\adimn)/2}(2\pi)^{-(\adimn)/2}\int_{\Sigma}f(y)e^{-\frac{\vnorm{y-\rho x}^{2}}{2(1-\rho^{2})}}\,\d y,\qquad\forall\,x\in\Sigma.
\end{equation}
A second variation argument (Lemma \ref{lemma7p} below) implies that, if $f$ is Gaussian volume-preserving, i.e. if $\int_{\Sigma}f(x)\gamma_{\adimn}(x)\,\d x=0$, and if $z^{(s)}\colonequals\int_{\Omega+sv}x\gamma_{\adimn}(x)\,\d x\in\R^{\adimn}$ $\forall$ $s\in(-1,1)$, then
\begin{equation}\label{zero4.5}
\begin{aligned}
&\frac{1}{2}\frac{\d^{2}}{\d s^{2}}\Big|_{s=0}\Big[\int_{\R^{\adimn}}1_{\Omega+sv}(x)T_{\rho}1_{\Omega+sv}(x)\gamma_{\adimn}(x)\,\d x
-\epsilon\Simplepens\Big]\\
&\,\,=\int_{\Sigma}\Big(S(f)(x)-\epsilon\SimplePhiz\int_{\Sigma}f(y)\SimplePhizy\gamma_{\adimn}(y)\,\d y\\
&\qquad\qquad\qquad\qquad-\Big\|\overline{\nabla}T_{\rho}1_{\Omega}(x)-\epsilon\SimplePhizderiv\Big\| f(x)\Big)f(x)\gamma_{\adimn}(x)\,\d x.
\end{aligned}
\end{equation}
(Technically, we use a slightly different vector field to perturb $\Omega$ that is constant on $\Sigma$ but not constant in a neighborhood of $\Sigma$, but we will not dwell on this point presently.) Somewhat unexpectedly, the function $f(x)=\langle v,N(x)\rangle$ is almost an eigenfunction of the operator $S$ (by Lemma \ref{treig}), in the sense that
\begin{equation}\label{zero5}
\begin{aligned}
S(f)(x)&=
f(x)\frac{1}{\rho}\Big\|\overline{\nabla} T_{\rho}1_{\Omega}(x)-\epsilon a_{0}\Big(\SimplePhizderiv+\zeta\Big)\Big\|\\
&\qquad\qquad\qquad\qquad\qquad\qquad\qquad-\frac{\epsilon a_{0}}{\rho}\Big\langle v, \SimplePhizderiv+\zeta\Big\rangle,\qquad\forall\,x\in\Sigma,
\end{aligned}
\end{equation}
where
$$
 a_{0}\colonequals\Simplepennosq,
\qquad \zeta\colonequals\int_{\Omega}\SimplePhizderiv\frac{y-\langle \frac{z}{\vnorm{z}},y\rangle \frac{z}{\vnorm{z}}}{\vnorm{z}}\gamma_{\adimn}(y)\,\d y.
$$
Equation \eqref{zero5} is the \textit{key fact} used in the proof of the main theorem, Theorem \ref{mainthm1}.  Equation \eqref{zero5} follows from \eqref{zero1.7} and the divergence theorem (see Lemma \ref{treig} for a proof of \eqref{zero5}.)  Plugging \eqref{zero5} into \eqref{zero4.5}, and using also Mehler's formula \eqref{Height} (see Lemma \ref{svartran}),
\begin{equation}\label{zero3}
\begin{aligned}
&\int_{\Sigma}\langle v,N(x)\rangle\gamma_{\adimn}(x)\,\d x=0\quad\Longrightarrow\\
&\qquad\frac{1}{2}\frac{\d^{2}}{\d s^{2}}\Big|_{s=0}\Big[\int_{\R^{\adimn}}1_{\Omega+sv}(x)T_{\rho}1_{\Omega+sv}(x)\gamma_{\adimn}(x)\,\d x
-\epsilon\Simplepens\Big]\\
&\qquad\qquad\geq \Big(\frac{1-\epsilon\rho\frac{10e^{\alpha^{2}\cdot\max\left(0,\frac{\beta}{\rho-\beta}-1\right)}}{(1-\rho)\vnorm{z}^{2}}}{\rho}-1\Big)
\int_{\Sigma}\langle v,N(x)\rangle^{2}\vnorm{\overline{\nabla}T_{\rho}1_{\Omega}(x)}\gamma_{\adimn}(x)\,\d x.
\end{aligned}
\end{equation}
If $\vnorm{z}\geq z_{0}>0$ and if $0<\epsilon<\frac{(1-\rho)^{2}z_{0}^{2}}{\rho10e^{\alpha^{2}\cdot\max\left(0,\frac{\beta}{\rho-\beta}-1\right)}}$, then the last term in \eqref{zero3} is nonnegative. The set
$$V\colonequals\Big\{v\in\R^{\adimn}\colon \int_{\Sigma}\langle v,N(x)\rangle\gamma_{\adimn}(x)\,\d x=0\Big\}$$
has dimension at least $\sdimn$, by the rank-nullity theorem.  Since $\Omega$ maximizes noise stability, the quantity on the right of \eqref{zero3} must be non-positive for all $v\in V$, implying that $f=0$ on $\Sigma$ (except possibly on a set of measure zero on $\Sigma$).  (One can show that $\vnorm{\overline{\nabla}T_{\rho}1_{\Omega}(x)}>0$ for all $x\in\Sigma$.  See Lemma \ref{lemma7p}.)  That is, for all $v\in V$, $\langle v,N(x)\rangle=0$ for all $x\in\Sigma$ (except possibly on a set of measure zero on $\Sigma$).  Since $V$ has dimension at least $\sdimn$, there exists a measurable discrete set $\Omega'\subset\R$ such that $\Omega=\Omega'\times\R^{\sdimn}$ after rotating $\Omega$, concluding the proof of Theorem \ref{mainthm2}.


Theorem \ref{mainthm1} then follows by showing that $\Omega'$ is actually an unbounded interval.  This requires an additional argument that perturbs the optimal set $\Omega'$ in the constant normal direction.  Since this perturbation is not necessarily Gaussian volume preserving, we suppose that $\Omega=\Omega'\times\R$ (which we can by Theorem \ref{mainthm2}), and we perturb $\Omega$ in the normal direction times $x_{2}$.  This argument is sufficient to conclude the proof.

\subsection{Local Stability of Half Spaces}

\begin{definition}
A set $\Omega\subset\R^{\adimn}$ is called \textbf{locally stable} for noise stability for any family of sets $\{\Omega_{s}\}_{s\in(-1,1)}$ with $\Omega_{0}=\Omega$ such that
$$\frac{\d}{\d s}\Big|_{s=0}\gamma_{\adimn}(\Omega_{s})=0,$$
we have
$$\frac{\d^{2}}{\d s^{2}}\Big|_{s=0}\int_{\R^{\adimn}}1_{\Omega^{(s)}}(x)T_{\rho}1_{\Omega^{(s)}}(x)\gamma_{\adimn}(x)\,\d x\leq0.$$
\end{definition}

The following result generalizes the main result of \cite{mcgonagle15} from the setting of Gaussian surface area to the setting of noise stability.

\begin{cor}\label{rk0}
Half spaces are the only locally stable sets for noise stability.
\end{cor}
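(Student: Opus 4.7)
The plan is to specialize the translation argument of Theorem \ref{mainthm2} to the case $\epsilon=0$, and then to finish with the same one-dimensional perturbation used at the end of Section \ref{secintro} to complete Theorem \ref{mainthm1}. First, suppose $\Omega$ is locally stable. As a critical point of noise stability subject to a Gaussian volume constraint, $\Omega$ satisfies the Euler--Lagrange condition that $T_{\rho}1_{\Omega}$ is constant along $\Sigma\colonequals\partial\Omega$; equivalently, $\overline{\nabla}T_{\rho}1_{\Omega}(x)$ is a negative multiple of the unit exterior normal $N(x)$ at every $x\in\Sigma$, which is \eqref{zero1.7} with $\epsilon=0$. For each $v\in\R^{\adimn}$ the translation $\Omega_{s}\colonequals\Omega+sv$ has first variation of Gaussian volume equal to $\int_{\Sigma}\langle v,N\rangle\gamma_{\adimn}$, so the subspace
$$V\colonequals\Bigl\{v\in\R^{\adimn}\colon \int_{\Sigma}\langle v,N(x)\rangle\gamma_{\adimn}(x)\,\d x=0\Bigr\}$$
has dimension at least $\sdimn$.

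For $v\in V$, Lemma \ref{lemma7p} applied with $\epsilon=0$ gives
$$\tfrac{1}{2}\tfrac{\d^{2}}{\d s^{2}}\Big|_{s=0}\int_{\R^{\adimn}}1_{\Omega_{s}}T_{\rho}1_{\Omega_{s}}\gamma_{\adimn}\,\d x=\int_{\Sigma}\bigl(S(f)(x)-\|\overline{\nabla}T_{\rho}1_{\Omega}(x)\|\,f(x)\bigr)f(x)\gamma_{\adimn}(x)\,\d x,$$
where $f(x)\colonequals\langle v,N(x)\rangle$ and $S$ is defined in \eqref{introsdef}. Using the eigenfunction-type identity of Lemma \ref{treig} specialized to $\epsilon=0$, namely $S(f)(x)=\rho^{-1}f(x)\|\overline{\nabla}T_{\rho}1_{\Omega}(x)\|$, the right-hand side collapses to
$$\Bigl(\tfrac{1}{\rho}-1\Bigr)\int_{\Sigma}f(x)^{2}\|\overline{\nabla}T_{\rho}1_{\Omega}(x)\|\gamma_{\adimn}(x)\,\d x,$$
which is nonnegative since $0<\rho<1$. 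Local stability forces this quantity to be $\leq 0$, hence zero; because $\|\overline{\nabla}T_{\rho}1_{\Omega}\|>0$ on $\Sigma$ (as in Lemma \ref{lemma7p}), we conclude $\langle v,N(x)\rangle=0$ for $\mathcal{H}^{\sdimn}$-a.e.\ $x\in\Sigma$ and every $v\in V$. Since $\dim V\geq\sdimn$, the normal field $N$ takes values in a single one-dimensional subspace of $\R^{\adimn}$. Up to a rotation and a Lebesgue-null modification, this yields the product decomposition $\Omega=\Omega'\times\R^{\sdimn}$ for some measurable $\Omega'\subset\R$, exactly as in the conclusion of Theorem \ref{mainthm2}.

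It remains to show $\Omega'$ is an unbounded interval. Following the closing paragraph of the proof outline in Section \ref{secintro}, we test local stability of $\Omega=\Omega'\times\R$ against the normal perturbation along $\Sigma=\partial\Omega'\times\R$ with speed $f(x_{1},x_{2})\colonequals x_{2}$; since $x_{2}$ is $\gamma_{1}$-odd, this perturbation is Gaussian-volume-preserving to first order (if $\sdimn\geq 2$, one first integrates out the extra factors, which commute with $T_{\rho}$). The same second-variation calculation used in Section \ref{secpro} to finish Theorem \ref{mainthm1} then shows that this second variation is strictly positive unless $\partial\Omega'$ is a single point, contradicting local stability. Hence $\Omega'$ is a half-line and $\Omega$ is a half space. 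The main obstacle is this last step: one must verify that the second-variation computation of Section \ref{secpro}, originally organized around the penalty term $\epsilon\bigl[\int\Phi\,\gamma_{\adimn}\bigr]^{2}$, continues to yield strict positivity when $\epsilon=0$. The Mehler expansion of $T_{\rho}$ produces cross-terms between distinct components of $\partial\Omega'$ that supply this positivity, with the remaining estimates following the pattern of Section \ref{lastlems}.
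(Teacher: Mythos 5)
Your proposal is correct and follows essentially the same route as the paper's second proof of Corollary \ref{rk0}: reduce to $\Omega=\Omega'\times\R$ by running the translation/almost-eigenfunction argument (Lemmas \ref{lemma7p} and \ref{treig}) at $\epsilon=0$, then test stability with the perturbation $f(x_{1},x_{2})=x_{2}$ and invoke the one-dimensional estimates. The "main obstacle" you flag at the end is not actually an obstacle: the required positivity is exactly Lemma \ref{finallem1}, which is stated and proved with no penalty term at all, and whose cross-term mechanism (the product of the $\Sigma_{L}$ and $\Sigma_{R}$ contributions in \eqref{stackeq}) is precisely the one you describe.
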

\begin{proof}
If $\Omega\subset\R^{\adimn}$ is not a half space, then we have by definition \eqref{introsdef} of $S$ the following strict inequality
\begin{equation}\label{dbst}
\int_{\redA}S(\abs{\langle v,N\rangle})(x)\cdot \abs{\langle v,N(x)\rangle} \gamma_{\adimn}(x)\,\d x
>\int_{\redA}S(\langle v,N\rangle)(x)\cdot\langle v,N(x)\rangle \gamma_{\adimn}(x)\,\d x.
\end{equation}
In the case that $\Omega=\Omega'\times\R$ (which we can assume by the Dimension Reduction Theorem \ref{mainthm2}), consider the function $g(x)\colonequals x_{\adimn}\abs{\langle v,N(x)\rangle}$ defined on $\redA$.  Then
\begin{flalign*}
&\int_{\redA}\Big(S(g)(x)-\vnormf{\overline{\nabla} T_{\rho}1_{\Omega}(x)} g(x)\Big) g(x)\gamma_{\adimn}(x)\,\d x\\
&=\int_{\redA}\Big(\rho S(\abs{\langle v,N\rangle})(x)\cdot \abs{\langle v,N(x)\rangle} -\vnormf{\overline{\nabla} T_{\rho}1_{\Omega}(x)} \abs{\langle v,N(x)\rangle}\Big)
\abs{\langle v,N(x)\rangle}\gamma_{\adimn}(x)\,\d x\\
&\stackrel{\eqref{dbst}}{>}\int_{\redA}\Big(\rho S(\langle v,N\rangle)(x) -\vnormf{\overline{\nabla} T_{\rho}1_{\Omega}(x)} \langle v,N(x)\rangle\Big)
\langle v,N(x)\rangle\gamma_{\adimn}(x)\,\d x
\stackrel{\eqref{zero5}\wedge(\epsilon=0)}{=}0.
\end{flalign*}
So, $\int_{\redA} g(x)\gamma_{\adimn}(x)\,\d x=0$ while the corresponding variation of $g$ satisfies (by Lemma \ref{lemma7p} below) $$\frac{\d^{2}}{\d s^{2}}\Big|_{s=0}\int_{\R^{\adimn}}1_{\Omega^{(s)}}(x)T_{\rho}1_{\Omega^{(s)}}(x)\gamma_{\adimn}(x)\,\d x>0.$$
That is, the half space is the only stable maximum of noise stability.
\end{proof}
The above proof does not seem to generalize to the case $\epsilon>0$ needed to prove Theorem \ref{mainthm1}.  There is a slightly longer proof of Corollary \ref{rk0} that does in fact generalize to prove Theorem \ref{mainthm1}, so we present this proof below for didactic purposes.
\begin{proof}[Second proof of Corollary \ref{rk0}]
We may assume that $\Omega=\Omega'\times\R$ for some $\Omega'\subset\R^{\sdimn}$ by the Dimension Reduction Theorem \ref{mainthm2}).  Consider the function $g(x)\colonequals x_{\adimn}$ defined on $\redA$.  Then $\int_{\redA} g(x)\gamma_{\adimn}(x)\,\d x=0$ and by Lemma \ref{lemma7p} below
\begin{flalign*}
&\frac{\d^{2}}{\d s^{2}}\Big|_{s=0}\int_{\R^{\adimn}}1_{\Omega^{(s)}}(x)T_{\rho}1_{\Omega^{(s)}}(x)\gamma_{\adimn}(x)\,\d x\\
&\qquad=\int_{\redA}\Big(S(g)(x)-\vnormf{\overline{\nabla} T_{\rho}1_{\Omega}(x)} g(x)\Big) g(x)\gamma_{\adimn}(x)\,\d x\\
&\qquad=\int_{\redA}\Big(\rho S(1)(x) -\vnormf{\overline{\nabla} T_{\rho}1_{\Omega}(x)}\Big)\gamma_{\adimn}(x)\,\d x.
\end{flalign*}
Let $H\subset\R^{\adimn}$ be a half space with $\gamma_{\adimn}(\Omega)=\gamma_{\adimn}(H)$.  By Lemma \ref{finallem1} below, we have
\begin{flalign*}
&\frac{\d^{2}}{\d s^{2}}\Big|_{s=0}\int_{\R^{\adimn}}1_{\Omega^{(s)}}(x)T_{\rho}1_{\Omega^{(s)}}(x)\gamma_{\adimn}(x)\,\d x\\
&\qquad\qquad\qquad\qquad\qquad\geq\frac{1}{80}\rho(1-\rho) \gamma_{\adimn}(\Omega)(1-\gamma_{\adimn}(\Omega))\Big(\int_{\Sigma}\gamma_{1}(x)\,\d x-\int_{H}\gamma_{1}(x)\,\d x\Big).
\end{flalign*}
So, this quantity is positive, unless $\Sigma$ is a half space, by the usual Gaussian isoperimetric inequality.  That is, the half space is the only stable maximum of noise stability.
\end{proof}
%

\subsection{Remarks on More than Two Sets}

We believe that Theorem \ref{mainthm2} also holds for partitions of Euclidean space optimizing noise stability.  However, this extra generality seemed to make the paper more difficult to write and to comprehend, so we avoided this generality.

\section{Summary of Notation}
Here is a summary of notation used throughout the paper.

\begin{itemize}
\item $T_{\rho}$ denotes the Ornstein-Uhlenbeck operator with correlation $\rho\in(-1,1)$.  (See \eqref{oudef}.)
\item $\Omega\subset\R^{\adimn}$ denotes a measurable set.
\item $\Sigma\colonequals\redb\Omega$ denotes the reduced boundary of $\Omega\subset\R^{\adimn}$.  (See Definition \ref{rbdef}.)
\item $\overline{\nabla}$ denotes the gradient on $\R^{\adimn}$.
\item $\overline{\Delta}\colonequals\sum_{i=1}^{\adimn}\partial^{2}/\partial x_{i}^{2}$ denotes the Laplacian on $\R^{\adimn}$.
\item $N(x)$ is the exterior pointing unit normal vector to $x\in\Sigma$.
\item $z\colonequals\int_{\Omega}x\gamma_{\adimn}(x)\,\d x\in\R^{\adimn}$ denotes the Gaussian barycenter of $\Omega$.
\item $\Phi(t)\colonequals\int_{-\infty}^{t}e^{-x^{2}/2}\,\d x/\sqrt{2\pi}$, $\forall$, $t\in\R$, denotes the Gaussian cumulative distribution function.
\item $\nu(x)\colonequals\begin{cases}\frac{x}{\vnorm{x}},\qquad\,\mbox{if}\,\, x\in\R^{\adimn}\setminus\{0\}\\  (1,0,\ldots,0),\qquad\,\mbox{if}\, x=0\end{cases}$.
\item $\vnorm{x}\colonequals(x_{1}^{2}+\cdots+x_{\adimn}^{2})^{1/2}$ for any $x=(x_{1},\ldots,x_{\adimn})\in\R^{\adimn}$.
\item $a\in(0,1)$ satisfies $\gamma_{\adimn}(\Omega)=a$.
\item $\alpha\in\R$ satisfies $\int_{\alpha}^{\infty}\gamma_{1}(x)\,\d x=\alpha$, i.e. $\alpha\colonequals-\Phi^{-1}(a)$.
\end{itemize}
Throughout the paper, unless otherwise stated, we define $G\colon\R^{\adimn}\times\R^{\adimn}\to\R$ to be the following function.  For all $x,y\in\R^{\adimn}$, $\forall$ $\rho\in(-1,1)$, define
\begin{equation}\label{gdef}
\begin{aligned}
G(x,y)&=(1-\rho^{2})^{-(\adimn)/2}(2\pi)^{-(\adimn)}e^{\frac{-\|x\|^{2}-\|y\|^{2}+2\rho\langle x,y\rangle}{2(1-\rho^{2})}}\\
&=(1-\rho^{2})^{-(\adimn)/2}\gamma_{\adimn}(x)\gamma_{\adimn}(y)e^{\frac{-\rho^{2}(\|x\|^{2}+\|y\|^{2})+2\rho\langle x,y\rangle}{2(1-\rho^{2})}}\\
&=(1-\rho^{2})^{-(\adimn)/2}(2\pi)^{-(\adimn)/2}\gamma_{\adimn}(x)e^{\frac{-\vnorm{y-\rho x}^{2}}{2(1-\rho^{2})}}\\
&\stackrel{\eqref{Height}}{=}\gamma_{\adimn}(x)\gamma_{\adimn}(y)
\sum_{k=0}^{\infty}\rho^{k}\sum_{\substack{\ell\in\N^{\adimn}\colon\\ \|\ell\|_{1}=k}}h_{\ell}(x)h_{\ell}(y)\ell!.
\end{aligned}
\end{equation}

\section{Existence and Regularity}\label{secexreg}

\subsection{Mehler's Formula}

We demonstrate here that a shifted and dilated Gaussian density on the real line can be written as an exponentially decaying sum of Hermit polynomials.  The derivation uses various elementary formulas.

Let $x\in\R$.  Recall that the Hermite polynomials $h_{0}(x),h_{1}(x),\ldots$ are defined by the following generating function.  For all $0<\lambda<1$, and for all $x\in\R$, we have

\begin{flalign}
\sum_{\ell=0}^{\infty}\lambda^{\ell}h_{\ell}(x)
&=e^{\lambda x-\lambda^{2}/2}
=\sum_{p=0}^{\infty}\frac{x^{p}}{p!}\lambda^{p}\sum_{q=0}^{\infty}\frac{(-1)^{q}(\lambda)^{2q}(1/2)^{q}}{q!}\label{Hzero}\\
&=\sum_{\ell=0}^{\infty}\lambda^{\ell}\sum_{k=0}^{\lfloor \ell/2\rfloor}\frac{x^{\ell-2k}(-1)^{k}2^{-k}}{k!(\ell-2k)!}.\label{Hone}
\end{flalign}
The generating function leads to the following explicit formula:
\begin{equation}\label{Htwo}
h_{\ell}(x)=\sum_{k=0}^{\lfloor \ell/2\rfloor}\frac{x^{\ell-2k}(-1)^{k}2^{-k}}{k!(\ell-2k)!},\qquad\forall\,\ell\geq0,\,\,\forall\, x\in\R.
\end{equation}
For example, $h_{0}(x)=1$, $h_{1}(x)=x$, $h_{2}(x)=(1/2)x^{2}-(1/2)$, and $h_{3}(x)=(1/6)x^{3}-(1/2)x$, for all $x\in\R$.  Let $\N\colonequals\{0,1,2,\ldots\}$.  It follows from \eqref{Hzero} that
\begin{equation}\label{Hrec}
\frac{\d}{\d x}h_{\ell+1}(x)=h_{\ell}(x),\qquad\forall\,\ell\geq0,\qquad\forall\,x\in\R.
\end{equation}

Let $\ell=(\ell_{1},\ldots\ell_{\adimn})\in\N^{\adimn}$ and for any $x=(x_{1},\ldots,x_{\adimn})\in\R^{\adimn}$, define
$$h_{\ell}(x)\colonequals\prod_{i=1}^{\adimn}h_{\ell_{i}}(x_{i}).$$
For any $\ell\in\N^{\adimn}$, define $\ell!\colonequals\prod_{i=1}^{\adimn}(\ell_{i}!)$ and define $\vnorm{\ell}_{1}\colonequals\sum_{i=1}^{\adimn}\ell_{i}$.

\begin{equation}\label{Hfour}
h_{\ell}(x)
=\frac{(-1)^{\ell}}{\ell!}e^{x^{2}/2}(d/dx)^{\ell}e^{-x^{2}/2},\qquad\forall\,\ell\geq0,\,\,\forall\, x\in\R.
\end{equation}
\begin{equation}\label{Hsix}
\int_{\R}h_{\ell}^{2}e^{-x^{2}/2}dx/\sqrt{2\pi}=1/\ell!,\qquad\forall\,\ell\geq0.
\end{equation}

Let $t>0$ and let $\lambda\in\R$.  Then
\begin{equation}\label{Hnine}
\begin{aligned}
&\int_{\R}\sum_{\ell=0}^{\infty}\lambda^{\ell}h_{\ell}(x)e^{-t(x-u)^{2}}\frac{dx}{\sqrt{\pi/t}}
\stackrel{\eqref{Hzero}}{=}
\int_{\R}e^{\lambda x-\lambda^{2}/2}e^{-t(x-u)^{2}}\frac{dx}{\sqrt{\pi/t}}\\
&\qquad=\int_{\R}e^{-t(x-u-\lambda/(2t))^{2}}e^{\lambda u}e^{\lambda^{2}(-1/2+1/(4t))}\frac{dx}{\sqrt{\pi/t}}
=e^{\lambda u}e^{\lambda^{2}(-1/2+1/(4t))}\\
&\qquad=\sum_{j=0}^{\infty}\lambda^{j}\frac{u^{j}}{j!}\sum_{k=0}^{\infty}\lambda^{2k}\frac{(-1/2+1/(4t))^{k}}{k!}.
\end{aligned}
\end{equation}
Therefore, $\forall$ nonnegative integers $\ell\geq0$, and $\forall$ $s,t>0$, $\forall$ $u\in\R$,
$$
\int_{\R}h_{\ell}(x)e^{-t(x-u)^{2}}\frac{dx}{\sqrt{\pi/t}}
=\sum_{k=0}^{\lfloor \ell/2\rfloor }\frac{u^{\ell-2k}}{[\ell-2k]!}\frac{(-1/2+1/(4t))^{k}}{k!}.
$$


\begin{flalign*}
&\int_{\R}h_{\ell}(x)e^{-s(x-u)^{2}}e^{-x^{2}/2}\frac{dx}{\sqrt{2\pi}}  
=e^{u^{2}s^{2}/(1/2+s)}e^{-su^{2}}\int_{\R}h_{\ell}(x)e^{-(1/2+s)(x-us/(1/2+s))^{2}}\frac{dx}{\sqrt{2\pi}} \\ %
&\qquad\qquad\qquad=e^{u^{2}s^{2}/(1/2+s)}e^{-su^{2}}\frac{1}{\sqrt{1+2s}}\sum_{k=0}^{\lfloor \ell/2\rfloor }\frac{(us/(1/2+s))^{\ell-2k}}{[\ell-2k]!}\frac{(-s/(1+2s))^{k}}{k!}\\
&\qquad\qquad\qquad=e^{u^{2}s^{2}/(1/2+s)}e^{-su^{2}}\frac{1}{\sqrt{1+2s}}\sum_{k=0}^{\lfloor \ell/2\rfloor }(-1)^{k}2^{-k}\frac{u^{\ell-2k}s^{\ell-k}(1/2+s)^{k-\ell}}{[\ell-2k]!}\frac{1}{k!}\\
&\qquad\qquad\qquad=e^{u^{2}s^{2}/(1/2+s)}e^{-su^{2}}\frac{1}{\sqrt{1+2s}}\sum_{k=0}^{\lfloor \ell/2\rfloor }(-1)^{k}2^{-k}\frac{u^{\ell-2k}(1+1/(2s))^{k-\ell}}{[\ell-2k]!k!}.
\end{flalign*}

%

So, in the $L_{2}(\gamma_{1})$ sense, $\forall$ $x\in\R$,
\begin{equation}\label{expherm}
\begin{aligned}
e^{-s(x-u)^{2}}
&=\sum_{\ell=0}^{\infty}\Big(\int_{\R}e^{-sy^{2}}h_{\ell}(y)\gamma_{1}(y)\,\d x\Big) h_{\ell}(x) \ell!\\
&=e^{u^{2}s^{2}/(1/2+s)}e^{-su^{2}}\frac{1}{\sqrt{1+2s}}\sum_{\ell=0}^{\infty}\sum_{k=0}^{\lfloor \ell/2\rfloor }(-1)^{k}2^{-k}\frac{u^{\ell-2k}(1+1/(2s))^{k-\ell}}{[\ell-2k]!k!}h_{\ell}(x) \ell!\\
&=e^{u^{2}s^{2}/(1/2+s)}e^{-su^{2}}\frac{1}{\sqrt{1+2s}}\sum_{\ell=0}^{\infty}\sum_{k=0}^{\lfloor \ell/2\rfloor }(-1)^{k}2^{-k}\frac{u^{\ell-2k}(1+1/(2s))^{k-\ell}\sqrt{\ell!}}{[\ell-2k]!k!}h_{\ell}(x) \sqrt{\ell!}.
\end{aligned}
\end{equation}
For any integer $\ell\geq0$, define $c_{2\ell}\colonequals (-s/(1+2s))^{\ell}\frac{(2\ell)!}{\ell!}$.  From Stirling's approximation,
$$\frac{\sqrt{(2\ell)!}}{\ell!}
\leq\frac{e^{1/2}(2\ell)^{1/4}(2\ell/e)^{\ell}}{(2\pi\ell)^{1/2}(\ell/e)^{\ell}}
\leq \ell^{-1/4} 2^{\ell}.$$
So,
$$\abs{c_{2\ell}}\leq \ell^{-1/4}(2s/(1+2s))^{\ell}=\ell^{-1/4}\frac{1}{(1+1/(2s))^{\ell}}.$$
Choosing $s\colonequals\beta^{2}/[2(1-\beta^{2})]$, we have
$$\abs{c_{2\ell}}\leq\ell^{-1/4}\beta^{2\ell}. $$

We now estimate \eqref{expherm}.  $\min_{0\leq k\leq \ell/2}(\ell-2k)!k!c^{k}$ occurs when $k=(\ell/2)-\sqrt{2c\ell}/4>0$, or when $k=0$ if $(\ell/2)-\sqrt{2c\ell}/4\leq0$, i.e. if $c\geq2\ell$.

\textbf{Case 1}.  In the case $c\leq 2\ell$, we therefore have
%
%
\begin{flalign*}
&\frac{\sqrt{\ell!}}{(\ell-2k)!k!}
\leq\frac{e^{1/2}\ell^{\ell/2+1/4}e^{-\ell/2}}{2\pi (\ell-2k)^{\ell-2k+1/2}e^{-(\ell-2k)}k^{k+1/2}e^{-k}}\\
&=(2/\sqrt{2c})^{\sqrt{2c\ell}/2+1/2}e^{\sqrt{\ell}(\sqrt{2c}/2-\sqrt{2c}/4)}\frac{e^{1/2}\ell^{\ell/2+1/4}}{2\pi \ell^{\sqrt{2c\ell}/4+1/4}[(\ell/2)-\sqrt{2c\ell}/4]^{(\ell/2)-\sqrt{2c\ell}/4+1/2}}\\
&=\frac{e^{1/2}}{2\pi}(2/\sqrt{2c})^{\sqrt{2c\ell}/2+1/2}e^{\sqrt{c\ell}(\sqrt{2}/4)}\Big(\frac{1}{1/2-\frac{\sqrt{2c}}{4\sqrt{\ell}}}\Big)^{\ell/2}\frac{\ell^{1/4}\cdot \ell^{-\sqrt{2c\ell}/4}[(\ell/2)-\sqrt{2c\ell}/4]^{\sqrt{2c\ell}/4}}{\sqrt{(\ell/2)-\sqrt{2c\ell}/4}}
\\
&\leq\frac{1}{3}2^{\sqrt{2c\ell}/4+1/2}c^{-\sqrt{2c\ell}/4}e^{\sqrt{c\ell}(\sqrt{2}/4)}2^{\ell/2}\Big(\frac{1}{1-\frac{\sqrt{2c}}{2\sqrt{\ell}}}\Big)^{\ell/2}e^{1}\ell^{-1/4}
\ell^{-\sqrt{2c\ell}/4}[(\ell/2)-\sqrt{2c\ell}/4]^{\sqrt{2c\ell}/4}\\
&\leq \ell^{-1/4}2^{\sqrt{2c\ell}/4+1/2}c^{-\sqrt{2c\ell}/4}e^{\sqrt{c\ell}(\sqrt{2}/2)}2^{\ell/2}[(1/2)-\sqrt{2c}/4\sqrt{\ell}]^{\sqrt{2c\ell}/4}\\
&\leq\ell^{-1/4}\sqrt{2}c^{-\sqrt{2c\ell}/4}e^{\sqrt{c\ell}(\sqrt{2}/2)}2^{\ell/2}[1-\sqrt{2c}/2\sqrt{\ell}]^{\sqrt{2c\ell}/4}
\leq\ell^{-1/4}\sqrt{2}c^{-\sqrt{2c\ell}/4}e^{\sqrt{c\ell}(\sqrt{2}/2)}2^{\ell/2}e^{-c/4}\\
&\leq 2\cdot 2^{\ell/2}\ell^{-1/4}c^{-\sqrt{2c\ell}/4}e^{\sqrt{c\ell}(\sqrt{2}/2)}e^{-c/4}.
\end{flalign*}

$$
\frac{\sqrt{\ell!}}{(\ell-2k)!k!c^{k}}
\leq 2\cdot 2^{\ell/2}\ell^{-1/4}c^{-\ell/2}e^{\sqrt{c\ell}(\sqrt{2}/2)}e^{-c/4}.
$$


Choosing $s\colonequals\beta^{2}/[2(1-\beta^{2})]$, we have $1+1/(2s)=\beta^{-2}$ and $c=(2u^{2}\beta^{2})$
\begin{flalign*}
(u\beta^{2})^{\ell}\frac{\sqrt{\ell!}}{(\ell-2k)!k!(2u\beta^{2})^{k}}
&\leq (u\beta^{2})^{\ell}2\cdot 2^{\ell/2}\ell^{-1/4}(2u^{2}\beta^{2})^{-\ell/2}e^{\sqrt{c\ell}(\sqrt{2}/2)}e^{-c/4}\\
&=\beta^{\ell}e^{\sqrt{c\ell}(\sqrt{2}/2)}e^{-c/4}
=\beta^{\ell}e^{\abs{\alpha}\sqrt{\ell}}e^{-\alpha^{2}/2}.
\end{flalign*}

%
%
%

Let $\lambda>0$.  Let us find $t>0$ such that $\beta^{\ell}e^{\abs{\alpha}\sqrt{\ell}}e^{-\alpha^{2}/2}\leq t(\beta+\lambda)^{\ell}$.
Solving for $t$, we get
$$t\geq \frac{e^{\abs{\alpha}\sqrt{\ell}}e^{-\alpha^{2}/2} }{(1+\lambda/\beta)^{\ell}}.$$
That is, we can choose
$$t\colonequals\sup_{\ell\geq0}\frac{e^{\abs{\alpha}\sqrt{\ell}}e^{-\alpha^{2}/2} }{(1+\lambda/\beta)^{\ell}}.$$
The supremum occurs when $\ell \approx \alpha^{2}/(4\log^{2}(1+\lambda/\beta))$, so that  
$$t=\frac{e^{\alpha^{2} /[2\log(1+\lambda/\beta)]}e^{-\alpha^{2}/2} }{(1+\lambda/\beta)^{\alpha^{2}/[4\log^{2}(1+\lambda/\beta)]}}
=(1+\lambda/\beta)^{\alpha^{2}\Big(\frac{1}{2\log(1+\lambda/\beta)}-\frac{1}{4\log^{2}(1+\lambda/\beta)}\Big)}e^{-\alpha^{2}/2}\leq e^{\frac{\alpha^{2}\beta}{2\lambda}}.$$
The last inequality used $0\leq\lambda\leq\beta$ and $\log(1+x)\geq x/2$ for all $0\leq x\leq 1$.
%


%
%

In summary, when $c\leq2\ell$, for any $\lambda>0$ we have
$$
(u\beta^{2})^{\ell}\frac{\sqrt{\ell!}}{(\ell-2k)!k!(2u\beta^{2})^{k}}
\leq  (\beta+\lambda)^{\ell}e^{\frac{\alpha^{2}\beta}{2\lambda}}.
$$

\textbf{Case 2}.  In the remaining case that $c\geq2\ell$, we similarly have


$$
\frac{\sqrt{\ell!}}{(\ell-2k)!k!c^{k}}
\leq\frac{\sqrt{\ell!}}{\ell!}=\frac{1}{\sqrt{\ell!}}.
$$

Choosing $s\colonequals\beta^{2}/[2(1-\beta^{2})]$, we have $1+1/(2s)=\beta^{-2}$ and $c=(2u^{2}\beta^{2})$, $u=\alpha/\beta$
$$
(u\beta^{2})^{\ell}\frac{\sqrt{\ell!}}{(\ell-2k)!k!(2u\beta^{2})^{k}}
\leq (\alpha\beta)^{\ell}\frac{1}{\sqrt{\ell!}}
=\beta^{\ell}\frac{\alpha^{\ell}}{\sqrt{\ell!}}
\leq\beta^{\ell} e^{\alpha^{2}/2}.
$$

%

Equation \ref{expherm} implies the following (noting also that $e^{u^{2}s^{2}/(1/2+s)}e^{-su^{2}}=e^{-\alpha^{2}/2}$ since $s=\beta^{2}/[2(1-\beta^{2})]$ and $u=\alpha/\beta$),

\begin{lemma}\label{gausfourier}
Let $0<\lambda\leq\beta<1$.  Let $\alpha\in\R$.  Then $\exists$ $c_{1}',c_{2}',\ldots\in\R$ with
$$\abs{c_{k}'}\leq (\beta+\lambda)^{k}e^{\alpha^{2}\cdot\max\left(0,\frac{\beta}{2\lambda}-\frac{1}{2}\right)},$$
for all $k\geq1$ and $c_{0}'\colonequals e^{-\alpha^{2}/2}$ and $c_{1}'\colonequals \alpha\beta e^{-\alpha^{2}/2}$ such that
$$(1-\beta^{2})^{-1/2}e^{-\frac{[\beta x-\alpha]^{2}}{2(1-\beta^{2})}}
=\sum_{k=0}^{\infty}c_{k}'h_{k}(x)\sqrt{k!},\qquad\forall\,x\in\R.$$
The sum on the right converges uniformly on compact subsets of $\R$, and it also converges in the $L_{2}(\gamma_{1})$ sense.  Also, when $\alpha=0$ we have $\abs{c_{k}}\leq \beta^{k}k^{-1/4}$ for all $k\geq1$.
\end{lemma}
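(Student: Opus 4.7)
The plan is to derive the expansion directly by specializing equation~\eqref{expherm} to the choice $s\colonequals\beta^{2}/[2(1-\beta^{2})]$ and $u\colonequals\alpha/\beta$. With these values one computes $1/\sqrt{1+2s}=\sqrt{1-\beta^{2}}$, $1+1/(2s)=\beta^{-2}$, and (as already noted in the excerpt) $e^{u^{2}s^{2}/(1/2+s)}e^{-su^{2}}=e^{-\alpha^{2}/2}$, while $s(x-u)^{2}=[\beta x-\alpha]^{2}/[2(1-\beta^{2})]$. Multiplying equation~\eqref{expherm} by $(1-\beta^{2})^{-1/2}$ then yields
\begin{equation*}
(1-\beta^{2})^{-1/2}e^{-[\beta x-\alpha]^{2}/[2(1-\beta^{2})]}=\sum_{\ell=0}^{\infty}c_{\ell}'\,h_{\ell}(x)\sqrt{\ell!},
\end{equation*}
with the explicit closed form
\begin{equation*}
c_{\ell}'=e^{-\alpha^{2}/2}\sum_{k=0}^{\lfloor \ell/2\rfloor}(-1)^{k}2^{-k}\frac{\alpha^{\ell-2k}\beta^{\ell}\sqrt{\ell!}}{(\ell-2k)!\,k!}.
\end{equation*}
Reading off the $\ell=0$ and $\ell=1$ terms immediately recovers the announced $c_{0}'=e^{-\alpha^{2}/2}$ and $c_{1}'=\alpha\beta e^{-\alpha^{2}/2}$.

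Next I would bound $|c_{\ell}'|$ for $\ell\geq1$ by estimating the maximal summand and absorbing the $O(\ell)$ term count into the final exponential factor. The two regimes singled out in the excerpt emerge naturally from optimizing the denominator $(\ell-2k)!\,k!\,c^{k}$ with $c\colonequals 2\alpha^{2}$: the optimizer $k^{*}=\ell/2-\sqrt{2c\ell}/4$ lies in $[0,\ell/2]$ exactly when $c\leq 2\ell$, i.e.\ $\alpha^{2}\leq\ell$. In this Case~1, substituting $k^{*}$ and applying Stirling's approximation as carried out in the excerpt yields the per-term bound $\beta^{\ell}e^{|\alpha|\sqrt{\ell}}e^{-\alpha^{2}/2}$; in Case~2 ($\alpha^{2}>\ell$) the crude factorial estimate $\sqrt{\ell!}/[(\ell-2k)!\,k!\,c^{k}]\leq 1/\sqrt{\ell!}$ combined with $|\alpha|^{\ell}/\sqrt{\ell!}\leq e^{\alpha^{2}/2}$ gives the cleaner bound $\beta^{\ell}e^{\alpha^{2}/2}$.

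The main obstacle is converting the Case~1 estimate $\beta^{\ell}e^{|\alpha|\sqrt{\ell}}$, which carries a subexponential factor in $\ell$, into one of the required form $(\beta+\lambda)^{\ell}\cdot(\text{constant})$. The idea, already begun in the excerpt, is to take the smallest $t=t(\alpha,\beta,\lambda)$ satisfying $e^{|\alpha|\sqrt{\ell}}\leq t\,(1+\lambda/\beta)^{\ell}$ for all $\ell\geq0$, so that $\beta^{\ell}e^{|\alpha|\sqrt{\ell}}\leq t(\beta+\lambda)^{\ell}$. Maximizing $e^{|\alpha|\sqrt{\ell}}/(1+\lambda/\beta)^{\ell}$ over $\ell\in[0,\infty)$ pins the extremum near $\ell\approx\alpha^{2}/[4\log^{2}(1+\lambda/\beta)]$, and the inequality $\log(1+x)\geq x/2$ for $0\leq x\leq 1$ (valid since $0<\lambda\leq\beta$ makes $\lambda/\beta\leq 1$) collapses the resulting expression to at most $e^{\alpha^{2}\beta/(2\lambda)}$. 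Combined with the $e^{-\alpha^{2}/2}$ prefactor that is already absorbed in Case~1, this produces exactly the claimed exponent $\max(0,\beta/(2\lambda)-1/2)$, and Case~2 is subsumed since $e^{\alpha^{2}/2}\leq e^{\alpha^{2}\cdot\max(0,\beta/(2\lambda)-1/2)}\cdot e^{\alpha^{2}/2}$ trivially.

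Finally, for the convergence claims, $|c_{\ell}'|\sqrt{\ell!}\,|h_{\ell}(x)|$ decays geometrically in $\ell$ on any compact $x$-set, since $\beta+\lambda<1$ and $h_{\ell}(x)\sqrt{\ell!}$ has at most polynomial-in-$\ell$ growth for fixed $x$ (by \eqref{Htwo}); this gives uniform convergence on compacta. Orthonormality of $\{h_{\ell}\sqrt{\ell!}\}$ in $L_{2}(\gamma_{1})$ from \eqref{Hsix} together with $\sum_{\ell}|c_{\ell}'|^{2}<\infty$ gives $L_{2}(\gamma_{1})$ convergence. The special case $\alpha=0$ is immediate: Case~1 always applies with optimal $k^{*}=\ell/2$, the extra $e^{|\alpha|\sqrt{\ell}}$ factor disappears, and the Stirling computation in the excerpt already records the sharper bound $|c_{\ell}'|\leq\beta^{\ell}\ell^{-1/4}$.
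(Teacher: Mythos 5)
Your proposal is correct and follows essentially the same route as the paper: the same specialization of \eqref{expherm} with $s=\beta^{2}/[2(1-\beta^{2})]$ and $u=\alpha/\beta$, the same closed form for $c_{\ell}'$, the same two-case split at $\alpha^{2}\lessgtr\ell$ with Stirling in one case and the crude factorial bound in the other, and the same optimization of $t$ via $\log(1+x)\geq x/2$ to reach the exponent $\max\left(0,\tfrac{\beta}{2\lambda}-\tfrac{1}{2}\right)$. The only difference is cosmetic: you explicitly flag the $O(\ell)$ term count that must be absorbed into the geometric gap between $\beta$ and $\beta+\lambda$, which the paper leaves implicit.
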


Denote $\Phi(t)\colonequals\int_{-\infty}^{t}\gamma_{1}(x)\,\d x$.  Since $\frac{\d}{\d x}\Big|_{x=0}\Phi\Big(\frac{\beta x-\alpha }{\sqrt{1-\beta^{2}}}\Big)=\frac{\beta}{\sqrt{1-\beta^{2}}}e^{-\frac{[\beta x-\alpha]^{2}}{2(1-\beta^{2})}}\frac{1}{\sqrt{2\pi}}$, \eqref{Hrec} implies:

%
%
%
%
%

\begin{lemma}\label{erfourier}
Let $0<\beta,\lambda<1$.  Let $\alpha\in\R$.  Then $\exists$ $c_{1},c_{2},\ldots\in\R$ with
$$\abs{c_{k}}\leq (\beta+\lambda)^{k}e^{\alpha^{2}\cdot\max\left(0,\frac{\beta}{2\lambda}-\frac{1}{2}\right)},$$
for all $k\geq2$, with $c_{0}\colonequals\Phi(\alpha)=a$, $c_{1}\colonequals \frac{\beta}{\sqrt{2\pi}}e^{-\alpha^{2}/2}$ such that
$$\Phi\Big(\frac{\beta x-\alpha }{\sqrt{1-\beta^{2}}}\Big)
=\sum_{k=0}^{\infty}c_{k}h_{k}(x)\sqrt{k!},\qquad\forall\,x\in\R.$$
The sum on the right converges uniformly on compact subsets of $\R$, and it also converges in the $L_{2}(\gamma_{1})$ sense.  Also, when $\alpha=0$ we have $\abs{c_{k}}\leq (1-\beta^{2})^{-1}\beta^{k}k^{1/4}$ for all $k\geq1$.
\end{lemma}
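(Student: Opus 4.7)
The plan is to derive the Hermite expansion of $\Phi(\frac{\beta x-\alpha}{\sqrt{1-\beta^2}})$ by integrating term-by-term the expansion of its derivative already supplied by Lemma \ref{gausfourier}. Observe that
$$\frac{\d}{\d x}\Phi\!\left(\frac{\beta x-\alpha}{\sqrt{1-\beta^2}}\right)=\frac{\beta}{\sqrt{2\pi}}\cdot (1-\beta^2)^{-1/2}e^{-[\beta x-\alpha]^2/(2(1-\beta^2))},$$
and Lemma \ref{gausfourier} provides $(1-\beta^2)^{-1/2}e^{-[\beta x-\alpha]^2/(2(1-\beta^2))}=\sum_{k\geq 0}c_k' h_k(x)\sqrt{k!}$ with $c_0'=e^{-\alpha^2/2}$, $c_1'=\alpha\beta e^{-\alpha^2/2}$, and $|c_k'|\leq (\beta+\lambda)^k e^{\alpha^2\max(0,\beta/(2\lambda)-1/2)}$ for $k\geq 1$. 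Since $\frac{\d}{\d x}h_{k+1}(x)=h_k(x)$ by \eqref{Hrec}, integrating term by term gives
$$\Phi\!\left(\frac{\beta x-\alpha}{\sqrt{1-\beta^2}}\right)=C+\frac{\beta}{\sqrt{2\pi}}\sum_{k\geq 0}c_k' h_{k+1}(x)\sqrt{k!}$$
for some constant $C\in\R$.

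Reindexing via $j=k+1$ and matching against $\sum_{j\geq 0} c_j h_j(x)\sqrt{j!}$ forces $c_j=\frac{\beta}{\sqrt{2\pi j}}c_{j-1}'$ for every $j\geq 1$. In particular $c_1=\frac{\beta}{\sqrt{2\pi}}e^{-\alpha^2/2}$ as asserted, and for $j\geq 2$ the elementary inequality $\frac{\beta}{\sqrt{2\pi j}}\leq\frac{\beta}{2\sqrt{\pi}}<\beta\leq\beta+\lambda$ upgrades the bound on $c_{j-1}'$ to $|c_j|\leq(\beta+\lambda)^j e^{\alpha^2\max(0,\beta/(2\lambda)-1/2)}$, which is the claimed estimate. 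The constant $C=c_0$ is then pinned down by orthogonality against $h_0\equiv 1$: $c_0=\int_{\R}\Phi(\frac{\beta x-\alpha}{\sqrt{1-\beta^2}})\gamma_1(x)\,\d x$. Writing $\Phi(t)=\P(Y\leq t)$ for $Y\sim N(0,1)$ independent of $X\sim N(0,1)$ and noting that $\beta X-Y\sqrt{1-\beta^2}\sim N(0,1)$ converts this integral into $\P(\beta X-Y\sqrt{1-\beta^2}\geq\alpha)=\Phi(-\alpha)=a$, confirming the stated value of $c_0$.

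Finally, $L_2(\gamma_1)$ convergence is automatic since the affine precomposition of $\Phi$ is bounded, hence in $L_2(\gamma_1)$, and the $c_j$ are by construction its Hermite coefficients. Uniform convergence on compact sets follows from the geometric decay of $|c_j|$ combined with the pointwise bound $|h_j(x)|\sqrt{j!}\leq e^{|x|^2/2}/\sqrt{j!}$ derivable from \eqref{Htwo}, which also justifies the term-by-term integration used above. The sharper $\alpha=0$ estimate uses the sharper bound $|c_k'|\leq \beta^k k^{-1/4}$ from Lemma \ref{gausfourier}, which through the same recursion yields $|c_j|\leq \beta^j (j-1)^{-1/4}/\sqrt{2\pi j}$ for $j\geq 2$, trivially majorized by $(1-\beta^2)^{-1}\beta^j j^{1/4}$. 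The only genuinely new ingredient beyond Lemma \ref{gausfourier} is the Gaussian probability identity that pins down $c_0$; everything else is bookkeeping, and no real obstacle is anticipated.
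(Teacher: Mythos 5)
Your proposal is correct in substance and takes exactly the route the paper intends: the paper's entire ``proof'' of Lemma \ref{erfourier} is the one-line observation that differentiating $\Phi\big(\frac{\beta x-\alpha}{\sqrt{1-\beta^2}}\big)$ produces the Gaussian of Lemma \ref{gausfourier} times $\frac{\beta}{\sqrt{2\pi}}$, so that \eqref{Hrec} shifts the index by one; your recursion $c_j=\frac{\beta}{\sqrt{2\pi j}}c_{j-1}'$, the resulting bounds, and the probabilistic identification of $c_0$ with $a$ fill in precisely those details. The one slip is the auxiliary pointwise bound $|h_j(x)|\sqrt{j!}\le e^{x^2/2}/\sqrt{j!}$, which is false (e.g.\ $j=3$, $x=1$ gives $|h_3(1)|=1/3>e^{1/2}/6$); replace it by Cram\'er's bound $|h_j(x)|\sqrt{j!}\le \kappa e^{x^2/4}$ (uniform in $j$), which together with the geometric decay of the coefficients yields the same uniform convergence on compacta and justifies the term-by-term integration.
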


\begin{remark}
Let $z\in\R^{\adimn}$.  From the Cauchy-Schwarz inequality (applied to discrete sequences of real numbers),
\begin{equation}\label{cs1}
\begin{aligned}
&\Big(\int_{\R^{\adimn}}f(x)\frac{e^{-\frac{[\beta\langle x,\nu(z)\rangle-\alpha]^{2}}{2(1-\beta^{2})}}}{(\pi/(1/2+\beta^{2}/[2(1-\beta^{2})]))^{1/2}}\gamma_{\adimn}(x)\,\d x\Big)^{2}\\
&\qquad=\Big(\sum_{k=1}^{\infty}c_{k}'\int_{\R^{\adimn}}f(x)h_{k}(\langle x,z\rangle)\sqrt{k!}\gamma_{\adimn}(x)\,\d x\Big)^{2}\\
&\qquad\leq \Big(\sum_{j=1}^{\infty}\abs{c_{j}'}\Big)\Big(\sum_{k=1}^{\infty}\abs{c_{k}'}\Big[\int_{\R^{\adimn}}f(x)h_{k}(\langle x,z\rangle)\sqrt{k!}\gamma_{\adimn}(x)\,\d x\Big]^{2}\Big).
\end{aligned}
\end{equation}
\end{remark}

%
%

\begin{lemma}\label{ersecondfourier}
Let $0<\beta,\lambda<1$.  Let $\alpha\in\R$.  Then $\exists$ $c_{1}'',c_{2}'',\ldots\in\R$ with
$$\abs{c_{k}''}\leq (\beta+\lambda)^{k}k^{1/2}e^{\alpha^{2}\cdot\max\left(0,\frac{\beta}{2\lambda}-\frac{1}{2}\right)},$$
for all $k\geq1$, with $c_{0}''\colonequals\beta\alpha e^{-\alpha^{2}/2}$,
$c_{1}''\colonequals \beta^{2}(\alpha^{2}-1)e^{-\alpha^{2}/2}$ such that
$$(1-\beta^{2})^{-3/2}[\alpha -x\beta ]e^{-\frac{[\beta x-\alpha]^{2}}{2(1-\beta^{2})}}
=\sum_{k=0}^{\infty}c_{k}''h_{k}(x)\sqrt{k!},\qquad\forall\,x\in\R.$$
The sum on the right converges uniformly on compact subsets of $\R$, and it also converges in the $L_{2}(\gamma_{1})$ sense.
\end{lemma}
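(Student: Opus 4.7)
My plan is to derive Lemma \ref{ersecondfourier} by termwise differentiation of the Hermite expansion from Lemma \ref{gausfourier}. A direct computation shows
$$\frac{d}{dx}\Bigl[(1-\beta^{2})^{-1/2}e^{-[\beta x-\alpha]^{2}/(2(1-\beta^{2}))}\Bigr]=\beta(1-\beta^{2})^{-3/2}(\alpha-x\beta)e^{-[\beta x-\alpha]^{2}/(2(1-\beta^{2}))},$$
so the target function of Lemma \ref{ersecondfourier} is exactly $1/\beta$ times the $x$-derivative of the function expanded in Lemma \ref{gausfourier}.

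Invoking the recursion $\tfrac{d}{dx}h_{\ell+1}=h_{\ell}$ from \eqref{Hrec} and reindexing the summation, termwise differentiation of the series from Lemma \ref{gausfourier} yields
$$(1-\beta^{2})^{-3/2}(\alpha-x\beta)e^{-[\beta x-\alpha]^{2}/(2(1-\beta^{2}))}=\sum_{k=0}^{\infty}\frac{c_{k+1}'\sqrt{k+1}}{\beta}\,h_{k}(x)\sqrt{k!},$$
so I read off $c_{k}''=c_{k+1}'\sqrt{k+1}/\beta$ for $k\geq 0$. The explicit initial values $c_{0}''$ and $c_{1}''$ then follow by substituting the known value $c_1'=\alpha\beta e^{-\alpha^2/2}$ from Lemma \ref{gausfourier} and the analogously computed value of $c_{2}'$ (using $\E[Y^{2}]=\alpha^{2}\beta^{2}+(1-\beta^{2})$ for $Y\sim N(\alpha\beta,1-\beta^{2})$), or equivalently by direct integration of the target function against $h_{0}\sqrt{0!}$ and $h_{1}\sqrt{1!}$ against $\gamma_{1}$.

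To establish the coefficient bound for $k\geq 1$, I would apply Lemma \ref{gausfourier} with a slightly reduced parameter $\lambda'\in(0,\lambda)$ in the role of $\lambda$, yielding
$$|c_{k}''|\leq \frac{(\beta+\lambda')^{k+1}\sqrt{k+1}}{\beta}\,e^{\alpha^{2}\max(0,\beta/(2\lambda')-1/2)}.$$
Termwise differentiation is justified since the geometric decay of $c_{k}'$ ensures that the differentiated series converges uniformly on compact subsets of $\R$ (Hermite polynomials grow at most exponentially, and this is dominated by the geometric decay of the coefficients), while $L_{2}(\gamma_{1})$ convergence of the resulting series to the target is automatic because the target function (a polynomial times a shifted Gaussian) lies in $L_{2}(\gamma_{1})$.

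The main obstacle is the bookkeeping of constants in the final bound: the extra factor $(1+\lambda'/\beta)\sqrt{k+1}/k^{1/2}$ that emerges from differentiation must be absorbed into the stated form $(\beta+\lambda)^{k}k^{1/2}e^{\alpha^{2}\max(0,\beta/(2\lambda)-1/2)}$. Choosing $\lambda'$ strictly less than $\lambda$ supplies a geometric factor $((\beta+\lambda')/(\beta+\lambda))^{k}<1$ that more than compensates for the polynomial prefactor $\sqrt{(k+1)/k}$, and the remaining constant $(1+\lambda'/\beta)/\beta$ is absorbed into the exponential factor via the inflation of $\beta/(2\lambda')$ relative to $\beta/(2\lambda)$. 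This reconciles the form of the bound with the one claimed in the lemma.
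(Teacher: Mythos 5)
Your route --- termwise differentiation of the expansion in Lemma \ref{gausfourier}, using \eqref{Hrec} to shift the Hermite index down by one --- is exactly the derivation the paper intends: the paper gives no explicit proof of this lemma, but it obtains Lemma \ref{erfourier} from Lemma \ref{gausfourier} by the mirror-image antidifferentiation step, and the extra factor $k^{1/2}$ in the bound of Lemma \ref{ersecondfourier} is precisely the $\sqrt{k+1}$ your reindexing produces. The identity $c_{k}''=c_{k+1}'\sqrt{k+1}/\beta$ is correct, and your justifications of uniform and $L_{2}(\gamma_{1})$ convergence are fine. One caveat on the initial coefficients: your formula gives $c_{0}''=c_{1}'/\beta=\alpha e^{-\alpha^{2}/2}$ and $c_{1}''=c_{2}'\sqrt{2}/\beta=\beta(\alpha^{2}-1)e^{-\alpha^{2}/2}$, and direct integration of the stated function against $h_{0}$ and $h_{1}$ confirms these values; they differ from the lemma's stated $c_{0}'',c_{1}''$ by a factor of $\beta$, so either the lemma's statement carries a typo (plausible, given the $\beta^{2}$ prefactor with which this function is used in \eqref{ztilpri}) or a hidden normalization. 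You should flag the discrepancy rather than assert that the stated values "follow."

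The substantive gap is in your final absorption of constants, and it goes in the wrong direction. Applying Lemma \ref{gausfourier} with $\lambda'<\lambda$ gives $\abs{c_{k}''}\leq(\beta+\lambda')^{k}\sqrt{k}\cdot\frac{\beta+\lambda'}{\beta}\sqrt{\tfrac{k+1}{k}}\,e^{\alpha^{2}\max(0,\beta/(2\lambda')-1/2)}$, and since $\beta/(2\lambda')>\beta/(2\lambda)$, the exponential factor you incur is \emph{larger} than the one permitted by the target bound: the ``inflation'' of $\beta/(2\lambda')$ is an additional deficit that the geometric gain $((\beta+\lambda')/(\beta+\lambda))^{k}$ must also cover, not a credit you can spend to absorb the prefactor. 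For fixed small $k$ the geometric gain is a constant bounded away from $0$, while the exponential deficit is unbounded in $\alpha$ whenever $\beta/(2\lambda)>1/2$; and even at $\alpha=0$, the $k=1$ instance requires $(\beta+\lambda')^{2}\sqrt{2}\leq\beta(\beta+\lambda)$, which fails e.g. as $\lambda\to0$. To close the argument you should return to the paper's intermediate estimates $\abs{c_{k+1}'}\leq\beta^{k+1}e^{\abs{\alpha}\sqrt{k+1}}e^{-\alpha^{2}/2}$ (Case 1) and $\abs{c_{k+1}'}\leq\beta^{k+1}\abs{\alpha}^{k+1}/\sqrt{(k+1)!}$ (Case 2) and optimize $\sup_{k}\beta^{k}\sqrt{k+1}\,e^{\abs{\alpha}\sqrt{k+1}}e^{-\alpha^{2}/2}\big/\big((1+\lambda/\beta)^{k}k^{1/2}\big)$ directly, checking the finitely many small values of $k$ against the explicit coefficients (which do satisfy the stated bound, since they carry the factor $e^{-\alpha^{2}/2}$).
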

%

%
Mehler's formula then says: for any $0<\rho<1$, for any $x,y\in\R^{\adimn}$
\begin{equation}\label{Height}
e^{-(\|x\|^{2}+\|y\|^{2})/2}
\sum_{k=0}^{\infty}\rho^{k}\sum_{\substack{\ell\in\N^{\adimn}\colon\\ \|\ell\|_{1}=k}}h_{\ell}(x)h_{\ell}(y)\ell!
=(1-\rho^{2})^{-(\adimn)/2}e^{\frac{-\|x\|^{2}-\|y\|^{2}+2\rho\langle x,y\rangle}{2(1-\rho^{2})}}.
\end{equation}
%

\subsection{Preliminaries and Notation}\label{secpre}

We say that $\Sigma\subset\R^{\adimn}$ is an $\sdimn$-dimensional $C^{\infty}$ manifold with boundary if $\Sigma$ can be locally written as the graph of a $C^{\infty}$ function on a relatively open subset of $\{(x_{1},\ldots,x_{\sdimn})\in\R^{\sdimn}\colon x_{\sdimn}\geq0\}$.  For any $(\adimn)$-dimensional $C^{\infty}$ manifold $\Omega\subset\R^{\adimn}$ such that $\partial\Omega$ itself has a boundary, we denote
\begin{equation}\label{c0def}
\begin{aligned}
C_{0}^{\infty}(\Omega;\R^{\adimn})
&\colonequals\{f\colon \Omega\to\R^{\adimn}\colon f\in C^{\infty}(\Omega;\R^{\adimn}),\, f(\partial\partial \Omega)=0,\\
&\qquad\qquad\qquad\exists\,r>0,\,f(\Omega\cap(B(0,r))^{c})=0\}.
\end{aligned}
\end{equation}
We also denote $C_{0}^{\infty}(\Omega)\colonequals C_{0}^{\infty}(\Omega;\R)$.  We let $\mathrm{div}$ denote the divergence of a vector field in $\R^{\adimn}$.  For any $r>0$ and for any $x\in\R^{\adimn}$, we let $B(x,r)\colonequals\{y\in\R^{\adimn}\colon\vnormt{x-y}\leq r\}$ be the closed Euclidean ball of radius $r$ centered at $x\in\R^{\adimn}$.  Here $\partial\partial\Omega$ refers to the $(\sdimn-1)$-dimensional boundary of $\Omega$.

\begin{definition}[\embolden{Reduced Boundary}]\label{rbdef}
A measurable set $\Omega\subset\R^{\adimn}$ has \embolden{locally finite surface area} if, for any $r>0$,
$$\sup\left\{\int_{\Omega}\mathrm{div}(X(x))\,\d x\colon X\in C_{0}^{\infty}(B(0,r),\R^{\adimn}),\, \sup_{x\in\R^{\adimn}}\vnormt{X(x)}\leq1\right\}<\infty.$$
Equivalently, $\Omega$ has locally finite surface area if $\nabla 1_{\Omega}$ is a vector-valued Radon measure such that, for any $x\in\R^{\adimn}$, the total variation
$$
\vnormt{\nabla 1_{\Omega}}(B(x,1))
\colonequals\sup_{\substack{\mathrm{partitions}\\ C_{1},\ldots,C_{m}\,\mathrm{of}\,B(x,1) \\ m\geq1}}\sum_{i=1}^{m}\vnormt{\nabla 1_{\Omega}(C_{i})}
$$
is finite \cite{cicalese12}.  If $\Omega\subset\R^{\adimn}$ has locally finite surface area, we define the \embolden{reduced boundary} $\redb \Omega$ of $\Omega$ to be the set of points $x\in\R^{\adimn}$ such that
$$N(x)\colonequals-\lim_{r\to0^{+}}\frac{\nabla 1_{\Omega}(B(x,r))}{\vnormt{\nabla 1_{\Omega}}(B(x,r))}$$
exists, and it is exactly one element of $S^{\sdimn}\colonequals\{x\in\R^{\adimn}\colon\vnorm{x}=1\}$.
\end{definition}

The reduced boundary $\redb\Omega$ is a subset of the topological boundary $\partial\Omega$.  Also, $\redb\Omega$ and $\partial\Omega$ coincide with the support of $\nabla 1_{\Omega}$, except for a set of $\sdimn$-dimensional Hausdorff measure zero.

Let $\Omega\subset\R^{\adimn}$ be an $(\adimn)$-dimensional $C^{2}$ submanifold with reduced boundary $\Sigma\colonequals\redb \Omega$.  Let $N\colon\redA\to S^{\sdimn}$ be the unit exterior normal to $\redA$.  Let $X\in C_{0}^{\infty}(\R^{\adimn},\R^{\adimn})$.  We write $X$ in its components as $X=(X_{1},\ldots,X_{\adimn})$, so that $\mathrm{div}X=\sum_{i=1}^{\adimn}\frac{\partial}{\partial x_{i}}X_{i}$.  Let $\Psi\colon\R^{\adimn}\times(-1,1)\to\R^{\adimn}$ such that
\begin{equation}\label{nine2.3}
\Psi(x,0)=x,\qquad\qquad\frac{\d}{\d s}\Psi(x,s)=X(\Psi(x,s)),\quad\forall\,x\in\R^{\adimn},\,s\in(-1,1).
\end{equation}
For any $s\in(-1,1)$, let $\Omega^{(s)}\colonequals\Psi(\Omega,s)$.  Note that $\Omega^{(0)}=\Omega$.  Let $\Sigma^{(s)}\colonequals\redb\Omega^{(s)}$, $\forall$ $s\in(-1,1)$.
\begin{definition}
We call $\{\Omega^{(s)}\}_{s\in(-1,1)}$ as defined above a \embolden{variation} of $\Omega\subset\R^{\adimn}$.  We also call $\{\Sigma^{(s)}\}_{s\in(-1,1)}$ a \embolden{variation} of $\Sigma=\redb\Omega$.
\end{definition}

For any $x\in\R^{\adimn}$ and any $s\in(-1,1)$, define
\begin{equation}\label{two9c}
V(x,s)\colonequals\int_{\Omega^{(s)}}G(x,y)\,\d y.
\end{equation}

Below, when appropriate, we let $\,\d x$ denote Lebesgue measure, restricted to a surface $\redA\subset\R^{\adimn}$.

\begin{lemma}[\embolden{Existence of a Maximizer}]\label{existlem}
Let $0<\beta\leq\rho$, define $\lambda\colonequals \rho-\beta$ and let $0<\epsilon\leq e^{-\alpha^{2}\cdot\max\left(0,\frac{\beta}{2\lambda}-\frac{1}{2}\right)}$.  (If $\alpha=0$ let $0<\epsilon\leq 1$.)  Then there exists a measurable set $\Omega\subset\R^{\adimn}$ maximizing Problem \ref{prob2z}.
\end{lemma}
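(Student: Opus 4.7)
The plan is to apply the direct method of the calculus of variations, and then promote the $[0,1]$-valued weak limit to an indicator function via a bang-bang argument driven by positive semi-definiteness of the quadratic piece of the objective $F$. To start, take a maximizing sequence $\{\Omega_n\}$ for Problem \ref{prob2z}; the supremum is finite because the noise stability is bounded by $1$ and both penalty terms are nonnegative. Writing $f_n := 1_{\Omega_n}$, the sequence is bounded in $L^2(\gamma_{\adimn})$. Using the rotational invariance of the objective, I rotate each $\Omega_n$ so that $\nu(z_n) = e_1$ whenever $z_n := \int_{\Omega_n} x\,\gamma_{\adimn}(x)\,\d x \neq 0$, and by weak compactness pass to a subsequence $f_n \rightharpoonup f$ with $0 \leq f \leq 1$ a.e.

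I would then verify the weak continuity of each piece of $F$ along $(f_n)$. By Mehler's formula \eqref{Height}, the kernel $G$ of $T_\rho$ lies in $L^2(\gamma_{\adimn} \otimes \gamma_{\adimn})$ (since $\sum_k \rho^{2k}\sum_{\|\ell\|_1=k} \ell! < \infty$ for $\rho<1$), so $T_\rho$ is Hilbert-Schmidt on $L^2(\gamma_{\adimn})$; this makes the noise stability weakly continuous along $(f_n)$. Testing weak convergence against $1$ and against each coordinate function (both in $L^2(\gamma_{\adimn})$) gives $\gamma_{\adimn}(\Omega_n) \to \int f\,\gamma_{\adimn}$ and $z_n \to z := \int x\, f\,\gamma_{\adimn}$, handling the volume term. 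For the stability penalty, when $z \neq 0$, $\nu(z_n) \to \nu(z)$ and dominated convergence (the integrand $\Phi(\cdot)$ is bounded) gives continuity; when $z = 0$, pass to a further subsequence with $\nu(z_n) \to w \in S^\sdimn$ and use rotational invariance to identify the limit direction. Thus $F(f) \geq \limsup_n F(f_n) = \sup F$.

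Finally, I would show that $f$ may be taken to be a.e. $\{0,1\}$-valued. Freezing $\nu(z) = e_1$, Lemma \ref{erfourier} with $\lambda := \rho - \beta$ expands
\begin{equation*}
\Phi\Bigl(\frac{\beta x_1 - \alpha}{\sqrt{1-\beta^{2}}}\Bigr) = \sum_{k \geq 0} c_k\, h_k(x_1)\, \sqrt{k!}, \qquad |c_k| \leq \rho^{k}\, e^{\alpha^{2} M}, \quad M := \max\Bigl(0,\tfrac{\beta}{2\lambda} - \tfrac{1}{2}\Bigr).
\end{equation*}
A Cauchy-Schwarz weighted by $\rho^k k!$, combined with the Mehler expansion $\langle \phi, T_\rho \phi\rangle_{\gamma_{\adimn}} = \sum_{\ell \in \N^{\adimn}} \rho^{\|\ell\|_1}\, \ell!\, \bigl(\int \phi\, h_\ell\, \gamma_{\adimn}\bigr)^{\!2}$, then yields
\begin{equation*}
[L(\phi)]^{2} \leq \Bigl(\sum_{k \geq 0} \frac{c_{k}^{2}}{\rho^{k}}\Bigr) \sum_{k \geq 0} \rho^{k}\, k!\, \Bigl(\int \phi(x)\, h_k(x_1)\, \gamma_{\adimn}(x)\,\d x\Bigr)^{\!2} \leq \frac{e^{2\alpha^{2} M}}{1-\rho}\,\langle \phi, T_\rho \phi\rangle_{\gamma_{\adimn}},
\end{equation*}
so for $\epsilon$ in the stated range the quadratic form $\phi \mapsto \langle \phi, T_\rho \phi\rangle - \epsilon[L(\phi)]^{2}$ is positive semi-definite. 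If $\{0 < f < 1\}$ has positive $\gamma_{\adimn}$-measure, pick a mean-zero $\phi \in L^\infty$ supported there; the volume penalty of $F(f \pm t\phi)$ is then unchanged for small $t$, and second-order Taylor expansion in $t$ gives $F(f+t\phi) + F(f-t\phi) - 2F(f) = 2t^{2}\bigl[\langle\phi, T_\rho \phi\rangle - \epsilon[L(\phi)]^{2}\bigr] + o(t^{2}) \geq o(t^2)$, so $\max_{\pm} F(f \pm t\phi) \geq F(f)$ to leading order and $f$ may be pushed toward the extreme points of $\{0 \leq f \leq 1\}$. A standard bang-bang iteration produces an indicator $1_{\Omega^{\star}}$ attaining $\sup F$. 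The main obstacle throughout is the nonlinear dependence of $L$ on $\nu(z(f))$, which arises in both the upper-semicontinuity and the second-variation step; rotating to freeze the direction handles this, and the smallness of $\epsilon$ coming from the Hermite decay in Lemma \ref{erfourier} ensures the resulting Taylor expansion is dominated by the noise-stability piece.
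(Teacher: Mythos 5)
Your overall architecture matches the paper's: a relaxed problem over $[0,1]$-valued densities, weak compactness plus weak continuity (via the Hilbert--Schmidt/Mehler structure of $T_{\rho}$) to get a relaxed maximizer, and positive semidefiniteness of the quadratic form $\phi\mapsto\langle\phi,T_{\rho}\phi\rangle-\epsilon[L(\phi)]^{2}$ (from Lemma \ref{erfourier} and Cauchy--Schwarz against the Mehler expansion) to push the maximizer to an indicator. The paper executes the last step differently: it restricts to the convex set $D_{0}$ of densities with \emph{fixed} mass $a$ and \emph{fixed} barycenter $z(f)=w$, on which $\nu(z(f))$ is a constant vector and the penalty is a genuine fixed quadratic; the functional is then convex on $D_{0}$, so its maximum sits at an extreme point, which is an indicator. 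Your bang-bang perturbation is the infinitesimal version of the same convexity fact.

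The genuine gap is in your perturbation step. You take $\phi$ mean-zero and supported on $\{0<f<1\}$ and claim the clean second-order expansion $F(f+t\phi)+F(f-t\phi)-2F(f)=2t^{2}[\langle\phi,T_{\rho}\phi\rangle-\epsilon[L(\phi)]^{2}]+o(t^{2})$. But $L$ depends on $f$ not only linearly through the integral but also nonlinearly through the direction $\nu(z(f))$ inside $\Phi$; a mean-zero $\phi$ generally moves the barycenter $z(f+t\phi)=z(f)+t\int x\phi(x)\gamma_{\adimn}(x)\,\d x$, so the argument of $\Phi$ changes with $t$ and extra first- and second-order terms appear. Your stated fix (``rotating to freeze the direction'') only normalizes the maximizing sequence, not the perturbed competitors $f\pm t\phi$. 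The repair is cheap but must be said: impose the additional $\adimn$ linear constraints $\int_{\R^{\adimn}}x\phi(x)\gamma_{\adimn}(x)\,\d x=0$ alongside $\int\phi\,\gamma_{\adimn}=0$; since $L^{\infty}$ of a positive-measure set is infinite-dimensional, nontrivial such $\phi$ exist, and then both the volume penalty and $\nu(z)$ are genuinely frozen to second order (this is exactly what the paper's restriction to $D_{0}$ accomplishes globally). Separately, your Cauchy--Schwarz with weights $\rho^{k}$ gives $[L(\phi)]^{2}\leq\frac{e^{2\alpha^{2}M}}{1-\rho}\langle\phi,T_{\rho}\phi\rangle$, which yields positive semidefiniteness only for $\epsilon\leq(1-\rho)e^{-2\alpha^{2}M}$, a strictly smaller range than the lemma's hypothesis $\epsilon\leq e^{-\alpha^{2}M}$; to cover the stated range you should weight the Cauchy--Schwarz by $|c_{k}|$ itself, as in \eqref{cs1}, so that the comparison is the termwise $\rho^{k}-\epsilon c_{k}\geq0$ used in the paper.
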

\begin{proof}
Let $f\colon\R^{\adimn}\to[0,1]$ be measurable.  Denote $z=z(f)\colonequals\int_{\R^{\adimn}}xf(x)\gamma_{\adimn}(x)\,\d x$, $\nu(z)\colonequals z/\vnorm{z}$ if $z\neq0$ and $\nu(0)\colonequals(1,0,\ldots,0)$.   Fix $w\in\R^{\adimn}$.  The set
$$D_{0}\colonequals\{f\colon\R^{\adimn}\to[0,1]\colon \int_{\R^{m}} f(x)\gamma_{\adimn}(x)\,\d x=a\quad\mathrm{and}\quad z(f)=w\}$$
is norm closed, bounded and convex, so it is weakly compact and convex.  Consider the function
$$
C(f)
\colonequals\int_{\R^{\adimn}}f(x)T_{\rho}f(x)\gamma_{\adimn}(x)\,\d x-\epsilon\penf.
$$
This function is weakly continuous on $D_{0}$, and $D_{0}$ is weakly compact, so there exists $\widetilde{f}\in D_{0}$ such that $C(\widetilde{f})=\max_{f\in D_{0}}C(f)$.  If $f\in D_{0}$, then $z(f)=w$, so $\nu(z(f))=\nu(w)$, i.e. $\forall\,f\in D_{0}$,
$$
C(f)
=\int_{\R^{\adimn}}f(x)T_{\rho}f(x)\gamma_{\adimn}(x)\,\d x-\epsilon\penfw.
$$

Moreover, $C$ is convex since for any $0<t<1$ and for any $f,g\in D_{0}$,
\begin{flalign*}
&tC(f)+(1-t)C(g)-C(tf+(1-t)g)\\
&\quad=\int_{\R^{\adimn}}
\Big(tf(x)T_{\rho}f(x)+(1-t)g(x)T_{\rho}g(x)\\
&\qquad\qquad\qquad\qquad\qquad-(tf(x)+(1-t)g(x))T_{\rho}[tf(x)+(1-t)g(x)] \Big)\gamma_{\adimn}(x)\,\d x\\
&\qquad\qquad\qquad-\epsilon t(1-t)\Big[\int_{\R^{\adimn}} (f(x)-g(x))\Phi\Big(\frac{\beta\langle x,\nu(w)\rangle-\alpha}{\sqrt{1-\beta^{2}}}\Big)\gamma_{\adimn}(x)\,\d x\Big]^{2}\\
&\quad=t(1-t)\int_{\R^{\adimn}}
\Big((f(x)-g(x))T_{\rho}[f(x)-g(x)]\Big)\gamma_{\adimn}(x)\,\d x\\
&\qquad\qquad\qquad-\epsilon t(1-t)\Big[\int_{\R^{\adimn}} (f(x)-g(x))\Phi\Big(\frac{\beta\langle x,\nu(w)\rangle-\alpha}{\sqrt{1-\beta^{2}}}\Big)\gamma_{\adimn}(x)\,\d x\Big]^{2}\\
&\stackrel{\eqref{Height}\wedge\eqref{nseq}\wedge\eqref{cs1}}{\geq}t(1-t)\Big(\sum_{k=0}^{\infty}[\rho^{k}-\epsilon\cdot c_{k}]
\sum_{\substack{\ell\in\N^{\adimn}\colon\\ \|\ell\|_{1}=k}}\ell!
\Big[\int_{\R^{\adimn}}(f(x)-g(x))h_{\ell}(x)\gamma_{\adimn}(x)\,\d x\Big]^{2}\Big)\\
&\quad\geq0.
\end{flalign*}
The penultimate inequality used $\epsilon\leq e^{-\alpha^{2}\cdot\max\left(0,\frac{\beta}{2\lambda}-\frac{1}{2}\right)}$ and Lemma \ref{erfourier}, $c_{0}\colonequals \Phi(\alpha)=a$, $c_{1}\colonequals \frac{\beta}{\sqrt{2\pi}}e^{-\alpha^{2}/2}$, $\abs{c_{k}}\leq (\beta+\lambda)^{k}e^{\alpha^{2}\cdot\max\left(0,\frac{\beta}{2\lambda}-\frac{1}{2}\right)}$ for all $k\geq2$,
%

Since $C$ is convex, its maximum must be achieved at an extreme point of $D_{0}$, so that $\gamma_{\adimn}(\{x\in\R^{\adimn}\colon f(x)\in\{0,1\}\})=1$.  Then, define $\Omega\colonequals\{x\in\R^{\adimn}\colon f(x)=1\}$, so that $f=1_{\Omega}$.  Finally, note that $C(1_{\Omega})=C(1_{R\Omega})$ for any rotation $R\colon\R^{\adimn}\to\R^{\adimn}$.
\end{proof}



\begin{lemma}[\embolden{Regularity of a Maximizer}]\label{reglem}
Let $0<\beta\leq\rho$, define $\lambda\colonequals \rho-\beta$ and let $0<\epsilon\leq e^{-\alpha^{2}\cdot\max\left(0,\frac{\beta}{2\lambda}-\frac{1}{2}\right)}$.  (If $\alpha=0$ let $0<\epsilon\leq 1$.)  Let $\Omega\subset\R^{\adimn}$ be the measurable set maximizing Problem \ref{prob2z}, guaranteed to exist by Lemma \ref{existlem}.  Then the set $\Omega$ has locally finite surface area.  Moreover, for all $x\in\partial\Omega$, there exists a neighborhood $U$ of $x$ such that $U\cap \partial\Omega$ is a finite union of $C^{\infty}$ $\sdimn$-dimensional manifolds.
\end{lemma}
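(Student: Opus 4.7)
The plan is to apply a first-variation (Euler-Lagrange) argument, showing that the maximizer $\Omega$ coincides, up to a Lebesgue null set, with a superlevel set of a real-analytic function, and then invoke the Lojasiewicz stratification theorem for real-analytic varieties. First I would derive the Euler-Lagrange equation: fix a measurable $E \subset \R^{\adimn}$ of small Lebesgue measure and compare the value of the functional in Problem \ref{prob2z} at $\Omega \cup E$ versus at $\Omega \setminus E$. Writing $a_{0} := \int_{\Omega}\Phi\bigl((\beta\langle\nu(z),x\rangle-\alpha)/\sqrt{1-\beta^{2}}\bigr)\gamma_{\adimn}(x)\,\d x$, a direct computation gives the first-order change in the functional as
\begin{equation*}
\pm\int_{E}\Bigl[2T_{\rho}1_{\Omega}(x) - 2\epsilon a_{0}\,\Phi\bigl(\tfrac{\beta\langle\nu(z),x\rangle-\alpha}{\sqrt{1-\beta^{2}}}\bigr) - 2\epsilon a_{0}\langle A,x\rangle - c'\Bigr]\gamma_{\adimn}(x)\,\d x + O(\gamma_{\adimn}(E)^{2}),
\end{equation*}
where $c' \in \R$ with $|c'| \leq 2(1+|\alpha|)$ comes from the (piecewise-linear) volume penalty and $A \in \R^{\adimn}$ is a constant depending on $\Omega$ that encodes the linearization of $\nu(z)$ around the current barycenter. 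Maximality of $\Omega$ then forces the existence of $c \in \R$ such that $\Omega = \{x \in \R^{\adimn} : F(x) \geq 0\}$ up to a Lebesgue null set, where
\begin{equation*}
F(x) := T_{\rho}1_{\Omega}(x) - \epsilon a_{0}\,\Phi\bigl(\tfrac{\beta\langle\nu(z),x\rangle-\alpha}{\sqrt{1-\beta^{2}}}\bigr) - \epsilon a_{0}\langle A,x\rangle - c.
\end{equation*}

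Next I would observe that $F$ is real-analytic on $\R^{\adimn}$: $T_{\rho}1_{\Omega}$ is the convolution of $1_{\Omega}$ with the analytic Gaussian kernel in \eqref{oudef}, $\Phi$ is analytic with affine-linear argument, and $\langle A,x\rangle$ is polynomial. Since $\gamma_{\adimn}(\Omega) \in (0,1)$, the function $F$ is not identically zero, so $\{F = 0\}$ is a proper real-analytic subvariety of $\R^{\adimn}$. The Lojasiewicz (or Whitney) stratification theorem applied to $\{F = 0\}$ on any bounded open neighborhood $U$ of a point of $\partial\Omega$ then yields a decomposition as a finite union of real-analytic (hence $C^{\infty}$) submanifolds of $U$ of dimensions at most $\sdimn$, the top stratum carrying locally finite $\sdimn$-dimensional Hausdorff measure and the lower strata being $\sdimn$-null. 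This immediately gives local finite surface area of $\Omega$ together with the stated $C^{\infty}$ stratified description of $\partial\Omega$.

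The main obstacle is the first-variation computation itself, in particular tracking how the nonlocal dependence of the functional on $\Omega$ through $\nu(z)$ and $a_{0}$ reduces to pointwise data that can be inserted into an Euler-Lagrange inequality. The key observations are smoothness of $\nu$ at $z \neq 0$ (so that the $\nu(z)$ perturbation linearizes cleanly into a $\langle A,x\rangle$ correction) and the quadratic form of the $\Phi$-penalty (so that the $a_{0}$ perturbation simply multiplies the pointwise integrand by $-2\epsilon \Phi(\cdots)$). Once the Euler-Lagrange equation is in place, real-analyticity of $F$ combined with Lojasiewicz's structure theorem delivers both the local finite surface area and the $C^{\infty}$ stratified regularity of $\partial \Omega$ automatically.
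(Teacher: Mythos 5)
Your proposal is correct in substance and its first half coincides with the paper's proof: both derive the Euler--Lagrange (level-set) description of $\Omega$ by an exchange argument, and your inclusion of the linear correction $\langle A,x\rangle$ coming from the linearization of $\nu(z)$ is if anything more careful than the paper's \eqref{zero8}, which suppresses that term (it reappears later as the $\langle x,\zeta\rangle$ term in Lemma \ref{firstvarmaxns}). Two points of comparison. First, you dispose of the second-order term in the exchange as $O(\gamma_{\adimn}(E)^{2})$; this is legitimate provided $E$ is localized in a bounded region (so the Mehler kernel is bounded there and the first-order term dominates), whereas the paper instead proves that the second-order term is \emph{nonnegative outright}, via Mehler's formula \eqref{Height} and the Hermite expansion of $\Phi$ in Lemma \ref{erfourier} -- this is precisely where the hypothesis $\epsilon\leq e^{-\alpha^{2}\max(0,\beta/(2\lambda)-1/2)}$ is consumed, so in your version that hypothesis plays no role in this lemma (it is still needed for existence). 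Second, for the regularity step you invoke real-analyticity of $F$ (which does hold: the complexified Gaussian convolution converges locally uniformly, so $T_{\rho}1_{\Omega}$ is entire) together with the \L{}ojasiewicz stratification of the analytic variety $\{F=0\}$, while the paper uses the strong unique continuation property for the heat equation \cite{lin90} combined with the Hardt--Simon/Lin decomposition \cite{hardt89,lin94,chen98} by order of vanishing of derivatives. Both routes deliver ``locally a finite union of $C^{\infty}$ $\sdimn$-manifolds'' and locally finite perimeter; yours is arguably more elementary, while the paper's gives the explicit stratification by vanishing order that it reuses implicitly later. The one common loose end (present in both arguments) is ruling out $F\equiv 0$, for which your appeal to $\gamma_{\adimn}(\Omega)\in(0,1)$ is not by itself sufficient, since the level-set inclusion says nothing on $\{F=0\}$; a short additional argument (e.g.\ boundedness of $T_{\rho}1_{\Omega}$ forcing $A=0$ and then a Hermite-coefficient comparison) is needed to make that airtight.
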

\begin{proof}
This follows from a first variation argument and the strong unique continuation property for the heat equation.  Define $\Phi(t)\colonequals\int_{-\infty}^{t}\gamma_{1}(x)\,\d x$ and define
\begin{equation}\label{zdef}
z\colonequals\int_{\Omega}x\gamma_{\adimn}(x)\,\d x\in\R^{\adimn},
\end{equation}
$\nu(z)\colonequals z/\vnorm{z}$ if $z\neq0$ and $\nu(0)\colonequals(1,0,\ldots,0)$.   We first claim that there exist a constant $c\in\R$ such that
\begin{equation}\label{zero8}
\begin{aligned}
\Omega&\supset\Big\{x\in\R^{\adimn}\colon T_{\rho}1_{\Omega}(x)-\epsilon\Phiz>c\Big\}\\
\Omega^{c}&\supset\Big\{x\in\R^{\adimn}\colon T_{\rho}1_{\Omega}(x)-\epsilon\Phiz<c\Big\}.
\end{aligned}
\end{equation}

By the Lebesgue density theorem \cite[1.2.1, Proposition 1]{stein70}, we may assume that, if $y\in \Omega$, then we have $\lim_{r\to0}\gamma_{\adimn}(\Omega\cap B(y,r))/\gamma_{\adimn}(B(y,r))=1$.

We prove \eqref{zero8} by contradiction.  Suppose there exist $c\in\R$, and there exists $y,z\in\Omega$ such that
$$T_{\rho}(1_{\Omega})(y)-\epsilon\langle x,z^{(j)}-z^{(k)}\rangle<c,\qquad T_{\rho}(1_{\Omega})(z)-\epsilon\langle x,z^{(j)}-z^{(k)}\rangle>c.$$
By \eqref{oudef}, $T_{\rho}(1_{\Omega})(x)$ is a continuous function of $x$.  And by the Lebesgue density theorem, there exist disjoint measurable sets $U_{1},U_{2}$ with positive Lebesgue measure such that $U_{1},U_{2}\subset\Omega$ such that $\gamma_{\adimn}(U_{1})=\gamma_{\adimn}(U_{2})$ and such that
\begin{equation}\label{zero9.0}
\begin{aligned}
&T_{\rho}(1_{\Omega})(y')-\epsilon\Phiz<c,\,\,\forall\,y'\in U_{1},\\
&T_{\rho}(1_{\Omega})(y')-\epsilon\Phiz>c,\,\,\forall\,y'\in U_{2}.
\end{aligned}
\end{equation}
We define a new set $\widetilde{\Omega}\subset\R^{\adimn}$ such that $\widetilde{\Omega}\colonequals U_{2}\cup \Omega\setminus U_{1}$.  Denote $\zeta\colonequals\int_{\widetilde{\Omega}}x\gamma_{\adimn}(x)\,\d x$, $z^{(i)}\colonequals\int_{U_{i}}x\gamma_{\adimn}(x)\,\d x$ for $i=1,2$.  Then,
\begin{flalign*}
&\int_{\R^{\adimn}}1_{\widetilde{\Omega}}(x)T_{\rho}1_{\widetilde{\Omega}}(x)\gamma_{\adimn}(x)\,\d x
-\epsilon \Big[\int_{\widetilde{\Omega}}\Phi\Big(\frac{\beta \langle \nu(\zeta),x\rangle-\alpha}{\sqrt{1-\rho^{2}}}\Big)\gamma_{\adimn}(x)dx\Big]^{2}\\
&\qquad-\int_{\R^{\adimn}}1_{\Omega}(x)T_{\rho}1_{\Omega}(x)\gamma_{\adimn}(x)\,\d x
+\epsilon \pen\\
&=\int_{\R^{\adimn}}[1_{\Omega}-1_{U_{1}}+1_{U_{2}}](x)T_{\rho}[1_{\Omega}-1_{U_{1}}+1_{U_{2}}]\gamma_{\adimn}(x)\,\d x\\
&\qquad\qquad-\epsilon \Big[\int_{\widetilde{\Omega}}\Phi\Big(\frac{\beta \langle \nu(z-z^{(1)}+z^{(2)}),x\rangle-\alpha}{\sqrt{1-\rho^{2}}}\Big)\gamma_{\adimn}(x)dx\Big]^{2}\\
&\qquad-\int_{\R^{\adimn}}1_{\Omega}(x)T_{\rho}1_{\Omega}(x)\gamma_{\adimn}(x)\,\d x
+\epsilon \pen\\
&=2\int_{\R^{\adimn}}[-1_{U_{1}}+1_{U_{2}}](x)\Big(T_{\rho}(1_{\Omega})(x)-\epsilon\Phiz\Big)\gamma_{\adimn}(x)\,\d x\\
&\qquad\qquad
+2\int_{\R^{\adimn}}[1_{U_{1}}-1_{U_{2}}](x)T_{\rho}[1_{U_{1}}-1_{U_{2}}](x)\gamma_{\adimn}(x)\,\d x.
\end{flalign*}
Rearranging the terms and using \eqref{zdef}, the previous quantity is
\begin{flalign*}
&2\int_{\R^{\adimn}}[-1_{U_{j}}+1_{U_{k}}](x)\Big(T_{\rho}[1_{\Omega_{j}}-1_{\Omega_{k}}](x)-\epsilon\Phiz\Big)\gamma_{\adimn}(x)\,\d x\\
&\qquad\qquad\qquad+2\int_{\R^{\adimn}}[1_{U_{j}}-1_{U_{k}}]T_{\rho}[1_{U_{j}}-1_{U_{k}}]\gamma_{\adimn}(x)\,\d x\\
&\qquad\qquad\qquad-2\epsilon\vnorm{\int_{\R^{\adimn}}\Phiz[1_{U_{j}}-1_{U_{k}}](x)\gamma_{\adimn}(x)\,\d x}^{2}.
\end{flalign*}
The first quantity is positive by \eqref{zero9.0}.  The remaining two quantities sum to a nonnegative number by Mehler's formula \eqref{Height} and Lemma \ref{erfourier}
\begin{flalign*}
&\int_{\R^{\adimn}}[1_{U_{j}}-1_{U_{k}}]T_{\rho}[1_{U_{j}}-1_{U_{k}}]\gamma_{\adimn}(x)\,\d x\\
&\qquad\qquad\qquad\qquad
-\epsilon\Big\|\int_{\R^{\adimn}}\Phiz[1_{U_{j}}-1_{U_{k}}](x)\gamma_{\adimn}(x)\,\d x\Big\|^{2}\\
&\stackrel{\eqref{Height}\wedge\eqref{nseq}}{=}\sum_{k=0}^{\infty}[\rho^{k}-\epsilon\cdot1_{\{k=1\}}]
\sum_{\substack{\ell\in\N^{\adimn}\colon\\ \|\ell\|_{1}=k}}\ell!
\Big[\int_{\R^{\adimn}}(1_{U_{j}}(x)-1_{U_{k}}(x))h_{\ell}(x)\gamma_{\adimn}(x)\,\d x\Big]^{2}
\geq0.
\end{flalign*}
The last inequality used $\epsilon\leq e^{-\alpha^{2}\cdot\max\left(0,\frac{\beta}{2\lambda}-\frac{1}{2}\right)}$ and Lemma \ref{erfourier}, $c_{0}\colonequals \Phi(\alpha)=a$, $c_{1}\colonequals \frac{\beta}{\sqrt{2\pi}}e^{-\alpha^{2}/2}$, $\abs{c_{k}}\leq (\beta+\lambda)^{k}e^{\alpha^{2}\cdot\max\left(0,\frac{\beta}{2\lambda}-\frac{1}{2}\right)}$ for all $k\geq2$.  In conclusion, we have contradicted the maximality of $\Omega_{1},\ldots,\Omega_{m}$.  We conclude that \eqref{zero8} holds.

We now upgrade \eqref{zero8} by examining the level sets of
$$T_{\rho}(1_{\Omega})(x)-\epsilon\Phiz,\qquad\forall\,x\in\R^{\adimn}.$$
Fix $c\in\R$ and consider the level set
$$\Sigma\colonequals\{x\in\R^{\adimn}\colon T_{\rho}(1_{\Omega})(x)-\epsilon\Phiz=c \}.$$
This level set has Hausdorff dimension at most $\sdimn$ by \cite[Theorem 2.3]{chen98}.

From the Strong Unique Continuation Property for the heat equation \cite{lin90}, $T_{\rho}(1_{\Omega})(x)$ does not vanish to infinite order at any $x\in\R^{\adimn}$, so the argument of \cite[Lemma 1.9]{hardt89} (see \cite[Proposition 1.2]{lin94} and also \cite[Theorem 2.1]{chen98}) shows that in a neighborhood of each $x\in\Sigma$, $\Sigma$ can be written as a finite union of $C^{\infty}$ manifolds.  That is, there exists a neighborhood $U$ of $x$ and there exists an integer $k\geq1$ such that
\begin{flalign*}
U\cap\Sigma&=\cup_{i=1}^{k}\Big\{y\in U\colon D^{i}\Big[T_{\rho}(1_{\Omega})(x)-\epsilon\Phiz\Big]\neq 0,\\
&\qquad\qquad\qquad\quad D^{j}\Big[T_{\rho}(1_{\Omega})(x)-\epsilon\Phiz\Big]=0,\,\,\forall\,1\leq j\leq i-1\Big\}.
\end{flalign*}
Here $D^{i}$ denotes the array of all iterated partial derivatives of order $i\geq1$.  We therefore have
$$\Sigma\colonequals\redb\Omega\supset\Big\{x\in\R^{\adimn}\colon T_{\rho}(1_{\Omega})(x)-\epsilon\Phiz=c \Big\},$$
and the Lemma follows.
\end{proof}

From Lemma \ref{reglem} and Definition \ref{rbdef}, if $x\in\Sigma$, then the exterior pointing unit normal vector $N(x)\in\R^{\adimn}$ is well-defined on $\Sigma$, $\partial\Omega\setminus\Sigma$ has Hausdorff dimension at most $\sdimn-1$, and
\begin{equation}\label{zero11}
N(x)=\pm\frac{\overline{\nabla} T_{\rho}(1_{\Omega})(x)-\epsilon\Phizderiv}{\Big\|\overline{\nabla} T_{\rho}(1_{\Omega})(x)-\epsilon\Phizderiv\Big\|},\qquad\forall\,x\in\Sigma.
\end{equation}
In Lemma \ref{lemma7p} below we will show that the negative sign holds in \eqref{zero11} when $\Omega$ maximizes Problem \ref{prob2z}.


%

\begin{lemma}\label{gradlem}
Let $x\in\R^{\adimn}\setminus\{0\}$.  Fix $v\in\R^{\adimn}$.  Then
$$\overline{\nabla}\frac{\langle x,v\rangle}{\vnorm{x}}
=\frac{v}{\vnorm{x}}-\frac{\langle x,v\rangle \frac{x}{\vnorm{x}}}{\vnorm{x}^{2}}
=\frac{v-\langle \frac{x}{\vnorm{x}},v\rangle \frac{x}{\vnorm{x}}}{\vnorm{x}}
\equalscolon\frac{\mathrm{Proj}_{x^{\perp}}(v)}{\vnorm{x}}$$
\end{lemma}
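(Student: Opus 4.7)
The statement is a routine calculus identity, so my plan is short.

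I would compute the gradient componentwise using the product rule. Write $\frac{\langle x,v\rangle}{\vnorm{x}} = \langle x,v\rangle \cdot \vnorm{x}^{-1}$, and use the two elementary gradient identities $\overline{\nabla}\langle x,v\rangle = v$ (since $v$ is a fixed vector) and $\overline{\nabla}\vnorm{x}^{-1} = -x/\vnorm{x}^{3}$ (valid for $x\neq 0$). Applying the product rule gives
\begin{equation*}
\overline{\nabla}\frac{\langle x,v\rangle}{\vnorm{x}}
= \frac{v}{\vnorm{x}} + \langle x,v\rangle\cdot\Bigl(-\frac{x}{\vnorm{x}^{3}}\Bigr)
= \frac{v}{\vnorm{x}} - \frac{\langle x,v\rangle\,x}{\vnorm{x}^{3}},
\end{equation*}
which is the first displayed expression (after writing $x/\vnorm{x}^{3} = (x/\vnorm{x})/\vnorm{x}^{2}$).

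From here I would just algebraically rearrange: factor $1/\vnorm{x}$ out of both terms to obtain
\begin{equation*}
\frac{v - \langle x/\vnorm{x},v\rangle\, x/\vnorm{x}}{\vnorm{x}},
\end{equation*}
and then observe that the numerator is exactly the orthogonal projection of $v$ onto the hyperplane $x^{\perp}$ perpendicular to $x$, since $\langle x/\vnorm{x},v\rangle\, x/\vnorm{x}$ is the component of $v$ along the unit vector $x/\vnorm{x}$. This yields the final form $\mathrm{Proj}_{x^{\perp}}(v)/\vnorm{x}$, by definition of $\mathrm{Proj}_{x^{\perp}}$.

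There is no real obstacle here; the three displayed expressions are three algebraic rewritings of the same quantity, and the only subtlety is that $x\neq 0$ is needed to differentiate $\vnorm{x}^{-1}$, which is precisely the hypothesis. So the proof amounts to one application of the product rule followed by two rearrangements.
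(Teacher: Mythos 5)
Your proof is correct and is the standard product-rule computation; the paper in fact states this lemma without proof, treating it as routine, and your argument fills in exactly the expected details. Nothing further is needed.
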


\begin{lemma}\label{gradlemv2}
Let $x\in\R^{\adimn}$.  Fix $v,w\in\R^{\adimn}$.  Let $z=z^{(s)}$ for all $s\in(-1,1)$.  
\begin{flalign*}
&\frac{\d}{\d s}\Big|_{s=0}\frac{\langle v,w\rangle-\langle \frac{z}{\vnorm{z}},v\rangle \langle \frac{z}{\vnorm{z}}, w\rangle}{\vnorm{z}}\\
&\qquad=\Big[-\langle z',v\rangle\langle w,z\rangle  -\langle z',w\rangle\langle v,z\rangle  -\langle v,w\rangle\langle z,z'\rangle+\frac{3}{\vnorm{z}^{2}}\langle v,z\rangle\langle w,z\rangle\langle z,z'\rangle\Big]/\vnorm{z}^{3}.
\end{flalign*}
\end{lemma}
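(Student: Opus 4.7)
The statement is a straightforward calculus exercise: we need to differentiate a scalar-valued function of the time-dependent vector $z = z^{(s)}$ at $s = 0$. The plan is to rewrite the expression in a form amenable to the product and chain rules, carry out the differentiation term by term, and then group everything over a common denominator of $\vnorm{z}^{3}$.

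First I would expand the numerator by distributing, which turns the expression into
\begin{equation*}
F(z) \colonequals \frac{\langle v,w\rangle}{\vnorm{z}} - \frac{\langle v,z\rangle\langle w,z\rangle}{\vnorm{z}^{3}} = \langle v,w\rangle\,\vnorm{z}^{-1} - \langle v,z\rangle\langle w,z\rangle\,\vnorm{z}^{-3}.
\end{equation*}
This form is much easier to differentiate than the original, since each factor is either a polynomial in the components of $z$ or a power of $\vnorm{z}$.

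Next I would record the basic derivative identities I need: writing $z' \colonequals \frac{\d}{\d s}\big|_{s=0} z^{(s)}$, we have $\frac{\d}{\d s}\big|_{s=0}\langle v,z\rangle = \langle v,z'\rangle$, and by the chain rule applied to $\vnorm{z}^{2} = \langle z,z\rangle$ one gets $\frac{\d}{\d s}\big|_{s=0}\vnorm{z}^{-1} = -\vnorm{z}^{-3}\langle z,z'\rangle$ and $\frac{\d}{\d s}\big|_{s=0}\vnorm{z}^{-3} = -3\vnorm{z}^{-5}\langle z,z'\rangle$. Applying the product rule to each of the two summands of $F(z)$ and collecting,
\begin{equation*}
\frac{\d}{\d s}\Big|_{s=0} F(z) = -\frac{\langle v,w\rangle\langle z,z'\rangle}{\vnorm{z}^{3}} - \frac{\langle v,z'\rangle\langle w,z\rangle + \langle v,z\rangle\langle w,z'\rangle}{\vnorm{z}^{3}} + \frac{3\langle v,z\rangle\langle w,z\rangle\langle z,z'\rangle}{\vnorm{z}^{5}}.
\end{equation*}
Factoring out $\vnorm{z}^{-3}$ from every term yields exactly the claimed identity.

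There is no real obstacle here; the only thing to be careful about is the bookkeeping of signs and the factor of $3$ coming from differentiating $\vnorm{z}^{-3}$ rather than $\vnorm{z}^{-1}$. Implicitly one uses $z \neq 0$ at $s = 0$ so that $\vnorm{z}^{-1}$ and $\vnorm{z}^{-3}$ are smooth at that point, which is consistent with the hypothesis in the surrounding Lemmas (e.g.\ $\vnorm{z} \geq z_{0} > 0$ in Theorem \ref{mainthm1} and Theorem \ref{mainthm2}); this ensures the differentiation above is legitimate.
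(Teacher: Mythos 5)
Your proof is correct and is essentially the paper's own computation: both are direct applications of the product and chain rules to the same scalar function of $z^{(s)}$, and your expanded form $\langle v,w\rangle\vnorm{z}^{-1}-\langle v,z\rangle\langle w,z\rangle\vnorm{z}^{-3}$ produces exactly the terms the paper obtains by instead differentiating through $\nu(z)=z/\vnorm{z}$ via the projection formula of Lemma \ref{gradlem} and then collecting the three $\vnorm{z}^{-2}\langle v,z\rangle\langle w,z\rangle\langle z,z'\rangle$ terms into the factor of $3$. The sign and power bookkeeping checks out, and your remark that $z\neq0$ is needed for smoothness is the right caveat.
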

\begin{proof}
\begin{flalign*}
&\frac{\d}{\d s}\Big|_{s=0}\frac{\langle v,w\rangle-\langle \frac{z}{\vnorm{z}},v\rangle \langle \frac{z}{\vnorm{z}}, w\rangle}{\vnorm{z}}\\
&=\frac{-\langle z',\frac{v-\langle \frac{z}{\vnorm{z}},v\rangle \frac{z}{\vnorm{z}}}{\vnorm{z}}\rangle\langle \frac{z}{\vnorm{z}}, w\rangle
-\langle \frac{z}{\vnorm{z}},v\rangle \langle \frac{w-\langle \frac{z}{\vnorm{z}},w\rangle \frac{z}{\vnorm{z}}}{\vnorm{z}}, z'\rangle}{\vnorm{z}}
-\frac{\Big[\langle v,w\rangle-\langle \frac{z}{\vnorm{z}},v\rangle \langle \frac{z}{\vnorm{z}}, w\rangle\Big]\frac{\langle z,z'\rangle}{\vnorm{z}}}{\vnorm{z}^{2}}\\
&=\Big[-\langle z',v\rangle\langle w,z\rangle +\frac{1}{\vnorm{z}^{2}}\langle z',z\rangle\langle v,z\rangle\langle w,z\rangle   -\langle z',w\rangle\langle v,z\rangle +\frac{1}{\vnorm{z}^{2}}\langle w,z\rangle\langle z,z'\rangle\langle v,z\rangle\\
&\qquad\qquad -\langle v,w\rangle\langle z,z'\rangle+\frac{1}{\vnorm{z}^{2}}\langle v,z\rangle\langle w,z\rangle\langle z,z'\rangle\Big]/\vnorm{z}^{3}.
\end{flalign*}
\end{proof}

\section{First and Second Variation}

In this section, we recall some standard facts for variations of sets with respect to the Gaussian measure.
Our first and second variation formulas will be written in terms of $G$, as defined in \eqref{gdef}.


\begin{lemma}[\embolden{The First Variation}\,{\cite{chokski07}}; also {\cite[Lemma 3.1, Equation (7)]{heilman14}}]\label{latelemma3}
Let $X\in C_{0}^{\infty}(\R^{\adimn},\R^{\adimn})$.  Let $\Omega\subset\R^{\adimn}$ be a measurable set such that $\partial\Omega$ is a locally finite union of $C^{\infty}$ manifolds.  Let $\{\Omega^{(s)}\}_{s\in(-1,1)}$ be the corresponding variation of $\Omega$.  Then
\begin{equation}\label{Bone6}
\frac{\d}{\d s}\Big|_{s=0}\int_{\R^{\adimn}} 1_{\Omega^{(s)}}(y)G(x,y)\,\d y
=\int_{\partial \Omega}G(x,y)\langle X(y),N(y)\rangle \,\d y.
\end{equation}
\end{lemma}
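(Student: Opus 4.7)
The plan is to pull the derivative inside the integral via a change of variables to the fixed domain $\Omega$, differentiate both the integrand and the Jacobian at $s=0$, recognize the result as a divergence, and then invoke the divergence theorem using the regularity hypothesis on $\partial\Omega$.

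First I would apply the diffeomorphism $\Psi(\cdot,s)\colon\R^{\adimn}\to\R^{\adimn}$ from \eqref{nine2.3} to rewrite
\[
\int_{\R^{\adimn}}1_{\Omega^{(s)}}(y)G(x,y)\,\d y
=\int_{\Omega}G(x,\Psi(z,s))\,\bigl|\det D_{z}\Psi(z,s)\bigr|\,\d z.
\]
Since $X\in C_{0}^{\infty}(\R^{\adimn},\R^{\adimn})$, the flow $\Psi$ is smooth with $\Psi(z,0)=z$, $D_z\Psi(z,0)=I$, and $X$ is compactly supported, so both $G(x,\Psi(z,s))$ and the Jacobian are bounded uniformly in $s$ on any compact set, justifying differentiation under the integral by dominated convergence.

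Next I would compute the two relevant derivatives at $s=0$. By the chain rule together with \eqref{nine2.3},
\[
\frac{\d}{\d s}\Big|_{s=0}G(x,\Psi(z,s))=\bigl\langle \overline{\nabla}_{y}G(x,z),\,X(z)\bigr\rangle,
\]
and the standard Jacobi formula gives
\[
\frac{\d}{\d s}\Big|_{s=0}\det D_{z}\Psi(z,s)=\mathrm{div}\,X(z).
\]
(Since $D_z\Psi(z,0)=I$ has positive determinant, the absolute value can be dropped for $s$ near $0$.) Combining these,
\[
\frac{\d}{\d s}\Big|_{s=0}\int_{\R^{\adimn}}1_{\Omega^{(s)}}(y)G(x,y)\,\d y
=\int_{\Omega}\Bigl[\bigl\langle \overline{\nabla}_{y}G(x,z),X(z)\bigr\rangle+G(x,z)\,\mathrm{div}\,X(z)\Bigr]\d z
=\int_{\Omega}\mathrm{div}_{z}\bigl[G(x,z)X(z)\bigr]\d z.
\]

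Finally I would apply the divergence theorem. The hypothesis that $\partial\Omega$ is a locally finite union of $C^{\infty}$ manifolds, together with the compact support of $X$, means only finitely many pieces of $\partial\Omega$ carry the integrand, and the outward unit normal $N(y)$ is well defined $\sdimn$-dimensional Hausdorff–almost everywhere on $\partial\Omega$. Hence
\[
\int_{\Omega}\mathrm{div}_{z}\bigl[G(x,z)X(z)\bigr]\d z=\int_{\partial\Omega}G(x,y)\,\langle X(y),N(y)\rangle\,\d y,
\]
which is the stated identity. The only step that requires any care is the divergence theorem application at the (possibly singular) lower-dimensional stratum of $\partial\Omega$, but since that stratum has Hausdorff dimension at most $\sdimn-1$ and thus measure zero with respect to the surface integral, it does not contribute and the standard $C^{\infty}$-manifold form of the divergence theorem suffices.
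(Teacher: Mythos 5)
Your proof is correct: the change of variables to the fixed domain via the flow $\Psi(\cdot,s)$, the Jacobi formula for the derivative of the Jacobian, and the divergence theorem together give exactly \eqref{Bone6}, and your handling of the compact support of $X$ (to justify differentiation under the integral) and of the lower-dimensional singular stratum of $\partial\Omega$ (which is negligible for the surface integral) is appropriate. The paper does not prove this lemma itself but cites it to the literature, and the argument you give is the standard one used there, so there is nothing further to compare.
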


The following Lemma is a consequence of \eqref{Bone6} and Lemma \ref{reglem}.  As usual, we denote $z^{(s)}\colonequals\int_{\Omega_{s}}x\gamma_{\adimn}(x)\,\d x\in\R^{\adimn}$, $z\colonequals z^{(0)}$, $z'\colonequals \int_{\Sigma}x\gamma_{\adimn}(x)\, \d x$, $\nu(z)\colonequals z/\vnorm{z}$ when $z\neq0$ and $\nu(0)\colonequals(1,0,\ldots,0)$.

\begin{lemma}[\embolden{The First Variation for Maximizers}]\label{firstvarmaxns}
Suppose $\Omega\subset\R^{\adimn}$ maximize Problem \ref{prob2z}.  Assume $z\neq0$.  Then there exists $c\in\R$ such that
$$T_{\rho}(1_{\Omega})(x)-\epsilon a_{0}\Big[\Phiz+\langle x,\zeta\,\rangle\Big]=c,\qquad\forall\,x\in\Sigma.$$
Moreover, $\abs{c}\leq2(1+\abs{\alpha})$.  Here $ a_{0}$ and $\zeta$ are defined in \eqref{atildef} and \eqref{ztildef}, respectively.
\end{lemma}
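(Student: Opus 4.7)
The plan is to perform a first-variation argument at the maximizer $\Omega$. Fix a vector field $X \in C_0^\infty(\R^{\adimn}; \R^{\adimn})$ and let $\{\Omega^{(s)}\}_{s\in(-1,1)}$ be the induced variation defined in \eqref{nine2.3}; by Lemma \ref{reglem}, $\Sigma = \redb\Omega$ is sufficiently regular for the boundary integrals below to be well defined, and since $z \neq 0$ by hypothesis, $z^{(s)} \colonequals \int_{\Omega^{(s)}} y\gamma_{\adimn}(y)\,dy$ stays away from the origin for small $|s|$, so $\nu(z^{(s)})$ is smooth in $s$ at $s=0$. The three terms in the functional of Problem \ref{prob2z} are then differentiated separately at $s=0$ and the Euler--Lagrange equation is extracted from maximality.

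For the noise stability term, Lemma \ref{latelemma3} combined with the symmetry $G(x,y)=G(y,x)$ and the identity $\int_\Omega G(x,y)\,dx = T_\rho 1_\Omega(y) \gamma_{\adimn}(y)$ (a direct consequence of \eqref{oudef} and \eqref{gdef}) gives
\begin{equation*}
\frac{d}{ds}\Big|_{s=0}\int_{\R^{\adimn}} 1_{\Omega^{(s)}}(x) T_\rho 1_{\Omega^{(s)}}(x) \gamma_{\adimn}(x)\,dx = 2\int_\Sigma T_\rho 1_\Omega(y) \gamma_{\adimn}(y) \langle X(y), N(y)\rangle\,dy.
\end{equation*}
For the penalty $\epsilon[A(s)]^2$ with $A(s) \colonequals \pensnosq$ and $a_0\colonequals A(0)$ (this is the $a_0$ of \eqref{atildef}), the chain rule yields $2\epsilon a_0 A'(0)$, and $A'(0)$ splits into a boundary part from the moving domain, equal to $\int_\Sigma \Phiz\, \gamma_{\adimn}(x)\langle X(x),N(x)\rangle\,dx$, and an interior part from the dependence of $\Phi$ on $\nu(z^{(s)})$. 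For the interior part, Lemma \ref{gradlem} gives
\begin{equation*}
\frac{d}{ds}\Big|_{s=0}\langle\nu(z^{(s)}),x\rangle = \Big\langle \frac{x - \langle\nu(z),x\rangle\nu(z)}{\vnorm{z}},\, z'\Big\rangle, \qquad z' \colonequals \int_\Sigma y\,\gamma_{\adimn}(y)\langle X,N\rangle\,dy,
\end{equation*}
so the interior contribution rewrites as $\langle z', \zeta\rangle$ for the vector $\zeta$ matching \eqref{ztildef}. Pulling $z'$ back to $\Sigma$, this equals $\int_\Sigma \langle x,\zeta\rangle\, \gamma_{\adimn}(x)\langle X,N\rangle\,dx$, so the first variation of the first two terms of the functional consolidates into the single boundary integral $2\int_\Sigma [T_\rho 1_\Omega(x) - \epsilon a_0(\Phiz + \langle x,\zeta\rangle)]\,\gamma_{\adimn}(x)\langle X,N\rangle\,dx$.

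The third term $-2(1+\abs{\alpha})\abs{\gamma_{\adimn}(\Omega^{(s)}) - a}$ is handled via one-sided derivatives. Setting $v\colonequals \int_\Sigma \gamma_{\adimn}(x)\langle X,N\rangle\,dx$, one has $\tfrac{d}{ds}\big|_{s=0}\gamma_{\adimn}(\Omega^{(s)}) = v$. If $\gamma_{\adimn}(\Omega)\neq a$ the functional is smooth near $\Omega$, and two-sided vanishing of the first variation together with the fundamental lemma of the calculus of variations forces the bracket $T_\rho 1_\Omega - \epsilon a_0(\Phiz + \langle\cdot,\zeta\rangle)$ to equal the constant $(1+\abs{\alpha})\,\mathrm{sign}(\gamma_{\adimn}(\Omega)-a)$ on $\Sigma$. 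If $\gamma_{\adimn}(\Omega)=a$, the one-sided derivatives of $\abs{\gamma_{\adimn}(\Omega^{(s)})-a}$ at $s=0$ equal $\abs{v}$ from both sides, and maximality $\tfrac{d}{ds}\big|_{s=0^+}F\leq 0$, $\tfrac{d}{ds}\big|_{s=0^-}F\geq 0$ combine to yield
\begin{equation*}
\Big|\int_\Sigma \big[T_\rho 1_\Omega(x)-\epsilon a_0(\Phiz + \langle x,\zeta\rangle)\big] f(x)\gamma_{\adimn}(x)\,dx\Big| \leq (1+\abs{\alpha})\Big|\int_\Sigma f(x)\gamma_{\adimn}(x)\,dx\Big|
\end{equation*}
for every $f=\langle X,N\rangle$, $X\in C_0^\infty(\R^{\adimn};\R^{\adimn})$. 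Testing with $f$ of zero $\gamma_{\adimn}$-average forces the left side to vanish, and a standard $L^2(\Sigma,\gamma_{\adimn})$ duality argument (using that $\{\langle X,N\rangle : X\in C_0^\infty\}$ is dense in $C_c^\infty(\Sigma)$ by the local $C^\infty$-manifold structure of Lemma \ref{reglem}) shows the bracket is a constant $c$ on each connected component of $\Sigma$; testing with $f$ of nonzero average gives $\abs{c}\leq 1+\abs{\alpha}\leq 2(1+\abs{\alpha})$.

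The principal conceptual work is the interior/boundary decomposition of the penalty first variation and its reduction to a single boundary integral on $\Sigma$: this is precisely what makes the linear correction $\langle x,\zeta\rangle$ appear in the Euler--Lagrange equation, and it is an artefact of the fact that the penalty in Problem \ref{prob2z} is quadratic rather than linear in $1_\Omega$. Once this reduction is in place, everything else is routine --- differentiation under the integral sign is justified by dominated convergence, the fundamental lemma and $L^2$ duality are standard, and Lemma \ref{reglem} supplies the regularity needed for $N$ and the boundary integrals to make sense. The slightly weaker bound $\abs{c}\leq 2(1+\abs{\alpha})$ stated in the lemma is non-tight; the argument above in fact yields $\abs{c}\leq 1+\abs{\alpha}$ uniformly in the three cases for $\gamma_{\adimn}(\Omega)$ vs.\ $a$.
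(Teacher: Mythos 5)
Your proposal is correct and follows essentially the same route as the paper: first variation of the noise stability term via Lemma \ref{latelemma3}, the boundary-plus-interior decomposition of the penalty's derivative producing the $\langle x,\zeta\rangle$ correction via Lemma \ref{gradlem}, and the comparison against the one-sided derivative of the volume-penalty term to force constancy and bound $c$. Your sharper bound $\abs{c}\leq 1+\abs{\alpha}$ is consistent with the paper's stated (weaker) $\abs{c}\leq 2(1+\abs{\alpha})$; just note that the mean-zero testing must be applied across different components of $\Sigma$ so that the constant is global rather than merely componentwise.
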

\begin{proof}
Denote $f(x)\colonequals\langle X(x),N(x)\rangle$ for all $x\in\Sigma$.  From Lemma \ref{latelemma3},
\begin{flalign*}
\frac{1}{2}\frac{\d}{\d s}\Big|_{s=0}\int_{\R^{\adimn}}1_{\Omega^{(s)}}(x)T_{\rho}1_{\Omega^{(s)}}(x)\gamma_{\adimn}(x)\,\d x
&=\int_{\Omega}G(x,y)\int_{\Sigma}\langle X(x),N(x)\rangle \,\d x \,\d y\\
&\!\!\stackrel{\eqref{oudef}\wedge\eqref{gdef}}{=}\int_{\Sigma}T_{\rho}(1_{\Omega})(x)f(x)\gamma_{\adimn}(x)\,\d x.
\end{flalign*}
Similarly, using $G(x,y)\colonequals\langle x,y\rangle\gamma_{\adimn}(x)\gamma_{\adimn}(y)$ for all $x,y\in\R^{\adimn}$ in Lemma \ref{latelemma3},
\begin{equation}\label{zderiv}
\begin{aligned}
&\frac{1}{2}\frac{\d}{\d s}\Big|_{s=0}\vnormf{z^{(s)}}^{2}
=\frac{1}{2}\frac{\d}{\d s}\Big|_{s=0}\Big\|\int_{\Omega^{(s)}}x\gamma_{\adimn}(x)\,\d x\Big\|^{2}\\
&\quad=\int_{\Omega}\langle x,y\rangle\gamma_{\adimn}(y)\int_{\Sigma}\langle X(x),N(x)\rangle\gamma_{\adimn}(x)\,\d x \,\d y
\stackrel{\eqref{zdef}}{=}\int_{\Sigma}\langle x,z\rangle f(x)\gamma_{\adimn}(x)\,\d x
=\langle z,z'\rangle.
\end{aligned}
\end{equation}
Define
\begin{equation}\label{ztildef}
\zeta\colonequals\int_{\Omega}\Phizdd\nu'(z)y\gamma_{\adimn}(y)\,\d y.
\end{equation}
Then using $G(x,y)\colonequals\Phi\Big(\frac{\beta\langle x,z\rangle-\alpha}{\sqrt{1-\beta^{2}}}\Big)\Phi\Big(\frac{\beta\langle y,z\rangle-\alpha}{\sqrt{1-\beta^{2}}}\Big)\gamma_{\adimn}(x)\gamma_{\adimn}(y)$ for all $x,y\in\R^{\adimn}$ in Lemma \ref{latelemma3} and the product rule,
\begin{flalign*}
&\frac{\d}{\d s}\Big|_{s=0}\pensnosq\\
&=\int_{\Sigma}\Phiz f(x)\gamma_{\adimn}(x)\,\d x
+\int_{\Omega}\Phizdd\frac{\d}{\d s}\Big|_{s=0}\langle y,\nu(z^{(s)})\rangle\gamma_{\adimn}(y)\,\d y \\
&=\int_{\Sigma}\Phiz f(x)\gamma_{\adimn}(x)\,\d x
+\int_{\Omega}\Phizdd\langle \nu'(z)y,z'\rangle\gamma_{\adimn}(y)\,\d y \\
&\stackrel{\eqref{zderiv}}{=}\int_{\Sigma}\Phiz f(x)\gamma_{\adimn}(x)\,\d x\\
&\qquad\qquad+\int_{\Sigma}  f(x)\Big\langle x,\int_{\Omega}\Phizdd \nu'(z)y\gamma_{\adimn}(y)\,\d y\Big\rangle\gamma_{\adimn}(x)\,\d x \\
&=\int_{\Sigma}\Phiz f(x)\gamma_{\adimn}(x)\,\d x
+\int_{\Sigma}  f(x)\langle x,\zeta\rangle\gamma_{\adimn}(x)\,\d x.
\end{flalign*}

\begin{equation}\label{atildef}
 a_{0}\colonequals\pennosq.
\end{equation}
In summary,
\begin{flalign*}
&\frac{1}{2}\frac{\d}{\d s}\Big|_{s=0}\Big[\int_{\R^{\adimn}}1_{\Omega^{(s)}}(x)T_{\rho}1_{\Omega^{(s)}}(x)\gamma_{\adimn}(x)\,\d x
-\epsilon\pens\Big]\\
&\qquad\qquad\qquad\qquad=\int_{\Sigma}\Big(T_{\rho}(1_{\Omega})(x)-\epsilon a_{0}\Big[\Phiz+\langle x,\zeta\rangle\Big]\Big)f(x)\gamma_{\adimn}(x)\,\d x.
\end{flalign*}
If $T_{\rho}(1_{\Omega})(x)-\epsilon a_{0}\Big[\Phiz+\langle x,\zeta\rangle\Big]$ is nonconstant, then we can construct $f$ with $\int_{\redb\Omega}f(x)\gamma_{\adimn}(x)dx=0$ to give a nonzero derivative:
$$\frac{\d}{\d s}\Big|_{s=0}\Big[\int_{\R^{\adimn}}1_{\Omega^{(s)}}(x)T_{\rho}1_{\Omega^{(s)}}(x)\gamma_{\adimn}(x)\,\d x
-\epsilon\pens\Big]\neq0,
$$
$$\frac{\d}{\d s}\Big|_{s=0}\gamma_{\adimn}(\Omega^{(s)})=0,
$$
contradicting the maximality of $\Omega$ (as in Lemma \ref{reglem} and \eqref{zero9.0}).  (The last equality used Lemma \ref{latelemma3} with $G(x,y)=\gamma_{\adimn}(y)$.)

Finally, in the case of a general vector field $X$ supported in $\Sigma$, we have
\begin{flalign*}
&\frac{1}{2}\frac{\d}{\d s}\Big|_{s=0}\Big[\int_{\R^{\adimn}}1_{\Omega^{(s)}}(x)T_{\rho}1_{\Omega^{(s)}}(x)\gamma_{\adimn}(x)\,\d x
-\epsilon\pens\Big]\\
&\qquad\qquad\qquad\qquad\qquad\qquad\qquad\qquad\qquad\qquad\qquad\qquad\qquad=\int_{\Sigma}cf(x)\,\d x.
\end{flalign*}
$$
\frac{\d}{\d s}\Big|_{s=0}\gamma_{\adimn}(\Omega^{(s)})=\int_{\Sigma}f(x)\gamma_{\adimn}(x)\,\d x.
$$
So, $\absf{\frac{1}{2}\frac{\d}{\d s}|_{s=0}\gamma_{\adimn}(\Omega^{(s)})}$ equals $0$ or $\absf{\int_{\Sigma}f(x)\gamma_{\adimn}(x)\,\d x}$.  Since $\Omega\subset\R^{\adimn}$ maximize Problem \ref{prob2z}, in either case, it follows that $\abs{c}\leq2(1+\abs{\alpha})$.
%
\end{proof}

\begin{theorem}[\embolden{General Second Variation Formula}, {\cite[Theorem 2.6]{chokski07}}; also {\cite[Theorem 1.10]{heilman15}}]\label{thm4}
Let $X\in C_{0}^{\infty}(\R^{\adimn},\R^{\adimn})$.  Let $\Omega\subset\R^{\adimn}$  be a measurable set such that $\partial\Omega$ is a locally finite union of $C^{\infty}$ manifolds.  Let $\{\Omega^{(s)}\}_{s\in(-1,1)}$ be the corresponding variation of $\Omega$.  Define $V$ as in \eqref{two9c}.  Then
\begin{flalign*}
&\frac{1}{2}\frac{\d^{2}}{\d s^{2}}\Big|_{s=0}\int_{\R^{\adimn}} \int_{\R^{\adimn}} 1_{\Omega^{(s)}}(y)G(x,y) 1_{\Omega^{(s)}}(x)\,\d x\d y\\
&\quad=\int_{\redA}\int_{\redA}G(x,y)\langle X(x),N(x)\rangle\langle X(y),N(y)\rangle \,\d x\d y
+\int_{\redA}\mathrm{div}(V(x,0)X(x))\langle X(x),N(x)\rangle \,\d x.
\end{flalign*}

\end{theorem}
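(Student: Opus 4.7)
The plan is to differentiate the bilinear functional $F(s) \colonequals \int_{\R^{\adimn}}\int_{\R^{\adimn}} 1_{\Omega^{(s)}}(x) G(x,y) 1_{\Omega^{(s)}}(y) \,\d x\,\d y = \int_{\Omega^{(s)}} V(x,s)\,\d x$ twice, combining Lemma \ref{latelemma3} with the divergence theorem at each step. The symmetry $G(x,y) = G(y,x)$ will be essential for collapsing the two mixed terms produced by the product rule.

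For the first derivative, the Reynolds transport identity (an immediate consequence of Lemma \ref{latelemma3}) together with the computation $\partial_{s} V(x,0) = \int_{\redb\Omega} G(x,y)\langle X(y), N(y)\rangle\,\d y$ (another application of Lemma \ref{latelemma3}, now in the $y$ variable) yields, by Fubini and symmetry of $G$,
$$
F'(s) = 2\int_{\redb\Omega^{(s)}} V(x,s)\langle X(x), N^{(s)}(x)\rangle\,\d x
      = 2\int_{\Omega^{(s)}} \mathrm{div}_{x}\bigl(V(\cdot,s)\,X(\cdot)\bigr)(x)\,\d x,
$$
where the last equality applies the divergence theorem to the vector field $V(\cdot,s)X$.

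Differentiating once more via the same Reynolds transport identity, at $s=0$ this produces
$$
\tfrac{1}{2}F''(0) = \int_{\Omega} \mathrm{div}\bigl(\partial_{s}V(\cdot,0)\,X(\cdot)\bigr)(x)\,\d x + \int_{\redb\Omega} \mathrm{div}\bigl(V(\cdot,0)X(\cdot)\bigr)(x)\langle X(x), N(x)\rangle\,\d x.
$$
The second term already matches the claim. For the first term I would apply the divergence theorem one more time, substitute the expression for $\partial_{s}V(x,0)$ computed above, and recognize the result as the double boundary integral $\int_{\redb\Omega}\int_{\redb\Omega} G(x,y)\langle X(x), N(x)\rangle\langle X(y), N(y)\rangle\,\d x\,\d y$ appearing in the theorem.

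The main technical obstacle is justifying the divergence theorem and Reynolds transport when $\partial\Omega$ is only a locally finite union of smooth manifolds rather than globally smooth. Since $X$ has compact support in $\R^{\adimn}$ and the singular set $\partial\Omega\setminus\redb\Omega$ has $\sdimn$-dimensional Hausdorff measure zero, the integrals over $\redb\Omega$ are well-defined and the contributions from the lower-dimensional strata vanish. The smoothness and Gaussian decay of $G$ in \eqref{gdef} then let all pointwise $s$-differentiations be justified by dominated convergence on the compact support of $X$.
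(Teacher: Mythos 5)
Your derivation is correct. Note that the paper itself gives no proof of Theorem \ref{thm4} --- it is quoted verbatim from Choksi--Sternberg and from the earlier Heilman reference --- so there is nothing internal to compare against; but your route (write the functional as $\int_{\Omega^{(s)}}V(x,s)\,\d x$, apply the transport identity built from Lemma \ref{latelemma3} twice, use $G(x,y)=G(y,x)$ and Fubini to merge the two mixed first-order terms into $2\int_{\redb\Omega^{(s)}}V\langle X,N\rangle$, then convert between boundary and bulk integrals with the Gauss--Green theorem for sets of locally finite perimeter) is exactly the standard argument used in those references, and each step checks out: the second transport application yields $\int_{\Omega}\mathrm{div}(\partial_{s}V(\cdot,0)X)\,\d x+\int_{\redb\Omega}\mathrm{div}(V(\cdot,0)X)\langle X,N\rangle\,\d x$, and one further divergence theorem plus the formula $\partial_{s}V(x,0)=\int_{\redb\Omega}G(x,y)\langle X(y),N(y)\rangle\,\d y$ gives the double boundary integral. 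Your closing remarks on why the Gauss--Green theorem applies (smoothness and decay of $G$, compact support of $X$, negligibility of $\partial\Omega\setminus\redb\Omega$) address the only genuine technical points.
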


\section{Noise Stability and the Calculus of Variations}\label{secnoise}

We now further refine the first and second variation formulas from the previous section.  The following formula follows by using $G(x,y)\colonequals\gamma_{\adimn}(x)\gamma_{\adimn}(y)$ $\forall$ $x,y\in\R^{\adimn}$ in Lemma \ref{latelemma3} and in Theorem \ref{thm4}.

\begin{lemma}[\embolden{Variations of Gaussian Volume}, {\cite{ledoux01}}]\label{lemma41}
Let $\Omega\subset\R^{\adimn}$  be a measurable set such that $\partial\Omega$ is a locally finite union of $C^{\infty}$ manifolds.  Let $X\in C_{0}^{\infty}(\R^{\adimn},\R^{\adimn})$.  Let $\{\Omega^{(s)}\}_{s\in(-1,1)}$ be the corresponding variation of $\Omega$.  Denote $f(x)\colonequals\langle X(x),N(x)\rangle$ for all $x\in\Sigma\colonequals \redb\Omega $.  Then
$$\frac{\d}{\d s}\Big|_{s=0}\gamma_{\adimn}(\Omega^{(s)})=\int_{\Sigma}f(x)\gamma_{\adimn}(x)\,\d x.$$
$$\frac{\d^{2}}{\d s^{2}}\Big|_{s=0}\gamma_{\adimn}(\Omega^{(s)})=\int_{\Sigma}(\mathrm{div}(X)-\langle X,x\rangle)f(x)\gamma_{\adimn}(x)\,\d x.$$
\end{lemma}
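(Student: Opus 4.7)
The plan is to derive both identities by specializing the variation formulas from Lemma \ref{latelemma3} and Theorem \ref{thm4} to product Gaussian kernels. For the first identity, I would apply Lemma \ref{latelemma3} with $G(x,y)\colonequals\gamma_{\adimn}(y)$ (a function only of $y$); writing $\gamma_{\adimn}(\Omega^{(s)})=\int_{\R^{\adimn}}1_{\Omega^{(s)}}(y)\gamma_{\adimn}(y)\,\d y$, identity \eqref{Bone6} yields immediately
$$\frac{\d}{\d s}\Big|_{s=0}\gamma_{\adimn}(\Omega^{(s)})=\int_{\Sigma}\gamma_{\adimn}(y)\langle X(y),N(y)\rangle\,\d y=\int_{\Sigma}f(y)\gamma_{\adimn}(y)\,\d y.$$

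For the second identity, Theorem \ref{thm4} is stated for a symmetric double integral, so the plan is to apply it with $G(x,y)\colonequals\gamma_{\adimn}(x)\gamma_{\adimn}(y)$, which makes the integrand there equal to $1_{\Omega^{(s)}}(x)1_{\Omega^{(s)}}(y)\gamma_{\adimn}(x)\gamma_{\adimn}(y)$ and hence its double integral equal to $\gamma_{\adimn}(\Omega^{(s)})^{2}$. In this case $V(x,0)$ from \eqref{two9c} becomes $\gamma_{\adimn}(x)\gamma_{\adimn}(\Omega)$, and using the Gaussian gradient identity $\overline{\nabla}\gamma_{\adimn}(x)=-x\gamma_{\adimn}(x)$ one computes
$$\mathrm{div}(V(x,0)X(x))=\gamma_{\adimn}(\Omega)\gamma_{\adimn}(x)\bigl(\mathrm{div}(X)-\langle X,x\rangle\bigr).$$
Theorem \ref{thm4} then yields
$$\frac{1}{2}\frac{\d^{2}}{\d s^{2}}\Big|_{s=0}\gamma_{\adimn}(\Omega^{(s)})^{2}=\Big(\int_{\Sigma}f(x)\gamma_{\adimn}(x)\,\d x\Big)^{2}+\gamma_{\adimn}(\Omega)\int_{\Sigma}\bigl(\mathrm{div}(X)-\langle X,x\rangle\bigr)f(x)\gamma_{\adimn}(x)\,\d x.$$

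To extract the stated formula I would now expand the left-hand side by the chain rule,
$$\frac{1}{2}\frac{\d^{2}}{\d s^{2}}\Big|_{s=0}\gamma_{\adimn}(\Omega^{(s)})^{2}=\gamma_{\adimn}(\Omega)\frac{\d^{2}}{\d s^{2}}\Big|_{s=0}\gamma_{\adimn}(\Omega^{(s)})+\Big(\frac{\d}{\d s}\Big|_{s=0}\gamma_{\adimn}(\Omega^{(s)})\Big)^{2},$$
substitute the first-variation formula already proved, cancel the $\bigl(\int_{\Sigma}f\gamma_{\adimn}\bigr)^{2}$ term appearing on both sides, and divide through by $\gamma_{\adimn}(\Omega)>0$. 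There is no real obstacle here; the only nontrivial input is the identity $\overline{\nabla}\gamma_{\adimn}=-x\gamma_{\adimn}$, which produces the characteristic $-\langle X,x\rangle$ correction distinguishing this Gaussian second-variation formula from the Lebesgue analogue. The degenerate case $\gamma_{\adimn}(\Omega)=0$ must be excluded from this argument, but it is trivial since then $\Sigma$ is empty up to a null set and both sides vanish (equivalently, one can argue directly from the change-of-variables formula $\gamma_{\adimn}(\Omega^{(s)})=\int_{\Omega}\gamma_{\adimn}(\Psi(x,s))|\det D_{x}\Psi(x,s)|\,\d x$ together with Jacobi's formula and Gaussian integration by parts).
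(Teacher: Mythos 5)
Your proposal is correct and is essentially the paper's own argument: the paper proves this lemma by the one-line remark that one should take $G(x,y)\colonequals\gamma_{\adimn}(x)\gamma_{\adimn}(y)$ in Lemma \ref{latelemma3} and Theorem \ref{thm4}, which is exactly what you do (your choice $G(x,y)=\gamma_{\adimn}(y)$ for the first variation is trivially equivalent). You have merely made explicit the chain-rule bookkeeping for $\gamma_{\adimn}(\Omega^{(s)})^{2}$ and the computation $\mathrm{div}(\gamma_{\adimn}(x)X(x))=\gamma_{\adimn}(x)(\mathrm{div}(X)-\langle X,x\rangle)$ that the paper leaves implicit.
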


\begin{lemma}[\embolden{Extension Lemma for Existence of Volume-Preserving Variations}, {\cite[Lemma 3.9]{heilman18}}]\label{lemma27}
Let $X'\in C_{0}^{\infty}(\R^{\adimn},\R^{\adimn})$ be a vector field.  Define $f\colonequals\langle X',N\rangle\in C_{0}^{\infty}(\Sigma)$.  If
\begin{equation}\label{eight2}
\int_{\Sigma}f(x)\gamma_{\sdimn}(x)\,\d x=0,
\end{equation}
then $X'|_{\Sigma}$ can be extended to a vector field $X\in C_{0}^{\infty}(\R^{\adimn},\R^{\adimn})$ such that the corresponding variation $\{\Omega^{(s)}\}_{s\in(-1,1)}$ satisfy
$$\forall\,s\in(-1,1),\quad \gamma_{\adimn}(\Omega^{(s)})=\gamma_{\adimn}(\Omega).$$
\end{lemma}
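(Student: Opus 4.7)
My plan is to perturb $X'$ by a correction term in a fixed auxiliary direction whose magnitude is chosen via the implicit function theorem so as to enforce Gaussian volume preservation for all $s$ in the variation, not merely at first order (which the hypothesis $\int_{\Sigma} f \gamma_{\adimn}(x)\,\d x = 0$ already supplies via Lemma \ref{lemma41}).

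Concretely, I would fix a vector field $Y \in C_{0}^{\infty}(\R^{\adimn}, \R^{\adimn})$ with $\int_{\Sigma} \langle Y, N\rangle \gamma_{\adimn}(x)\,\d x = 1$; such $Y$ exists because the linear functional $W \mapsto \int_{\Sigma}\langle W, N\rangle\gamma_{\adimn}(x)\,\d x$ on $C_{0}^{\infty}(\R^{\adimn},\R^{\adimn})$ is not identically zero (any smooth compactly supported multiple of a suitably chosen constant direction works, after rescaling). For each small $\mu$, let $\Psi_{\mu}(\cdot,s)$ denote the autonomous flow of the vector field $X' + \mu Y$, and set $G(s,\mu) \colonequals \gamma_{\adimn}(\Psi_{\mu}(\Omega,s))$. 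Since $\Psi_{\mu}(\cdot,0)$ is the identity, $G(0,\mu) = \gamma_{\adimn}(\Omega)$ for every $\mu$, so the difference quotient $H(s,\mu) \colonequals (G(s,\mu) - \gamma_{\adimn}(\Omega))/s$ extends smoothly across $s=0$ with
\[
H(0,\mu) = \partial_{s} G(0,\mu) = \int_{\Sigma}\langle X' + \mu Y, N\rangle \gamma_{\adimn}(x)\,\d x = \mu
\]
by Lemma \ref{lemma41} and the hypothesis on $X'$. Then $H(0,0) = 0$ and $\partial_\mu H(0,0) = 1 \neq 0$, and the implicit function theorem yields a smooth $\mu(s)$ with $\mu(0)=0$ and $H(s,\mu(s)) = 0$; equivalently, $\gamma_{\adimn}(\Psi_{\mu(s)}(\Omega,s)) = \gamma_{\adimn}(\Omega)$ for $s$ in a neighborhood of $0$. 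Setting $\Omega^{(s)} \colonequals \Psi_{\mu(s)}(\Omega,s)$ produces a Gaussian-volume-preserving family whose infinitesimal generator at $s=0$ equals $X' + \mu(0) Y = X'$, matching the prescribed boundary data $X'|_{\Sigma}$.

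The main obstacle will be promoting this $s$-dependent construction into a genuinely autonomous extension $X \in C_{0}^{\infty}(\R^{\adimn}, \R^{\adimn})$ as the lemma states, because $\Psi_{\mu(s)}(\cdot,s)$ is not a priori the time-$s$ map of any single autonomous field. I plan to accomplish this by writing $X \colonequals X' + \phi Y$ and solving the linear transport equation
\[
\langle \nabla \phi, Y\rangle + \phi(\mathrm{div}(Y) - \langle Y, x\rangle) = -(\mathrm{div}(X') - \langle X', x\rangle)
\]
along the integral curves of $Y$ issuing from $\Sigma$, with initial condition $\phi|_{\Sigma} = 0$, after choosing $Y$ transverse to $\Sigma$ so that the characteristics cover a tubular neighborhood of $\Omega$. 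The hypothesis $\int_{\Sigma}f\gamma_{\adimn}\,\d x = 0$ is precisely the compatibility condition making $\int_{\Omega}\mathrm{div}(X\gamma_{\adimn})\,\d x = 0$, consistent with the pointwise identity $\mathrm{div}(X\gamma_{\adimn}) \equiv 0$ that results, and this pointwise identity forces $\int_{\Sigma^{(s)}}\langle X, N^{(s)}\rangle \gamma_{\adimn}\,\d x = 0$ for all small $s$ via the divergence theorem; hence the autonomous flow of $X$ preserves $\gamma_{\adimn}(\Omega^{(s)})$, giving the required variation.
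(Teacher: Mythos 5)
Your first paragraph is, in substance, the proof of the cited result \cite[Lemma 3.9]{heilman18} (following \cite{barchiesi16}): a two-parameter family of flows plus the implicit function theorem produces a volume-preserving family $\{\Omega^{(s)}\}$ whose normal velocity at $s=0$ is $f$, and that is all that is actually used downstream (in Lemma \ref{lemma7p} only the vanishing of the first and second $s$-derivatives of $\gamma_{\adimn}(\Omega^{(s)})$ at $s=0$ matters). As you yourself note, though, $\Psi_{\mu(s)}(\cdot,s)$ is not the time-$s$ map of an autonomous field, so this part alone does not prove the lemma as literally stated, with $X\in C_{0}^{\infty}(\R^{\adimn},\R^{\adimn})$ generating the variation in the sense of \eqref{nine2.3}.

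The transport-equation upgrade is the right kind of idea but has genuine gaps. First, the characteristics of $Y$ issuing from $\Sigma$ cover only a tubular neighborhood of $\Sigma$ (not ``of $\Omega$''), and only where $Y$ is transverse to $\Sigma$; by Lemma \ref{reglem}, $\Sigma=\redb\Omega$ is merely a locally finite union of $C^{\infty}$ sheets, possibly unbounded and with crossings, so a characteristic of a single compactly supported $Y$ may meet $\Sigma$ more than once, over-determining the Cauchy data $\phi|_{\Sigma}=0$; moreover $\phi Y$ must be cut off to lie in $C_{0}^{\infty}$, which destroys $\mathrm{div}(X\gamma_{\adimn})=0$ wherever the cutoff varies. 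Second, since the identity $\mathrm{div}(X\gamma_{\adimn})\equiv0$ can therefore at best be arranged on a neighborhood $U$ of $\Sigma\cap\mathrm{supp}(X')$, your concluding step --- that the pointwise identity ``forces'' $\int_{\Sigma^{(s)}}\langle X,N^{(s)}\rangle\gamma_{\adimn}\,\d x=0$ --- does not follow from it alone. The correct bookkeeping is $\frac{\d}{\d s}\gamma_{\adimn}(\Omega^{(s)})=\int_{\Omega^{(s)}}\mathrm{div}(X\gamma_{\adimn}(x))\,\d x$, together with $\Omega^{(s)}\Delta\Omega\subset U$ for small $s$ and $\int_{\Omega}\mathrm{div}(X\gamma_{\adimn}(x))\,\d x=\int_{\Sigma}f(x)\gamma_{\adimn}(x)\,\d x=0$; in particular the hypothesis \eqref{eight2} is not a ``compatibility condition'' for the (always locally solvable, non-characteristic) Cauchy problem, as you assert, but the ingredient that kills the contribution from outside $U$. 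Until the transversality, multiple-crossing, and cutoff issues are resolved and this integral argument is made explicit, the second construction does not close.
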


\begin{lemma}\label{gpsd}
Define $G\colon\R^{\adimn}\times\R^{\adimn}\to\R$ by \eqref{gdef}.  Let $f\colon\Sigma\to\R$ be continuous and compactly supported.  Then
$$
\int_{\redA}\int_{\redA}G(x,y)f(x)f(y) \,\d x\d y\geq0.
$$
\end{lemma}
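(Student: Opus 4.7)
The plan is to expand the kernel $G$ via Mehler's formula \eqref{Height}/\eqref{gdef} into a convergent series of rank-one positive semidefinite pieces, interchange sum and integral, and observe that each term is a nonnegative multiple of a square.

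First I would use the last equality in \eqref{gdef}, namely
$$G(x,y)=\gamma_{\adimn}(x)\gamma_{\adimn}(y)\sum_{k=0}^{\infty}\rho^{k}\sum_{\substack{\ell\in\N^{\adimn}\colon\\ \|\ell\|_{1}=k}}\ell!\,h_{\ell}(x)h_{\ell}(y).$$
Since $f$ is continuous with compact support, there exists $R>0$ with $\operatorname{supp}(f)\subset B(0,R)\cap\Sigma$, so the double integral is effectively over a compact set $K\times K\subset\R^{\adimn}\times\R^{\adimn}$. For $0\le\rho<1$ (the regime of interest in the paper), Mehler's series converges absolutely and uniformly on $K\times K$; this is a classical fact which one can verify by combining the elementary polynomial-in-$\|x\|$ bounds on $h_{\ell}$ with the geometric decay of $\rho^{k}$, or by observing that the truncated sum $G_{K}(x,y)\colonequals\gamma_{\adimn}(x)\gamma_{\adimn}(y)\sum_{k=0}^{K}\rho^{k}\sum_{\|\ell\|_{1}=k}\ell!h_{\ell}(x)h_{\ell}(y)$ converges to the explicit closed-form $G(x,y)$ given in the second line of \eqref{gdef}, uniformly on compact sets.

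Next, applying Fubini to each finite truncation $G_{K}$ and using the product structure $h_{\ell}(x)h_{\ell}(y)$ yields
$$\int_{\redA}\int_{\redA}G_{K}(x,y)f(x)f(y)\,\d x\,\d y=\sum_{k=0}^{K}\rho^{k}\sum_{\substack{\ell\in\N^{\adimn}\colon\\\|\ell\|_{1}=k}}\ell!\left(\int_{\Sigma}f(x)h_{\ell}(x)\gamma_{\adimn}(x)\,\d x\right)^{2}\ge 0,$$
since every coefficient $\rho^{k}\ell!$ is nonnegative and each remaining factor is the square of a real number. Letting $K\to\infty$ and invoking the uniform convergence $G_{K}\to G$ on $K\times K$ (the support of $f(x)f(y)$), the left-hand side converges to $\int_{\Sigma}\int_{\Sigma}G(x,y)f(x)f(y)\,\d x\,\d y$, and the limit is still nonnegative.

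The main obstacle is the justification of the interchange of summation and integration, i.e.\ controlling the Mehler series on a compact set. This is routine but must be checked: one can bound the $\ell$-th Hermite term by $C_{R}^{\|\ell\|_{1}}(\|\ell\|_{1}!)^{-1/2}$ on $K$ and combine with the multinomial count $\#\{\ell\in\N^{\adimn}\colon\|\ell\|_{1}=k\}=\binom{k+\sdimn}{\sdimn}$ to get a geometric majorant in $\rho^{k}$, valid for any $\rho$ strictly less than one; this legitimizes the passage to the limit above and completes the proof.
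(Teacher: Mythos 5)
Your proof is correct, and it reaches the conclusion by a more direct route than the paper. The paper's proof also starts from Mehler's formula, but it first records that $G$ is a positive semidefinite integral kernel for functions on all of $\R^{\adimn}$, invokes Mercer's theorem to pass to the pointwise statement $\sum_{i,j}\beta_i\beta_j G(z^{(i)},z^{(j)})\geq0$, and then restricts the points $z^{(i)}$ to $\partial\Omega$; the return trip from that matrix inequality to the surface-measure integral inequality on $\Sigma$ is left implicit and itself requires an approximation (Riemann-sum or Mercer-type) argument. You instead expand $G$ directly under the double surface integral, so that each truncation is manifestly a finite sum of terms $\rho^{k}\ell!\big(\int_{\Sigma}f\,h_{\ell}\,\gamma_{\adimn}\big)^{2}\geq0$, and the only analytic input is uniform convergence of Mehler's series on the compact support of $f\otimes f$ together with finiteness of the surface measure there (which Lemma \ref{reglem} supplies). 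This is cleaner and self-contained. One small imprecision: the majorant $C_{R}^{\|\ell\|_1}(\|\ell\|_1!)^{-1/2}$ you propose for the Hermite terms would, after multiplying two such factors by $\ell!$, give a geometric factor $C_R^{2k}$ that defeats $\rho^{k}$ when $C_R$ is large; the correct classical bound is Cram\'er-type, $\absf{h_{\ell}(x)}\sqrt{\ell!}\leq C e^{\vnormf{x}^{2}/4}$ (or $\absf{h_\ell(x)}\leq C_{R,\epsilon}(1+\epsilon)^{\ell}/\sqrt{\ell!}$ on $B(0,R)$ for every $\epsilon>0$), which makes $\ell!\absf{h_\ell(x)h_\ell(y)}$ uniformly bounded on compacts and lets the geometric decay of $\rho^{k}$ against the polynomial count $\binom{k+\sdimn}{\sdimn}$ do the rest. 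With that correction your argument is complete.
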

\begin{proof}
If $g\colon\R^{\adimn}\to\R$ is continuous and compactly supported, then it follows from \eqref{Height} that
$$
\int_{\redA}\int_{\redA}G(x,y)g(x)g(y) \,\d x\d y\geq0,
$$%
From Mercer's Theorem, this is equivalent to: $\forall$ $p\geq1$, for all $z^{(1)},\ldots,z^{(p)}\in\R^{n}$, for all $\beta_{1},\ldots,\beta_{p}\in\R$,
$$\sum_{i,j=1}^{p}\beta_{i}\beta_{j}G(z^{(i)},z^{(j)})\geq0.$$
In particular, this holds for all $z^{(1)},\ldots,z^{(p)}\in\partial\Omega\subset\R^{\adimn}$.  So, the positive semidefinite property carries over (by restriction) to $\partial\Omega$.
\end{proof}

%
%

\begin{lemma}[\embolden{Second Variation}]\label{lemma6}
Let $\Omega\subset\R^{\adimn}$  be a measurable set such that $\partial\Omega$ is a locally finite union of $C^{\infty}$ manifolds.  Let $X\in C_{0}^{\infty}(\R^{\adimn},\R^{\adimn})$.  Let $\{\Omega^{(s)}\}_{s\in(-1,1)}$ be the corresponding variation of $\Omega$.  Denote $f(x)\colonequals\langle X(x),N(x)\rangle$ for all $x\in\Sigma\colonequals \redb\Omega $.  Let $z\colonequals\int_{\Omega}x\gamma_{\adimn}(x)\,\d x$, $z^{(s)}\colonequals\int_{\Omega^{(s)}}x\gamma_{\adimn}(x)\,\d x$.  Define $ a_{0}$ and $\zeta$ by \eqref{atildef} and \eqref{ztildef}, respectively.  Then
\begin{equation}\label{four30}
\begin{aligned}
&\frac{1}{2}\frac{\d^{2}}{\d s^{2}}\Big|_{s=0}\Big[\int_{\Omega^{(s)}}\int_{\Omega^{(s)}} G(x,y) \,\d x\d y-\epsilon\pens\Big]\\
&=\int_{\redA}\int_{\redA}G(x,y)f(x)f(y)\,\d x\d y-\epsilon\Big(\int_{\redA}\Big[\Phiz +\langle x,\zeta\rangle\Big]f(x)\gamma_{\adimn}(x)\,\d x\Big)^{2}\\
&\qquad-\epsilon a_{0}\int_{\redA}\Big(\Phizdd\langle \nu'(z)x,z'\rangle+\langle x,\zeta'\rangle\Big)f(x)\gamma_{\adimn}(x)\,\d x\\
&\qquad+\int_{\redA}\Big\langle\overline{\nabla} T_{\rho}1_{\Omega}(x)-\epsilon a_{0}\Big(\Phizderiv+\zeta\Big),X(x)\Big\rangle f(x) \gamma_{\adimn}(x)\,\d x\\
&+\int_{\redA} \!\Big[T_{\rho}1_{\Omega}(x)-\epsilon a_{0}\Big[\Phiz+\langle x,\zeta\,\rangle\Big]\Big]\!\Big(\mathrm{div}(X(x))-\langle X(x),x\rangle\Big)f(x)\gamma_{\adimn}(x)\,\d x.
\end{aligned}
\end{equation}
\end{lemma}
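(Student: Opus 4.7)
The plan is to separate the claim into two independent computations: the second variation of the pure noise-stability term $\int_{\Omega^{(s)}}\int_{\Omega^{(s)}} G(x,y)\,\d x\,\d y$, and the second variation of the penalty $-\epsilon\,\pens$. The first is a direct application of Theorem \ref{thm4}. For that piece, one sets $V(x,0) = \int_{\Omega} G(x,y)\,\d y$, recognizes from \eqref{gdef} and \eqref{oudef} that $V(x,0) = \gamma_{\adimn}(x)\, T_{\rho}1_{\Omega}(x)$, and then expands $\mathrm{div}(V(x,0) X(x))$ by the product rule using $\overline{\nabla}\gamma_{\adimn}(x) = -x\,\gamma_{\adimn}(x)$. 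This gives
$$\mathrm{div}(V(x,0)X(x)) = \gamma_{\adimn}(x)\bigl[T_{\rho}1_{\Omega}(x)(\mathrm{div}(X)-\langle X,x\rangle) + \langle \overline{\nabla}T_{\rho}1_{\Omega}(x), X(x)\rangle\bigr],$$
which supplies the $\int_{\redA}\int_{\redA} G\,f\,f$ term in \eqref{four30}, the $T_{\rho}1_{\Omega}(\mathrm{div}(X)-\langle X,x\rangle)$ contribution in the last integral, and the $\langle\overline{\nabla}T_{\rho}1_{\Omega},X\rangle$ contribution in the second-to-last integral.

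For the penalty, write $A(s) \colonequals \int_{\Omega^{(s)}}\Phi\bigl(\tfrac{\beta\langle\nu(z^{(s)}),x\rangle-\alpha}{\sqrt{1-\beta^{2}}}\bigr)\gamma_{\adimn}(x)\,\d x$, so $\pens = A(s)^{2}$ and $\tfrac{1}{2}\tfrac{\d^{2}}{\d s^{2}}\big|_{s=0} A(s)^{2} = (A'(0))^{2} + A(0)\, A''(0)$. By \eqref{atildef}, $A(0) = a_{0}$, and the computation performed inside Lemma \ref{firstvarmaxns} already shows $A'(0) = \int_{\Sigma}\bigl[\Phiz + \langle x,\zeta\rangle\bigr]f(x)\gamma_{\adimn}(x)\,\d x$, accounting for the squared term of \eqref{four30} upon multiplying by $-\epsilon$.

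To compute $A''(0)$, I would split $A'(s) = A'_{1}(s) + A'_{2}(s)$, where $A'_{1}(s) = \int_{\Sigma^{(s)}}\phi(x,z^{(s)})f^{(s)}(x)\gamma_{\adimn}(x)\,\d x$ is the moving-boundary contribution and $A'_{2}(s) = \int_{\Omega^{(s)}}\overline{\nabla}_{z}\phi(x,z^{(s)})\cdot (z^{(s)})'\,\gamma_{\adimn}(x)\,\d x = \langle \zeta^{(s)},(z^{(s)})'\rangle$ is the chain-rule contribution from the $z^{(s)}$-dependence, with $\phi(x,z) \colonequals \Phi\bigl(\tfrac{\beta\langle\nu(z),x\rangle-\alpha}{\sqrt{1-\beta^{2}}}\bigr)$. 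Differentiating $A'_{1}$ at $s=0$ by the moving-domain first variation (the analog of Lemma \ref{lemma41} applied to the $s$-dependent integrand $\phi(x,z^{(s)})$) yields $\int_{\Sigma}\langle\overline{\nabla}\Phiz, X\rangle f\gamma_{\adimn}\,\d x$, $\int_{\Sigma}\Phiz(\mathrm{div}(X)-\langle X,x\rangle) f\gamma_{\adimn}\,\d x$, and $\int_{\Sigma}\Phizdd\langle\nu'(z)x,z'\rangle f\gamma_{\adimn}\,\d x$. Since $\overline{\nabla}\Phiz$ equals the vector $\Phizderiv$, these match the $\Phizderiv$ and $\Phiz(\mathrm{div}(X)-\langle X,x\rangle)$ contributions in \eqref{four30}. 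Differentiating $A'_{2}(s) = \langle\zeta^{(s)},(z^{(s)})'\rangle$ at $s=0$ and converting the resulting expressions into surface integrals via the identity $(z^{(s)})' = \int_{\Sigma^{(s)}} y f^{(s)}(y)\gamma_{\adimn}(y)\,\d y$ (the same scalarization used in the proof of Lemma \ref{firstvarmaxns}) supplies the $\langle\zeta,X\rangle$, $\langle x,\zeta\rangle(\mathrm{div}(X)-\langle X,x\rangle)$, and $\langle x,\zeta'\rangle$ contributions. Collecting everything and multiplying by $-\epsilon a_{0}$ reconstructs \eqref{four30}.

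The main obstacle is bookkeeping: one must track how $\phi$ depends on $s$ through both the integration domain and $z^{(s)}$, and reduce each vector-valued derivative (of $\nu$, $\zeta$, $z$) to a scalar surface integral against $f = \langle X,N\rangle$. Once the right identifications are made, Theorem \ref{thm4}, Lemma \ref{lemma41}, and the scalarization identity $(z^{(s)})' = \int_{\Sigma^{(s)}} y f^{(s)}(y)\gamma_{\adimn}(y)\,\d y$ together force precisely the six surface-integral terms in \eqref{four30} to appear.
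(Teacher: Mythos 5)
Your proposal is correct and takes essentially the same route as the paper: Theorem \ref{thm4} with $V(x,0)=\gamma_{\adimn}(x)T_{\rho}1_{\Omega}(x)$ and the product rule for $\mathrm{div}(V(x,0)X(x))$ handle the noise-stability term, while the penalty is handled via $\tfrac{1}{2}(A^{2})''=(A')^{2}+AA''$ with $A'(0)$ read off from the computation in Lemma \ref{firstvarmaxns} and $A''(0)$ split into the moving-boundary piece and the $z^{(s)}$ chain-rule piece (Lemmas \ref{lemma41} and \ref{gradlem}), exactly as in the paper's displayed derivation. No gaps.
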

\begin{proof}
For all $x\in\R^{\adimn}$, we have $V(x,0)\stackrel{\eqref{two9c}}{=}\int_{\Omega}G(x,y)\,\d y\stackrel{\eqref{oudef}}{=}\gamma_{\adimn}(x)T_{\rho}1_{\Omega}(x)$.  So, from Theorem \ref{thm4},
\begin{flalign*}
&\frac{1}{2}\frac{\d^{2}}{\d s^{2}}\Big|_{s=0}\int_{\R^{\adimn}}\int_{\R^{\adimn}} 1_{\Omega^{(s)}}(y)G(x,y) 1_{\Omega^{(s)}}(x)\,\d x\d y\\
&\qquad\qquad\qquad\qquad=\int_{\redA}\int_{\redA}G(x,y)\langle X(x),N(x)\rangle\langle X(y),N(y)\rangle \,\d x\d y\\
&\qquad\qquad\qquad\qquad\qquad+\int_{\redA}\Big(\sum_{i=1}^{\adimn}[T_{\rho}1_{\Omega}(x]\frac{\partial}{\partial x_{i}}X_{i}(x)-x_{i}T_{\rho}1_{\Omega}(x)X_{i}(x)\\
&\qquad\qquad\qquad\qquad\qquad\qquad\,\,+\frac{\partial}{\partial x_{i}}[T_{\rho}1_{\Omega}(x)]X_{i}(x)\Big)\langle X(x),N(x)\rangle \gamma_{\adimn}(x)\,\d x.
\end{flalign*}
Also, by Lemmas \ref{lemma41} and \ref{gradlem}
\begin{flalign*}
&\frac{1}{2}\frac{\d^{2}}{\d s^{2}}\Big|_{s=0}\pens\\
&\quad=\Big(\int_{\redA}\Big[\Phiz +\langle x,\zeta\rangle\Big]f(x)\gamma_{\adimn}(x)\,\d x\Big)^{2}\\
&\quad+ a_{0}\int_{\redA}\Big(\Phiz[\mathrm{div}(X)-\langle X,x\rangle]+\Phizdd\langle X,\nu(z)\rangle\Big)f(x)\gamma_{\adimn}(x)\,\d x\\
&\quad\qquad+ a_{0}\frac{\d}{\d s}\Big|_{s=0}\int_{\redA}\Big(\Phizs+\langle x,\zeta^{(s)}\rangle\Big)f(x)\gamma_{\adimn}(x)\,\d x\\
&\quad\qquad\qquad\,\,+ a_{0}\int_{\redA}\Big(\langle x,\zeta\rangle[\mathrm{div}(X)-\langle X,x\rangle]+\langle X,\zeta\rangle\Big)f(x)\gamma_{\adimn}(x)\,\d x\\
&\quad=\Big(\int_{\redA}\Phiz f(x)\gamma_{\adimn}(x)\,\d x\Big)^{2}\\
&\quad+ a_{0}\int_{\redA}\Big(\Phiz [\mathrm{div}(X)-\langle X,x\rangle]+\Phizdd\langle X,\nu(z)\rangle\Big)f(x)\gamma_{\adimn}(x)\,\d x\\
&\quad\qquad+ a_{0}\int_{\redA}\Big(\Phizdd\langle \nu'(z)x,z'\rangle+\langle x,\zeta'\rangle\Big)f(x)\gamma_{\adimn}(x)\,\d x\\
&\quad\qquad\qquad\,\,+ a_{0}\int_{\redA}\Big(\langle x,\zeta\rangle[\mathrm{div}(X)-\langle X,x\rangle]+\langle X,\zeta\rangle\Big)f(x)\gamma_{\adimn}(x)\,\d x.
\end{flalign*}
That is, \eqref{four30} holds.
\end{proof}

\begin{lemma}[\embolden{Volume Preserving Second Variation of Maximizers}]\label{lemma7p}
Suppose $\Omega\subset\R^{\adimn}$ maximizes Problem \ref{prob2z}.  Let $\epsilon<\frac{(1-\rho)^{2}z_{0}^{2}}{\rho10e^{\alpha^{2}\cdot\max\left(0,\frac{\beta}{\rho-\beta}-1\right)}}$, where $\vnorm{z}\geq z_{0}>0$.  Let $\{\Omega^{(s)}\}_{s\in(-1,1)}$ be the corresponding variation of $\Omega$.  Denote $f(x)\colonequals\langle X(x),N(x)\rangle$, $\forall$ $x\in\Sigma\colonequals \redb\Omega$, $z\colonequals\int_{\Omega}x\gamma_{\adimn}(x)\,\d x\in\R^{\adimn}$.  If
$$\int_{\Sigma}f(x)\gamma_{\adimn}(x)\,\d x=0,$$
Then there exists an extension of the vector field $X|_{\Sigma}$ such that the corresponding variation $\{\Omega^{(s)}\}_{s\in(-1,1)}$ satisfies
\begin{equation}\label{four32p}
\begin{aligned}
&\frac{1}{2}\frac{\d^{2}}{\d s^{2}}\Big|_{s=0}\Big[\int_{\Omega^{(s)}}\int_{\Omega^{(s)}} G(x,y) \,\d x\d y-\epsilon\pens\Big]\\
&=\int_{\redA}\int_{\redA}G(x,y)f(x)f(y)\,\d x\d y-\epsilon\Big(\int_{\redA}\Big[\Phiz+\langle x,\zeta\rangle\Big] f(x)\gamma_{\adimn}(x)\,\d x\Big)^{2}\\
&\qquad-\epsilon a_{0}\int_{\redA}\Big(\Phizdd\langle \nu'(z)x,z'\rangle+\langle x,\zeta'\rangle\Big)f(x)\gamma_{\adimn}(x)\,\d x\\
&\qquad-\int_{\redA}\Big\|\overline{\nabla} T_{\rho}1_{\Omega}(x)-\epsilon a_{0}\Big(\Phizderiv+\zeta\Big)\Big\| \abs{f(x)}^{2} \gamma_{\adimn}(x)\,\d x.
\end{aligned}
\end{equation}
Moreover,
\begin{equation}\label{nabeq2}
\begin{aligned}
&\overline{\nabla} T_{\rho}1_{\Omega}(x)-\epsilon a_{0}\Big(\Phizderiv+\zeta\Big)\\
&\qquad=-N(x)\Big\|\overline{\nabla} T_{\rho}1_{\Omega}(x)-\epsilon a_{0}\Big(\Phizderiv+\zeta\Big)\Big\|,
\qquad\forall\,x\in\Sigma.
\end{aligned}
\end{equation}
Lastly, $\vnormf{\overline{\nabla} T_{\rho}(1_{\Omega})(x)-\epsilon\Phizderiv}>0$ for all $x\in\Sigma$, except on a set of Hausdorff dimension at most $\sdimn-1$.
\end{lemma}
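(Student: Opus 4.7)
The plan is to derive \eqref{four32p} from the general second variation formula \eqref{four30} of Lemma \ref{lemma6} by exploiting two facts special to a maximizer: the Euler--Lagrange identity from Lemma \ref{firstvarmaxns}, and the level-set structure of $\Sigma$ which forces the penalized gradient to be normal to $\Sigma$. First I would apply Lemma \ref{lemma27}: since $\int_{\Sigma} f(x)\gamma_{\adimn}(x)\,\d x=0$, the given boundary data $X|_{\Sigma}$ extends to a vector field $X\in C_{0}^{\infty}(\R^{\adimn},\R^{\adimn})$ whose induced variation satisfies $\gamma_{\adimn}(\Omega^{(s)})=\gamma_{\adimn}(\Omega)$ for all $s$. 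In particular $\frac{\d^{2}}{\d s^{2}}|_{s=0}\gamma_{\adimn}(\Omega^{(s)})=0$, so by Lemma \ref{lemma41} the quantity $\int_{\Sigma}(\mathrm{div}(X)-\langle X,x\rangle)f(x)\gamma_{\adimn}(x)\,\d x$ vanishes.

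By Lemma \ref{firstvarmaxns} there exists $c\in\R$ with $F(x)\colonequals T_{\rho}1_{\Omega}(x)-\epsilon a_{0}[\Phiz+\langle x,\zeta\rangle]\equiv c$ on $\Sigma$. Plugging this identity into the fifth integrand of \eqref{four30} factors $c$ out of the integral, and the preceding vanishing makes that whole term disappear. For the fourth integrand, a direct differentiation (using Lemma \ref{gradlem} applied to $\langle\nu(z),x\rangle=\langle z,x\rangle/\vnorm{z}$ and $\overline{\nabla}_{x}\langle x,\zeta\rangle=\zeta$) gives
\[
\overline{\nabla}F(x)=\overline{\nabla}T_{\rho}1_{\Omega}(x)-\epsilon a_{0}\Bigl(\Phizderiv+\zeta\Bigr),
\]
which is exactly the bracketed vector appearing in \eqref{four30} and \eqref{nabeq2}. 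Since $F\equiv c$ on $\Sigma$, the gradient $\overline{\nabla}F$ is normal to $\Sigma$, so $\overline{\nabla}F(x)=\pm N(x)\vnormf{\overline{\nabla}F(x)}$. To fix the sign I would invoke the inclusions \eqref{zero8} proved in Lemma \ref{reglem}, which say $\Omega\supset\{F>c\}$ and $\Omega^{c}\supset\{F<c\}$; so $F$ strictly decreases as one crosses $\Sigma$ from interior to exterior, i.e.\ along $N(x)$, forcing the minus sign in \eqref{nabeq2}. Substituting \eqref{nabeq2} into the fourth integrand of \eqref{four30} and using $\langle X(x),N(x)\rangle=f(x)$ produces $-\int_{\Sigma}\vnormf{\overline{\nabla}F(x)}\absf{f(x)}^{2}\gamma_{\adimn}(x)\,\d x$, giving \eqref{four32p}.

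For the last assertion, that $\vnormf{\overline{\nabla}F}>0$ on $\Sigma$ off a set of Hausdorff dimension at most $\sdimn-1$, I would reuse the unique-continuation argument at the end of the proof of Lemma \ref{reglem}: the function $T_{\rho}1_{\Omega}$ solves the heat equation \eqref{oup} and therefore cannot vanish to infinite order at any point \cite{lin90}, and the subtracted penalty term $\epsilon a_{0}[\Phi'(\cdots)\nu(z)+\zeta]$ is real-analytic in $x$, so the critical set of $F$ in $\Sigma$ is locally a finite union of smooth submanifolds of dimension at most $\sdimn-1$ by \cite[Lemma 1.9]{hardt89} (cf.\ \cite[Theorem 2.3]{chen98}). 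The main obstacle is pinning down the \emph{sign} in \eqref{nabeq2}: the first-variation identity alone determines $\overline{\nabla}F$ only up to sign on $\Sigma$, and the geometric argument via \eqref{zero8} is essential to rule out the wrong orientation. The hypothesis $\epsilon<\tfrac{(1-\rho)^{2}z_{0}^{2}}{\rho 10 e^{\alpha^{2}\max(0,\beta/(\rho-\beta)-1)}}$, while not used directly in the algebraic manipulation above, is what allows Lemmas \ref{existlem} and \ref{reglem} to apply, legitimizing the use of \eqref{zero8}; it will also be needed downstream to make the resulting second-variation quadratic form have a usable sign.
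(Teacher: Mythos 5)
Your reduction of \eqref{four32p} to the general formula \eqref{four30} is exactly the paper's: Lemma \ref{lemma27} supplies a volume-preserving extension, Lemma \ref{lemma41} then kills the last term of \eqref{four30} because the Euler--Lagrange function $F(x)\colonequals T_{\rho}1_{\Omega}(x)-\epsilon a_{0}\big[\Phiz+\langle x,\zeta\rangle\big]$ of Lemma \ref{firstvarmaxns} equals the constant $c$ there, and constancy of $F$ on $\Sigma$ makes $\overline{\nabla}F$ parallel to $N$. The divergence is in how you fix the sign in \eqref{nabeq2}, and there is a genuine gap there. You invoke the inclusions \eqref{zero8}, but those are stated and proved for the function $T_{\rho}1_{\Omega}(x)-\epsilon\Phiz$, whereas the vector in \eqref{nabeq2} is $\overline{\nabla}F$ with $F$ as above; the two functions differ by $\epsilon(a_{0}-1)\Phiz-\epsilon a_{0}\langle x,\zeta\rangle$, which is not constant, so the monotone crossing of one level across $\Sigma$ does not transfer to the other. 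To make your route rigorous you would need to re-run the set-swapping argument of Lemma \ref{reglem} with the exact first variation of the penalty (including the $\zeta$-term produced by the motion of $\nu(z^{(s)})$) so as to obtain the inclusions for $F$ itself. The paper avoids this entirely: it rules out the plus sign variationally, observing that if $\overline{\nabla}F=+N\vnormf{\overline{\nabla}F}$ near some point one can choose $X$ supported there making the fourth term of \eqref{four30} strictly positive, while the remaining terms form a positive semidefinite form in $f$ by Lemmas \ref{gpsd} and \ref{techlem}, contradicting maximality.

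A similar remark applies to the final assertion: the paper again argues variationally (if $\vnormf{\overline{\nabla}F}=0$ on a positive-measure subset of $\Sigma$, a mean-zero $f$ supported there has strictly positive second variation by Lemma \ref{techlem} and Mehler's formula \eqref{Height}, a contradiction), whereas your unique-continuation and stratification route via \cite{lin90} and \cite[Lemma 1.9]{hardt89} is a legitimate alternative, consistent with how Lemma \ref{reglem} is proved. Note finally that the hypothesis on $\epsilon$ is not only there to legitimize Lemmas \ref{existlem} and \ref{reglem}: the smallness of $\epsilon$ is what makes $\theta<1$ in Lemma \ref{techlem}, i.e.\ what makes the second-variation form positive semidefinite, and that positivity is the engine of the paper's sign argument, so in the paper's proof the bound on $\epsilon$ is used directly rather than only ``downstream.''
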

\begin{proof}
From Lemma \ref{firstvarmaxns}, $T_{\rho}1_{\Omega}(x)-\epsilon a_{0}\Big[\Phiz+\langle x,\zeta\,\rangle\Big]$ is constant for all $x\in\Sigma$.  So, from Lemma \ref{lemma41} and Lemma \ref{lemma27}, the last term in \eqref{four30} vanishes, i.e.
\begin{flalign*}
&\frac{1}{2}\frac{\d^{2}}{\d s^{2}}\Big|_{s=0}\Big[\int_{\Omega^{(s)}}\int_{\Omega^{(s)}} G(x,y) \,\d x\d y-\epsilon\pens\Big]\\
&=\int_{\redA}\int_{\redA}G(x,y)f(x)f(y)\,\d x\d y-\epsilon\Big(\int_{\redA}\Big[\Phiz +\langle x,\zeta\rangle\Big]f(x)\gamma_{\adimn}(x)\,\d x\Big)^{2}\\
&\qquad-\epsilon a_{0}\int_{\redA}\Big(\Phizdd\langle \nu'(z)x,z'\rangle+\langle x,\zeta'\rangle\Big)f(x)\gamma_{\adimn}(x)\,\d x\\
&\qquad+\int_{\redA}\Big\langle\overline{\nabla} T_{\rho}1_{\Omega}(x)-\epsilon a_{0}\Big(\Phizderiv+\zeta\Big),X(x)\Big\rangle f(x) \gamma_{\adimn}(x)\,\d x.
\end{flalign*}
(Here $\overline{\nabla}$ denotes the gradient in $\R^{\adimn}$.)  Since $T_{\rho}1_{\Omega}(x)-\epsilon a_{0}\Big[\Phiz+\langle x,\zeta\,\rangle\Big]$ is constant for all $x\in\partial\Omega$ by Lemma \ref{firstvarmaxns}, $\overline{\nabla} T_{\rho}1_{\Omega}(x)-\epsilon a_{0}\Big(\Phizderiv+\zeta\Big)$ is parallel to $N(x)$ for all $x\in\partial\Omega$.  That is, for all $x\in\Sigma$,
\begin{equation}\label{nabeq}
\begin{aligned}
&\overline{\nabla} T_{\rho}1_{\Omega}(x)-\epsilon a_{0}\Big(\Phizderiv+\zeta\Big)\\
&\qquad\qquad=\pm\Big\|\overline{\nabla} T_{\rho}1_{\Omega}(x)-\epsilon a_{0}\Big(\Phizderiv+\zeta\Big)\Big\|N(x).
\end{aligned}
\end{equation}
In fact, we must have a negative sign in \eqref{nabeq}, otherwise we could find a vector field $X$ supported near $x\in\partial\Omega$ such that \eqref{nabeq} has a positive sign, and then since the second variation is a positive semidefinite function of $f$ by Lemmas \ref{gpsd} and Lemma \ref{techlem} below, we would have
\begin{flalign*}
&\frac{1}{2}\frac{\d^{2}}{\d s^{2}}\Big|_{s=0}\int_{\R^{\adimn}}\int_{\R^{\adimn}} 1_{\Omega^{(s)}}(y)G(x,y) 1_{\Omega^{(s)}}(x)\,\d x\d y\\
&\quad\qquad\geq\int_{\redA}\langle\overline{\nabla} T_{\rho}1_{\Omega}(x)-\epsilon a_{0}\Big(\Phizderiv+\zeta\Big),X(x)\rangle \langle X(x),N(x)\rangle \gamma_{\adimn}(x)\,\d x>0,
\end{flalign*}
a contradiction.  In summary,
\begin{flalign*}
&\frac{1}{2}\frac{\d^{2}}{\d s^{2}}\Big|_{s=0}\Big[\int_{\Omega^{(s)}}\int_{\Omega^{(s)}} G(x,y) \,\d x\d y-\epsilon\pens\Big]\\
&=\int_{\redA}\int_{\redA}G(x,y)f(x)f(y)\,\d x\d y-\epsilon\Big(\int_{\redA}\Big[\Phiz +\langle x,\zeta\rangle\Big] f(x)\gamma_{\adimn}(x)\,\d x\Big)^{2}\\
&\qquad-\epsilon a_{0}\int_{\redA}\Big(\Phizdd\langle \nu'(z)x,z'\rangle+\langle x,\zeta'\rangle\Big)f(x)\gamma_{\adimn}(x)\,\d x\\
&\qquad-\int_{\redA}\Big\|\overline{\nabla} T_{\rho}1_{\Omega}(x)-\epsilon a_{0}\Big(\Phizderiv+\zeta\Big)\Big\| \abs{f(x)}^{2} \gamma_{\adimn}(x)\,\d x.
\end{flalign*}

This same argument implies the final assertion, that $\vnormf{\overline{\nabla} T_{\rho}(1_{\Omega})(x)-\epsilon\Phizderiv}>0$ for all $x\in\Sigma$, except on a set of Hausdorff dimension at most $\sdimn-1$.  More specifically, if $\vnormf{\overline{\nabla} T_{\rho}(1_{\Omega})(x)-\epsilon\Phizderiv}=0$ on a set of positive Hausdorff measure on $\Sigma$, then we let $f$ be supported on this set with $\int_{\Sigma}f(x)\gamma_{\adimn}(x)\,\d x=0$, then Lemma \ref{techlem} and Mehler's formula \eqref{Height} implies that $f$ has positive second variation, a contradiction.
\end{proof}

The following technical lemma shows that the second variation formula \eqref{four32p} is a positive semidefinite function of $f$, when $\epsilon,\beta$ are sufficiently small.

\begin{lemma}\label{techlem}
Define
$$\theta\colonequals\epsilon\rho\frac{10e^{\alpha^{2}\cdot\max\left(0,\frac{\beta}{\rho-\beta}-1\right)}}{(1-\rho)\vnorm{z}^{2}}.$$
\begin{flalign*}
&\frac{1}{2}\frac{\d^{2}}{\d s^{2}}\Big|_{s=0}\Big[\int_{\R^{\adimn}}1_{\Omega^{(s)}}(x)T_{\rho}1_{\Omega^{(s)}}(x)\gamma_{\adimn}(x)\,\d x
-\epsilon\pens\Big]\\
&\qquad\geq (1-\theta)\int_{\Sigma}\int_{\Sigma}f(x)G(x,y)f(y)\,\d x\d y\\
&\qquad-\int_{\Sigma}\Big\|\overline{\nabla} T_{\rho}1_{\Omega}(x)-\epsilon a_{0}\Big(\Phizderiv+\zeta\Big)\Big\| \abs{f(x)}^{2}\gamma_{\adimn}(x)\, \d x.
\end{flalign*}
\end{lemma}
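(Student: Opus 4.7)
The plan is to reduce the claim to showing that the two penalty contributions in \eqref{four32p}, namely
\begin{equation*}
\mathrm{(P)}\colonequals\epsilon\Big(\int_{\redA}[\Phiz+\langle x,\zeta\rangle]f\gamma_{\adimn}\,\d x\Big)^{2}+\epsilon a_{0}\int_{\redA}\big(\Phizdd\langle\nu'(z)x,z'\rangle+\langle x,\zeta'\rangle\big)f\gamma_{\adimn}\,\d x,
\end{equation*}
are bounded above by $\theta\int_{\redA}\int_{\redA}G(x,y)f(x)f(y)\,\d x\d y$. My main tool will be Mehler's formula \eqref{Height}, which identifies the right-hand quadratic form with the manifestly positive-definite Hermite series
\begin{equation*}
\int_{\redA}\int_{\redA}G(x,y)f(x)f(y)\,\d x\d y=\sum_{k=0}^{\infty}\rho^{k}\sum_{\|\ell\|_{1}=k}\ell!\Big(\int_{\redA}fh_{\ell}\gamma_{\adimn}\Big)^{2}.
\end{equation*}
Every integrand appearing in $\mathrm{(P)}$ will be expanded into Hermite polynomials via Lemmas \ref{gausfourier}, \ref{erfourier}, and \ref{ersecondfourier}, and then compared against this series via a weighted Cauchy-Schwarz with weights $\rho^{k}$.

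For the $\Phi$-type contribution, I first rotate so that $\nu(z)=e_{1}$ (so that $h_{k}(\langle\nu(z),x\rangle)=h_{(k,0,\ldots,0)}(x)$ is a Hermite polynomial of total degree $k$) and choose $\lambda\colonequals\rho-\beta$ in Lemma \ref{erfourier}, which forces $\beta+\lambda=\rho$ and yields Hermite coefficients $|c_{k}|\leq\rho^{k}e^{(\alpha^{2}/2)\max(0,\beta/(\rho-\beta)-1)}$ for $k\geq 2$. Weighted Cauchy-Schwarz then gives
\begin{equation*}
\Big(\int_{\redA}f\Phiz\gamma_{\adimn}\,\d x\Big)^{2}\leq\Big(\sum_{k\geq 0}c_{k}^{2}\rho^{-k}\Big)\int_{\redA}\int_{\redA}G(x,y)f(x)f(y)\,\d x\d y,
\end{equation*}
and summing the geometric tail yields $\sum_{k\geq 2}c_{k}^{2}\rho^{-k}\lesssim\rho^{2}(1-\rho)^{-1}e^{\alpha^{2}\max(0,\beta/(\rho-\beta)-1)}$. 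The $k=0$ term is killed by the volume-preserving constraint $\int_{\redA}f\gamma_{\adimn}=0$, and the $k=1$ term is handled separately since it must be combined with the $\langle x,\zeta\rangle$ contribution (see below). The $\Phizdd$ factor is treated analogously via Lemma \ref{ersecondfourier}.

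For the linear terms $\langle x,\zeta\rangle$ and $\langle x,\zeta'\rangle$ the key identity is $\int_{\redA}\langle x,\zeta\rangle f\gamma_{\adimn}\,\d x=\langle\zeta,z'\rangle$, where $z'\colonequals\int_{\redA}xf\gamma_{\adimn}\,\d x$. Using the definition \eqref{ztildef} of $\zeta$ and the symmetry of $\nu'(z)=\|z\|^{-1}(I-\nu(z)\nu(z)^{T})$, one rewrites
\begin{equation*}
\langle\zeta,z'\rangle=\|z\|^{-1}\int_{\Omega}\Phizddy\langle y,\mathrm{Proj}_{z^{\perp}}(z')\rangle\gamma_{\adimn}(y)\,\d y.
\end{equation*}
Cauchy-Schwarz on this $y$-integral, together with the pointwise bound $\Phizddy\leq\beta/\sqrt{1-\beta^{2}}$ and the trivial estimate $\|\mathrm{Proj}_{z^{\perp}}(z')\|\leq\|z'\|$, yields the \emph{dimension-free} bound $|\langle\zeta,z'\rangle|^{2}\lesssim\beta^{2}(1-\beta^{2})^{-1}\|z\|^{-2}\|z'\|^{2}$. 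Finally $\|z'\|^{2}=\sum_{i}(\int x_{i}f\gamma_{\adimn})^{2}\leq\rho^{-1}\int_{\redA}\int_{\redA}Gff$ by the $k=1$ layer of the Mehler series, supplying exactly the $\|z\|^{-2}$ factor appearing in $\theta$. The $\langle x,\zeta'\rangle$ term is handled identically.

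The main obstacle I anticipate is not conceptual but the bookkeeping of constants: each subterm of $\mathrm{(P)}$ contributes a bound with its own prefactor in $\rho$, $\beta$, $1-\rho$, $1-\beta^{2}$, $\|z\|$, $a_{0}\leq 1$, and $e^{\alpha^{2}\cdots}$, and these must be consolidated into the single numerical constant $10$ appearing in $\theta$. In particular, I will need $(a+b)^{2}\leq 2a^{2}+2b^{2}$ to decouple the $\Phi$ and $\langle x,\zeta\rangle$ parts of the quadratic penalty, careful tracking of the geometric tail bound $\sum_{k\geq 2}\rho^{k}=\rho^{2}/(1-\rho)$ to avoid any spurious higher-order blowup, and a separate treatment of the $k=1$ Hermite contribution---where $c_{1}=\beta e^{-\alpha^{2}/2}/\sqrt{2\pi}$ is itself independent of $\|z\|$ and so must be absorbed against the $\langle x,\zeta\rangle$ term in order to produce the correct $\|z\|^{-2}$ dependence on the right-hand side.
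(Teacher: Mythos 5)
Your proposal follows essentially the same route as the paper's proof: starting from the second variation formula of Lemma \ref{lemma7p}, expanding the penalty integrands in Hermite polynomials via Lemmas \ref{gausfourier} and \ref{erfourier} with $\lambda=\rho-\beta$, applying a (weighted) Cauchy--Schwarz against the Mehler series, using $\vnormf{\zeta}\leq\beta/\vnorm{z}$ and the $k=1$ layer identity $\rho\vnormf{z'}^{2}\leq\int_{\Sigma}\int_{\Sigma}Gff$ to extract the $\vnorm{z}^{-2}$ dependence, and consolidating constants into the factor $10$. The only place you are slightly glib is the claim that $\langle x,\zeta'\rangle$ is ``handled identically'': $\zeta'$ has three pieces, including one involving $\nu''(z)$, which requires the extra computation of Lemma \ref{gradlemv2} and the bound \eqref{bigbd}, but this is bookkeeping of the kind you already anticipate rather than a gap.
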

\begin{proof}
From Lemma \ref{lemma7p},
\begin{equation}\label{four32pv8}
\begin{aligned}
&\frac{1}{2}\frac{\d^{2}}{\d s^{2}}\Big|_{s=0}\Big[\int_{\Omega^{(s)}}\int_{\Omega^{(s)}} G(x,y) \,\d x\d y-\epsilon\pens\Big]\\
&=\int_{\redA}\Big(S(f)(x)\\
&\qquad-\epsilon \Big[\int_{\redA}\Big[\Phiz +\langle y,\zeta\rangle\Big]f(y)\gamma_{\adimn}(y)\,\d y\Big]\Big[\Phiz+\langle x,\zeta\rangle\Big]\\
&\qquad-\Big\|\overline{\nabla} T_{\rho}1_{\Omega}(x)-\epsilon a_{0}\Big(\Phizderiv+\zeta\Big)\Big\| f(x) \Big) f(x)\gamma_{\adimn}(x)\, \d x\\
&\quad-\epsilon a_{0}\int_{\redA}\Big(\Phizdd\langle \nu'(z)x,z'\rangle+\langle x,\zeta'\rangle\Big)f(x)\gamma_{\adimn}(x)\,\d x.
\end{aligned}
\end{equation}
Here
\begin{equation}\label{zpri}
z'=\int_{\Sigma}yf(y)\gamma_{\adimn}(y)\,\d y.
\end{equation}
Also
\begin{equation}\label{ztilpri}
\begin{aligned}
\zeta'
&=\frac{\d}{\d s}\Big|_{s=0}\int_{\Omega^{(s)}}\Phizdds \nu'(z^{(s)})y\gamma_{\adimn}(y)\,\d y\\
&=\int_{\Sigma}\Phizddy\nu'(z)y f(y)\gamma_{\adimn}(y)\,\d y\\
&\qquad+\int_{\Omega}\frac{\beta^{2}[\alpha-\beta\langle y,\nu(z)\rangle] e^{-\frac{[\beta\langle y,\nu(z)\rangle-\alpha]^{2}}{2(1-\beta^{2})}}}{(1-\beta^{2})^{3/2}}
\langle \nu'(z)y,z'\rangle \nu'(z)y\gamma_{\adimn}(y)\,\d y\\
&\qquad+\int_{\Omega}\Phizdd\nu''(z) y\gamma_{\adimn}(y)\,\d y.
\end{aligned}
\end{equation}
Using Lemma \ref{gradlemv2}, we have
$$\nu'(z)y=\frac{ \mathrm{Proj}_{z^{\perp}}(y) }{\vnorm{z}}.$$
\begin{equation}\label{nudp}
\nu''(z) y=\frac{1}{\vnorm{z}^{3}}\Big(-\langle z',y\rangle z  -\langle y,z\rangle z'  -\langle z,z'\rangle y+\frac{3}{\vnorm{z}^{2}}\langle y,z\rangle\langle z,z'\rangle z\Big).
\end{equation}

We bound the second term in \eqref{four32pv8} using Lemmas \ref{gausfourier} and \ref{erfourier}.  Since
$$\zeta\stackrel{\eqref{ztildef}}{\colonequals}\int_{\Omega}\Phizdd\nu'(z)y\gamma_{\adimn}(y)\,\d y,$$
we have
\begin{equation}\label{ztibd}
\vnormf{\zeta}\leq\frac{\beta}{\vnorm{z}}.
\end{equation}
So, Lemma  \ref{erfourier} with $\lambda\colonequals \rho-\beta$ implies that $\exists$ $c_{1},c_{2},\ldots\in\R$ with
\begin{equation}\label{ckbd}
\abs{c_{k}}\leq \rho^{k}e^{\alpha^{2}\cdot\max\left(0,\frac{\beta}{2[\rho-\beta]}-\frac{1}{2}\right)},
\end{equation}
for all $k\geq2$, with $c_{0}\colonequals\Phi(\alpha)=a$, $c_{1}\colonequals \frac{\beta}{\sqrt{2\pi}}e^{-\alpha^{2}/2}$ such that
\begin{equation}\label{phicexp}
\Phiz
=\sum_{k=0}^{\infty}c_{k}h_{k}(y)\sqrt{k!},\qquad\forall\,y\in\R
\end{equation}
We have (using also $\int_{\Sigma}f(y)\gamma_{\adimn}\,\d y=0$ and the Cauchy-Schwarz inequality for discrete sequences of real numbers),
\begin{equation}\label{tobd2}
\begin{aligned}
&\Big(\int_{\redA}\Big[\Phiz+\langle y,\zeta\rangle\Big] f(y)\gamma_{\adimn}(y)\,\d y\Big)^{2}\\
&\stackrel{\eqref{ztibd}\wedge\eqref{phicexp}}{\leq}\Big(\int_{\redA}\Big[\frac{\beta}{\vnorm{z}}h_{1}(y)+\sum_{k=0}^{\infty}c_{k}h_{k}(\langle y,\nu(z)\rangle)\sqrt{k!}\Big] f(y)\gamma_{\adimn}(y)\,\d y\Big)^{2}\\
&\leq\Big(\sum_{j=1}^{\infty}\Big|c_{j}+1_{\{j=1\}}\frac{\beta}{\vnorm{z}}\Big|\Big)
\Big(\sum_{k=1}^{\infty}\Big|c_{k}+1_{\{k=1\}}\frac{\beta}{\vnorm{z}}\Big|\Big[\int_{\redA}h_{k}(\langle y,\nu(z)\rangle)\sqrt{k!} f(y)\gamma_{\adimn}(y)\,\d y\Big]^{2}\Big)\\
&\stackrel{\eqref{ckbd}}{\leq}\frac{\rho e^{\alpha^{2}\cdot\max\left(0,\frac{\beta}{\rho-\beta}-1\right)}}{(1-\rho)}
\Big(\sum_{k=1}^{\infty}\rho^{k}\Big[\int_{\redA}h_{k}(\langle y,\nu(z)\rangle)\sqrt{k!} f(y)\gamma_{\adimn}(y)\,\d y\Big]^{2}\Big).
 \end{aligned}
 \end{equation}

 We now bound the last term of \eqref{four32pv8}.  From \eqref{gausfourier}, $\exists$ $c_{1}',c_{2}',\ldots\in\R$ with
$$\abs{c_{k}'}\leq \rho^{k}e^{\alpha^{2}\cdot\max\left(0,\frac{\beta}{2[\rho-\beta]}-\frac{1}{2}\right)},$$
for all $k\geq1$ and $c_{0}'\colonequals e^{-\alpha^{2}/2}$ such that
$$(1-\beta^{2})^{-1/2}e^{-\frac{[\beta x-\alpha]^{2}}{2(1-\beta^{2})}}
=\sum_{k=0}^{\infty}c_{k}'h_{k}(x)\sqrt{k!},\qquad\forall\,x\in\R.$$
So, using the Cauchy-Schwarz and AMGM inequalities,
\begin{equation}\label{bd9}
 \begin{aligned}
& \Big\langle\int_{\Sigma}\Phizdd\nu'(z)x f(x)\gamma_{\adimn}(x)\,\d x,\,\, z'\Big\rangle\\
&\stackrel{\eqref{zpri}}{=}\Big\langle\int_{\Sigma}\Phizdd\nu'(z)x f(x)\gamma_{\adimn}(x)\,\d x,\,\, \int_{\Sigma} y f(y)\gamma_{\adimn}(y)\,\d y\Big\rangle\\
&\leq \frac{\beta}{\vnorm{z}}\Big\|\frac{1}{\sqrt{1-\beta^{2}}}\int_{\Sigma}e^{-\frac{[\beta\langle \nu(z),x\rangle-\alpha]^{2}}{2(1-\beta^{2})}}\mathrm{Proj}_{z^{\perp}}(x)f(x)\gamma_{\adimn}(x)\,\d  x\Big\|^{2}\\
&\qquad\qquad\qquad+\frac{\beta}{\vnorm{z}}\Big\|\int_{\Sigma}xf(x)\gamma_{\adimn}(x)\,\d x\Big\|^{2}\\
&\leq  \frac{\beta e^{\alpha^{2}\cdot\max\left(0,\frac{\beta}{\rho-\beta}-1\right)}}{\vnorm{z}(1-\rho)}\Big(\sum_{k=0}^{\infty}\rho^{k}\\
&\qquad\qquad\cdot\sum_{\substack{j=1,\ldots,\sdimn\colon e_{1},\ldots,e_{\sdimn}\,\,\mathrm{is}\\ \mathrm{an}\,\mathrm{orthonormal}\,\mathrm{basis}\\ \mathrm{of}\,\, z^{\perp}}}\Big[\int_{\redA}\langle y,e_{j}\rangle h_{k}(\langle y,\nu(z)\rangle)\sqrt{k!} f(y)\gamma_{\adimn}(y)\,\d y\Big]^{2}\Big)\\
&\qquad+\frac{\beta}{\vnorm{z}}\Big\|\int_{\Sigma}xf(x)\gamma_{\adimn}(x)\,\d x\Big\|^{2}.
 \end{aligned}
 \end{equation}
 It then remains to bound the last term of \eqref{four32pv8}:

 \begin{equation}\label{lastbd}
  \begin{aligned}
& \Big\langle\int_{\Sigma}x f(x)\gamma_{\adimn}(x)\,\d x,\,\, \zeta'\Big\rangle\\
&\stackrel{\eqref{ztilpri}}{=}
\Big\langle\int_{\Sigma}x f(x)\gamma_{\adimn}(x)\,\d x,\,\, \int_{\Sigma}\Phizdd\nu'(z)y f(y)\gamma_{\adimn}(y)\,\d y\Big\rangle\\
&\qquad+\Big\langle\int_{\Sigma}x f(x)\gamma_{\adimn}(x)\,\d x,\,\, \int_{\Omega}\frac{\beta^{2}[\alpha-\beta\langle y,\nu(z)\rangle] e^{-\frac{[\beta\langle y,\nu(z)\rangle-\alpha]^{2}}{2(1-\beta^{2})}}}{(1-\beta^{2})^{3/2}}\\
&\qquad\qquad\qquad\qquad\qquad\qquad\qquad\qquad\qquad\cdot\vnorm{z}^{-2}\langle\mathrm{Proj}_{z^{\perp}}(y),z'\rangle\mathrm{Proj}_{z^{\perp}}(y)\gamma_{\adimn}(y)\,\d y\Big\rangle\\
&\qquad+\Big\langle\int_{\Sigma}x f(x)\gamma_{\adimn}(x)\,\d x,\,\, \int_{\Omega}\Phizdd\nu''(z) y\gamma_{\adimn}(y)\,\d y\Big\rangle.
 \end{aligned}
 \end{equation}
 The first term of \eqref{lastbd} is bounded by \eqref{bd9}.  The second term of \eqref{lastbd} is bounded similarly as

 \begin{flalign*}
& \Big\langle\int_{\Sigma}x f(x)\gamma_{\adimn}(x)\,\d x,\\
&\qquad\qquad\qquad \int_{\Omega}\frac{\beta^{2}[\alpha-\beta\langle y,\nu(z)\rangle] e^{-\frac{[\beta\langle y,\nu(z)\rangle-\alpha]^{2}}{2(1-\beta^{2})}}}{(1-\beta^{2})^{3/2}}\vnorm{z}^{-2}\langle\mathrm{Proj}_{z^{\perp}}(y),z'\rangle\mathrm{Proj}_{z^{\perp}}(y)\gamma_{\adimn}(y)\,\d y\Big\rangle\\
&\leq
\frac{\beta^{2}}{\vnorm{z}^{2}}\Big\|\int_{\Sigma}x f(x)\gamma_{\adimn}(x)\,\d x\Big\|^{2}\\
&\quad+\frac{\beta^{2}}{\vnorm{z}^{2}}\Big\|\Big\langle\int_{\Omega}\frac{[\alpha-\beta\langle y,\nu(z)\rangle]}{(1-\beta^{2})^{3/2}} e^{-\frac{[\beta\langle y,\nu(z)\rangle-\alpha]^{2}}{2(1-\beta^{2})}}\mathrm{Proj}_{z^{\perp}}(y)\mathrm{Proj}_{z^{\perp}}(y)\gamma_{\adimn}(y)\,\d y,\,\, z'\Big\rangle\Big\|^{2}\\
&\leq
\frac{\beta^{2}}{\vnorm{z}^{2}}\Big\|\int_{\Sigma}x f(x)\gamma_{\adimn}(x)\,\d x\Big\|^{2}\cdot\\
&\qquad\qquad\qquad\Big(1+\Big[\int_{\Omega}\frac{[\alpha-\beta\langle y,\nu(z)\rangle]}{(1-\beta^{2})^{3/2}} e^{-\frac{[\beta\langle y,\nu(z)\rangle-\alpha]^{2}}{2(1-\beta^{2})}}\mathrm{Proj}_{z^{\perp}}(y)\mathrm{Proj}_{z^{\perp}}(y)\gamma_{\adimn}(y)\,\d y\Big]^{2}\Big).
 \end{flalign*}

 The last term of \eqref{lastbd} is bounded in a similar way to the second term, by substituting \eqref{nudp}.
\begin{flalign*}
&\Big\langle\int_{\Sigma}x f(x)\gamma_{\adimn}(x)\,\d x,\,\, \int_{\Omega}\Phizddy\nu''(z) y\gamma_{\adimn}(y)\,\d y\Big\rangle\\
&\qquad=-\vnorm{z}^{-3}\Big\langle\int_{\Sigma}x f(x)\gamma_{\adimn}(x)\,\d x,\,\, \int_{\Omega}\Phizddy\langle z',y\rangle z\gamma_{\adimn}(y)\,\d y\Big\rangle\\
&\qquad\qquad-\vnorm{z}^{-3}\Big\langle\int_{\Sigma}x f(x)\gamma_{\adimn}(x)\,\d x,\,\, \int_{\Omega}\Phizddy\langle y,z\rangle z'\gamma_{\adimn}(y)\,\d y\Big\rangle\\
&\qquad\qquad-\vnorm{z}^{-3}\Big\langle\int_{\Sigma}x f(x)\gamma_{\adimn}(x)\,\d x,\,\, \int_{\Omega}\Phizddy\langle z,z'\rangle y\gamma_{\adimn}(y)\,\d y\Big\rangle\\
&\qquad\qquad+3\vnorm{z}^{-5}\Big\langle\int_{\Sigma}x f(x)\gamma_{\adimn}(x)\,\d x,\,\, \int_{\Omega}\Phizddy\langle y,z\rangle\langle z,z'\rangle z\gamma_{\adimn}(y)\,\d y\Big\rangle\\
&\qquad\stackrel{\eqref{zpri}}{=}-\vnorm{z}^{-3}\Big\langle\int_{\Sigma}x f(x)\gamma_{\adimn}(x)\,\d x,\,\,z\Big\rangle  \int_{\Omega}\Phizddy\langle z',y\rangle \gamma_{\adimn}(y)\,\d y\\
&\qquad\qquad-\vnorm{z}^{-3}\Big\|\int_{\Sigma}x f(x)\gamma_{\adimn}(x)\,\d x\Big\|^{2} \int_{\Omega}\Phizddy\langle y,z\rangle \gamma_{\adimn}(y)\,\d y\\
&\qquad\qquad-\vnorm{z}^{-3}\langle z,z'\rangle\Big\langle\int_{\Sigma}x f(x)\gamma_{\adimn}(x)\,\d x,\,\, \int_{\Omega}\Phizddy y\gamma_{\adimn}(y)\,\d y\Big\rangle\\
&\qquad\qquad+3\vnorm{z}^{-5}\langle z,z'\rangle\Big\langle\int_{\Sigma}x f(x)\gamma_{\adimn}(x)\,\d x,\,\,z\Big\rangle \int_{\Omega}\Phizddy\langle y,z\rangle \gamma_{\adimn}(y)\,\d y
\end{flalign*}
\begin{equation}\label{bigbd}
\begin{aligned}
&\qquad\stackrel{\eqref{zpri}}{=}-\vnorm{z}^{-3}\Big\langle\int_{\Sigma}x f(x)\gamma_{\adimn}(x)\,\d x,\,\,z\Big\rangle  \Big\langle z',\int_{\Omega}\Phizddy y\gamma_{\adimn}(y)\,\d y\Big\rangle\\
&\qquad\qquad-\vnorm{z}^{-3}\Big\|\int_{\Sigma}x f(x)\gamma_{\adimn}(x)\,\d x\Big\|^{2} \int_{\Omega}\Phizddy\langle y,z\rangle \gamma_{\adimn}(y)\,\d y\\
&\qquad\qquad-\vnorm{z}^{-3}\langle z,z'\rangle\Big\langle\int_{\Sigma}x f(x)\gamma_{\adimn}(x)\,\d x,\,\, \int_{\Omega}\Phizddy y\gamma_{\adimn}(y)\,\d y\Big\rangle\\
&\qquad\qquad+3\vnorm{z}^{-5}\langle z,z'\rangle\Big\langle\int_{\Sigma}x f(x)\gamma_{\adimn}(x)\,\d x,\,\,z\Big\rangle \int_{\Omega}\Phizddy\langle y,z\rangle \gamma_{\adimn}(y)\,\d y\\
&\qquad\leq6\vnorm{z}^{-2}\Big\|\int_{\Sigma}x f(x)\gamma_{\adimn}(x)\,\d x\Big\|^{2}\Big\|\int_{\Omega}\Phizddy y\gamma_{\adimn}(y)\,\d y\Big\|\\
&\qquad\leq6\beta\vnorm{z}^{-1}\Big\|\int_{\Sigma}x f(x)\gamma_{\adimn}(x)\,\d x\Big\|^{2}.
\end{aligned}
\end{equation}
Combining all above estimates, we get
\begin{equation}\label{nin3v2}
\begin{aligned}
&\frac{1}{2}\frac{\d^{2}}{\d s^{2}}\Big|_{s=0}\Big[\int_{\R^{\adimn}}1_{\Omega^{(s)}}(x)T_{\rho}1_{\Omega^{(s)}}(x)\gamma_{\adimn}(x)\,\d x
-\epsilon\pens\\
&\qquad\geq\int_{\Sigma}\int_{\Sigma}f(x)G(x,y)f(y)\,\d x\d y\\
&\qquad-\epsilon\frac{\rho e^{\alpha^{2}\cdot\max\left(0,\frac{\beta}{\rho-\beta}-1\right)}}{(1-\rho)}
\Big(\sum_{k=1}^{\infty}\rho^{k}\Big[\int_{\redA}h_{k}(\langle y,\nu(z)\rangle)\sqrt{k!} f(y)\gamma_{\adimn}(y)\,\d y\Big]^{2}\Big)\\
&-\epsilon\frac{\beta e^{\alpha^{2}\cdot\max\left(0,\frac{\beta}{\rho-\beta}-1\right)}}{\vnorm{z}(1-\rho)}\Big(\sum_{k=0}^{\infty}\rho^{k}
\cdot\sum_{\substack{j=1,\ldots,\sdimn\colon e_{1},\ldots,e_{\sdimn}\,\,\mathrm{is}\\ \mathrm{an}\,\mathrm{orthonormal}\,\mathrm{basis}\\ \mathrm{of}\,\, z^{\perp}}}\Big[\int_{\redA}\langle y,e_{j}\rangle h_{k}(\langle y,\nu(z)\rangle)\sqrt{k!} f(y)\gamma_{\adimn}(y)\,\d y\Big]^{2}\Big)\\
&\qquad-\epsilon\Big\|\int_{\Sigma}xf(x)\gamma_{\adimn}(x)\,\d x\Big\|^{2}\Big(2\frac{\beta}{\vnorm{z}}+\frac{\beta}{\vnorm{z}^{2}}\Big[1+\beta\Big]+\frac{6\beta}{\vnorm{z}}\Big)\\
&\qquad-\Big\|\overline{\nabla} T_{\rho}1_{\Omega}(x)-\epsilon a_{0}\Big(\Phizderiv+\zeta\Big)\Big\| \abs{f(x)}^{2}\gamma_{\adimn}(x)\, \d x.
\end{aligned}
\end{equation}
Then Mehler's formula \eqref{Height} and \eqref{gdef} conclude the proof.
\end{proof}

\section{Almost Eigenfunctions of the Second Variation}


Let $\Sigma\colonequals\redb\Omega$.  For any bounded measurable $f\colon\Sigma\to\R$, define the following function (if it exists):
\begin{equation}\label{sdef}
S(f)(x)\colonequals (1-\rho^{2})^{-(\adimn)/2}(2\pi)^{-(\adimn)/2}\int_{\Sigma}f(y)e^{-\frac{\vnorm{y-\rho x}^{2}}{2(1-\rho^{2})}}\,\d y,\qquad\forall\,x\in\Sigma.
\end{equation}

\begin{lemma}[\embolden{Key Lemma, Translations as Almost Eigenfunctions}]\label{treig}
Let $\Omega$ maximize Problem \ref{prob2z}.  Let $v\in\R^{\adimn}$.  Define $z\colonequals\int_{\Omega}x\gamma_{\adimn}(x)dx\in\R^{\adimn}$.  Then
\begin{flalign*}
S(\langle v,N\rangle)(x)&=
\langle v,N(x)\rangle\frac{1}{\rho}\Big\|\overline{\nabla} T_{\rho}1_{\Omega}(x)-\epsilon a_{0}\Big(\Phizderiv+\zeta\Big)\Big\|\\
&\qquad\qquad\qquad\qquad\qquad\qquad-\frac{\epsilon a_{0}}{\rho}\Big\langle v, \Phizderiv+\zeta\Big\rangle,\qquad\forall\,x\in\Sigma.
\end{flalign*}
\end{lemma}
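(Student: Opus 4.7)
The plan is to derive the identity through a divergence-theorem computation that relates $S(\langle v,N\rangle)$ directly to $\langle v, \overline{\nabla}T_\rho 1_\Omega\rangle$, and then substitute the critical-point equation from Lemma \ref{lemma7p} to eliminate $\overline{\nabla}T_\rho 1_\Omega$ in favor of the normal $N(x)$ and the penalty terms.

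First, fix $x\in\Sigma$ and consider the kernel $k(y)\colonequals e^{-\|y-\rho x\|^{2}/(2(1-\rho^{2}))}$, which satisfies $\overline{\nabla}_{y}k(y) = \frac{\rho x - y}{1-\rho^{2}}k(y)$. Since $v$ is constant, $\mathrm{div}_{y}(v\,k(y)) = \langle v,\tfrac{\rho x - y}{1-\rho^{2}}\rangle k(y)$. Because Lemma \ref{reglem} guarantees that $\partial\Omega$ is locally a finite union of smooth manifolds and $\Omega$ has locally finite surface area, the divergence theorem on $\Omega$ applies and gives
\begin{equation*}
\int_{\Sigma}\langle v,N(y)\rangle k(y)\,\d y = \int_{\Omega}\Big\langle v,\tfrac{\rho x - y}{1-\rho^{2}}\Big\rangle k(y)\,\d y.
\end{equation*}
Multiplying through by $(1-\rho^{2})^{-(\adimn)/2}(2\pi)^{-(\adimn)/2}$ and recognizing the left-hand side from \eqref{sdef} yields
\begin{equation*}
S(\langle v,N\rangle)(x) = (1-\rho^{2})^{-(\adimn)/2}(2\pi)^{-(\adimn)/2}\int_{\Omega}\Big\langle v,\tfrac{\rho x - y}{1-\rho^{2}}\Big\rangle k(y)\,\d y.
\end{equation*}

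Second, differentiating $T_{\rho}1_{\Omega}(x)$ under the integral sign using the representation in \eqref{oudef}, one finds $\overline{\nabla}_{x}k(y) = \rho\cdot\frac{y-\rho x}{1-\rho^{2}}k(y)$, hence
\begin{equation*}
\langle v,\overline{\nabla}T_{\rho}1_{\Omega}(x)\rangle = \rho\,(1-\rho^{2})^{-(\adimn)/2}(2\pi)^{-(\adimn)/2}\int_{\Omega}\Big\langle v,\tfrac{y-\rho x}{1-\rho^{2}}\Big\rangle k(y)\,\d y = -\rho\, S(\langle v,N\rangle)(x).
\end{equation*}
So the two quantities are simply related by a factor of $-\rho$, which is the ``almost eigenfunction'' phenomenon in its rawest form: before imposing the Euler--Lagrange equation, constant translations are \emph{exactly} eigenfunctions of $S$ with eigenvalue $-1/\rho$ applied to $\overline{\nabla}T_{\rho}1_{\Omega}$.

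Third, invoke Lemma \ref{lemma7p}, which says that at a maximizer, $\overline{\nabla}T_{\rho}1_{\Omega}(x) = \epsilon a_{0}\bigl(\Phizderiv + \zeta\bigr) - N(x)\bigl\|\overline{\nabla}T_{\rho}1_{\Omega}(x) - \epsilon a_{0}(\Phizderiv + \zeta)\bigr\|$ on $\Sigma$. Taking the inner product with $v$ and substituting into $S(\langle v,N\rangle)(x) = -\tfrac{1}{\rho}\langle v,\overline{\nabla}T_{\rho}1_{\Omega}(x)\rangle$ gives exactly the claimed formula after distributing the $-1/\rho$.

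The only genuine technical concern is justifying the divergence theorem on $\Omega$: the boundary is not globally smooth, only a locally finite union of $C^{\infty}$ manifolds away from a set of Hausdorff codimension at least one, together with locally finite perimeter (Lemma \ref{reglem}). This is handled by the standard De~Giorgi--Federer divergence theorem for sets of locally finite perimeter, using the reduced boundary $\Sigma=\redb\Omega$ as the integration domain; integrability near infinity is automatic because $k(y)$ is a Gaussian in $y$. Everything else is a straightforward, if slightly fiddly, chain of identifications.
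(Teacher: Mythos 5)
Your proposal is correct and follows essentially the same route as the paper: the divergence-theorem computation giving $\langle v,\overline{\nabla}T_{\rho}1_{\Omega}(x)\rangle=-\rho\,S(\langle v,N\rangle)(x)$ is exactly the paper's equation \eqref{gre}, and the substitution of the first-variation identity \eqref{nabeq2} from Lemma \ref{lemma7p} is the paper's concluding step. Your remarks on justifying the divergence theorem for sets of locally finite perimeter likewise mirror the paper's Remark \ref{drk}.
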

\begin{proof}
Since $T_{\rho}1_{\Omega}(x)-\epsilon a_{0}\Big[\Phiz+\langle x,\zeta\,\rangle\Big]$ is constant for all $x\in\partial\Omega$ by Lemma \ref{firstvarmaxns}, $\overline{\nabla} T_{\rho}1_{\Omega}(x)-\epsilon a_{0}\Big(\Phizderiv+\zeta\Big)$ is parallel to $N(x)$ for all $x\in\partial\Omega$.  That is \eqref{nabeq2} holds:
\begin{equation}\label{firstve}
\begin{aligned}
&\overline{\nabla} T_{\rho}1_{\Omega}(x)-\epsilon a_{0}\Big(\Phizderiv+\zeta\Big)\\
&\qquad=-N(x)\Big\|\overline{\nabla} T_{\rho}1_{\Omega}(x)-\epsilon a_{0}\Big(\Phizderiv+\zeta\Big)\Big\|,
\qquad\forall\,x\in\Sigma.
\end{aligned}
\end{equation}
From Definition \ref{oudef}, and then using the divergence theorem,
\begin{equation}\label{gre}
\begin{aligned}
\langle v,\overline{\nabla} T_{\rho}1_{\Omega}(x)\rangle
&=(1-\rho^{2})^{-(\adimn)/2}(2\pi)^{-(\adimn)/2}\Big\langle v,\int_{\Omega} \overline{\nabla}_{x}e^{-\frac{\vnorm{y-\rho x}^{2}}{2(1-\rho^{2})}}\,\d y\Big\rangle\\
&=(1-\rho^{2})^{-(\adimn)/2}(2\pi)^{-(\adimn)/2}\frac{\rho}{1-\rho^{2}}\int_{\Omega} \langle v,\,y-\rho x\rangle e^{-\frac{\vnorm{y-\rho x}^{2}}{2(1-\rho^{2})}}\,\d y\\
&=-(1-\rho^{2})^{-(\adimn)/2}(2\pi)^{-(\adimn)/2}\rho\int_{\Omega} \mathrm{div}_{y}\Big(ve^{-\frac{\vnorm{y-\rho x}^{2}}{2(1-\rho^{2})}}\Big)\,\d y\\
&=-(1-\rho^{2})^{-(\adimn)/2}(2\pi)^{-(\adimn)/2}\rho\int_{\Sigma}\langle v,N(y)\rangle e^{-\frac{\vnorm{y-\rho x}^{2}}{2(1-\rho^{2})}}\,\d y\\
&\stackrel{\eqref{sdef}}{=}-\rho\, S(\langle v,N\rangle)(x).
\end{aligned}
\end{equation}
Therefore,
\begin{flalign*}
&\langle v,N(x)\rangle\Big\|\overline{\nabla} T_{\rho}1_{\Omega}(x)-\epsilon a_{0}\Big(\Phizderiv+\zeta\Big)\Big\|\\
&\qquad\stackrel{\eqref{firstve}}{=}-\Big\langle v,\overline{\nabla} T_{\rho}1_{\Omega}(x)-\epsilon a_{0}\Big(\Phizderiv+\zeta\Big\rangle\\
&\qquad\stackrel{\eqref{gre}}{=}\rho\, S(\langle v,N\rangle)(x)+\epsilon a_{0}\Big\langle v, \Phizderiv+\zeta\Big\rangle.
\end{flalign*}
\end{proof}
\begin{remark}\label{drk}
To justify the use of the divergence theorem in \eqref{gre}, let $r>0$ and note that we can differentiate under the integral sign of  $T_{\rho}1_{\Omega\cap B(0,r)}(x)$ to get
\begin{equation}\label{grep}
\begin{aligned}
\overline{\nabla} T_{\rho}1_{\Omega\cap B(0,r)}(x)
&=(1-\rho^{2})^{-(\adimn)/2}(2\pi)^{-(\adimn)/2}\Big\langle v,\int_{\Omega\cap B(0,r)} \overline{\nabla}_{x}e^{-\frac{\vnorm{y-\rho x}^{2}}{2(1-\rho^{2})}}\,\d y\Big\rangle\\
&=(1-\rho^{2})^{-(\adimn)/2}(2\pi)^{-(\adimn)/2}\frac{\rho}{1-\rho^{2}}\int_{\Omega\cap B(0,r)} \langle v,\,y-\rho x\rangle e^{-\frac{\vnorm{y-\rho x}^{2}}{2(1-\rho^{2})}}\,\d y\\
&=-(1-\rho^{2})^{-(\adimn)/2}(2\pi)^{-(\adimn)/2}\rho\int_{\Omega\cap B(0,r)} \mathrm{div}_{y}\Big(ve^{-\frac{\vnorm{y-\rho x}^{2}}{2(1-\rho^{2})}}\Big)\,\d y\\
&=-(1-\rho^{2})^{-(\adimn)/2}(2\pi)^{-(\adimn)/2}\rho\int_{(\Sigma\cap B(0,r))\cup(\Omega\cap\partial B(0,r))}\langle v,N(y)\rangle e^{-\frac{\vnorm{y-\rho x}^{2}}{2(1-\rho^{2})}}\,\d y.
\end{aligned}
\end{equation}
Fix $r'>0$.  Fix $x\in\R^{\adimn}$ with $\vnorm{x}<r'$.  The last integral in \eqref{grep} over $\Omega\cap \partial B(0,r)$ goes to zero as $r\to\infty$ uniformly over all such $\vnorm{x}<r'$.  Also
$\overline{\nabla} T_{\rho}1_{\Omega}(x)$
exists a priori for all $x\in\R^{\adimn}$, while
\begin{flalign*}
&\vnorm{\overline{\nabla} T_{\rho}1_{\Omega}(x)-\overline{\nabla} T_{\rho}1_{\Omega\cap B(0,r)}(x)}
\stackrel{\eqref{oudef}}{=}\frac{\rho}{\sqrt{1-\rho^{2}}}\vnorm{\int_{\R^{\adimn}} y 1_{\Omega\cap B(0,r)^{c}}(x\rho+y\sqrt{1-\rho^{2}})\gamma_{\adimn}(y)\,\d y}\\
&\qquad\qquad\qquad
\leq\frac{\rho}{\sqrt{1-\rho^{2}}}\sup_{w\in\R^{\adimn}\colon\vnorm{w}=1}\int_{\R^{\adimn}} \abs{\langle w,y\rangle} 1_{B(0,r)^{c}}(x\rho+y\sqrt{1-\rho^{2}})\gamma_{\adimn}(y)\,\d y.
\end{flalign*}
And the last integral goes to zero as $r\to\infty$, uniformly over all $\vnorm{x}<r'$.
\end{remark}



\begin{lemma}[\embolden{Second Variation of Translations}]\label{svartran}
Let $v\in\R^{\adimn}$.  Let $\Omega$ maximize Problem \ref{prob2z}.  Let $\{\Omega^{(s)}\}_{s\in(-1,1)}$ be the variation of $\Omega$ corresponding to the constant vector field $X\colonequals v$.  Assume that
$$\int_{\Sigma}\langle v,N(x)\rangle \gamma_{\adimn}(x)\,\d x=0.$$
Let $\theta\colonequals\epsilon\rho\frac{10e^{\alpha^{2}\cdot\max\left(0,\frac{\beta}{\rho-\beta}-1\right)}}{(1-\rho)\vnorm{z}^{2}}$.  Then
\begin{flalign*}
&\frac{1}{2}\frac{\d^{2}}{\d s^{2}}\Big|_{s=0}\Big[\int_{\Omega^{(s)}}\int_{\Omega^{(s)}} G(x,y) \,\d x\d y-\epsilon\pens\Big]\\
&\qquad\geq \Big(\frac{1-\theta}{\rho}-1\Big)\Big\|\overline{\nabla} T_{\rho}1_{\Omega}(x)-\epsilon a_{0}\Big(\Phizderiv+\zeta\Big)\Big\| \abs{\langle v,N(x)\rangle}^{2}\gamma_{\adimn}(x)\, \d x.
\end{flalign*}
\end{lemma}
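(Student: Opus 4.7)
The plan is to combine Lemma \ref{techlem} with the ``almost eigenfunction'' identity of Lemma \ref{treig}, specializing to the constant vector field $X\equiv v$ and $f(x):=\langle v,N(x)\rangle$. The first key observation is that, by the divergence theorem applied to $v\gamma_{\adimn}$, the volume preservation hypothesis $\int_{\Sigma}f\,\gamma_{\adimn}\,\d x=0$ is equivalent to $\langle v,z\rangle=0$; equivalently, $v$ is orthogonal to $\nu(z)=z/\vnorm{z}$. This geometric fact is what ultimately kills the cross terms below.

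First I would invoke Lemma \ref{techlem} with this $f$ to get
\begin{align*}
\frac{1}{2}\frac{\d^{2}}{\d s^{2}}\Big|_{s=0}\Big[\int_{\R^{\adimn}}1_{\Omega^{(s)}}T_{\rho}1_{\Omega^{(s)}}\gamma_{\adimn}\,\d x-\epsilon\pens\Big]
&\geq(1-\theta)\int_{\Sigma}\int_{\Sigma}f(x)G(x,y)f(y)\,\d x\d y\\
&\quad-\int_{\Sigma}\Big\|\overline{\nabla}T_{\rho}1_{\Omega}(x)-\epsilon a_{0}(\Phizderiv+\zeta)\Big\|f(x)^{2}\gamma_{\adimn}(x)\,\d x.
\end{align*}
Using the factorization of $G$ in \eqref{gdef}, namely $\int_{\Sigma}G(x,y)f(y)\,\d y=\gamma_{\adimn}(x)\,S(f)(x)$, the bilinear double integral equals $\int_{\Sigma}f(x)S(f)(x)\gamma_{\adimn}(x)\,\d x$. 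I would then apply Lemma \ref{treig} pointwise to express $S(f)$ in terms of $f$ and $\Phizderiv+\zeta$, and integrate against $f\gamma_{\adimn}$ to obtain
$$\int_{\Sigma}f\,S(f)\,\gamma_{\adimn}\,\d x=\frac{1}{\rho}\int_{\Sigma}f(x)^{2}\Big\|\overline{\nabla}T_{\rho}1_{\Omega}(x)-\epsilon a_{0}(\Phizderiv+\zeta)\Big\|\gamma_{\adimn}(x)\,\d x-\frac{\epsilon a_{0}}{\rho}\int_{\Sigma}f(x)\langle v,\Phizderiv+\zeta\rangle\gamma_{\adimn}(x)\,\d x.$$

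The final cross term is handled as follows. Because $\zeta$ is a constant vector in $\R^{\adimn}$ and $\int_{\Sigma}f\gamma_{\adimn}\,\d x=0$, the $\zeta$ contribution is $\langle v,\zeta\rangle\int_{\Sigma}f\gamma_{\adimn}\,\d x=0$. For the $\Phizderiv$ contribution, I note that $\Phizderiv$ is a scalar multiple of $\nu(z)$; hence $\langle v,\Phizderiv\rangle$ carries a factor $\langle v,\nu(z)\rangle$, which vanishes by the opening observation $v\perp\nu(z)$. Substituting the resulting identity back into the estimate from Lemma \ref{techlem} and collecting the common term $\int_{\Sigma}\|\overline{\nabla}T_{\rho}1_{\Omega}-\epsilon a_{0}(\Phizderiv+\zeta)\|f(x)^{2}\gamma_{\adimn}(x)\,\d x$ yields exactly the prefactor $(1-\theta)/\rho-1$ claimed in the lemma. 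The only mildly delicate step is justifying the divergence-theorem identification of the volume constraint with $v\perp z$; this relies on the regularity of $\Sigma=\redb\Omega$ from Lemma \ref{reglem} together with the Gaussian decay of the integrands, handled in the same spirit as Remark \ref{drk}.
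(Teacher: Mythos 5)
Your proposal is correct and follows essentially the same route as the paper: apply Lemma \ref{techlem}, rewrite the bilinear term via $\int_{\Sigma}G(x,y)f(y)\,\d y=\gamma_{\adimn}(x)S(f)(x)$, substitute the almost-eigenfunction identity of Lemma \ref{treig}, and kill the cross term using the divergence-theorem identity $\int_{\Sigma}\langle v,N\rangle\gamma_{\adimn}\,\d x=-\langle v,z\rangle=0$ (for the $\Phizderiv$ part, which is a scalar multiple of $\nu(z)$) together with $\int_{\Sigma}f\gamma_{\adimn}\,\d x=0$ (for the constant vector $\zeta$ part). This matches the paper's argument step for step.
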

\begin{proof}

Let $f(x)\colonequals\langle v,N(x)\rangle$ for all $x\in\Sigma$.  By Lemma \ref{techlem},
\begin{flalign*}
&\frac{1}{2}\frac{\d^{2}}{\d s^{2}}\Big|_{s=0}\Big[\int_{\R^{\adimn}}1_{\Omega^{(s)}}(x)T_{\rho}1_{\Omega^{(s)}}(x)\gamma_{\adimn}(x)\,\d x
-\epsilon\pens\Big]\\
&\qquad\geq (1-\theta)\int_{\Sigma}\int_{\Sigma}f(x)G(x,y)f(y)\,\d x\d y\\
&\qquad-\int_{\Sigma}\Big\|\overline{\nabla} T_{\rho}1_{\Omega}(x)-\epsilon a_{0}\Big(\Phizderiv+\zeta\Big)\Big\| \abs{f(x)}^{2}\gamma_{\adimn}(x)\, \d x.
\end{flalign*}

Applying Lemma \ref{treig} (with \eqref{sdef}, \eqref{gdef}),
\begin{equation}\label{tobound}
\begin{aligned}
&\frac{1}{2}\frac{\d^{2}}{\d s^{2}}\Big|_{s=0}\Big[\int_{\Omega^{(s)}}\int_{\Omega^{(s)}} G(x,y) \,\d x\d y-\epsilon\pens\Big]\\
&\geq-\int_{\redA}(1-\theta)\frac{\epsilon a_{0}}{\rho}\Big\langle v, \Phizderiv+\zeta\Big\rangle\\
&\qquad\qquad+\Big(\frac{1-\theta}{\rho}-1\Big)\int_{\Sigma}\Big\|\overline{\nabla} T_{\rho}1_{\Omega}(x)-\epsilon a_{0}\Big(\Phizderiv+\zeta\Big)\Big\| \abs{f(x)}^{2}\gamma_{\adimn}(x)\, \d x.
\end{aligned}
\end{equation}
We now show that the first term in \eqref{tobound} is zero.  By assumption, $\int_{\Sigma}\langle v,N(x)\rangle \gamma_{\adimn}(x)\,\d x=0.$  By the Divergence Theorem and the definition of $z$, we have
$$0=\int_{\Sigma}\langle v,N(x)\rangle \gamma_{\adimn}(x)\,\d x
=\int_{\Omega}\mathrm{div}(v\gamma_{\adimn}(x))\,\d x
=\int_{\Omega}\langle -v,x\rangle\gamma_{\adimn}(x)\,\d x
=-\langle v,z\rangle.
$$
So, $\langle v,\nu(z)\rangle=\langle v,z\rangle/\vnorm{z}=0$. Similarly, $\int_{\Sigma}\langle v,\zeta\rangle f(x)\gamma_{\adimn}(x)\,\d x=\langle v,\zeta\rangle(-1)\langle v,z\rangle=0$.  So, the first term $\int_{\Sigma}\big\langle v, \Phizderiv+\zeta\big\rangle f(x)\gamma_{\adimn}(x)\,\d x$ in \eqref{tobound} is zero.  The proof is concluded.  Note also that $\int_{\Sigma}\vnormf{\overline{\nabla}T_{\rho}1_{\Omega}(x)-\epsilon z}\langle v,N(x)\rangle^{2}\gamma_{\adimn}(x)\,\d x$ is finite priori by the divergence theorem and \eqref{nabeq2}:
\begin{flalign*}
\infty&>\abs{\int_{\Omega}\Big\langle v,-x+\nabla\langle v,\overline{\nabla}T_{\rho}1_{\Omega}(x)-\epsilon z\rangle\Big\rangle\gamma_{\adimn}(x)\,\d x}\\
&=\abs{\int_{\Omega}\mathrm{div}\Big(v\langle v,\overline{\nabla}T_{\rho}1_{\Omega}(x)-\epsilon z\rangle\gamma_{\adimn}(x)\Big)\,\d x}\\
&=\abs{\int_{\Sigma}\langle v,N(x)\rangle\langle v,\overline{\nabla}T_{\rho}1_{\Omega}(x)-\epsilon z\rangle\gamma_{\adimn}(x)\,\d x}\\
&\stackrel{\eqref{nabeq2}}{=}\abs{\int_{\Sigma}\vnormf{\overline{\nabla}T_{\rho}1_{\Omega}(x)-\epsilon z}\langle v,N(x)\rangle^{2}\gamma_{\adimn}(x)\,\d x}.
\end{flalign*}

\end{proof}

\begin{remark}\label{drkv2}
To justify that $\int_{\Sigma} \langle v,N(x)\rangle \gamma_{\adimn}(x)\,\d x$ is finite a priori in Lemma \ref{svartran}, let $r>0$ and use the divergence theorem to obtain
\begin{flalign*}
\int_{\Omega\cap B(0,r)} \mathrm{div}(v\gamma_{\adimn}(x))\,\d x
&=\int_{\Sigma\cap B(0,r)} (-1)\langle v,N(x)\rangle\gamma_{\adimn}(x)\,\d x\\
&\qquad+\int_{\Omega\cap \partial B(0,r)} (-1)\langle v,N(x)\rangle\gamma_{\adimn}(x)\,\d x.
\end{flalign*}
And note that
$$\abs{\int_{\Omega\cap \partial B(0,r)} (-1)\langle v,N(x)\rangle\gamma_{\adimn}(x)\,\d x}
\leq \vnorm{v}\int_{\partial B(0,r)}\gamma_{\adimn}(x)\,\d x.$$
The last quantity goes to zero as $r\to\infty$, and $\int_{\Omega\cap B(0,r)} \mathrm{div}(v\gamma_{\adimn}(x))\,\d x$ is uniformly bounded for all $r>0$, so $\int_{\Sigma} \langle v,N(x)\rangle \gamma_{\adimn}(x)\,\d x$ is finite a priori.
\end{remark}

\section{Proof of Dimension Reduction Theorem}\label{secdimred}

\begin{proof}[Proof of Theorem \ref{mainthm2}]
Let $0<\rho<1$ and let
$$\epsilon<\frac{(1-\rho)^{2}\vnorm{z}^{2}}{\rho10e^{\alpha^{2}\cdot\max\left(0,\frac{\beta}{\rho-\beta}-1\right)}}.$$
Fix $0<a<1$.  Let $\Omega\subset\R^{\adimn}$ be a measurable set that maximizes Problem \ref{prob2z}.  The set $\Omega$ exists by Lemma \ref{existlem} and from Lemma \ref{reglem} the boundary of $\Omega$ is a locally finite union of $C^{\infty}$ $\sdimn$-dimensional manifolds.  Define $\Sigma\colonequals\redb\Omega$.  Define $z\in\R^{\adimn}$ by \eqref{zdef}.  Assume that $\vnorm{z}\geq z_{0}>0$.

By Lemma \ref{firstvarmaxns}, there exists $c\in\R$ such that
$$T_{\rho}(1_{\Omega})(x)-\epsilon\Phiz=c,\qquad\forall\,x\in\Sigma.$$
By this condition, the regularity Lemma \ref{reglem}, and the last part of Lemma \ref{lemma7p},  there exists a set $\sigma\subset\Sigma$ of Hausdorff dimension at most $\sdimn-1$ such that
\begin{flalign*}
&\overline{\nabla}T_{\rho}(1_{\Omega})(x)-\epsilon\Phizderiv\\
&\qquad\qquad= -N(x)\Big\|\overline{\nabla}T_{\rho}(1_{\Omega})(x)-\epsilon\Phizderiv\Big\|,\,\,\forall\,x\in\Sigma\setminus\sigma.
\end{flalign*}
Moreover, by the last part of Lemma \ref{lemma7p}, we have
\begin{equation}\label{nine1}
\Big\|\overline{\nabla}T_{\rho}(1_{\Omega})(x)-\epsilon\Phizderiv\Big\|>0,\qquad\forall\,x\in\Sigma\setminus\sigma.
\end{equation}

Fix $v\in\R^{\adimn}$, and consider the variation of $\Omega$ induced by the constant vector field $X\colonequals v$.  Define $S$ as in \eqref{sdef}.  Define
$$
V\colonequals\Big\{v\in\R^{\adimn}\colon \int_{\Sigma}\langle v,N(x)\rangle \gamma_{\adimn}(x)\,\d x=0\Big\}.
$$
From Lemma \ref{svartran}, $\frac{\d}{\d s}\big|_{s=0}\gamma_{\adimn}(\Omega^{(s)})=0$, and

\begin{flalign*}
&v\in V\,\Longrightarrow\\
&\frac{1}{2}\frac{\d^{2}}{\d s^{2}}\Big|_{s=0}\Big[\int_{\R^{\adimn}}1_{\Omega^{(s)}}(x)T_{\rho}1_{\Omega^{(s)}}(x)\gamma_{\adimn}(x)\,\d x
-\epsilon\pens\Big]\\
&\qquad\quad\geq \Big(\frac{1-\theta}{\rho}-1\Big)\Big\|\overline{\nabla} T_{\rho}1_{\Omega}(x)-\epsilon a_{0}\Big(\Phizderiv+\zeta\Big)\Big\| \abs{\langle v,N(x)\rangle}^{2}\gamma_{\adimn}(x)\, \d x,
\end{flalign*}
where $\theta\colonequals\epsilon\rho\frac{10e^{\alpha^{2}\cdot\max\left(0,\frac{\beta}{\rho-\beta}-1\right)}}{(1-\rho)\vnorm{z}^{2}}$.  Since $$\epsilon<\frac{(1-\rho)^{2}\vnorm{z}^{2}}{\rho10e^{\alpha^{2}\cdot\max\left(0,\frac{\beta}{\rho-\beta}-1\right)}},$$
we have $(1-\theta)/\rho -1  >0$, so \eqref{nine1} implies
%
\begin{equation}\label{nine2}
v\in V\,\Longrightarrow\,\langle v,N(x)\rangle=0,\qquad\forall\,x\in\Sigma.
\end{equation}
The set $V$ has dimension at least $\sdimn$, since it is a set of vectors in $\R^{\adimn}$ that is perpendicular to another fixed vector.  So, by \eqref{nine2}, after rotating $\Omega$, we conclude that there exist measurable $\Omega'\subset\R$ such that
$$\Omega=\Omega'\times\R^{\sdimn}.$$
\end{proof}

\section{One-Dimensional Case}\label{secpro}

By Theorem \ref{mainthm2}, it remains to solve the one-dimensional case of Problem \ref{prob2}.  That is, it suffices to assume that $\Omega\subset\R$.

\begin{proof}[Proof of Theorem \ref{mainthm1}]%
We will show that the maximizer of Problem \ref{prob2z} is a half space $H$ with Gaussian measure $a$.  Consequently, any $\Omega'\subset\R^{\adimn}$ with Gaussian measure $a$ satisfies
\begin{equation}\label{weq}
\begin{aligned}
&\int_{\Omega'}T_{\rho}1_{\Omega'}(x)\gamma_{\adimn}(x)\,\d x
-\epsilon\Big[\int_{\R^{\adimn}}\Phiz1_{\Omega'}(x)\gamma_{\adimn}(x)\,\d x\Big]^{2}\\
&\qquad\qquad
\leq \int_{H}T_{\rho}1_{H}(x)\gamma_{\adimn}(x)\,\d x
-\epsilon\Big[\int_{\R^{\adimn}}\Phiz1_{H}(x)\gamma_{\adimn}(x)\,\d x\Big]^{2},
\end{aligned}
\end{equation}
where $H\subset\R^{\adimn}$ is a half space such that $\int_{\Omega}x\gamma_{\adimn}(x)\,\d x\in\R^{\adimn}$ is a positive multiple of $\int_{H}x\gamma_{\adimn}(x)\,\d x\in\R^{\adimn}$. Rearranging \eqref{weq} gives the first inequality (i.e. the only part we need to prove) of Theorem \ref{mainthm1}:
\begin{flalign*}
&\epsilon\Big( \int_{\R^{\adimn}}\Phiz(1_{H}(x)-1_{\Omega'}(x))\gamma_{\adimn}(x)\d x\Big)\\
&\qquad\cdot \Big( \int_{\R^{\adimn}}\Phiz(1_{H}(x)+1_{\Omega'}(x))\gamma_{\adimn}(x)\d x\Big)\\
&\qquad\qquad\qquad\qquad\leq\int_{\R^{\adimn}}1_{H}(x)T_{\rho}1_{H}(x)\gamma_{\adimn}(x)\,\d x-\int_{\R^{\adimn}}1_{\Omega'}(x)T_{\rho}1_{\Omega'}(x)\gamma_{\adimn}(x)\,\d x,
\end{flalign*}
using also the inequality: 
\begin{flalign*}
&\int_{\R^{\adimn}}\Phiz 1_{H}(x)\gamma_{\adimn}(x)\d x
=\int_{\alpha}^{\infty}\Phi\Big(\frac{\beta x-\alpha}{\sqrt{1-\beta^{2}}}\Big)\gamma_{1}(x)\,\d x\\
&\qquad\geq \int_{\alpha}^{\infty}\Phi\Big(\frac{-\alpha(1-\beta)}{\sqrt{1-\beta^{2}}}\Big)\gamma_{1}(x)\,\d x
=\Phi\Big(-\alpha\sqrt{\frac{1-\beta}{1+\beta}}\Big)\int_{\alpha}^{\infty}\gamma_{1}(x)\,\d x\\
&\qquad\geq\min\Big(\frac{1}{2},\Phi(-\alpha)\Big)\cdot a
=\min\Big(\frac{1}{2},a\Big)\cdot a
\geq a^{2}/2.
\end{flalign*}

Let us therefore find the maximizer of Problem \ref{prob2z}.  By Theorem \ref{mainthm2}, we may assume that $\Omega\subset\R$.  Denote $\Sigma\colonequals\partial\Omega$, $\widetilde{\Omega}\colonequals\Omega\times\R$, and $\widetilde{\Sigma}\colonequals\partial\widetilde{\Omega}$.  Let $f\colon\widetilde{\Sigma}\to\R$ be defined so that
$$f(x_{1},x_{2})\colonequals x_{2},\qquad\forall\, (x_{1},x_{2})\in\widetilde{\Sigma}.$$
Then the corresponding variation of $\widetilde{\Omega}$ satisfies

\begin{flalign*}
&\frac{1}{2}\frac{\d^{2}}{\d s^{2}}\Big|_{s=0}\Big[\int_{\widetilde{\Omega}^{(s)}}\int_{\widetilde{\Omega}^{(s)}} G(x,y) \,\d x\d y-\epsilon\penstilde\Big]\\
&=\int_{\redA\times\R}\int_{\Sigma\times\R} G(x,y)f(x)f(y)\,\d x\d y\\
&\qquad -\int_{\redA\times\R}\Big\|\overline{\nabla} T_{\rho}1_{\Omega\times\R}(x)-\epsilon  a_{0}\frac{\beta e^{-\frac{ [\beta x_{1}-\alpha]^{2}}{2(1-\beta^{2})}}}{\sqrt{1-\beta^{2}}}(1,0)\Big\| \abs{f(x)}^{2}\gamma_{2}(x)\,\d x\\
&\qquad-\epsilon a_{0}\int_{\redA\times\R}\Big(\Phizdd\langle \nu'(\widetilde{z})x,\widetilde{z}'\rangle+\langle x,\zeta'\rangle\Big)f(x)\gamma_{2}(x)\,\d x.
\end{flalign*}

We begin by simplifying the last $\nu'$ term.  We get

 \begin{flalign*}
& \Big\langle\int_{\Sigma\times\R}\Phizdd\nu'(\widetilde{z})x f(x)\gamma_{2}(x)\,\d x,\,\, \widetilde{z}'\Big\rangle\\
&\stackrel{\eqref{zpri}}{=}\Big\langle\int_{\Sigma\times\R}\Phizdd\nu'(\widetilde{z})x f(x)\gamma_{2}(x)\,\d x,\,\, \int_{\Sigma\times\R} y f(y)\gamma_{2}(y)\,\d y\Big\rangle\\
&= \frac{\beta}{\vnorm{z}\sqrt{1-\beta^{2}}}\int_{\Sigma\times\R}e^{-\frac{[\beta x_{1}-\alpha]^{2}}{2(1-\beta^{2})}}x_{2}^{2}\gamma_{2}(x)\,\d  x\cdot \int_{\Sigma\times\R}y_{2}^{2}\gamma_{2}(y)\,\d y\\
&= \frac{\beta}{\vnorm{z}\sqrt{1-\beta^{2}}}\int_{\Sigma}e^{-\frac{[\beta x_{1}-\alpha]^{2}}{2(1-\beta^{2})}}\gamma_{1}(x_{1})\,\d  x_{1}\cdot
\int_{\Sigma}\gamma_{1}(y_{1})\,\d y_{1}.
\end{flalign*}

We examine each of the three terms from \eqref{lastbd}.  The first term was just dealt with.  For the second term, we have

 \begin{flalign*}
& \Big\langle\int_{\Sigma\times\R}x f(x)\gamma_{2}(x)\,\d x,\\
&\qquad\qquad\qquad\int_{\Omega\times\R}\frac{\beta^{2}[\alpha-\beta\langle y,\nu(z)\rangle] e^{-\frac{[\beta\langle y,\nu(z)\rangle-\alpha]^{2}}{2(1-\beta^{2})}}}{(1-\beta^{2})^{3/2}}\vnorm{z}^{-2}\langle\mathrm{Proj}_{z^{\perp}}(y),z'\rangle\mathrm{Proj}_{z^{\perp}}(y)\gamma_{2}(y)\,\d y\Big\rangle\\
&=\int_{\Sigma\times\R}x_{2} f(x)\gamma_{2}(x)\,\d x\cdot \int_{\Omega\times\R}\frac{\beta^{2}[\alpha-\beta y_{1}] e^{-\frac{[\beta y_{1}-\alpha]^{2}}{2(1-\beta^{2})}}}{(1-\beta^{2})^{3/2}}\vnorm{z}^{-2}y_{2}\langle e_{2},z'\rangle y_{2}\gamma_{2}(y)\,\d y\\
&=\frac{1}{\vnorm{z}^{2}}\int_{\Sigma}\gamma_{1}(x)\,\d x\cdot \int_{\Omega}\frac{\beta^{2}[\alpha-\beta y_{1}] e^{-\frac{[\beta y_{1}-\alpha]^{2}}{2(1-\beta^{2})}}}{(1-\beta^{2})^{3/2}}\langle e_{2},z'\rangle\gamma_{1}(y)\,\d y\\
&\stackrel{\eqref{zpri}}{=}\frac{1}{\vnorm{z}^{2}}\Big(\int_{\Sigma}\gamma_{1}(x)\,\d x\Big)^{2}\cdot \int_{\Omega}\frac{\beta^{2}[\alpha-\beta y_{1}] e^{-\frac{[\beta y_{1}-\alpha]^{2}}{2(1-\beta^{2})}}}{(1-\beta^{2})^{3/2}}\gamma_{1}(y)\,\d y.
\end{flalign*}

Finally, from \eqref{bigbd}, (noting that the first, third fourth terms there are zero),
\begin{flalign*}
&\Big\langle\int_{\Sigma\times\R}x f(x)\gamma_{2}(x)\,\d x,\,\, \int_{\Omega\times\R}\Phizddy\nu''(z) y\gamma_{2}(y)\,\d y\Big\rangle\\
&\qquad\qquad=-\vnorm{z}^{-3}\Big\|\int_{\Sigma\times\R}x \gamma_{2}(x)\,\d x\Big\|^{2} \int_{\Omega\times\R}\Phizddy\langle y, z\rangle \gamma_{2}(y)\,\d y\\
&\qquad\qquad=-\vnorm{z}^{-2}\Big(\int_{\Sigma}\gamma_{1}(x)\,\d x\Big)^{2} \int_{\Omega}\Phizddy y_{1} \gamma_{1}(y)\,\d y.
&\end{flalign*}
Therefore,
\begin{equation}\label{tobd6}
\begin{aligned}
&\frac{1}{2}\frac{\d^{2}}{\d s^{2}}\Big|_{s=0}\Big[\int_{\R^{2}}\int_{\R^{2}} 1_{\Omega^{(s)}}(y)G(x,y) 1_{\Omega^{(s)}}(x)\,\d x\d y-\epsilon\pens\Big]\\
&=\int_{\redA\times\R}\int_{\Sigma\times\R} G(x,y)\,\d x\d y
-\int_{\redA\times\R}\Big\langle N(x),\, \overline{\nabla} T_{\rho}1_{\Omega\times\R}(x)-\epsilon  a_{0}\frac{\beta e^{-\frac{ [\beta x_{1}-\alpha]^{2}}{2(1-\beta^{2})}}}{\sqrt{1-\beta^{2}}}(1,0)\Big\rangle \gamma_{2}(x)\,\d x\\
&\qquad-2\epsilon a_{0}\frac{\beta}{\vnorm{z}\sqrt{1-\beta^{2}}}\int_{\Sigma}e^{-\frac{[\beta x_{1}-\alpha]^{2}}{2(1-\beta^{2})}}\gamma_{1}(x_{1})\,\d  x_{1}\cdot
\gamma_{1}(\Sigma)\\
&\qquad+\epsilon a_{0}\frac{1}{\vnorm{z}^{2}}\gamma_{1}(\Sigma)^{2}\cdot \Big[-\int_{\Omega}\frac{\beta^{2}[\alpha-\beta y_{1}] e^{-\frac{[\beta y_{1}-\alpha]^{2}}{2(1-\beta^{2})}}}{(1-\beta^{2})^{3/2}}\gamma_{1}(y)\,\d y
+\int_{\Omega}\frac{\beta e^{-\frac{[\beta y_{1}-\alpha]^{2}}{2(1-\beta^{2})}}}{\sqrt{1-\beta^{2}}}y_{1}\gamma_{1}(y)\,\d y\Big]\\
&=\int_{\redA\times\R}\int_{\Sigma\times\R} G(x,y)\,\d x\d y
-\int_{\redA\times\R}\langle N(x),\, \overline{\nabla} T_{\rho}1_{\Omega\times\R}(x)\rangle \gamma_{2}(x)\,\d x\\
&\qquad-\epsilon a_{0}\frac{\beta}{\sqrt{1-\beta^{2}}}\int_{\Sigma}e^{-\frac{[\beta x_{1}-\alpha]^{2}}{2(1-\beta^{2})}}\gamma_{1}(x_{1})\,\d  x_{1}\cdot
(-1+2\gamma_{1}(\Sigma)/\vnorm{z})\\
&\qquad+\epsilon a_{0}\frac{1}{\vnorm{z}^{2}}\gamma_{1}(\Sigma)^{2}\frac{\beta}{(1-\beta^{2})^{3/2}}\int_{\Omega}(y_{1}-\alpha\beta)e^{-\frac{[\beta y_{1}-\alpha]^{2}}{2(1-\beta^{2})}}\gamma_{1}(y)\,\d y.
\end{aligned}
\end{equation}

We will now split into two cases, thereby giving two separate bounds for \eqref{tobd6}

\textbf{Case 1.}  Assume in this case that $\frac{1}{10}\int_{H}\gamma_{1}(y)\,\d y>24\pi(1+\alpha^{2})(\gamma_{1}(\Sigma)-\gamma_{1}(\partial H))$.  Lemma \ref{finallem1} with $k=1$ lower bounds the first term.  Lemma \ref{finallem7} controls the middle term.  And Lemma \ref{finallem9} controls the last term.  Combining these estimates,

\begin{flalign*}
&\frac{1}{2}\frac{\d^{2}}{\d s^{2}}\Big|_{s=0}\Big[\int_{\R^{2}}\int_{\R^{2}} 1_{\Omega^{(s)}}(y)G(x,y) 1_{\Omega^{(s)}}(x)\,\d x\d y-\epsilon\pens\Big]\\
&\qquad\geq\Big(\int_{\partial\Omega}\gamma_{1}(x)\,\d x
-\int_{\partial H}\gamma_{1}(x)\,\d x\Big)\\
&\qquad\quad
\cdot\Big[\rho(1-\rho)\frac{\min(a,1-a)}{80}
-\epsilon a_{0}e^{\frac{\alpha^{2}}{1+\beta}}8\frac{(6+\abs{\alpha})^{2}}{\beta(1-\beta^{2})}\Big((-1+2\gamma_{1}(\Sigma)/\vnorm{z})+\frac{1}{\vnorm{z}^{2}}\gamma_{1}(\Sigma)^{2}\Big)\Big]\\
&\qquad-\epsilon a_{0}(-1+2\gamma_{1}(\Sigma)/\vnorm{z})\frac{\beta}{\sqrt{1-\beta^{2}}}\int_{H}e^{-\frac{[\beta x-\alpha]^{2}}{2(1-\beta^{2})}}\gamma_{1}(x)\,\d x\\
&\qquad+\epsilon a_{0}\frac{1}{\vnorm{z}^{2}}\gamma_{1}(\Sigma)^{2}\frac{\beta}{(1-\beta^{2})^{3/2}}\int_{H}(x-\alpha\beta)e^{-\frac{[\beta x-\alpha]^{2}}{2(1-\beta^{2})}}\gamma_{1}(x)\,\d x\\
&\quad=\Big(\int_{\partial\Omega}\gamma_{1}(x)\,\d x
-\int_{\partial H}\gamma_{1}(x)\,\d x\Big)\\
&\qquad\quad\cdot\Big[\rho(1-\rho)\frac{\min(a,1-a)}{80}
+\epsilon a_{0}8e^{\frac{\alpha^{2}}{1+\beta}}\frac{(6+\abs{\alpha})^{2}}{\beta(1-\beta^{2})}\Big((1-\gamma_{1}(\Sigma)/\vnorm{z})^{2}-2\frac{1}{\vnorm{z}^{2}}\gamma_{1}(\Sigma)^{2}\Big)\Big]\\
&\qquad+\epsilon a_{0}\frac{\beta}{\sqrt{1-\beta^{2}}}\int_{H}e^{-\frac{[\beta x-\alpha]^{2}}{2(1-\beta^{2})}}\gamma_{1}(x)\,\d x(1-\gamma_{1}(\Sigma)/\vnorm{z})^{2}.
\end{flalign*}
Ignoring some nonnegative terms,

\begin{flalign*}
&\frac{1}{2}\frac{\d^{2}}{\d s^{2}}\Big|_{s=0}\Big[\int_{\R^{2}}\int_{\R^{2}} 1_{\Omega^{(s)}}(y)G(x,y) 1_{\Omega^{(s)}}(x)\,\d x\d y-\epsilon\pens\Big]\\
&\geq\Big(\int_{\partial\Omega}\gamma_{1}(x)\,\d x
-\int_{\partial H}\gamma_{1}(x)\,\d x\Big)\\
&\qquad\qquad\qquad\qquad\qquad\qquad\cdot\Big[\rho(1-\rho)\frac{\min(a,1-a)}{80}
-\epsilon a_{0}16e^{\frac{\alpha^{2}}{1+\beta}}\frac{(6+\abs{\alpha})^{2}}{\beta(1-\beta^{2})}\frac{1}{\vnorm{z}^{2}}\gamma_{1}(\Sigma)^{2}\Big].
\end{flalign*}%

From the main result of \cite{barchiesi16},
$$
\abs{\int_{\Omega}x\gamma_{1}(x)\,\d x
-\int_{H}x\gamma_{1}(x)\,\d x}
\leq24\pi(1+\alpha^{2})\Big(\int_{\partial\Omega}\gamma_{1}(x)\,\d x
-\int_{\partial H}\gamma_{1}(x)\,\d x\Big).
$$
So, using this inequality
\begin{flalign*}
&\frac{1}{2}\frac{\d^{2}}{\d s^{2}}\Big|_{s=0}\Big[\int_{\R^{2}}\int_{\R^{2}} 1_{\Omega^{(s)}}(y)G(x,y) 1_{\Omega^{(s)}}(x)\,\d x\d y-\epsilon\pens\Big]\\
&\qquad\geq\Big(\int_{\partial\Omega}\gamma_{1}(x)\,\d x
-\int_{\partial H}\gamma_{1}(x)\,\d x\Big)\cdot\Big[\rho(1-\rho)\frac{\min(a,1-a)}{80}\\
&\qquad\qquad-\epsilon \frac{a_{0}16e^{\frac{\alpha^{2}}{1+\beta}}(6+\abs{\alpha})^{2}}{\beta(1-\beta^{2})}\Big(\frac{\gamma_{1}(\Sigma)}{\int_{H}y\gamma_{1}(y)\,\d y-24\pi(1+\alpha^{2})(\gamma_{1}(\Sigma)-\gamma_{1}(\partial H))}\Big)^{2}\Big].
\end{flalign*}
Then, using the assumption of Case 1,
\begin{flalign*}
&\frac{1}{2}\frac{\d^{2}}{\d s^{2}}\Big|_{s=0}\Big[\int_{\R^{2}}\int_{\R^{2}} 1_{\Omega^{(s)}}(y)G(x,y) 1_{\Omega^{(s)}}(x)\,\d x\d y-\epsilon\pens\Big]\\
&\qquad\geq\Big(\int_{\partial\Omega}\gamma_{1}(x)\,\d x
-\int_{\partial H}\gamma_{1}(x)\,\d x\Big)\cdot\Big[\rho(1-\rho)\frac{\min(a,1-a)}{80}\\
&\qquad\qquad-\epsilon \frac{a_{0}16e^{\frac{\alpha^{2}}{1+\beta}}(6+\abs{\alpha})^{2}}{\beta(1-\beta^{2})}\Big(\frac{\gamma_{1}(\partial H)+\frac{1}{240\pi(1+\alpha^{2})}\int_{H}y\gamma_{1}(y)\,\d y}{\frac{9}{10}\int_{H}y\gamma_{1}(y)\,\d y}\Big)^{2}\Big]\\
&\geq\Big(\int_{\partial\Omega}\gamma_{1}(x)\,\d x
-\int_{\partial H}\gamma_{1}(x)\,\d x\Big)\cdot\Big[\rho(1-\rho)\frac{\min(a,1-a)}{80}\\
&\qquad\qquad\qquad\qquad\qquad\qquad\qquad\qquad\qquad
-\epsilon \frac{a_{0}16e^{\frac{\alpha^{2}}{1+\beta}}(6+\abs{\alpha})^{2}\gamma_{1}(\partial H)^{2}}{\beta(1-\beta^{2})}\Big(\frac{1+\frac{1}{240\pi(1+\alpha^{2})}}{\frac{9}{10}}\Big)^{2}\Big]\\
&\geq\Big(\int_{\partial\Omega}\gamma_{1}(x)\,\d x
-\int_{\partial H}\gamma_{1}(x)\,\d x\Big)\cdot\Big[\rho(1-\rho)\frac{\min(a,1-a)}{80}
-\epsilon \frac{a_{0}16e^{\frac{\alpha^{2}}{1+\beta}}(6+\abs{\alpha})^{2}}{\beta(1-\beta^{2})}\Big].
\end{flalign*}%

\textbf{Case 2.}  Assume in this case that $\frac{1}{10}\int_{H}\gamma_{1}(y)\,\d y\leq 24\pi(1+\alpha^{2})(\gamma_{1}(\Sigma)-\gamma_{1}(\partial H))$.  We now give our alternate bound of \eqref{tobd6}.  Lemma \ref{finallem1} lower bounds the first term, though we use the $k=2$ case instead of the $k=1$ case.  Lemma \ref{finallem7} controls the middle term again.  For the final term, we just lower bound it by zero.  Combining these estimates,

\begin{flalign*}
&\frac{1}{2}\frac{\d^{2}}{\d s^{2}}\Big|_{s=0}\Big[\int_{\R^{2}}\int_{\R^{2}} 1_{\Omega^{(s)}}(y)G(x,y) 1_{\Omega^{(s)}}(x)\,\d x\d y-\epsilon\pens\Big]\\
&\geq\Big(\int_{\partial\Omega}\gamma_{1}(x)\,\d x
-\int_{\partial H}\gamma_{1}(x)\,\d x\Big)\cdot\Big[\rho(1-\rho)\frac{\min(a,1-a)}{80}\Big(\int_{\partial\Omega}\gamma_{1}(x)\,\d x
-\int_{\partial H}\gamma_{1}(x)\,\d x\Big)\\
&\qquad\qquad\qquad\qquad\qquad\qquad
-\epsilon a_{0}8e^{\frac{\alpha^{2}}{1+\beta}}\frac{(6+\abs{\alpha})^{2}}{\beta(1-\beta^{2})}(-1+2\gamma_{1}(\Sigma)/\vnorm{z})\Big]\\
&\qquad-\epsilon a_{0}(-1+2\gamma_{1}(\Sigma)/\vnorm{z})\frac{\beta}{\sqrt{1-\beta^{2}}}\int_{H}e^{-\frac{[\beta x-\alpha]^{2}}{2(1-\beta^{2})}}\gamma_{1}(x)\,\d x\\
&\geq\Big(\int_{\partial\Omega}\gamma_{1}(x)\,\d x
-\int_{\partial H}\gamma_{1}(x)\,\d x\Big)\cdot\Big[\rho(1-\rho)\frac{\min(a,1-a)}{80}\Big(\int_{\partial\Omega}\gamma_{1}(x)\,\d x
-\int_{\partial H}\gamma_{1}(x)\,\d x\Big)\\
&\qquad\qquad\qquad\qquad\qquad\qquad
-\epsilon a_{0}10e^{\frac{\alpha^{2}}{1+\beta}}\frac{(6+\abs{\alpha})^{2}}{\beta(1-\beta^{2})}(-1+2\gamma_{1}(\Sigma)/\vnorm{z})\Big].
\end{flalign*}
Ignoring some nonnegative terms and rearranging

\begin{flalign*}
&\frac{1}{2}\frac{\d^{2}}{\d s^{2}}\Big|_{s=0}\Big[\int_{\R^{2}}\int_{\R^{2}} 1_{\Omega^{(s)}}(y)G(x,y) 1_{\Omega^{(s)}}(x)\,\d x\d y-\epsilon\pens\Big]\\
&\geq\Big(\int_{\partial\Omega}\gamma_{1}(x)\,\d x
-\int_{\partial H}\gamma_{1}(x)\,\d x\Big)\cdot\Big[\rho(1-\rho)\frac{\min(a,1-a)}{80}\Big(\int_{\partial\Omega}\gamma_{1}(x)\,\d x
-\int_{\partial H}\gamma_{1}(x)\,\d x\Big)\\
&\qquad\qquad\qquad\qquad\qquad\qquad
-\epsilon \frac{1}{\vnorm{z}}a_{0}200e^{\frac{\alpha^{2}}{1+\beta}}\frac{(6+\abs{\alpha})^{2}}{\beta(1-\beta^{2})}[\gamma_{1}(\Sigma)-\gamma_{1}(\partial H)+\gamma_{1}(\partial H)]\Big]\\
&=\Big(\int_{\partial\Omega}\gamma_{1}(x)\,\d x
-\int_{\partial H}\gamma_{1}(x)\,\d x\Big)\cdot\Big[\rho(1-\rho)\frac{\min(a,1-a)}{80}\Big(\int_{\partial\Omega}\gamma_{1}(x)\,\d x
-\int_{\partial H}\gamma_{1}(x)\,\d x\Big)\\
&\qquad\qquad\qquad\qquad\qquad
-\epsilon \frac{1}{\vnorm{z}}a_{0}200e^{\frac{\alpha^{2}}{1+\beta}}\frac{(6+\abs{\alpha})^{2}}{\beta(1-\beta^{2})}\Big]
-\epsilon \frac{1}{\vnorm{z}}a_{0}200e^{\frac{\alpha^{2}}{1+\beta}}\frac{(6+\abs{\alpha})^{2}}{\beta(1-\beta^{2})}\gamma_{1}(\partial H)].
\end{flalign*}

Then, using the assumption of Case 2,

\begin{flalign*}
&\frac{1}{2}\frac{\d^{2}}{\d s^{2}}\Big|_{s=0}\Big[\int_{\R^{2}}\int_{\R^{2}} 1_{\Omega^{(s)}}(y)G(x,y) 1_{\Omega^{(s)}}(x)\,\d x\d y-\epsilon\pens\Big]\\
&\qquad\geq\frac{1}{10}\gamma_{1}(\partial H)\cdot\Big[\rho(1-\rho)\frac{\min(a,1-a)}{80}\frac{1}{10}\gamma_{1}(\partial H)
-\epsilon \frac{1}{\vnorm{z}}a_{0}200e^{\frac{\alpha^{2}}{1+\beta}}\frac{(6+\abs{\alpha})^{2}}{\beta(1-\beta^{2})}\Big]\\
&\qquad\qquad\qquad\qquad\qquad\qquad\qquad\qquad\qquad
-\epsilon \frac{1}{\vnorm{z}}a_{0}200e^{\frac{\alpha^{2}}{1+\beta}}\frac{(6+\abs{\alpha})^{2}}{\beta(1-\beta^{2})}\gamma_{1}(\partial H)]\\
&\qquad=\frac{1}{10}\gamma_{1}(\partial H)\cdot\Big[\rho(1-\rho)\frac{\min(a,1-a)}{80}\frac{1}{10}\gamma_{1}(\partial H)
-\epsilon \frac{1}{\vnorm{z}}a_{0}2200e^{\frac{\alpha^{2}}{1+\beta}}\frac{(6+\abs{\alpha})^{2}}{\beta(1-\beta^{2})}\Big].
\end{flalign*}

\textbf{Completing the Proof by combining Cases 1 and 2}.

Suppose
$$\epsilon<\frac{\rho(1-\rho)\frac{\min(a,1-a)}{80}\frac{1}{10}\gamma_{1}(\partial H)}{\frac{1}{\vnorm{z}}a_{0}2200e^{\frac{\alpha^{2}}{1+\beta}}\frac{(6+\abs{\alpha})^{2}}{\beta(1-\beta^{2})}}.$$
It then follows that
$$\epsilon<\frac{\rho(1-\rho)\frac{\min(a,1-a)}{80}}{\Big[\frac{a_{0}16e^{\frac{\alpha^{2}}{1+\beta}}(6+\abs{\alpha})^{2}}{\beta(1-\beta^{2})}\Big]}.$$
So, in either Case 1 or Case 2, the following quantity is positive:

$$\frac{1}{2}\frac{\d^{2}}{\d s^{2}}\Big|_{s=0}\Big[\int_{\R^{2}}\int_{\R^{2}} 1_{\Omega^{(s)}}(y)G(x,y) 1_{\Omega^{(s)}}(x)\,\d x\d y-\epsilon\pens\Big].
$$
That is, we have contradicted the assumption that $\Omega$ maximizes Problem \ref{prob2z}, unless $\Omega$ is a half space.

Finally, Lemma \ref{finallem2} shows that $\Omega$ has the desired Gaussian measure, i.e. $\gamma_{1}(\Omega)=a$, so that $\Omega$ maximizes Problem \ref{prob2}, as desired.

This inequality contradicts Lemma \ref{lemma7p} for $\Omega\times\R\subset\R^{2}$.  We conclude that $\Omega$ is a half space.  It remains to show that $\gamma_{1}(\Omega)=a$.  This follows from Lemma \ref{finallem2}.
\end{proof}

\section{One-Dimensional Lemmas}\label{lastlems}

The lemmas below are needed in order to lower bound the second variation in Section \ref{secpro}.

\begin{lemma}\label{finallem1}
Let $\Omega\subset\R$, $\Sigma\colonequals\partial\Omega$.  Then
\begin{flalign*}
&\int_{\redA\times\R}\int_{\redA\times\R}G(x,y) \,\d x\d y
-\int_{\redA\times\R}\vnorm{\overline{\nabla} T_{\rho}1_{\Omega\times\R}(x)}\gamma_{2}(x)\,\d x\\
&\qquad\qquad\geq\rho(1-\rho)\frac{\min(a,1-a)}{80}\max_{k\in\{1,2\}}\Big(\int_{\Sigma}\gamma_{1}(x)\,\d x-\int_{H}\gamma_{1}(x)\,\d x\Big)^{k}.
\end{flalign*}
\end{lemma}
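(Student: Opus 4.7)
The plan is to reduce the inequality to a one-dimensional computation by exploiting the cylindrical product structure, apply integration by parts to rewrite $(T_{\rho}1_{\Omega})'$, and then use Mehler's formula to extract a clean lower bound. Since $\Omega\times\R\subset\R^{2}$ is a cylinder, the Gaussian kernel factors as $G(x,y)=G_{1}(x_{1},y_{1})G_{1}(x_{2},y_{2})$, where $G_{1}$ is the one-dimensional analogue of \eqref{gdef}, and since $T_{\rho}1_{\Omega\times\R}(x)=T_{\rho}1_{\Omega}(x_{1})$ depends only on $x_{1}$, the free integrals over the second coordinate collapse to $\int_{\R}\int_{\R}G_{1}(x_{2},y_{2})\,\d x_{2}\d y_{2}=1$. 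Thus the quantity on the left reduces to
\[
A\colonequals\int_{\Sigma}\int_{\Sigma}G_{1}(s,t)\,\d s\,\d t-\int_{\Sigma}\abs{(T_{\rho}1_{\Omega})'(s)}\gamma_{1}(s)\,\d s.
\]

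Next, differentiating under the integral in the definition of $T_{\rho}1_{\Omega}$ and integrating by parts in the $y$-variable (which exchanges $\partial_{s}$ and $-\rho\partial_{y}$ on the Gaussian conditional density, and is justified at infinity exactly as in Remark \ref{drk}) yields the identity
\[
(T_{\rho}1_{\Omega})'(s)\gamma_{1}(s)=-\rho\int_{\Sigma}N(u)G_{1}(s,u)\,\d u,\qquad\forall\,s\in\R.
\]
The triangle inequality $\bigl|\int_{\Sigma}N(u)G_{1}(s,u)\,\d u\bigr|\leq\int_{\Sigma}G_{1}(s,u)\,\d u$, integrated against $\,\d s$ over $\Sigma$, immediately gives $A\geq(1-\rho)\int_{\Sigma}\int_{\Sigma}G_{1}(s,t)\,\d s\,\d t$.

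Now I would apply the one-dimensional form of Mehler's formula \eqref{Height}, namely $G_{1}(s,t)=\gamma_{1}(s)\gamma_{1}(t)\sum_{k\geq 0}\rho^{k}k!h_{k}(s)h_{k}(t)$. Interchanging sum and integral and keeping only the $k=0$ term,
\[
\int_{\Sigma}\int_{\Sigma}G_{1}(s,t)\,\d s\,\d t=\sum_{k=0}^{\infty}\rho^{k}k!\Bigl(\int_{\Sigma}\gamma_{1}(x)h_{k}(x)\,\d x\Bigr)^{2}\geq\Bigl(\int_{\Sigma}\gamma_{1}\Bigr)^{2}=(D+\gamma_{1}(\alpha))^{2},
\]
where $D\colonequals\int_{\Sigma}\gamma_{1}-\int_{\partial H}\gamma_{1}\geq 0$ is the isoperimetric deficit and $\gamma_{1}(\partial H)=\gamma_{1}(\alpha)$ for $\alpha=-\Phi^{-1}(a)$. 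Combining the last two steps gives $A\geq(1-\rho)(D+\gamma_{1}(\alpha))^{2}\geq\max\bigl((1-\rho)D^{2},\,2(1-\rho)D\gamma_{1}(\alpha)\bigr)$.

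Finally I would convert $\gamma_{1}(\alpha)$ to $\min(a,1-a)$: the inverse Mills ratio $\alpha\mapsto\gamma_{1}(\alpha)/\Phi(-\alpha)$ is increasing on $[0,\infty)$ (a calculus exercise using the standard Mills bound $\Phi(-\alpha)\leq\gamma_{1}(\alpha)/\alpha$), with infimum $\sqrt{2/\pi}>1/2$ attained at $\alpha=0$; a symmetric argument handles $\alpha<0$. This yields $\gamma_{1}(\alpha)\geq\min(a,1-a)/2$. Since $\rho\min(a,1-a)/80\leq 1$, the bound $A\geq(1-\rho)D^{2}$ yields the $k=2$ case and $A\geq 2(1-\rho)D\gamma_{1}(\alpha)\geq(1-\rho)\min(a,1-a)D$ yields the $k=1$ case, both with room to spare (the constant $1/80$ is far from optimal). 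The only mildly delicate point is justifying the integration by parts at infinity in Step 2, which is the one-dimensional version of Remark \ref{drk}; otherwise the argument is quite soft.
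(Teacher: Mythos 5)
Your argument does prove the inequality exactly as the lemma is typeset, but it does so by exploiting what is really a notational omission in the statement, and it misses the actual content of the lemma. The quantity the paper needs (and the one its own proof computes) carries the perturbation weights $f(x)f(y)=x_{2}y_{2}$ in the double integral and $\abs{f(x)}^{2}=x_{2}^{2}$ in the gradient integral: this is visible in the first display of the paper's proof, where $\int_{\Sigma\times\R}\int_{\Sigma\times\R}G\,\d x\d y$ is immediately rewritten with a factor $\rho x_{2}^{2}$, in the way the lemma is invoked to bound the first term of \eqref{tobd6}, and in the second proof of Corollary \ref{rk0}, where the relevant expression is $\int_{\Sigma}(\rho S(1)(x)-\vnormf{\overline{\nabla}T_{\rho}1_{\Omega}(x)})\gamma_{\adimn}(x)\,\d x$. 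Because $\int_{\R}\int_{\R}G_{1}(x_{2},y_{2})\,x_{2}y_{2}\,\d x_{2}\d y_{2}=\rho$ rather than $1$, the correct one-dimensional reduction is
\begin{equation*}
A'\colonequals\rho\int_{\Sigma}\int_{\Sigma}G_{1}(s,t)\,\d s\,\d t-\int_{\Sigma}\abs{(T_{\rho}1_{\Omega})'(s)}\gamma_{1}(s)\,\d s,
\end{equation*}
not your $A$. For $A'$ your key step collapses: the divergence-theorem identity plus the triangle inequality gives only $\int_{\Sigma}\abs{(T_{\rho}1_{\Omega})'}\gamma_{1}\leq\rho\int_{\Sigma}\int_{\Sigma}G_{1}$, i.e.\ $A'\geq0$, with equality when $\Sigma$ is a single point. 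Indeed a direct computation for a half space gives $A'=0$ while your lower bound $(1-\rho)\big(\int_{\Sigma}\gamma_{1}\big)^{2}$ is strictly positive, so no soft estimate of this type can yield a deficit-proportional lower bound for $A'$.

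What the paper's proof actually does, and what is absent from your proposal, is to use the semigroup factorization $T_{\rho}=T_{\sqrt{\rho}}T_{\sqrt{\rho}}$ to write $A'$ as $\rho$ times an integral of $\big(\sum_{i}a_{i}(x)\big)^{2}-\big(\sum_{i}(-1)^{i}a_{i}(x)\big)^{2}=4\big(\sum_{i\,\mathrm{odd}}a_{i}(x)\big)\big(\sum_{i\,\mathrm{even}}a_{i}(x)\big)$, where the $a_{i}$ are Gaussian bumps centered at the boundary points $\sigma_{i}$ with alternating normals; this isolates the cross-interaction between left and right endpoints. The resulting product $\int_{\Sigma_{L}}\gamma_{1}\cdot\int_{\Sigma_{R}}\gamma_{1}$ is then bounded below by $\kappa\big(\int_{\Sigma}\gamma_{1}-\int_{\partial H}\gamma_{1}\big)$ via the auxiliary minimization of $\int_{\Sigma_{L}}\gamma_{1}\cdot\int_{\Sigma_{R}}\gamma_{1}-\kappa\int_{\Sigma}\gamma_{1}$ (and by an elementary counting bound in the $k=2$ case). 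Your triangle inequality discards exactly this sign cancellation, so the argument cannot be repaired without reintroducing it. The reduction to one dimension, the divergence-theorem identity, and the Mills-ratio conversion of $\gamma_{1}(\alpha)$ to $\min(a,1-a)$ are all fine, but the heart of the lemma is missing.
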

\begin{proof}
\begin{flalign*}
&\int_{\redA\times\R}\int_{\redA\times\R}G(x,y) \,\d x\d y
-\int_{\redA\times\R}\vnorm{\overline{\nabla} T_{\rho}1_{\Omega\times\R}(x)}\gamma_{2}(x)\,\d x\\
&\qquad\qquad=\int_{\R^{2}}\rho x_{2}^{2} T_{\rho}1_{\Sigma}(x_{1})\gamma_{2}(x) \,\d x\d y
-\int_{\redA\times\R}\vnorm{\overline{\nabla} T_{\rho}1_{\Omega}(x)}x_{2}^{2} \gamma_{2}(x)\,\d x.\\
&\qquad\qquad=\rho\int_{\Sigma'}T_{\rho}1_{\Sigma}(x_{1})\gamma_{1}(x) \,\d x
-\int_{\redA}\vnorm{\overline{\nabla} T_{\rho}1_{\Omega}(x)}\gamma_{1}(x)\,\d x.\\
\end{flalign*}
Using \eqref{firstve} then the divergence theorem,
\begin{flalign*}
&\int_{\redA\times\R}\int_{\redA\times\R}G(x,y) \,\d x\d y
-\int_{\redA\times\R}\vnorm{\overline{\nabla} T_{\rho}1_{\Omega\times\R}(x)}\gamma_{2}(x)\,\d x\\
&\qquad\qquad=\rho\int_{\R}(T_{\sqrt{\rho}}1_{\Sigma}(x_{1}))^{2}\gamma_{1}(x) \,\d x
+\int_{\redA}\langle  N(x),\overline{\nabla} T_{\rho}1_{\Omega}(x)\rangle\gamma_{1}(x)\,\d x.\\
&\qquad\qquad=\rho\int_{\R}(T_{\sqrt{\rho}}1_{\Sigma}(x))^{2}\gamma_{1}(x) \,\d x
+\int_{\Omega}\mathrm{div}\Big(\overline{\nabla} T_{\rho}1_{\Omega}(x)\gamma_{1}(x)\Big)\,\d x.\\
&\qquad\qquad=\rho\int_{\R}(T_{\sqrt{\rho}}1_{\Sigma}(x))^{2}\gamma_{1}(x) \,\d x
+\int_{\R}1_{\Omega}(x)(\overline{\Delta}-\langle x,\overline{\nabla}\rangle)T_{\rho}1_{\Omega}(x)\gamma_{1}(x)\,\d x.
\end{flalign*}
Integrating by parts,
$$
\int_{\R}1_{\Omega}(x)(\overline{\Delta}-\langle x,\overline{\nabla}\rangle)T_{\rho}1_{\Omega}(x)\gamma_{1}(x)\,\d x.
=-\int_{\R}\vnormf{\overline{\nabla} T_{\sqrt{\rho}}1_{\Omega}(x)}^{2}\gamma_{1}(x)\,\d x.
$$
Since
$$T_{\sqrt{\rho}}1_{\Omega}(x)=\int_{\frac{\Omega-\sqrt{\rho}x}{1-\rho}}\gamma_{1}(y)\,\d y,$$
$$\overline{\nabla} T_{\sqrt{\rho}}1_{\Omega}(x)=\sqrt{\rho}\int_{\frac{\Omega-\sqrt{\rho}x}{1-\rho}}y\gamma_{1}(y)\,\d y,$$
In summary,
\begin{flalign*}
&\int_{\redA\times\R}\int_{\redA\times\R}G(x,y) \,\d x\d y
-\int_{\redA\times\R}\vnorm{\overline{\nabla} T_{\rho}1_{\Omega\times\R}(x)}\gamma_{2}(x)\,\d x\\
&\qquad\qquad\qquad\qquad=\rho\int_{\R}\Big((T_{\sqrt{\rho}}1_{\Sigma}(x))^{2}-\vnormf{\int_{\frac{\Omega-\sqrt{\rho}x}{1-\rho}}y\gamma_{1}(y)\,\d y}^{2}\Big)\gamma_{1}(x)\,\d x\\
&\qquad\qquad\qquad\qquad=\rho\int_{\R}\Big(\int_{\frac{\Sigma-\sqrt{\rho}x}{1-\rho}}\gamma_{1}(y)\,\d y)^{2}-\vnormf{\int_{\frac{\Omega-\sqrt{\rho}x}{1-\rho}}y\gamma_{1}(y)\,\d y}^{2}\Big)\gamma_{1}(x)\,\d x.
\end{flalign*}
The latter quantity is nonnegative by rearrangement, as noted e.g. in Eldan's proof of Borell's inequality \cite{eldan13}.  However, we need a better lower bound.  To this end, write $\Sigma=\cup_{i=1}^{r}\sigma_{i}\subset\R$ with $r$ a positive integer or $\infty$, such that $\sigma_{2i-1}<\sigma_{2i}$ for all $i\geq1$.  In the case $r=2$ we have
\begin{equation}\label{expl1}
\begin{aligned}
&\int_{\R}\Big((\int_{\frac{\Sigma-\sqrt{\rho}x}{1-\rho}}\gamma_{1}(y)\,\d y)^{2}-\vnormf{\int_{\frac{\Omega-\sqrt{\rho}x}{1-\rho}}y\gamma_{1}(y)}^{2}\Big)\gamma_{1}(x)\,\d x\\
&\qquad=\frac{4}{(2\pi)^{3/2}}\int_{\R}\Big[\Big(e^{-\frac{1}{2}(\sigma_{1}-\sqrt{\rho}x/(1-\rho))^{2}}+e^{-\frac{1}{2}(\sigma_{1}-\sqrt{\rho}x/(1-\rho))^{2}}\Big)^{2}\\
&\qquad\qquad\qquad\qquad\qquad\qquad-\Big(e^{-\frac{1}{2}(\sigma_{1}-\sqrt{\rho}x/(1-\rho))^{2}}-e^{-\frac{1}{2}(\sigma_{1}-\sqrt{\rho}x/(1-\rho))^{2}}\Big)^{2}e^{-x^{2}/2}\Big]\,\d x\\
&\qquad=\frac{4}{(2\pi)^{3/2}}\int_{\R}e^{-\frac{1}{2}(\sigma_{1}-\sqrt{\rho}x/(1-\rho))^{2}-\frac{1}{2}(\sigma_{2}-\sqrt{\rho}x/(1-\rho))^{2}-\frac{1}{2}x^{2}}\,\d x\\
&\qquad=\frac{4}{(2\pi)^{3/2}}\int_{\R}e^{-x^{2}\Big(\frac{1}{2}+\frac{\rho}{(1-\rho)^{2}}\Big)+\frac{(\sigma_{1}+\sigma_{2})\sqrt{\rho}x}{(1-\rho)}-\frac{1}{2}(\sigma_{1}^{2}+\sigma_{2}^{2})}\,\d x\\
&\qquad=\frac{4}{(2\pi)^{3/2}}\int_{\R}e^{-\Big(\frac{1}{2}+\frac{\rho}{(1-\rho)^{2}}\Big)\Big(x-\frac{(\sigma_{1}+\sigma_{2})\sqrt{\rho}}{(1-\rho)}\frac{1}{2}\frac{1}{1/2+\rho/(1-\rho)^{2}}\Big)^{2}
+\frac{\Big(\frac{(\sigma_{1}+\sigma_{2})^{2}\rho}{(1-\rho)^{2}}\Big)}{4(1/2+\rho/(1-\rho)^{2})^{2}}-\frac{1}{2}(\sigma_{1}^{2}+\sigma_{2}^{2})}\,\d x\\
&\qquad=\frac{16}{(2\pi)^{3/2}}\frac{\sqrt{\pi}}{\sqrt{\frac{1}{2}+\frac{\rho}{(1-\rho)^{2}}}}e^{\frac{(\sigma_{1}+\sigma_{2})^{2}\rho}{4((1/2)(1-\rho)+\rho/(1-\rho))^{2}}}
e^{-\frac{1}{2}(\sigma_{1}^{2}+\sigma_{2}^{2})}\\
&\qquad=\frac{4\sqrt{2}(1-\rho)}{\pi\sqrt{\frac{1}{2}(1-\rho)^{2}+\rho}}e^{-\frac{1}{2}(\sigma_{1}^{2}+\sigma_{2}^{2})\Big(1-\frac{\rho(1-\rho)^{2}}{2((1/2)(1-\rho)^{2}+\rho)^{2}}\Big)}\\
&\qquad=\frac{4\sqrt{2}(1-\rho)}{\pi\sqrt{\rho^{2}+1}}\Big(e^{-\frac{1}{2}\sigma_{1}^{2}}e^{-\frac{1}{2}\sigma_{2}^{2}}\Big)^{\Big(1-\frac{\rho(1-\rho)^{2}}{2((1/2)(1-\rho)^{2}+\rho)^{2}}\Big)}
=\frac{4\sqrt{2}(1-\rho)}{\pi\sqrt{\rho^{2}+1}}\Big(e^{-\frac{1}{2}\sigma_{1}^{2}}e^{-\frac{1}{2}\sigma_{2}^{2}}\Big)^{\Big(1-\frac{2\rho(1-\rho)^{2}}{\rho^{2}+1}\Big)}.
\end{aligned}
\end{equation}
More generally,
\begin{equation}\label{stackeq}
\begin{aligned}
&\int_{\R}\Big(\Big[\int_{\frac{\Sigma-\sqrt{\rho}x}{1-\rho}}\gamma_{1}(y)\,\d y\Big]^{2}-\vnormf{\int_{\frac{\Omega-\sqrt{\rho}x}{1-\rho}}y\gamma_{1}(y)}^{2}\Big)\gamma_{1}(x)\,\d x\\
&\quad=\frac{4}{(2\pi)^{3/2}}\int_{\R}\Big(\sum_{i=1}^{r}e^{-\frac{1}{2}(\sigma_{i}-\sqrt{\rho}x/(1-\rho))^{2}}\Big)^{2}
-\Big(\sum_{i=1}^{r}(-1)^{i}e^{-\frac{1}{2}(\sigma_{i}-\sqrt{\rho}x/(1-\rho))^{2}}\Big)^{2}e^{-x^{2}/2}\,\d x\\
&\quad=\frac{4}{(2\pi)^{3/2}}\int_{\R}\Big(\sum_{i=2,\ldots,r,\,\,\mathrm{even}}e^{-\frac{1}{2}(\sigma_{i}-\sqrt{\rho}x/(1-\rho))^{2}}\Big)
\cdot\Big(\sum_{i=1,\ldots,r,\,\,\mathrm{odd}}e^{-\frac{1}{2}(\sigma_{i}-\sqrt{\rho}x/(1-\rho))^{2}}\Big)e^{-x^{2}/2}\,\d x\\
&\quad\stackrel{\eqref{expl1}}{=}\frac{4\sqrt{2}(1-\rho)}{\pi\sqrt{\rho^{2}+1}}\Big(\sum_{i=2,\ldots,r,\,\,\mathrm{even}}e^{-\frac{1}{2}\sigma_{i}^{2}\Big(1-\frac{2\rho(1-\rho)^{2}}{\rho^{2}+1}\Big)}\Big)
\Big(\sum_{i=1,\ldots,r,\,\,\mathrm{odd}}e^{-\frac{1}{2}\sigma_{i}^{2}\Big(1-\frac{2\rho(1-\rho)^{2}}{\rho^{2}+1}\Big)}\Big).
\end{aligned}
\end{equation}

We now analyze a variant of the last quantity.  For any $\Omega\subset\R$ denote $\Sigma\colonequals\partial\Omega$, $\Sigma_{L}\colonequals\{x\in\Sigma\colon N(x)=-1\}$, $\Sigma_{R}\colonequals\{x\in\Sigma\colon N(x)=1\}$, so that $\Sigma$ is the disjoint union $\Sigma=\Sigma_{L}\cup\Sigma_{R}$.  Consider now the following minimization problem in $\R$: fix $0<a<1$ and minimize
\begin{equation}\label{minprob}
\int_{\Sigma_{L}}\gamma_{1}(x)\,\d x\cdot \int_{\Sigma_{R}}\gamma_{1}(y)\,\d y-\kappa\int_{\Sigma}\gamma_{1}(x)\,\d x
\end{equation}
over all $\Omega\subset\R$ such that $\gamma_{1}(\Omega)=a$.  The first variation condition implies that the minimum of this problem is attained at an interval of the form $\Omega=[c,d]$ or its complement, where $-\infty\leq c\leq d\leq\infty$.  For any $s\in\R$, let $c(s),d(s)\in\R$ such that $\gamma_{1}([c(s),d(s)])=a$.  (Assume that $c'(s),d'(s)>0$ for all $s\in\R$.)  Then $\Phi(d(s))-\Phi(c(s))=a$, so differentiating gives
$$\gamma_{1}(d(s))(-d'(s))-\gamma_{1}(c(s))(-c'(s))=0.$$
That is,
\begin{equation}\label{gdifeq}
\gamma_{1}(d(s))d'(s)=\gamma_{1}(c(s))c'(s).
\end{equation}
Also,
\begin{flalign*}
&\frac{\d}{\d s}\int_{\Sigma_{L}}\gamma_{1}(x)\,\d x\cdot \int_{\Sigma_{R}}\gamma_{1}(y)\,\d y-\kappa \int_{\Sigma}\gamma_{1}(x)\,\d x\\
&=\gamma_{1}(c(s))\gamma_{1}(d(s))(-c(s)c'(s)-d(s)d'(s))-\kappa(\gamma_{1}(c(s))(-c(s)c'(s))+\gamma_{1}(d(s))(-d(s)d'(s)))\\
&\stackrel{\eqref{gdifeq}}{=}\gamma_{1}(c(s))c'(s)\Big(-\gamma_{1}(d(s))c(s)-\gamma_{1}(c(s))d(s)+\kappa(c(s)+d(s))\Big).
\end{flalign*}
So, a critical point occurs when $c'(s)=d'(s)=0$, or when
$$-c(s)(\kappa-\gamma_{1}(d(s)))=d(s)(\kappa-\gamma_{1}(c(s))).$$
That is,
\begin{equation}\label{critpteq}
\frac{\kappa-\gamma_{1}(c(s))}{-c(s)}=\frac{\kappa-\gamma_{1}(d(s))}{d(s)}.
\end{equation}

Assume for now that $a\geq1/2$, let $\beta>0$ such that $\gamma_{1}[-\beta,\beta]=a$.  Assume also that $c(0)=-\beta$ and $d(0)=\beta$.  Let $\kappa\in\R$ such that $0<\kappa<\gamma_{1}(\beta)$.  In the case $a\geq1/2$, the unique solution of this equation occurs when $-c(s)=d(s)=\beta$, since $c(s)\leq0$ always, and $c\mapsto\frac{\kappa-\gamma_{1}(c)}{-c}$ is decreasing and negative (when $s\geq0$)), whereas $d\mapsto\frac{\kappa-\gamma_{1}(d)}{d}$ is increasing and then positive (when $s\geq0$).

We now consider the case $a<1/2$.  In this case, again there is a solution $-c(s)=d(s)=\beta$ to the critical point condition \eqref{critpteq}, and then $c\mapsto\frac{\kappa-\gamma_{1}(c)}{-c}$ is decreasing and negative (when $c(s)<0$), whereas $d\mapsto\frac{\kappa-\gamma_{1}(d)}{d}$ is increasing, as long as $c(s)\leq0$.  When $c(s)>0$, $c\mapsto\frac{\kappa-\gamma_{1}(c)}{-c}$ is increasing and negative, whereas $d\mapsto\frac{\kappa-\gamma_{1}(d)}{d}$ is increasing and positive.  By choice of $\kappa$, for all $s\geq0$ in the domain of $c$, we have $c(s)\leq \inf_{t\geq} d(t)$.  So, as before, there is only one solution to the equation \eqref{critpteq}, corresponding to $-c(s)=d(s)=\beta$.
%

The function $f(t)\colonequals (\kappa- e^{-t^{2}/2})/t$ satisfies $f'(t)=0$ when $e^{-t/2}(t^{2}+1)=\kappa$, so $f(t)=\kappa t/(t^{2}+1)\leq\kappa/2$ at this critical point.  Moreover, if $\kappa\colonequals\min(1/\abs{\alpha},1) \frac{1}{2}e^{-\alpha^{2}/2}$, then $f(\alpha)<-(1/2)e^{-\alpha^{2}/2}/\abs{\alpha} <-\kappa$.

In conclusion, the only two possible critical points of the minimization problem \eqref{minprob} are the symmetric interval $[-\beta,\beta]$ (or its complement) and the half space $[-\infty,-\alpha)$ (or $(\alpha,\infty]$).  By choice of $\kappa$, the minimum occurs at the half space.  We conclude that, for any set $\Omega$ with $\gamma_{1}(\Omega)=a$, we have
$$\int_{\Sigma_{L}}\gamma_{1}(x)\,\d x\cdot \int_{\Sigma_{R}}\gamma_{1}(y)\,\d y-\kappa\int_{\Sigma}\gamma_{1}(x)\,\d x
\geq -\kappa\int_{\partial H}\gamma_{1}(x)\,\d x.
$$
That is,
$$\int_{\Sigma_{L}}\gamma_{1}(x)\,\d x\cdot \int_{\Sigma_{R}}\gamma_{1}(y)\,\d y
\geq \kappa \Big(\int_{\Sigma}\gamma_{1}(x)\,\d x-\int_{\partial H}\gamma_{1}(x)\,\d x\Big),
$$
Plugging this into \eqref{stackeq} completes the proof for the $k=1$ case of the lemma, where $\kappa\colonequals\min(a,1-a)/2$ suffices.

The case $k=2$ of the lemma is simpler.  By identifying the left endpoints of an interval in $\Omega$ with its right endpoint (if it exists) we have
$$\abs{\int_{\Sigma_{L}}\gamma_{1}(x)\,\d x-\int_{\Sigma_{R}}\gamma_{1}(y)\,\d y}\leq\frac{2}{\sqrt{2\pi}}<.8.$$
And $\int_{\Sigma_{L}}\gamma_{1}(x)\,\d x+\int_{\Sigma_{R}}\gamma_{1}(y)\,\d y=\int_{\Sigma}\gamma_{1}(x)\,\d x$, so
\begin{equation}\label{lreq}
\begin{aligned}
\min\Big(\int_{\Sigma_{L}}\gamma_{1}(x)\,\d x,\int_{\Sigma_{R}}\gamma_{1}(x)\,\d x\Big)
&\geq\frac{1}{2}\Big(\int_{\Sigma_{L}}\gamma_{1}(x)\,\d x+\int_{\Sigma_{R}}\gamma_{1}(x)\,\d x -.8\Big)\\
&=\frac{1}{2}\Big(\int_{\Sigma}\gamma_{1}(x)\,\d x-.8\Big).
\end{aligned}
\end{equation}
If $\int_{\Sigma}\gamma_{1}(x)\,\d x-\int_{\partial H}\gamma_{1}(x)\,\d x>1$, then \eqref{stackeq} and \eqref{lreq} imply that
\begin{flalign*}
&\int_{\R}\Big(\Big[\int_{\frac{\Sigma-\sqrt{\rho}x}{1-\rho}}\gamma_{1}(y)\,\d y\Big]^{2}-\vnormf{\int_{\frac{\Omega-\sqrt{\rho}x}{1-\rho}}y\gamma_{1}(y)}^{2}\Big)\gamma_{1}(x)\,\d x\\
&\qquad\geq\frac{4\sqrt{2}(1-\rho)}{\pi\sqrt{\rho^{2}+1}}\frac{1}{4}\Big(\int_{\Sigma}\gamma_{1}(x)\,\d x-.8\Big)^{2}\\
&\qquad\geq\frac{4\sqrt{2}(1-\rho)}{\pi\sqrt{\rho^{2}+1}}\frac{1}{100}\Big(\int_{\Sigma}\gamma_{1}(x)\,\d x-\int_{\partial H}\gamma_{1}(x)\,\d x\Big)^{2}.
\end{flalign*}
The case $k=2$ of the lemma follows.

\end{proof}

\begin{lemma}\label{finallem9}
Let $\Omega\subset\R$ satisfy $\int_{\Omega}x\gamma_{1}(x)\,\d x\geq0$.  Let $H\subset\R$ be a half space such that $\gamma_{1}(H)=\gamma_{1}(\Omega)$ and $\int_{H}x\gamma_{1}(x)\,\d x>0$.  Let $0<a\leq1/2$, let $\alpha\in\R$ satisfy $\int_{\alpha}^{\infty}\gamma_{1}(t)\,\d t=a$.  Then
\begin{flalign*}
&\int_{H}(x-\alpha\beta) e^{-\frac{[\beta x-\alpha]^{2}}{2(1-\beta^{2})}}\gamma_{1}(x)\,\d x
-\int_{\Omega}(x-\alpha\beta) e^{-\frac{[\beta x-\alpha]^{2}}{2(1-\beta^{2})}}\gamma_{1}(x)\,\d x\\
&\qquad\qquad\qquad\leq8e^{\frac{\alpha^{2}}{1+\beta}}\frac{(6+\abs{\alpha})^{2}}{\beta(1-\beta^{2})}\Big(\int_{\partial\Omega}\gamma_{1}(x)\,\d x
-\int_{\partial H}\gamma_{1}(x)\,\d x\Big).
\end{flalign*}
Moreover, the left and right sides of the inequality are both nonnegative.
\end{lemma}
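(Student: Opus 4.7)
The plan is to express both sides in closed form via an explicit antiderivative of $w(x) := (x-\alpha\beta) e^{-(\beta x-\alpha)^2/(2(1-\beta^2))}\gamma_1(x)$, then split the LHS into a principal part controlled by the linear isoperimetric deficit bound from \cite{barchiesi16} and a correction part controlled by the smoothness of the Gaussian weight.

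First I will parameterize $\Omega = \bigsqcup_j [a_j,b_j]$ as a disjoint union of intervals (allowing $a_j=-\infty$ or $b_j=+\infty$) and introduce
$$W(x) \colonequals -(1-\beta^2)\gamma_1(\alpha)\, e^{-(x-\alpha\beta)^2/(2(1-\beta^2))},$$
which one checks satisfies $W'=w$ using the algebraic identity $\gamma_1(x) e^{-(\beta x-\alpha)^2/(2(1-\beta^2))} = \gamma_1(\alpha) e^{-(x-\alpha\beta)^2/(2(1-\beta^2))}$ (a consequence of completing the square, as already used in Lemma~\ref{gausfourier}). Since $W(\pm\infty)=0$, writing $V\colonequals -W\ge 0$ and $p(x)\colonequals e^{-(\beta x-\alpha)^2/(2(1-\beta^2))}$ (so $V(x)=(1-\beta^2)p(x)\gamma_1(x)$), the fundamental theorem of calculus gives
$$\int_H w\,\d x - \int_\Omega w\,\d x \;=\; V(\alpha) + \sum_j\bigl[V(b_j)-V(a_j)\bigr].$$

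To verify nonnegativity, I argue that $H$ maximizes $\Omega\mapsto\int_\Omega w\,\d x$ over $\{\gamma_1(\Omega)=a,\ \int_\Omega x\gamma_1\ge0\}$. A Lagrangian analysis on $\int_\Omega (x-\alpha\beta)p(x)\gamma_1(x)\,\d x$ under the measure constraint forces any interior critical configuration of single-interval form $(x_1,x_2)$ to satisfy $(x_1-\alpha\beta)p(x_1)=(x_2-\alpha\beta)p(x_2)$; a direct computation using the explicit form of $V$ shows $\int_{(x_1,x_2)}w = V(x_1)-V(x_2)\le V(\alpha)$ (with equality only in the degenerate limit $x_2\to\infty$ that recovers $H$). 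The opposite bang-bang boundary case (the half-space $(-\infty,-\alpha)$) is precisely the one ruled out by $\int_\Omega x\gamma_1\ge 0$. A similar sign analysis handles multi-interval $\Omega$.

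For the upper bound I add and subtract $p(\alpha)$ inside each term, writing
\begin{equation*}
\int_H w\,\d x-\int_\Omega w\,\d x = (1-\beta^2)p(\alpha)\!\left[\gamma_1(\alpha)+\sum_j(\gamma_1(b_j)-\gamma_1(a_j))\right] +(1-\beta^2)\!\sum_j\!\Bigl\{[p(b_j)-p(\alpha)]\gamma_1(b_j)-[p(a_j)-p(\alpha)]\gamma_1(a_j)\Bigr\}.
\end{equation*}
The bracket in the first term is exactly $\int_H x\gamma_1-\int_\Omega x\gamma_1$, which is bounded by $24\pi(1+\alpha^2)D$ via the main result of \cite{barchiesi16}; combined with $p(\alpha)=e^{-\alpha^2(1-\beta)/(2(1+\beta))}$ this yields a clean contribution. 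For the correction, I will combine the mean-value estimate $|p(x)-p(\alpha)|\le \beta|x-\alpha|/\sqrt{(1-\beta^2)e}$ (from $|p'(t)|\le\beta/\sqrt{(1-\beta^2)e}$) with a pointwise bound $|x-\alpha|\gamma_1(x)\le(6+|\alpha|)\gamma_1(x)$ valid on an appropriate compact window together with a tail integral, reducing the correction to a multiple of $(6+|\alpha|)(D+\gamma_1(\alpha))$. Using $p(\alpha)^{-1}\le e^{\alpha^2/(1+\beta)}$ to absorb the exponential factor produces the stated constant.

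The main obstacle is the correction step: obtaining exactly $8(6+|\alpha|)^2 e^{\alpha^2/(1+\beta)}/[\beta(1-\beta^2)]$ requires a careful accounting of the cross terms $[p(b_j)-p(\alpha)]\gamma_1(b_j)$ versus the perimeter deficit, and I expect this is where the bulk of the computation lies, possibly requiring a second invocation of \cite{barchiesi16} (applied to a shifted linear functional $\int_\Omega(x-\alpha)\gamma_1$) or a Hermite-expansion argument paralleling Lemma~\ref{techlem}.
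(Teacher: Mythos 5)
Your setup is correct and genuinely different from the paper's: the antiderivative identity $W'=w$ with $W(x)=-(1-\beta^2)\gamma_1(\alpha)e^{-(x-\alpha\beta)^2/(2(1-\beta^2))}$ is right, and the first ("principal") term of your decomposition is indeed exactly $(1-\beta^2)p(\alpha)\big(\int_H x\gamma_1\,\d x-\int_\Omega x\gamma_1\,\d x\big)$, which the barycenter bound of \cite{barchiesi16} controls linearly by the perimeter deficit $D\colonequals\int_{\partial\Omega}\gamma_1-\int_{\partial H}\gamma_1$. The gap is in the correction term, and it is not merely a matter of bookkeeping constants. Your proposed tools (a Lipschitz bound on $p$, a pointwise bound $\abs{x-\alpha}\gamma_1(x)\le(6+\abs{\alpha})\gamma_1(x)$ on a window, plus tails) only yield an estimate of the form $C\,(D+\gamma_1(\partial H))$: each summand $[p(b_j)-p(\alpha)]\gamma_1(b_j)$ is bounded by a constant times $\gamma_1(b_j)$, and summing over boundary points gives a constant times the total perimeter $\int_{\partial\Omega}\gamma_1=D+\gamma_1(\partial H)$, not a constant times $D$. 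Since the lemma's right-hand side vanishes as $D\to0$ while $\gamma_1(\partial H)$ does not, such a bound cannot close the proof. Nor does a "second invocation of \cite{barchiesi16}" help: after an integration by parts the correction equals $\int_\Omega F'-\int_H F'$ with $F(x)=[p(x)-p(\alpha)]\gamma_1(x)$, and the kernel $F'(x)/\gamma_1(x)=p'(x)-x[p(x)-p(\alpha)]$ is genuinely non-affine in $x$ (and does not vanish at $\alpha$ unless $\alpha=0$), whereas \cite{barchiesi16} controls only the barycenter functional. So the correction step is a problem of the same type as the lemma itself, not a lower-order error.

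The paper avoids this circularity by a different mechanism: it introduces the auxiliary minimization of $\int_{\partial\Omega}\gamma_1(x)\,\d x+\epsilon\frac{\beta}{2(1-\beta^2)^3}\big(\int_\Omega(x-\alpha\beta)e^{-[\beta x-\alpha]^2/(2(1-\beta^2))}\gamma_1(x)\,\d x\big)^2+2(1+\abs{\alpha})\abs{\gamma_1(\Omega)-a}$, shows by first- and second-variation arguments (choosing a two-point-supported test function, with Lemma \ref{finallem3} fixing the measure) that the minimizer is the half space for all $\epsilon$ up to the threshold \eqref{epsbound}, and then simply rearranges the resulting inequality; the stated constant $8e^{\alpha^2/(1+\beta)}(6+\abs{\alpha})^2/[\beta(1-\beta^2)]$ is the reciprocal of that admissible $\epsilon$ (after factoring the difference of squares using the explicit value $\int_H w=(1-\beta^2)e^{-\alpha^2/(1+\beta)}$). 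If you want to salvage your direct approach, you would need to prove that the correction term itself is $O(D)$ — for instance by a separate variational or rearrangement argument exploiting cancellation between left and right endpoints — which is essentially re-deriving the paper's penalty argument. Your nonnegativity discussion via the Lagrangian analysis is plausible but likewise incomplete as stated (the multi-interval case is waved at); in the paper nonnegativity also falls out of the same minimization.
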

\begin{proof}
Choose $\epsilon>0$ such that
\begin{equation}\label{epsbound}
\epsilon\leq\frac{1}{8\Big(\frac{\beta^{2}}{(1-\beta^{2})^{3}}+3(2\abs{\alpha}+2+\abs{\alpha}\beta)^{2}(1/2+\frac{\beta}{1-\beta^{2}})\Big)}\leq\frac{(1-\beta^{2})^{3}}{8(6+\abs{\alpha})^{2}}.
\end{equation}
Suppose we minimize over all $\Omega\subset\R$ the quantity
\begin{equation}\label{qtomin}
\int_{\partial\Omega}\gamma_{1}(x)\,\d x+\epsilon\frac{\beta}{2(1-\beta^{2})^{3}}\Big(\int_{\Omega}(x-\alpha\beta) e^{-\frac{[\beta x-\alpha]^{2}}{2(1-\beta^{2})}}\gamma_{1}(x)\,\d x\Big)^{2}
+2(1+\abs{\alpha})\abs{\gamma_{1}(\Omega)-a}.
\end{equation}

 The first variation condition \cite[Lemma 3.2]{heilman18} says $\exists$ $c\in\R$ such that
\begin{equation}\label{six1}
- xN(x)+\epsilon\frac{\beta^{2}}{(1-\beta^{2})^{3}}\Big(\int_{\Omega}(y-\alpha\beta) e^{-\frac{[\beta y-\alpha]^{2}}{2(1-\beta^{2})}}\gamma_{1}(y)\,\d y\Big)(x-\alpha\beta)e^{-\frac{[\beta x-\alpha]^{2}}{2(1-\beta^{2})}}=c,\quad\forall\,x\in\Sigma\colonequals\partial\Omega,
\end{equation}
and $\abs{c}\leq2(1+\abs{\alpha})$, which follows by repeating the argument of Lemma \ref{firstvarmaxns}:  For a general vector field $X$ supported in $\Sigma$, its first variation (without the Gaussian volume term) is $\int_{\Sigma}cf(x)\,\d x$, and
$$
\frac{\d}{\d s}\Big|_{s=0}\gamma_{\adimn}(\Omega^{(s)})=\int_{\Sigma}f(x)\gamma_{\adimn}(x)\,\d x.
$$
So, $\absf{\frac{1}{2}\frac{\d}{\d s}|_{s=0}\gamma_{\adimn}(\Omega^{(s)})}$ equals $0$ or $\absf{\int_{\Sigma}f(x)\gamma_{\adimn}(x)\,\d x}$.  In either case, it follows that $\abs{c}\leq2(1+\abs{\alpha})$.

The second variation \cite[Lemma 3.7]{heilman18} \cite[Theorem 4.1]{colding12a} says: $\forall$ $f\colon\partial\Omega\to\R$ with $\int_{\partial\Omega}f(x)\gamma_{1}(x)\,\d x=0$,
\begin{flalign*}
&\int_{\Sigma}\Big[-(f(x))^{2}+\epsilon\frac{\beta^{2}}{(1-\beta^{2})^{3}}\Big(\int_{\Omega}(y-\alpha\beta) e^{-\frac{[\beta y-\alpha]^{2}}{2(1-\beta^{2})}}\gamma_{1}(y)\,\d y\Big) N(x)(f(x))^{2}e^{-\frac{[\beta x-\alpha]^{2}}{2(1-\beta^{2})}}\Big]\gamma_{1}(x)\,\d x\\
&\qquad\qquad\qquad+\epsilon\frac{\beta^{2}}{(1-\beta^{2})^{3}}\abs{\int_{\Sigma}(x-\alpha\beta)f(x) e^{-\frac{[\beta x-\alpha]^{2}}{2(1-\beta^{2})}}\gamma_{1}(x)\,\d x}^{2}.
\end{flalign*}
Multiplying \eqref{six1} by $x N$ and integrating, then using the AMGM inequality and \eqref{epsbound},
\begin{flalign*}
\int_{\Sigma}x^{2}\gamma_{1}(x)\,\d x -\frac{1}{8}\int_{\Sigma}x^{2}e^{-\frac{[\beta x-\alpha]^{2}}{2(1-\beta^{2})}} \gamma_{1}(x)\,\d x
&\leq(\abs{c}+\alpha\beta)\int_{\Sigma}\abs{x}\gamma_{1}(x)\,\d x\\
&\leq (\abs{c}+\alpha\beta)^{2}\int_{\Sigma}\gamma_{1}(x)\,\d x+\frac{1}{2}\int_{\Sigma}x^{2}\gamma_{1}(x)\,\d x.
\end{flalign*}
That is,
\begin{equation}\label{six2}
\int_{\Sigma}x^{2}\gamma_{1}(x)\,\d x\leq 3(\abs{c}+\alpha\beta)^{2}\int_{\Sigma}\gamma_{1}(x)\,\d x.
\end{equation}

So, choose $f$ such that $f$ is supported on two distinct points and $\int_{\Sigma}f(x)\gamma_{1}(x)\,\d x=0$.  Then
\begin{flalign*}
&\int_{\Sigma}\Big[-(f(x))^{2}+\epsilon\frac{\beta^{2}}{(1-\beta^{2})^{3}}\Big(\int_{\Omega}(y-\alpha\beta) e^{-\frac{[\beta y-\alpha]^{2}}{2(1-\beta^{2})}}\gamma_{1}(y)\,\d y\Big) N(x)(f(x))^{2}e^{-\frac{[\beta x-\alpha]^{2}}{2(1-\beta^{2})}}\Big]\gamma_{1}(x)\,\d x\\
&\qquad\qquad\qquad+\epsilon\frac{\beta^{2}}{(1-\beta^{2})^{3}}\abs{\int_{\Sigma}(x-\alpha\beta)f(x) e^{-\frac{[\beta x-\alpha]^{2}}{2(1-\beta^{2})}}\gamma_{1}(x)\,\d x}^{2}\\
&\qquad\stackrel{\eqref{six2}}{\leq}
\int_{\Sigma}\Big[-(f(x))^{2}\\
&\qquad\qquad\qquad+\epsilon\frac{\beta^{2}}{(1-\beta^{2})^{3}}\Big(\int_{\Omega}(y-\alpha\beta) e^{-\frac{[\beta y-\alpha]^{2}}{2(1-\beta^{2})}}\gamma_{1}(y)\,\d y\Big) N(x)(f(x))^{2}e^{-\frac{[\beta x-\alpha]^{2}}{2(1-\beta^{2})}}\Big]\gamma_{1}(x)\,\d x\\
&\qquad\qquad\qquad+\epsilon 3(\abs{c}+\alpha\beta)^{2}(\int_{\Sigma}(f(x))^{2}\gamma_{1}(x)\,\d x\Big)\Big(\int_{\Sigma}\gamma_{1}(y)\,\d y\Big)\\
&\qquad\stackrel{\eqref{epsbound}}{\leq}
\int_{\Sigma}(f(x))^{2}\Big(-1+\frac{1}{8}+\epsilon 3(\abs{c}+\alpha\beta)^{2}\int_{\Sigma}\gamma_{1}(y)\,\d y\Big)\gamma_{1}(x)\,\d x
\stackrel{\eqref{epsbound}}{<}0.
\end{flalign*}
The last inequality used $\int_{\Sigma}\gamma_{1}(y)\,\d y\leq1/2+\frac{\beta}{1-\beta^{2}}$.  This follows by the minimizing property of $\Omega$ and since a half space $\Omega'=[\alpha,\infty)$ with $\gamma_{1}(\Omega')=a$ satisfies
\begin{flalign*}
&\int_{\partial\Omega}\gamma_{1}(x)\,\d x+\epsilon\frac{\beta}{(1-\beta^{2})^{3}}\Big(\int_{\Omega}(x-\alpha\beta) e^{-\frac{[\beta x-\alpha]^{2}}{2(1-\beta^{2})}}\gamma_{1}(x)\,\d x\Big)^{2}
+2(1+\abs{\alpha})\abs{\gamma_{1}(\Omega)-a}\\
&\leq\int_{\partial\Omega'}\gamma_{1}(x)\,\d x+\epsilon\frac{\beta}{(1-\beta^{2})^{3}}\Big(\int_{\Omega'}(x-\alpha\beta) e^{-\frac{[\beta x-\alpha]^{2}}{2(1-\beta^{2})}}\gamma_{1}(x)\,\d x\Big)^{2}
+2(1+\abs{\alpha})\abs{\gamma_{1}(\Omega')-a}\\
&= e^{-\alpha^{2}/2}/\sqrt{2\pi}+\frac{\epsilon\beta}{1-\beta^{2}}e^{-\alpha^{2}/2}e^{-\frac{[\alpha-\alpha\beta]^{2}}{2(1-\beta^{2})}} <1/2+\frac{\beta}{1-\beta^{2}}.
\end{flalign*}

We have then arrived at a contradiction, since the second variation must be nonnegative.  We conclude that no such $f$ exists, i.e. $\Omega$ itself must be a half space.  Then Lemma \ref{finallem3} implies that $\Omega$ has Gaussian measure $a$.  Since a half-space minimizes the quantity \eqref{qtomin} over sets of Gaussian measure $a$, we have: for any $\Omega\subset\R$ with $\gamma_{1}(\Omega)=a$,
\begin{flalign*}
&\int_{\partial H}\gamma_{1}(x)\,\d x+\epsilon\frac{\beta}{(1-\beta^{2})^{3}}\Big(\int_{ H}(x-\alpha\beta) e^{-\frac{[\beta x-\alpha]^{2}}{2(1-\beta^{2})}}\gamma_{1}(x)\,\d x\Big)^{2}\\
&\leq\int_{\partial\Omega}\gamma_{1}(x)\,\d x+\epsilon\frac{\beta}{(1-\beta^{2})^{3}}\Big(\int_{\Omega}(x-\alpha\beta) e^{-\frac{[\beta x-\alpha]^{2}}{2(1-\beta^{2})}}\gamma_{1}(x)\,\d x\Big)^{2}.
\end{flalign*}
Rearranging this inequality completes the proof
\begin{flalign*}
&\epsilon\Big[\frac{\beta}{(1-\beta^{2})^{3}}\Big(\int_{ H}(x-\alpha\beta) e^{-\frac{[\beta x-\alpha]^{2}}{2(1-\beta^{2})}}\gamma_{1}(x)\,\d x\Big)^{2}
-\frac{\beta}{(1-\beta^{2})^{3}}\Big(\int_{\Omega}(x-\alpha\beta) e^{-\frac{[\beta x-\alpha]^{2}}{2(1-\beta^{2})}}\gamma_{1}(x)\,\d x\Big)^{2}\Big]\\
&\qquad\qquad\leq\int_{\partial\Omega}\gamma_{1}(x)\,\d x-\int_{\partial H}\gamma_{1}(x)\,\d x.
\end{flalign*}
To conclude, we use again the equality
$$
\frac{1}{1-\beta^{2}}\int_{H}(x-\alpha\beta) e^{-\frac{[\beta x-\alpha]^{2}}{2(1-\beta^{2})}}\gamma_{1}(x)\,\d x
=e^{-\alpha^{2}/2}e^{-\frac{[\alpha-\alpha\beta]^{2}}{2(1-\beta^{2})}}
=e^{-\frac{\alpha^{2}}{1+\beta}}.
$$
\end{proof}

\begin{lemma}\label{finallem3}
Let $a\in\R$.  Let $0<\rho<1$ and let $0\leq \epsilon\leq \frac{(1-\beta^{2})^{3}}{8(6+\abs{\alpha})^{2}}$.  For any $t\in\R$, define
$$
h(t)
\colonequals \frac{e^{-t^{2}/2}}{\sqrt{2\pi}}
+\epsilon\frac{\beta}{(1-\beta^{2})^{3/2}}\int_{t}^{\infty}(x-\alpha\beta)e^{-\frac{\abs{\beta x-\alpha}^{2}}{2(1-\beta^{2})}}\gamma_{1}(x)\,\d x
+2(1+\abs{\alpha})\abs{\gamma_{1}[t,\infty)-a}.
$$
Then the unique minimum of $h(t)$ occurs when $t\in\R$ satisfies $\gamma_{1}[t,\infty)=a$ (i.e. $t=\alpha$).
\end{lemma}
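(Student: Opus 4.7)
The plan is to show $h(t)>h(\alpha)$ for all $t\neq\alpha$ in four steps: first simplify the $\epsilon$-penalty integral, then establish $\alpha$ as a strict local minimum via one-sided derivatives, then show $h$ has exactly one interior critical point (a local maximum) on each side of $\alpha$, and finally compare $h(\alpha)$ with $h(\pm\infty)$ via a Mills-ratio estimate.

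First, completing the square via $\frac{(\beta x-\alpha)^{2}}{2(1-\beta^{2})}+\frac{x^{2}}{2}=\frac{(x-\alpha\beta)^{2}}{2(1-\beta^{2})}+\frac{\alpha^{2}}{2}$ and evaluating the resulting elementary Gaussian integral yield
\begin{equation*}
\int_{t}^{\infty}(x-\alpha\beta)e^{-\frac{(\beta x-\alpha)^{2}}{2(1-\beta^{2})}}\gamma_{1}(x)\,\d x=(1-\beta^{2})\frac{e^{-\alpha^{2}/2}}{\sqrt{2\pi}}e^{-\frac{(t-\alpha\beta)^{2}}{2(1-\beta^{2})}},
\end{equation*}
so writing $B(t)\colonequals\frac{\beta e^{-\alpha^{2}/2}}{\sqrt{2\pi(1-\beta^{2})}}e^{-\frac{(t-\alpha\beta)^{2}}{2(1-\beta^{2})}}$ gives $h(t)=\gamma_{1}(t)+\epsilon B(t)+2(1+\abs{\alpha})|\gamma_{1}[t,\infty)-a|$, with $0\leq B\leq K_{0}\colonequals\frac{\beta e^{-\alpha^{2}/2}}{\sqrt{2\pi(1-\beta^{2})}}$. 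Since $|\gamma_{1}[t,\infty)-a|$ has one-sided derivatives $\mp\gamma_{1}(\alpha)$ at $t=\alpha^{\pm}$, I get $h'(\alpha^{\pm})=\gamma_{1}(\alpha)[-\alpha\pm 2(1+\abs{\alpha})]+\epsilon B'(\alpha)$, where the bracket is $\geq 2$ (resp.\ $\leq-2$) on the $+$ (resp.\ $-$) side; the elementary estimate $\abs{\alpha}e^{-\alpha^{2}(1-\beta)/(2(1+\beta))}\leq\sqrt{(1+\beta)/[(1-\beta)e]}$ combined with the hypothesis on $\epsilon$ then yields $\epsilon\abs{B'(\alpha)}/\gamma_{1}(\alpha)\leq 1/(288\sqrt{e})\ll 1$, so $h'(\alpha^{-})<0<h'(\alpha^{+})$ and $\alpha$ is a strict local minimum.

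Next, on each smooth piece $(-\infty,\alpha)$ and $(\alpha,\infty)$ I factor $h'(t)=\gamma_{1}(t)M_{\pm}(t)$ with $M_{\pm}(t)\colonequals-t-Q(t)\pm 2(1+\abs{\alpha})$ ($-$ on the left, $+$ on the right) and $Q(t)\colonequals\epsilon\frac{\beta(t-\alpha\beta)}{(1-\beta^{2})^{3/2}}e^{-\frac{(\beta t-\alpha)^{2}}{2(1-\beta^{2})}}$. Substituting $u=\beta t-\alpha$ and using the elementary bounds $v\,e^{-v/2}\leq 2/e$ and $\abs{u}e^{-u^{2}/(2(1-\beta^{2}))}\leq\sqrt{(1-\beta^{2})/e}$ gives $\abs{Q'(t)}\leq\epsilon\frac{\beta(2+\abs{\alpha})}{(1-\beta^{2})^{3/2}}\leq 1/48$, so $M_{\pm}'(t)\leq-47/48<0$. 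Since $M_{-}(-\infty)=+\infty$, $M_{-}(\alpha^{-})<0$, $M_{+}(\alpha^{+})>0$, and $M_{+}(+\infty)=-\infty$, each of $M_{-},M_{+}$ has exactly one zero, giving a unique strict local maximum of $h$ on each side of $\alpha$. So the only remaining candidates for a global minimum are $t=\alpha$ and $t=\pm\infty$.

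Finally, $h(-\infty)=2(1+\abs{\alpha})(1-a)$ and $h(+\infty)=2(1+\abs{\alpha})a$, so $\min(h(-\infty),h(+\infty))=2(1+\abs{\alpha})\min(a,1-a)$. Komatsu's Mills-ratio inequality $\Phi(-\abs{\alpha})\geq 2\gamma_{1}(\alpha)/(\abs{\alpha}+\sqrt{\alpha^{2}+4})$, combined with $\sqrt{\alpha^{2}+4}\leq\abs{\alpha}+2$, gives $\min(a,1-a)=\Phi(-\abs{\alpha})\geq\gamma_{1}(\alpha)/(1+\abs{\alpha})$, i.e.\ $\gamma_{1}(\alpha)\leq(1+\abs{\alpha})\min(a,1-a)$. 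For the penalty, the hypothesis on $\epsilon$ yields $\epsilon K_{0}\leq\gamma_{1}(\alpha)(1-\beta^{2})^{5/2}\beta/[8(6+\abs{\alpha})^{2}]\leq\gamma_{1}(\alpha)/288$, which is negligible. Combining, $h(\alpha)=\gamma_{1}(\alpha)+\epsilon B(\alpha)<2(1+\abs{\alpha})\min(a,1-a)=\min(h(-\infty),h(+\infty))$, so $\alpha$ is the unique global minimum. The hard part is this final Mills-type comparison: one needs the uniform bound $\gamma_{1}(\alpha)/\min(a,1-a)\leq 1+\abs{\alpha}$ for every $\alpha\in\R$ (exactly Komatsu's inequality), while simultaneously keeping the $\epsilon B$ perturbation small enough not to upset the inequality.
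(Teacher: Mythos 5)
Your proof is correct and follows essentially the same route as the paper's: compute $h'(t)=\gamma_{1}(t)\cdot[\text{bracket}]$, show the bracket is strictly decreasing on each side of the kink at $t=\alpha$ so that $h$ increases, decreases, increases, decreases, and then compare the three remaining candidates $h(\alpha)$, $h(-\infty)$, $h(+\infty)$. The one place you go beyond the paper is the final endpoint comparison, where you justify $\gamma_{1}(\alpha)\leq(1+\abs{\alpha})\min(a,1-a)$ via Komatsu's Mills-ratio inequality and explicitly control the $\epsilon$-perturbation; the paper asserts the corresponding inequality $2(1+\abs{\alpha})\int_{-\infty}^{\alpha}\gamma_{1}(w)\,\d w>h(\alpha)$ without proof, so your version is a welcome tightening of the same argument.
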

\begin{proof}
For any $t\in\R$,
\begin{flalign*}
h'(t)
&=-t\gamma_{1}(t)
- \epsilon\frac{\beta}{(1-\beta^{2})^{3/2}}(t-\alpha\beta)e^{-\frac{\abs{\beta x-\alpha}^{2}}{2(1-\beta^{2})}}\gamma_{1}(t)
-2(1+\abs{\alpha})\gamma_{1}(t)\cdot\mathrm{sign}(\gamma_{1}[t,\infty)-a)\\
&=\gamma_{1}(t)\Big[-t
- \epsilon\frac{\beta}{(1-\beta^{2})^{3/2}}(t-\alpha\beta)e^{-\frac{\abs{\beta x-\alpha}^{2}}{2(1-\beta^{2})}}
-2(1+\abs{\alpha})\cdot\mathrm{sign}(\gamma_{1}[t,\infty)-a)\Big].
\end{flalign*}
The derivative of the term inside the square brackets is negative (since $\epsilon<\frac{(1-\beta^{2})^{3}}{8(6+\abs{\alpha})^{2}}$), and $h'(t)>0$ for sufficiently negative $t$.  If $t<\alpha$, then $\mathrm{sign}(\gamma_{1}[t,\infty)-a)=1$, so $\lim_{t\to\alpha^{-}}h'(t)=h'(\alpha^{-})< 0$.  So, $h$ is increasing and then decreasing on $(-\infty,\alpha]$.  If $t>\alpha$, then $\mathrm{sign}(\gamma_{1}[t,\infty)-a)=-1$, so $\lim_{t\to\alpha^{+}}h'(t)=h'(\alpha^{+})>0$.  And $h'(t)<0$ for sufficiently positive $t$.  In summary, $h$ increases, then decreases then increases and then decreases.  So, the minimum value of $h$ is either $h(-\infty),h(\infty)$ or $h(\alpha)$.  In the case $a\geq1/2$, we have
$$\lim_{t\to\infty}h(t)=2(1+\abs{\alpha})a\geq (1+\abs{\alpha})
>e^{-\alpha^{2}/2}/\sqrt{2\pi}+\epsilon\frac{\beta}{\sqrt{1-\beta^{2}}}e^{-\frac{\alpha^{2}}{1+\beta}}=h(\alpha).$$
$$\lim_{t\to-\infty}h(t)=2(1+\abs{\alpha})(1-a)=2(1+\abs{\alpha})\int_{-\infty}^{\alpha}\gamma_{1}(w)\,\d w> \frac{e^{-\alpha^{2}/2}}{\sqrt{2\pi}}+\epsilon\frac{\beta}{\sqrt{1-\beta^{2}}}e^{-\frac{\alpha^{2}}{1+\beta}}=h(\alpha).$$
(The last inequality used $0<\beta<1$.)  Therefore, $h$ is uniquely minimized at $t=\alpha$.  In the remaining case $a\leq1/2$, we have
$$\lim_{t\to-\infty}h(t)=2(1+\abs{\alpha})(1-a)\geq(1+\abs{\alpha})
>\frac{e^{-\alpha^{2}/2}}{\sqrt{2\pi}}+\epsilon\frac{\beta}{\sqrt{1-\beta^{2}}}e^{-\frac{\alpha^{2}}{1+\beta}}=h(\alpha).$$
$$\lim_{t\to\infty}h(t)=2(1+\abs{\alpha})a=2(1+\abs{\alpha})\int_{\alpha}^{\infty}\gamma_{1}(w)\,\d w
>e^{-\alpha^{2}/2}/\sqrt{2\pi}+\epsilon\frac{\beta}{\sqrt{1-\beta^{2}}}e^{-\frac{\alpha^{2}}{1+\beta}}=h(\alpha).$$
Therefore, $h$ is uniquely minimized at $t=\alpha$.

%

\end{proof}

\begin{lemma}\label{finallem7}
Let $\Omega\subset\R$ satisfy $\int_{\Omega}x\gamma_{1}(x)\,\d x\geq0$.  Let $H\subset\R$ be a half space such that $\gamma_{1}(H)=\gamma_{1}(\Omega)$ and $\int_{H}x\gamma_{1}(x)\,\d x>0$.  Then
\begin{flalign*}
&\abs{\int_{\partial H} e^{-\frac{[\beta x-\alpha]^{2}}{2(1-\beta^{2})}}\gamma_{1}(x)\,\d x
-\int_{\partial\Omega} e^{-\frac{[\beta x-\alpha]^{2}}{2(1-\beta^{2})}}\gamma_{1}(x)\,\d x}\\
&\qquad\qquad\qquad\leq8\frac{(6+\abs{\alpha})^{2}}{\beta(1-\beta^{2})}\Big(\int_{\partial\Omega}\gamma_{1}(x)\,\d x
-\int_{\partial H}\gamma_{1}(x)\,\d x\Big).
\end{flalign*}
\end{lemma}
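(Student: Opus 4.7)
The plan is to adapt the penalty-function argument of Lemma~\ref{finallem9}, replacing the quadratic volume penalty there with a linear boundary penalty. Write
$$M(\Omega)\colonequals \int_{\partial\Omega}e^{-\frac{[\beta x-\alpha]^{2}}{2(1-\beta^{2})}}\gamma_{1}(x)\,\d x,\qquad P(\Omega)\colonequals \int_{\partial\Omega}\gamma_{1}(x)\,\d x,$$
fix $\epsilon_{0}\colonequals\frac{\beta(1-\beta^{2})}{8(6+\abs{\alpha})^{2}}$, and for each choice of sign consider the minimization over measurable $\Omega\subset\R$ of
$$h_{\pm}(\Omega)\colonequals P(\Omega)\pm\epsilon_{0}M(\Omega)+2(1+\abs{\alpha})\abs{\gamma_{1}(\Omega)-a}.$$
I will show that the unique minimizer of $h_{\pm}$ (restricted to $\{\int_{\Omega}x\gamma_1\,\d x\geq 0\}$ when this matters) is the half space $H=[\alpha,\infty)$. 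Granted this, for any $\Omega$ with $\gamma_{1}(\Omega)=a$ and $\int_\Omega x\gamma_1\geq 0$, the volume-penalty terms vanish on both sides of $h_{\pm}(H)\leq h_{\pm}(\Omega)$, and combining the two signs produces $\abs{M(H)-M(\Omega)}\leq\epsilon_{0}^{-1}(P(\Omega)-P(H))=\frac{8(6+\abs{\alpha})^{2}}{\beta(1-\beta^{2})}(P(\Omega)-P(H))$, which is the stated inequality.

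The first variation of $h_{\pm}$, computed as in Lemma~\ref{firstvarmaxns} using the identity $\frac{\d}{\d x}[e^{-\frac{[\beta x-\alpha]^{2}}{2(1-\beta^{2})}}\gamma_{1}(x)]=-\frac{x-\alpha\beta}{1-\beta^{2}}e^{-\frac{[\beta x-\alpha]^{2}}{2(1-\beta^{2})}}\gamma_{1}(x)$, yields the Euler--Lagrange equation: for some $c\in\R$ with $\abs{c}\leq 2(1+\abs{\alpha})$,
$$-\sigma N(\sigma)\mp\epsilon_{0}\frac{\sigma-\alpha\beta}{1-\beta^{2}}e^{-\frac{[\beta\sigma-\alpha]^{2}}{2(1-\beta^{2})}}N(\sigma)=c,\qquad\forall\,\sigma\in\partial\Omega.$$
For the second variation I use the canonical parametrization $\sigma_{j}(s)=\sigma_{j}+sN(\sigma_{j})f(\sigma_{j})+\tfrac{s^{2}}{2}\sigma_{j}f(\sigma_{j})^{2}+O(s^{3})$, which is consistent with $\gamma_{1}(\Omega^{(s)})=\gamma_{1}(\Omega)+O(s^{3})$ and reproduces the Colding--Minicozzi formula $\frac{\d^{2}}{\d s^{2}}|_{0}P(\Omega^{(s)})=-\int_{\partial\Omega}f^{2}\gamma_{1}\,\d\sigma$. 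Writing $f_{*}(\sigma)\colonequals e^{-\frac{[\beta\sigma-\alpha]^{2}}{2(1-\beta^{2})}}\gamma_{1}(\sigma)$, direct computation with $f_{*}'(\sigma)=-\frac{\sigma-\alpha\beta}{1-\beta^{2}}f_{*}(\sigma)$ and $f_{*}''(\sigma)=\bigl[\frac{(\sigma-\alpha\beta)^{2}}{(1-\beta^{2})^{2}}-\frac{1}{1-\beta^{2}}\bigr]f_{*}(\sigma)$ gives
$$f_{*}''(\sigma)+\sigma f_{*}'(\sigma)=\frac{\beta(\sigma-\alpha\beta)(\beta\sigma-\alpha)-(1-\beta^{2})}{(1-\beta^{2})^{2}}\,f_{*}(\sigma),$$
whence
$$\frac{\d^{2}h_{\pm}}{\d s^{2}}\Big|_{0}=\int_{\partial\Omega}f(\sigma)^{2}\gamma_{1}(\sigma)\left[-1\pm\epsilon_{0}\frac{\beta(\sigma-\alpha\beta)(\beta\sigma-\alpha)-(1-\beta^{2})}{(1-\beta^{2})^{2}}e^{-\frac{[\beta\sigma-\alpha]^{2}}{2(1-\beta^{2})}}\right]\d\sigma.$$

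The substitution $u=\beta\sigma-\alpha$ together with $\sup_{t}\abs{t}e^{-t^{2}/2}\leq 1$ and $\sup_{t}t^{2}e^{-t^{2}/2}\leq 1$ yields the pointwise estimate $\beta\abs{\sigma-\alpha\beta}\abs{\beta\sigma-\alpha}e^{-\frac{[\beta\sigma-\alpha]^{2}}{2(1-\beta^{2})}}\leq (1-\beta^{2})(1+\abs{\alpha})$, so the bracket above is uniformly at most $-1+\epsilon_{0}\frac{2+\abs{\alpha}}{1-\beta^{2}}\leq -1+\frac{\beta(2+\abs{\alpha})}{8(6+\abs{\alpha})^{2}}<-\tfrac{1}{2}$ for our choice of $\epsilon_{0}$. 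Restricting $f$ to any two distinct boundary points with $\int_{\partial\Omega}f\gamma_{1}=0$ then produces a volume-preserving test variation along which the second variation is strictly negative, contradicting nonnegativity of the second variation at a minimizer. Hence $\abs{\partial\Omega}\leq 1$, so $\Omega$ must be a half space.

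Finally, the scalar map $t\mapsto h_{\pm}([t,\infty))=\gamma_{1}(t)\pm\epsilon_{0}e^{-\frac{[\beta t-\alpha]^{2}}{2(1-\beta^{2})}}\gamma_{1}(t)+2(1+\abs{\alpha})\abs{\gamma_{1}[t,\infty)-a}$ is uniquely minimized at $t=\alpha$ by the derivative analysis of Lemma~\ref{finallem3}: the $2(1+\abs{\alpha})$-slope coming from the volume penalty dominates the $\pm\epsilon_{0}$ perturbation as long as $\epsilon_{0}\leq \frac{(1-\beta^{2})^{3}}{8(6+\abs{\alpha})^{2}}$, which holds. The main subtlety is handling the reflected family of half spaces $(-\infty,t]$: for $h_{-}$ the unconstrained minimizer is already $H$ (since $M(H)$ is maximal among the two candidate half spaces with Gaussian measure $a$ and $-\epsilon_{0}M$ rewards large $M$), but for $h_{+}$ the global minimizer over all measurable $\Omega$ is the reflected half space $(-\infty,-\alpha]$; this is exactly where the hypothesis $\int_{\Omega}x\gamma_{1}\geq 0$ is used, since $(-\infty,-\alpha]$ violates it and so, after restricting to the feasible class, $H$ becomes the unique minimizer. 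With this caveat, $H=[\alpha,\infty)$ is the minimizer of $h_{\pm}$ in the feasible class, and the lemma follows by the rearrangement described in the first paragraph.
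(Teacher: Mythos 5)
Your proposal follows essentially the same route as the paper's proof: minimize perimeter plus (or minus) a small multiple of the weighted perimeter, use the first variation to get an Euler--Lagrange equation, and kill any candidate with two or more boundary points by a two-point volume-preserving test function whose second variation is strictly negative because the bracket is uniformly below $-1/2$ for the chosen $\epsilon_{0}$. The only substantive differences are bookkeeping: you replace the hard constraint $\gamma_{1}(\Omega)=a$ by the soft penalty $2(1+\abs{\alpha})\abs{\gamma_{1}(\Omega)-a}$ (so you need the extra scalar minimization in the style of Lemma \ref{finallem3} at the end --- note your quoted threshold $\epsilon_{0}\leq(1-\beta^{2})^{3}/(8(6+\abs{\alpha})^{2})$ does not actually follow from your choice of $\epsilon_{0}$ when $\beta$ is close to $1$, though the condition genuinely needed for that scalar step is weaker and is met); and your second-variation coefficient $\frac{\beta(\sigma-\alpha\beta)(\beta\sigma-\alpha)-(1-\beta^{2})}{(1-\beta^{2})^{2}}$, obtained from $f_{*}''+\sigma f_{*}'$, is the correct one (the paper's printed $\frac{1+x(x-\alpha\beta)}{1-\beta^{2}}$ appears to be a transcription of a different quantity), and your pointwise bound on it is valid. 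You also correctly flag the orientation issue for the ``$+$'' sign that the paper silently passes over; the one soft spot is that ``after restricting to the feasible class, $H$ becomes the unique minimizer'' is asserted rather than argued --- a constrained minimizer could sit on the boundary $\int_{\Omega}x\gamma_{1}\,\d x=0$, where two-point perturbations must respect two linear constraints; a three-point test function (still with pointwise negative bracket) closes this case, and the remaining two-point candidate is the symmetric interval, which loses to $H$ by the Gaussian isoperimetric inequality.
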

\begin{proof}
Let $0<\epsilon'<1$.  Suppose we minimize over all $\Omega\subset\R$ with $\gamma_{1}(\Omega)=a$ the quantity
$$\int_{\partial\Omega}\gamma_{1}(x)\,\d x\pm\epsilon'\int_{\partial \Omega}e^{-\frac{[\beta x-\alpha]^{2}}{2(1-\beta^{2})}}\gamma_{1}(x)\,\d x
=\int_{\partial\Omega}(1\pm\epsilon'e^{-\frac{[\beta x-\alpha]^{2}}{2(1-\beta^{2})}})\gamma_{1}(x)\,\d x.$$
(The minimum exists since $\epsilon'<1$.)  The first variation condition \cite[Lemma 3.2]{heilman18} says $\exists$ $c\in\R$ such that
\begin{equation}\label{six1c}
- xN(x)\mp\epsilon'\frac{x-\alpha\beta}{1-\beta^{2}}e^{-\frac{[\beta x-\alpha]^{2}}{2(1-\beta^{2})}} N(x)=c,\qquad\forall\,x\in\Sigma\colonequals\partial\Omega.
\end{equation}
%
The second variation \cite[Lemma 3.7]{heilman18} \cite[Theorem 4.1]{colding12a} says: $\forall$ $f\colon\partial\Omega\to\R$ with $\int_{\partial\Omega}f(x)\gamma_{1}(x)\,\d x=0$,
$$
\int_{\partial\Omega}\Big(-1\mp\epsilon'\frac{1+x(x-\alpha\beta)}{1-\beta^{2}}e^{-\frac{[\beta x-\alpha]^{2}}{2(1-\beta^{2})}}\Big)(f(x))^{2}\gamma_{1}(x)\,\d x\geq0.
$$
Choosing $0<\epsilon'<(1-\beta^{2})(6+\abs{\alpha})^{-2}$ completes the proof, since if $\Omega$ is not a half space, we can select an $f$ supported on two points of $\partial\Omega$ with $\int_{\partial\Omega}f(x)\gamma_{1}(x)\,\d x=0$ that has negative second variation, a contradiction.  We conclude that
$$
\int_{\partial H}\gamma_{1}(x)\,\d x\pm\epsilon \int_{\partial H}e^{-\frac{[\beta x-\alpha]^{2}}{2(1-\beta^{2})}} \gamma_{1}(x)\,\d x
\leq \int_{\partial \Omega}\gamma_{1}(x)\,\d x\pm\epsilon \int_{\partial \Omega}e^{-\frac{[\beta x-\alpha]^{2}}{2(1-\beta^{2})}} \gamma_{1}(x)\,\d x.
$$
Rearranged, this gives the desired result:
$$
\pm\epsilon \Big(\int_{\partial H}e^{-\frac{[\beta x-\alpha]^{2}}{2(1-\beta^{2})}} \gamma_{1}(x)\,\d x-\int_{\partial \Omega}e^{-\frac{[\beta x-\alpha]^{2}}{2(1-\beta^{2})}} \gamma_{1}(x)\,\d x\Big)
\leq \int_{\partial \Omega}\gamma_{1}(x)\,\d x-\int_{\partial H}\gamma_{1}(x)\,\d x.
$$
\end{proof}

\begin{lemma}\label{finallem2}
Let $0<a<1$.  Let $0<\rho<1$ and let $0\leq \epsilon\leq e^{-\frac{\alpha^{2}}{1+\beta}}\frac{\sqrt{1-\beta^{2}}}{8\beta (6+\abs{\alpha})^{2}}$.  $\forall$ $t\in\R$, define
$$
h(t)
\colonequals e^{-t^{2}/2}/\sqrt{2\pi}
+\epsilon\Big[\int_{t}^{\infty}\Phi\Big(\frac{\beta x-\alpha}{\sqrt{1-\beta^{2}}}\Big)\gamma_{1}(x)\,\d x\Big]^{2}
+2(1+\abs{\alpha})\abs{\gamma_{1}[t,\infty)-a}.
$$
Then the unique minimum of $h(t)$ occurs when $\gamma_{1}[t,\infty)=a$, i.e. when $t=\alpha\colonequals-\Phi^{-1}(a)$.
\end{lemma}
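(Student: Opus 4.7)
The plan is to mirror the argument of Lemma \ref{finallem3}, replacing the linear penalty $\int_t^\infty (x-\alpha\beta)e^{-[\beta x-\alpha]^2/2(1-\beta^2)}\gamma_1(x)\,\d x$ by the squared quantity $F(t)^2$, where $F(t)\colonequals\int_t^\infty \Phi\bigl(\tfrac{\beta x-\alpha}{\sqrt{1-\beta^2}}\bigr)\gamma_1(x)\,\d x$. Differentiating $h$ and factoring out $\gamma_1(t)$ gives $h'(t)=\gamma_1(t)\,B(t)$, with
\[
B(t) = -t \;-\; 2\epsilon\,\Phi\bigl(\tfrac{\beta t-\alpha}{\sqrt{1-\beta^2}}\bigr)F(t) \;-\; 2(1+|\alpha|)\,\mathrm{sign}\bigl(\gamma_1[t,\infty)-a\bigr).
\]
Away from $t=\alpha$, $B$ is smooth, and a direct computation yields
\[
B'(t) = -1 \;+\; 2\epsilon\,\Phi\bigl(\tfrac{\beta t-\alpha}{\sqrt{1-\beta^2}}\bigr)^2\gamma_1(t) \;-\; \tfrac{2\epsilon\beta}{\sqrt{1-\beta^2}}\,\phi\bigl(\tfrac{\beta t-\alpha}{\sqrt{1-\beta^2}}\bigr)\,F(t).
\]
Only the middle term is positive, and it is bounded by $2\epsilon/\sqrt{2\pi}$; the hypothesized smallness of $\epsilon$ (using $(6+|\alpha|)^2\geq 36$ and $e^{-\alpha^2/(1+\beta)}\leq 1$) forces this to be far below $1$, so $B'<0$ on $\R\setminus\{\alpha\}$.

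At $t=\alpha$ the sign factor jumps from $+1$ to $-1$, so $B$ jumps upward by $4(1+|\alpha|)$. Since $|2\epsilon\,\Phi(\cdot)\,F(t)|\leq 2\epsilon<2\leq 2(1+|\alpha|)$, we have $B(\alpha^-)<0$ and $B(\alpha^+)>0$, and combined with $B(-\infty)=+\infty$, $B(+\infty)=-\infty$, and strict monotonicity on each side, $B$ has exactly one zero $t_1\in(-\infty,\alpha)$ and one zero $t_2\in(\alpha,\infty)$. Consequently $h$ increases on $(-\infty,t_1)$, decreases on $(t_1,\alpha)$, increases on $(\alpha,t_2)$, and decreases on $(t_2,\infty)$; in particular $t=\alpha$ is a strict local minimum, and the global infimum of $h$ is attained among $\lim_{t\to-\infty}h(t)$, $h(\alpha)$, and $\lim_{t\to+\infty}h(t)$.

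To close the argument I compare the three candidate values, where $h(\alpha)=e^{-\alpha^2/2}/\sqrt{2\pi}+\epsilon F(\alpha)^2$, $h(-\infty)=2(1+|\alpha|)(1-a)$, and $h(+\infty)=2(1+|\alpha|)\,a$. Splitting into $a\geq 1/2$ and $a\leq 1/2$ as in the proof of Lemma \ref{finallem3}, the larger of $h(\pm\infty)$ exceeds $1+|\alpha|$, which easily dominates $h(\alpha)\leq 1/\sqrt{2\pi}+\epsilon$. For the smaller endpoint the inequality is delicate when $|\alpha|$ is large, and here I invoke the Mills ratio bound $1-\Phi(|\alpha|)\geq \tfrac{|\alpha|}{\alpha^2+1}\tfrac{e^{-\alpha^2/2}}{\sqrt{2\pi}}$ to show the gap is of order $e^{-\alpha^2/2}/\sqrt{2\pi}$. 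The main obstacle is ensuring that $\epsilon F(\alpha)^2$ — which is quadratic in the penalty, unlike the cleaner linear penalty of Lemma \ref{finallem3} — does not consume this Mills-ratio slack. This is exactly where the $e^{-\alpha^2/(1+\beta)}$ factor in the $\epsilon$-hypothesis is essential: combining it with the trivial bound $F(\alpha)\leq a=\int_\alpha^\infty\gamma_1$ and the Mills estimate gives $\epsilon F(\alpha)^2 \ll e^{-\alpha^2/2}/\sqrt{2\pi}$ uniformly in $|\alpha|$, yielding the strict inequality $h(\alpha)<h(\pm\infty)$ and hence the uniqueness of $t=\alpha$ as the global minimum.
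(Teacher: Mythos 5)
Your proposal is correct and follows essentially the same route as the paper's proof: factor $h'(t)=\gamma_{1}(t)B(t)$, use the smallness of $\epsilon$ to show $B$ is strictly decreasing away from $t=\alpha$, use the jump of the sign term at $\alpha$ to get the increase--decrease--increase--decrease profile, and then compare $h(\pm\infty)$ with $h(\alpha)$. Your explicit Mills-ratio estimate for the delicate endpoint comparison merely fills in a step the paper leaves implicit (and your sign convention for the $\mathrm{sign}$ term is the correct one; the paper's factored display has a typo there).
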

\begin{proof}
\begin{flalign*}
h'(t)
&=-t\gamma_{1}(t)
- 2\epsilon\Phi\Big(\frac{\beta t-\alpha}{\sqrt{1-\beta^{2}}}\Big)\gamma_{1}(t)\int_{t}^{\infty}\Phi\Big(\frac{\beta x-\alpha}{\sqrt{1-\beta^{2}}}\Big)\gamma_{1}(x)\,\d x\\
&\qquad\qquad\qquad
-2(1+\abs{\alpha})\gamma_{1}(t)\cdot\mathrm{sign}(\gamma_{1}[t,\infty)-a)\\
&=\gamma_{1}(t)\Big[-t
- 2\epsilon\Phi\Big(\frac{\beta t-\alpha}{\sqrt{1-\beta^{2}}}\Big)\int_{t}^{\infty}\Phi\Big(\frac{\beta x-\alpha}{\sqrt{1-\beta^{2}}}\Big)\gamma_{1}(x)\,\d x\\
&\qquad\qquad\qquad+2(1+\abs{\alpha})\cdot\mathrm{sign}(\gamma_{1}[t,\infty)-a)\Big].
\end{flalign*}%
The derivative of the term inside the square brackets is negative (since $\epsilon<\frac{\sqrt{1-\beta^{2}}}{\beta8(6+\abs{\alpha})^{2}}$), and $h'(t)>0$ for sufficiently negative $t$.  If $t<\alpha$, then $\mathrm{sign}(\gamma_{1}[t,\infty)-a)=1$, so $\lim_{t\to\alpha^{-}}h'(t)=h'(\alpha^{-})< 0$.  So, $h$ is increasing and then decreasing on $(-\infty,\alpha]$.  If $t>\alpha$, then $\mathrm{sign}(\gamma_{1}[t,\infty)-a)=-1$, so $\lim_{t\to\alpha^{+}}h'(t)=h'(\alpha^{+})>0$.  And $h'(t)<0$ for sufficiently positive $t$.  In summary, $h$ increases, then decreases then increases and then decreases.  So, the minimum value of $h$ is either $h(-\infty),h(\infty)$ or $h(\alpha)$.  Suppose for now that $a\leq 1/2$ that $\alpha\leq0$.  Then
$$
\lim_{t\to-\infty}h(t)
=2(1+\abs{\alpha})(1-a)\geq (1+\abs{\alpha})
>\frac{e^{-\alpha^{2}/2}}{\sqrt{2\pi}}
+\epsilon\Big[\int_{\alpha}^{\infty}\Phi\Big(\frac{\beta x-\alpha}{\sqrt{1-\beta^{2}}}\Big)\gamma_{1}(x)\,\d x\Big]^{2}=h(\alpha).
$$
\begin{flalign*}
\lim_{t\to\infty}h(t)
&=2(1+\abs{\alpha})a
=2(1+\abs{\alpha})\int_{\alpha}^{\infty}\gamma_{1}(w)\,\d w\\
&>e^{-\alpha^{2}/2}/\sqrt{2\pi}
+\epsilon\Big[\int_{\alpha}^{\infty}\Phi\Big(\frac{\beta x-\alpha}{\sqrt{1-\beta^{2}}}\Big)\gamma_{1}(x)\,\d x\Big]^{2}=h(\alpha).
\end{flalign*}
The last inequality used
$$
\int_{\alpha}^{\infty}\Phi\Big(\frac{\beta x-\alpha}{\sqrt{1-\beta^{2}}}\Big)\gamma_{1}(x)\,\d x\leq \int_{\alpha}^{\infty}\gamma_{1}(x)\,\d x.$$
Therefore, $h$ is uniquely minimized at $t=\alpha$.  In the remaining case that $a> 1/2$, we have $\alpha\leq0$ and
\begin{flalign*}
\lim_{t\to-\infty}h(t)
&=2(1+\abs{\alpha})(1-a)=2(1+\abs{\alpha})\int_{-\infty}^{\alpha}\gamma_{1}(w)\,\d w\\
&>e^{-\alpha^{2}/2}/\sqrt{2\pi}
+\epsilon\Big[\int_{\alpha}^{\infty}\Phi\Big(\frac{\beta x-\alpha}{\sqrt{1-\beta^{2}}}\Big)\gamma_{1}(x)\,\d x\Big]^{2}=h(\alpha).
\end{flalign*}
The last inequality used the definition of $\epsilon$.  Also,
$$\lim_{t\to\infty}h(t)=2(1+\abs{\alpha})a\geq (1+\abs{\alpha})
>e^{-\alpha^{2}/2}/\sqrt{2\pi}
+\epsilon\Big[\int_{\alpha}^{\infty}\Phi\Big(\frac{\beta x-\alpha}{\sqrt{1-\beta^{2}}}\Big)\gamma_{1}(x)\,\d x\Big]^{2}=h(\alpha).$$
Therefore, $h$ is uniquely minimized at $t=\alpha$.
%
%

\end{proof}

\medskip
\noindent\textbf{Acknowledgement}.  Thanks to Alex Tarter for helpful discussions.

\bibliographystyle{amsalpha}
\bibliography{12162011}

\end{document}